\newcommand{\DD}{\Delta\!\!\!\!\Delta}
\newcommand{\res}{\upharpoonright}
\newcommand{\Fraisse}{Fra\"iss\'e}
\theoremstyle{plain}
\newtheorem{theorem}{Theorem}[section]
\newtheorem{corollary}[theorem]{Corollary}
\newtheorem{lemma}[theorem]{Lemma}
\newtheorem{proposition}[theorem]{Proposition}
\newtheorem{definition}[theorem]{Definition}
\newtheorem*{theorem*}{Theorem}
\theoremstyle{remark}
\newtheorem*{claim}{Claim}
\numberwithin{equation}{section}
\begin{document}

\title[The combinatorial simplex]{The generic combinatorial simplex}

\author{A. Panagiotopoulos}
\address{Institut f\"ur Mathematische Logik und Grundlagenforschung, Westfalische Wil\-helms-Universit\"at M\"unster, Einsteinstr. 62, 48149 M\"unster, Germany 
}
\email{aristotelis.panagiotopoulos@gmail.com}
\urladdr{http://apanagiotopoulos.org}

\author{S. Solecki}
\address{Department of Mathematics, Cornell University, Ithaca, NY 14853, USA}
\email{ssolecki@cornell.edu}
\urladdr{https://e.math.cornell.edu/people/ssolecki/}

\thanks{Research of Solecki was supported by NSF grants DMS-1800680 and 1954069.} 

\subjclass[2010]{03C30, 05E45, 55U10, 57N60, 57Q05}
\keywords{Projective \Fraisse{} limit, domination closure, simplex, simplicial complex, stellar move, cellular map, near-homeomorphism}

\begin{abstract}
We employ projective \Fraisse{} theory to define the ``generic combinatorial $n$-simplex'' as the pro-finite, simplicial complex that is canonically associated with a family of 
simply defined selection maps between finite triangulations of the simplex. The generic combinatorial $n$-simplex is a combinatorial  object that can be used to define the geometric realization of a simplicial complex without any reference to the Euclidean space. 
It also reflects dynamical properties of its homeomorphism group down to finite combinatorics.

As part of our study of the generic combinatorial simplex,
we define and prove results on domination closure for \Fraisse{} classes, and 
we develop further the theories of stellar moves and cellular maps. 
We prove that the domination closure of selection maps contains the class of face-preserving simplicial maps that are cellular  
on each face of the $n$-simplex and is contained in the class of simplicial, face-preserving near-homeomorphisms. 
Under the PL-Poincar{\'e} conjecture, this gives a characterization of the domination closure of selections.  
\end{abstract}

\maketitle
\addtocontents{toc}{\setcounter{tocdepth}{1}}

\tableofcontents{}

\section{Introduction}

\Fraisse{} theory  has been extensively used in recent years  to establish connections between combinatorics of 
finite structures and properties of automorphism groups of countable structures; see \cite{Ke} for a survey. Projective \Fraisse{} theory 
can similarly be used to provide natural combinatorial models for compact metrizable spaces and to investigate symmetries of these spaces.
Projective \Fraisse{} theory was introduced in \cite{IrSo}, where it was used to give a combinatorial description of the \emph{pseudo-arc}, 
which in turn was applied  
to study the homeomorphism group of that space. Since then, projective \Fraisse{} theory  has been used in analyzing the dynamics of various homeomorphism groups \cite{BaKw,BaKw1,Kw1,Kw2,PaSo,BaCa}. For example: in \cite{Kw1}, it was applied to show that the homeomorphism group of the \emph{Cantor space} has ample generics; in \cite{BaKw1}, it played a role in computing the universal minimal flow of the homeomorphism group of the \emph{Lelek fan};  in \cite{PaSo}, it was the main ingredient in a new combinatorial proof of the homogeneity of the \emph{Menger sponge}. While  these  examples demonstrate the usefulness of the 
projective \Fraisse{}-theoretic approach, so far the reach of this theory has been almost entirely confined to the study of compacta whose  topological dimension does not exceed 1. Of course,  given the subtleties and complications that arise in higher-dimensional combinatorial topology, this may not come as a surprise.

The main goal of this paper is to produce the projective \Fraisse{} theory of the $n$-dimensional topological simplex. In short, 
we introduce the \emph{generic combinatorial $n$-simplex} $\DD$ as the projective \Fraisse{} limit of a simply defined category  
$\mathcal{S}(\Delta)$ of selection maps, and we prove that the canonical quotient $|\DD|$ of $\DD$ is homeomorphic to the topological 
$n$-dimensional simplex.
This proof makes it necessary to investigate the extent to which the symmetries of $\DD$ approximate arbitrary face-preserving homeomorphisms of $|\DD|$.
The approximation problem, in turn, leads to two further developments that appear to be of independent interest. 

First, we formulate a new general notion of \emph{domination closure} $[{\mathcal F}]$ of 
an arbitrary projective \Fraisse{} category $\mathcal F$, which makes it possible to trade a rigid projective \Fraisse{} category 
for a more flexible one. 
 While $\mathcal{F}$ and $[\mathcal{F}]$ have the  same projective \Fraisse{} limit,  $[\mathcal{F}]$ reflects  more faithfully the intrinsic symmetries of this limit.

Second, we introduce two categories of simplicial maps   $\mathcal{C}(\Delta)\subseteq \mathcal{H}(\Delta)$,  which  reflect  
the piecewise linear and topological natures of  $|\DD|$, respectively. We show that the two categories 
constitute lower and upper bounds for  the domination closure $[\mathcal{S}(\Delta)]$ of  $\mathcal{S}(\Delta)$. 
To prove this, we expand the theory of stellar moves. (Stellar moves provide a purely combinatorial alternative to PL-topology.) In particular, we introduce a stronger notion of \emph{starring} for induced complexes and give a new variant  of the main technical theorem from~\cite{Al,Ne}.

We finish this part of the introduction by making two additional points. 
First, the generic combinatorial $n$-simplex $\DD$ is a purely combinatorial object that gives, via $|{\DD}|$, 
an intrinsic definition of the geometric realization of the $n$-dimensional simplex without any reference to the Euclidean space. 
Second, $\DD$ seems to be the right notion of simplex for  homology theories  appropriate for projective \Fraisse{} limits. We do not explore this theme here but we  expect it to have applications in the study of several compacta; see \cite[Section 6]{PaSo}.

\subsection{An outline of results}\label{Su:outline}

For every finite simplicial complex $A$, we consider the category $\mathcal{S}(A)$ of  {\bf selection maps} whose objects are all finite 
barycentric subdivisions $\beta^n A$ of $A$ and whose morphisms are defined by closing the collection of elementary selections 
$s\colon \beta^{n+1}A\to \beta^n A$ under composition and barycentric subdivision $f\to \beta f$. 
Informally speaking, an elementary selection $s$, as above, is a function that 
maps each old vertex of $\beta^{n+1}A$, that is, a vertex that is in $\beta^nA$, to itself and each 
new vertex $v$ of $\beta^{n+1}A$ to a vertex in $\beta^nA$ that is a neighbor of $v$; see also Figure~1. 
A precise definition of elementary selections 
is given in Section~\ref{S:1}.

\begin{figure}[ht]
\centering
\begin{tabular}{c c}
\begin{tikzpicture}[scale=0.50]
		\node  (0) at (-3, -2) {0};
		\node  (1) at (3, -2) {1};
         \node  (2) at (0, 2.5) {2};		
		\draw (0) to (1);
		\draw (0) to (2);
		\draw (1) to (2);
\end{tikzpicture}
&
\begin{tikzpicture}[scale=0.70]
		\node  (0) at (-3, -2) {\small{\{0\}}};
		\node  (1) at (3, -2) {\small{\{1\}}};
         \node  (2) at (0, 2.5) {\small{\{2\}}};
         
		\node  (01) at (0, -2) {\tiny{\{0,1\}}};
		\node  (02) at (-1.5, 0.25) {\tiny{\{0,2\}}};
         \node  (12) at (1.5, 0.25) {\tiny{\{1,2\}}};
         
         \node  (012) at (0, -0.7) {\tiny{\{0,1,2\}}};
	    \draw (0) to (01);
		\draw (1) to (01);
	    \draw (0) to (02);
		\draw (2) to (02);
	    \draw (1) to (12);
		\draw (2) to (12);
         \draw (0) to (012);
		\draw (1) to (012);
	    \draw (2) to (012);
		\draw (02) to (012);
	    \draw (01) to (012);
		\draw (12) to (012);
\end{tikzpicture}
\end{tabular}
\caption{The $2$-dimensional simplex $\Delta$ and its first barycentric subdivision $\beta \Delta$. A simplicial map $s\colon \beta \Delta\to \Delta$ is 
an elementary selection if $s(\sigma)\in \sigma$ for every non-empty  $\sigma\subseteq \{0,1,2\}$.
} \label{fig:M1}
\end{figure}
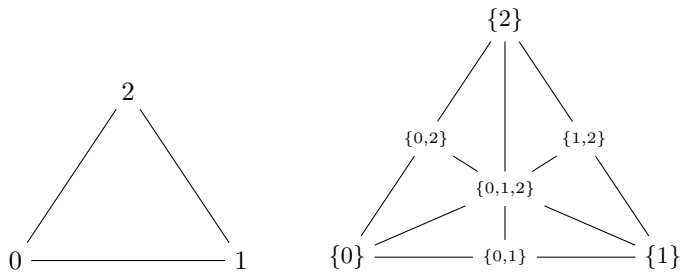

The notions in the theorem below 
are explained in Section~\ref{S:Background}. 

\begin{theorem}\label{Theorem:Intro1}
Let $A$ be a finite simplicial complex. 
\begin{enumerate} 
\item[(i)] $\mathcal{S}(A)$ is a projective \Fraisse{} category. 

\item[(ii)] The topological realization $\mathbb{A}/R^{\mathbb{A}}$ of 
the projective \Fraisse{} limit $\mathbb{A}$ of $\mathcal{S}(A)$ is homeomorphic to the geometric realization $|A|_{\mathbb{R}}$ of $A$. 
\end{enumerate}
\end{theorem}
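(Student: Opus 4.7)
The plan is to verify the projective \Fraisse{} axioms for $\mathcal{S}(A)$ in (i), then construct a continuous map $\pi\colon \mathbb{A} \to |A|_{\mathbb{R}}$ whose fibers are exactly the $R^{\mathbb{A}}$-classes, giving the homeomorphism of (ii).

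For (i), countability of $\mathcal{S}(A)$ is clear since the objects are the $\beta^n A$ and the morphisms are built from finitely many elementary selections and barycentric subdivisions. The joint projection property follows because an elementary selection $\beta^{n+1}A \to \beta^n A$ always exists: each new vertex (a barycenter of some simplex $\sigma$ of $\beta^n A$) has every vertex of $\sigma$ as a legitimate image, so $\beta^N A$ projects onto every $\beta^n A$ with $n \le N$. The substantive axiom is projective amalgamation: given $f_1\colon \beta^n A \to \beta^k A$ and $f_2\colon \beta^m A \to \beta^k A$, I would pass to a sufficiently fine $\beta^N A$ and inductively build selection morphisms $g_1\colon \beta^N A \to \beta^n A$ and $g_2\colon \beta^N A \to \beta^m A$ satisfying $f_1 \circ g_1 = f_2 \circ g_2$. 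The key observation is that the constraint $s(\sigma) \in \sigma$ is local to each simplex, so one can make coherent choices vertex-by-vertex, exploiting that barycentric subdivision is functorial and compatible with restriction to faces.

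For (ii), every vertex of $\beta^n A$ has a canonical position in $|A|_{\mathbb{R}}$ (the barycenter of the corresponding chain of simplices), and the condition $s(\sigma) \in \sigma$ bounds the geometric displacement between a new vertex $v$ and $s(v)$ by the diameter of $\sigma$, hence by the mesh of $\beta^n A$, which decays geometrically as $n \to \infty$. Consequently, for any thread $v = (v_n) \in \mathbb{A}$ the associated sequence in $|A|_{\mathbb{R}}$ is Cauchy, so $\pi(v)$ can be defined as its limit. Continuity of $\pi$ is automatic, and surjectivity follows from density of iterated barycenters in $|A|_{\mathbb{R}}$ together with compactness. To identify the fibers of $\pi$ with the $R^{\mathbb{A}}$-classes, I would argue that two threads converge to the same point in $|A|_{\mathbb{R}}$ iff at every finite level the corresponding vertices span a common simplex of $\beta^n A$, which is exactly what the limit simplex-relation records. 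The induced map $\mathbb{A}/R^{\mathbb{A}} \to |A|_{\mathbb{R}}$ is then a continuous bijection between a compact space and a Hausdorff space, hence a homeomorphism.

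The main obstacle I anticipate is the projective amalgamation in (i): it is easy to produce $g_1, g_2$ whose composites $f_1 \circ g_1$ and $f_2 \circ g_2$ agree \emph{up to the simplex relation} of $\beta^k A$, but forcing exact equality at every vertex of $\beta^N A$ is delicate. A single vertex of $\beta^N A$ may be sent by $g_1$ and $g_2$ to vertices of $\beta^n A$ and $\beta^m A$ that are then mapped by $f_1, f_2$ into different vertices of $\beta^k A$, and overcoming this requires an inductive construction that, for each vertex of $\beta^N A$, simultaneously pins down a target in $\beta^k A$ and leverages the local flexibility of the selection definition to realize that target via both compositions. A secondary technical point in (ii) is the explicit combinatorial identification of $R^{\mathbb{A}}$, which should fall out once the inverse limit presentation of $\mathbb{A}$ is in hand.
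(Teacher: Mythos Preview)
For (i) your plan is in the right spirit but misses the key simplification. The paper does not amalgamate two arbitrary morphisms head-on; instead it observes that the amalgamation property is inherited under composition and under $f\mapsto\beta f$, so it suffices to amalgamate an arbitrary $f\colon\beta^{l}A\to\beta^{k}A$ against a single elementary selection $s\colon\beta^{k+1}A\to\beta^{k}A$. That square closes immediately: take $\beta f$ on one side and define an elementary selection $s'\colon\beta^{l+1}A\to\beta^{l}A$ by choosing, for each face $\tau$ of $\beta^lA$, any $s'(\tau)\in\tau$ with $f(s'(\tau))=s(f_*(\tau))$; then $f\circ s'=s\circ\beta f$. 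Your vertex-by-vertex construction, which you yourself flag as the main obstacle, is never needed.

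For (ii) there is a genuine gap. Your displacement bound relies on the condition $s(\sigma)\in\sigma$, i.e., it applies only when the bonding maps are elementary selections. But $\mathcal{S}(A)$ is also closed under $f\mapsto\beta f$, and maps of the form $\beta^{j}(s)$ do \emph{not} have small geometric displacement: if $s\colon\beta A\to A$ sends the barycenter of a top face to a vertex of $A$, then $\beta s\colon\beta^{2}A\to\beta A$ still moves some vertex of $\beta^{2}A$ by roughly $\mathrm{diam}(|A|_{\mathbb{R}})$, and further iterates of $\beta$ do not help. A generic sequence for $\mathcal{S}(A)$ will involve such morphisms (indeed, the amalgamation argument above produces $\beta f$ on one leg), so the barycenter positions along a thread need not be Cauchy and your map $\pi$ is not well-defined as stated. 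The paper's proof of (ii) is accordingly much deeper: the geometric limit argument you sketch is carried out only for inverse sequences of elementary selections, and the bulk of the paper (the theory of domination closure, stellar moves, and hereditarily cellular maps) is devoted to showing that the subcategory of compositions of elementary selections is dominating in a larger \Fraisse{} category $\mathcal{C}(\Delta)\supseteq\mathcal{S}(\Delta)$. This lets one replace the original generic sequence by an isomorphic one whose bonding maps are all compositions of elementary selections, to which your Cauchy argument then legitimately applies; the case of a general $A$ is reduced to $A=\Delta$ by gluing over faces.
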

 
As mentioned earlier, Theorem~\ref{Theorem:Intro1}(ii) gives an intrinsic combinatorial definition of the geometric realization of $A$. 
Point (i) of the theorem above is proved in Section~\ref{S:1}. The proof of point (ii) is quite involved; its final 
step is presented in Section~\ref{Su:proofend} and uses the material developed in earlier sections. 

In particular, this proof depends on 
the notion of domination closure that appears interesting in its own right, and that we motivate here.  In a nutshell, while
the inductive definition of the category $\mathcal{S}(A)$ makes $\mathcal{S}(A)$ amenable to combinatorial investigations, it also makes it quite rigid for the purpose of analyzing the face-preserving symmetries of $\mathbb A$.  Our goal is to extract from $\mathcal{S}(A)$ broader, more flexible projective \Fraisse{} categories, whose projective \Fraisse{} limits are still equal to $\mathbb A$. 
 The guiding principle will be provided by the abstract notion 
of domination closure for essentially countable categories that we introduce in Section \ref{S:CoinitialClosure}. Fix an ambient category $\mathcal E$ and let $\mathcal{F}\subseteq \mathcal{F}'$ be subcategories of $\mathcal{E}$ on the same class of objects as $\mathcal E$.  Recall that $\mathcal{F}$  {\bf is dominating in  $\mathcal{F}'$},
if for every $g'\in \mathcal{F}'$ there is 
$g''\in \mathcal{F}'$ so that $g'\circ g'' \in \mathcal{F}$. In Section~\ref{Su:coo}, we prove the following theorem. 

\begin{theorem}\label{T:IntroCoin}
Let $\mathcal{F},\, \mathcal{E}$ be as above. There exists a largest subcategory $[{\mathcal F}]$ of ${\mathcal E}$  which is dominated by $\mathcal F$. 
Moreover, $[[{\mathcal F}]]=[{\mathcal F}]$.
\end{theorem}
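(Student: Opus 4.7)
My plan is to realize $[\mathcal{F}]$ as the union of all subcategories of $\mathcal{E}$ that are dominated by $\mathcal{F}$, and then verify the delicate point that this union is itself dominated. First I would let $\mathcal{D}$ denote the collection of all subcategories $\mathcal{F}' \subseteq \mathcal{E}$ on the same class of objects as $\mathcal{E}$, containing $\mathcal{F}$, and in which $\mathcal{F}$ is dominating. Note that $\mathcal{F}$ dominates itself trivially via identity morphisms, so $\mathcal{D}$ is nonempty. I would then define $[\mathcal{F}]$ to be the subcategory of $\mathcal{E}$ generated by $\bigcup_{\mathcal{F}' \in \mathcal{D}} \mathcal{F}'$; its morphisms are exactly the identities and the finite compositions $f_k \circ \cdots \circ f_1$ where each $f_j$ belongs to some $\mathcal{F}_{i_j} \in \mathcal{D}$.

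The central step, and the one I expect to be the main obstacle, is to show $[\mathcal{F}] \in \mathcal{D}$; maximality of $[\mathcal{F}]$ among dominated subcategories then follows by construction. The plan is induction on the length $k$ of a chosen representation of $g \in [\mathcal{F}]$ as such a composition. Given $g = f_k \circ \cdots \circ f_1$, the inductive hypothesis applied to $g' = f_{k-1} \circ \cdots \circ f_1$ supplies $h' \in [\mathcal{F}]$ with $g' \circ h' \in \mathcal{F}$. Since $\mathcal{F} \subseteq \mathcal{F}_{i_k}$ and the latter is closed under composition, $f_k \circ g' \circ h' \in \mathcal{F}_{i_k}$; applying the domination of $\mathcal{F}_{i_k}$ by $\mathcal{F}$ then yields $h'' \in \mathcal{F}_{i_k} \subseteq [\mathcal{F}]$ with $g \circ h' \circ h'' \in \mathcal{F}$, and $h' \circ h'' \in [\mathcal{F}]$ witnesses the desired domination. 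The crucial subtlety is keeping the witness inside $[\mathcal{F}]$ throughout, which is why the argument must exploit the inclusion $\mathcal{F} \subseteq \mathcal{F}_{i_k}$ of a \emph{subcategory} rather than treating each factor $f_j$ separately.

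For the idempotency $[[\mathcal{F}]] = [\mathcal{F}]$, the inclusion $[\mathcal{F}] \subseteq [[\mathcal{F}]]$ is immediate since $[\mathcal{F}]$ dominates itself via identities and therefore belongs to the collection out of which $[[\mathcal{F}]]$ is built. For the reverse, I would show $[[\mathcal{F}]] \in \mathcal{D}$, so that maximality of $[\mathcal{F}]$ in $\mathcal{D}$ forces $[[\mathcal{F}]] \subseteq [\mathcal{F}]$: given $g \in [[\mathcal{F}]]$, the domination of $[[\mathcal{F}]]$ by $[\mathcal{F}]$ yields $g_1 \in [[\mathcal{F}]]$ with $g \circ g_1 \in [\mathcal{F}]$, and then the domination of $[\mathcal{F}]$ by $\mathcal{F}$ yields $g_2 \in [\mathcal{F}] \subseteq [[\mathcal{F}]]$ with $g \circ g_1 \circ g_2 \in \mathcal{F}$. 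The composite $g_1 \circ g_2 \in [[\mathcal{F}]]$ thus witnesses domination of $g$ by $\mathcal{F}$ inside $[[\mathcal{F}]]$, as required.
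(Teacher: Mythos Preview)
Your proposal is correct and follows essentially the same route as the paper: define $[\mathcal{F}]$ as the (composition closure of the) union of all dominated classes, verify by an induction on composition length that this union is itself dominated by $\mathcal{F}$, and then deduce idempotency from transitivity of domination. The only cosmetic differences are that the paper takes the union over all dominated \emph{sets} (not just subcategories) and peels the composition from the opposite end using condition~(i) directly, whereas you peel using $\mathcal{F}\subseteq\mathcal{F}_{i_k}$; both yield the same object and the same argument.
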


We call $[\mathcal{F}]$ the {\bf domination closure of $\mathcal{F}$ with respect to $\mathcal{E}$}. In Section~\ref{SS:CoinitionalClosureFraisse}, 
we develop a theory of domination closure in 
the case when $\mathcal{F}$ is a projective \Fraisse{} category and $\mathcal E$ is essentially countable.
In this case, $[\mathcal{F}]$ turns out to be a projective \Fraisse{} category that is potentially (and actually, for the category of selections) larger than 
$\mathcal F$, but has the same \Fraisse{} limit as $\mathcal{F}$.

It is convenient to consider the special instance of the category ${\mathcal S}(A)$ when $A$ is a simplex. 
The {\bf $n$-dimensional  simplex} $\Delta$ is the set of all non-empty subsets of $\{0,1,\ldots,n\}$.
Using Theorem~\ref{Theorem:Intro1}, we introduce now the main object of this paper.  

\begin{definition}
The {\bf generic combinatorial simplex} is the projective \Fraisse{} limit $\DD$ of the projective \Fraisse{} category $\mathcal{S}(\Delta)$. 
\end{definition}

In Section~\ref{S:CategoriesOfMaps}, we compute the domination closure $[\mathcal{S}(\Delta)]$ of $\mathcal{S}(\Delta)$ in the ambient category $\mathcal{R}(\Delta)$ 
of all simplicial maps among barycentric subdivisions of $\Delta$ that preserve the face structure of $\Delta$. 
In particular, we consider the categories $\mathcal{H}(\Delta)$, of all {\bf restricted near-homeomorphisms} and $\mathcal{C}(\Delta)$, of all {\bf hereditarily cellular maps} on barycentric subdivisions of $\Delta$. 
We give definitions of these classes in Section~\ref{Su:defrel}, but we point out here that near-homeomorphisms and cellular maps are well-studied 
in topology classes of maps. 
The following theorem is proved, in installments, in Sections~\ref{S:ResultsOnHereditarilyCellularMaps}, 
\ref{S:ResultsOnRestrictedNearHomeo}, and \ref{Su:close}. Its main point is that the domination closure of ${\mathcal S}(\Delta)$ in ${\mathcal R}(\Delta)$ 
is estimated from below by ${\mathcal C}(\Delta)$ and from above by ${\mathcal H}(\Delta)$, and that ${\mathcal C}(\Delta)$ and ${\mathcal H}(\Delta)$ 
are equal under appropriate assumptions. 

\begin{theorem}\label{Theorem:Intron}
Let $\Delta$ be the $n$-simplex and let 
$[\mathcal{S}(\Delta)]$ be the domination closure of 
$\mathcal{S}(\Delta)$  in  
$\mathcal{R}(\Delta)$. Then 
\[
\mathcal{S}(\Delta)\subseteq \mathcal{C}(\Delta) \subseteq [\mathcal{S}(\Delta)]\subseteq \mathcal{H}(\Delta).
\]
Furthermore, 
\[
\mathcal{C}(\Delta)=\mathcal{H}(\Delta),\,\hbox{ for }n<4,
\]
and, if the PL-Poincar{\'e} conjecture is positively resolved for $n=4$, then $\mathcal{C}(\Delta)=\mathcal{H}(\Delta)$ for all $n\geq 0$ . 
\end{theorem}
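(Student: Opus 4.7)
The plan is to establish the four relations separately, each as a distinct combinatorial–topological statement, with the main technical difficulty concentrated in the middle inclusion.

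First, for $\mathcal{S}(\Delta)\subseteq \mathcal{C}(\Delta)$, I would verify that every elementary selection $s\colon \beta A\to A$ is cellular when restricted to each face of $\Delta$: by construction $s$ fixes old vertices and sends every new barycenter to a neighboring old vertex, so the point-preimages of $s$ are simplicial stars of old vertices in $\beta A$ and hence combinatorial cells. The property of being hereditarily cellular is preserved by composition and by the barycentric subdivision functor $f\mapsto \beta f$, and since $\mathcal{S}(\Delta)$ is by definition the closure of elementary selections under these operations, every morphism of $\mathcal{S}(\Delta)$ lies in $\mathcal{C}(\Delta)$.

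Second, for $[\mathcal{S}(\Delta)]\subseteq \mathcal{H}(\Delta)$, the strategy is to show that $\mathcal{H}(\Delta)$ itself is dominated by $\mathcal{S}(\Delta)$ in $\mathcal{R}(\Delta)$; this is essentially a restatement of Theorem~\ref{Theorem:Intro1}(ii), which provides the density of selections among face-preserving near-homeomorphisms, so that any restricted near-homeomorphism can be post-composed by an appropriate selection back into $\mathcal{S}(\Delta)$. Combined with the maximality assertion of Theorem~\ref{T:IntroCoin}, this gives $[\mathcal{S}(\Delta)]\subseteq \mathcal{H}(\Delta)$.

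The hard part will be the middle inclusion $\mathcal{C}(\Delta)\subseteq [\mathcal{S}(\Delta)]$. To prove it, I would verify that $\mathcal{C}(\Delta)$ is itself a subcategory of $\mathcal{R}(\Delta)$ dominated by $\mathcal{S}(\Delta)$: given any hereditarily cellular $g\colon \beta^m\Delta\to \beta^k\Delta$ in $\mathcal{R}(\Delta)$, I need to construct $h\in \mathcal{C}(\Delta)$ with $g\circ h\in \mathcal{S}(\Delta)$. The tool is the enhanced theory of stellar moves and starring for induced complexes developed earlier in the paper: hereditary cellularity means the point-preimages of $g$ are, face by face, combinatorial cells, and the strengthened Alexander–Newman-type theorem lets one realize such a collapse by a sequence of stellar subdivisions, each matchable to an elementary selection. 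The face-preservation constraint forces the construction to be performed coherently across faces and along an induction on the skeleton; this is where the bulk of the combinatorial bookkeeping lies and is the main obstacle.

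Finally, for the equalities $\mathcal{C}(\Delta)=\mathcal{H}(\Delta)$ in dimensions $n<4$ and under the PL-Poincar\'e hypothesis for all $n$, I would argue that a face-preserving simplicial map in $\mathcal{R}(\Delta)$ is a restricted near-homeomorphism precisely when its point-preimages on each face are cell-like subcomplexes of that face, and then invoke classical PL topology to identify cell-like with cellular in the relevant dimension. For $n\leq 3$ this identification is unconditional; in dimension $4$ it is equivalent to the PL-Poincar\'e conjecture; and in dimensions $\geq 5$ it follows from high-dimensional PL techniques once the four-dimensional case is available.
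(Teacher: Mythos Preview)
Your outlines for the first inclusion $\mathcal{S}(\Delta)\subseteq\mathcal{C}(\Delta)$, for the middle inclusion $\mathcal{C}(\Delta)\subseteq[\mathcal{S}(\Delta)]$, and for the final equality are broadly in line with the paper's approach, though the paper works with duals $D(\sigma,f_{\mathrm{X}})$ rather than point-preimages throughout, and the argument for $\mathcal{S}(\Delta)\subseteq\mathcal{C}(\Delta)$ proceeds by factoring elementary selections as compositions of connection maps.

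There is, however, a genuine logical error in your treatment of $[\mathcal{S}(\Delta)]\subseteq\mathcal{H}(\Delta)$. You propose to show that $\mathcal{H}(\Delta)$ is dominated by $\mathcal{S}(\Delta)$ and then invoke the maximality in Theorem~\ref{T:IntroCoin}. But the domination closure $[\mathcal{S}(\Delta)]$ is by definition the \emph{largest} subcategory dominated by $\mathcal{S}(\Delta)$; so establishing that $\mathcal{H}(\Delta)$ is dominated by $\mathcal{S}(\Delta)$ would yield $\mathcal{H}(\Delta)\subseteq[\mathcal{S}(\Delta)]$, the opposite of what you want. Moreover, Theorem~\ref{Theorem:Intro1}(ii) does not say what you attribute to it: it identifies the topological realization of the \Fraisse{} limit with the geometric realization, and contains no statement about restricted near-homeomorphisms factoring through selections.

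The paper's argument for this inclusion is of a completely different nature. One first checks directly that every selection is a restricted near-homeomorphism. Then one uses the characterization of $[\mathcal{S}(\Delta)]$ given by Theorem~\ref{T:iid}: any $g\in[\mathcal{S}(\Delta)]$ occurs as the first entry $e_0$ of an iso-sequence $(e_k)$ for $\mathcal{S}(\Delta)$. The two interleaved generic sequences $(e_{2k}\circ e_{2k+1})$ and $(e_{2k+1}\circ e_{2k+2})$ consist of selections, hence of restricted near-homeomorphisms; Brown's approximation theorem (Theorem~\ref{T:Brown}) then makes the projections from the two inverse limits restricted near-homeomorphisms, and a cancellation lemma (Lemma~\ref{L:nil}) forces $g=e_0$ to be one as well. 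This is a topological approximation argument passing through the \Fraisse{} limit, not a domination argument inside $\mathcal{R}(\Delta)$.
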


To establish Theorem~\ref{Theorem:Intron}, we will need to consider a more general framework and prove 
a more general result of which Theorem~\ref{Theorem:Intron} 
is a consequence. In particular, crucial to our proof will be combinatorial triangulations of $\Delta$ which are more flexible than barycentric 
subdivisions of $\Delta$.  
These are stellar $n$-simplexes, obtained by using stellar moves, first defined and studied by Alexander~\cite{Al} and Newman~\cite{Ne}; see also \cite{Li}.  
A {\bf stellar $n$-simplex} is 
a stellar $n$-ball together with a fixed decomposition of its boundary into stellar balls of lower dimensions, each of them corresponding to 
a combinatorial triangulation of a face of the $n$-dimensional simplex $\Delta$. 

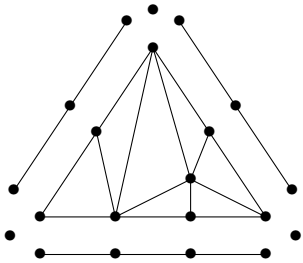
\begin{figure}[h]
\centering
\begin{tikzpicture}[scale=0.50]
\node  (d0) at (-3.8, -2.5){$\bullet$};
\node  (d1) at (3.8, -2.5){$\bullet$};
\node  (d2) at (0, 3.5){$\bullet$};         
\node  (a0) at (-3, -3){$\bullet$};
\node  (a1) at (3, -3){$\bullet$};
\node  (a01) at (-1, -3){$\bullet$};
\node  (a01') at (1, -3){$\bullet$};
\draw (a0.center) to (a01.center);
\draw (a1.center) to (a01'.center);
\draw (a01.center) to (a01'.center);
\node  (b1) at (3.7, -1.3){$\bullet$};
\node  (b2) at (0.7, 3.2){$\bullet$};         
\node  (b12) at (2.2, 0.95){$\bullet$};
\draw (b1.center) to (b12.center);
\draw (b2.center) to (b12.center);
\node  (c0) at (-3.7, -1.3){$\bullet$};
\node  (c2) at (-0.7, 3.2){$\bullet$};         
\node  (c02) at (-2.2, 0.95){$\bullet$};
\draw (c0.center) to (c02.center);
\draw (c2.center) to (c02.center);
\node  (0) at (-3, -2){$\bullet$};
\node  (1) at (3, -2){$\bullet$};
\node  (2) at (0, 2.5){$\bullet$};         
\node  (01) at (-1, -2){$\bullet$};
\node  (01') at (1, -2){$\bullet$};	
\node  (02) at (-1.5, 0.25){$\bullet$};
\node  (12) at (1.5, 0.25){$\bullet$};
\node  (012) at (1, -1){$\bullet$};
\draw (0.center) to (01.center);
\draw (1.center) to (01'.center);
\draw (01.center) to (01'.center);
\draw (0.center) to (02.center);
\draw (2.center) to (02.center);
\draw (01.center) to (02.center);
\draw (01.center) to (2.center);	    
\draw (01'.center) to (012.center);
\draw (01.center) to (012.center);
\draw (12.center) to (012.center);
\draw (2.center) to (012.center);	    
\draw (1.center) to (012.center);	    
\draw (1.center) to (12.center);
\draw (2.center) to (12.center);         
\end{tikzpicture}
\caption{A finite stellar $2$-simplex  $A=(A_{\mathrm{X}}\mid\mathrm{X}\in\Delta)$. 
} \label{fig:M2}
\end{figure}

Considering stellar $n$-simplexes naturally leads to defining a category broader than ${\mathcal S}(\Delta)$. 
In relation to $\mathcal{S}(\Delta)$, the category $\mathcal{S}_{\star}(\Delta)$ 
of {\bf selection maps on stellar $n$-simplexes} is defined as the union of all categories of the form $\mathcal{S}(A)$, where $A$ is a stellar $n$-simplex. 
Notice that  $\mathcal{S}(\Delta)$ can be canonically identified as a full subcategory of $\mathcal{S}_{\star}(\Delta)$.
We compute the domination closure $[\mathcal{S}_{\star}(\Delta)]$ of $\mathcal{S}_{\star}(\Delta)$ in the ambient category $\mathcal{R}_{\star}(\Delta)$ 
of all simplicial maps between stellar $n$-simplexes which preserve the face structure of $\Delta$. 
We obtain a theorem fully analogous to Theorem~\ref{Theorem:Intron}, where the categories 
$\mathcal{H}(\Delta)$ and $\mathcal{C}(\Delta)$ are replaced by 
the categories $\mathcal{H}_{\star}(\Delta)$ of all {\bf restricted near-homeomorphisms on stellar $n$-simplexes} 
and $\mathcal{C}_{\star}(\Delta)$ of all {\bf hereditarily cellular maps on stellar $n$-simplexes}.

The proof of Theorem \ref{Theorem:Intron}  relies on some new results in the theory of stellar moves which are presented in Section~\ref{S:strcomp}. 
One of these results is a variant of the main technical theorem of Alexander~\cite{Al} and Newman~\cite{Ne} which states that every stellar ball $B$ can be transformed to 
a cone $[v]\star \partial B$ using a sequence of internal stellar moves. In Section~\ref{S: Tame balls}, we define the notion of a {\bf strongly internal stellar move}, 
and we show that the moves transforming $B$ to $[v]\star \partial B$ can  be assumed to be strongly internal, as long as the boundary of $B$ is 
an induced subcomplex of $B$.

\subsection{Organization of the paper}
The main line of our argument runs from Section~\ref{S:1} through Section~\ref{S:CoinitialClosure} to 
Section~\ref{S:CategoriesOfMaps}, with Section~\ref{S:strcomp} providing the necessary tools for proving the results of 
Section~\ref{S:CategoriesOfMaps}.
In Section~\ref{S:1}, we introduce the projective \Fraisse{} category of certain simplicial maps, which we call selections, and represent simplexes 
as canonical quotients of limits of these categories.  In Section~\ref{S:CoinitialClosure}, we show that each \Fraisse{} category is included in a (canonical) largest \Fraisse{} category, within a fixed ambient category, 
that produces the same limit. We call this largest category the domination closure of the category we started with. In Section~\ref{S:CategoriesOfMaps}, 
we compute lower and upper estimates on the domination closure of the category of selections, and we show that under favorable circumstances 
the two estimates coincide.  The arguments in 
Section~\ref{S:CategoriesOfMaps} use the theory of stellar manifolds to establish the lower estimate. 
In Section~\ref{S:strcomp}, we prove new results in the theory of stellar manifolds. These results are needed in our considerations in 
Section~\ref{S:CategoriesOfMaps}, but we also find them interesting in their own right. 
In Section~\ref{S:Background}, we give the background for projective \Fraisse{} theory that is necessary to frame the discussion in this paper.

\subsection{A review of standard notions concerning simplicial complexes and simplicial maps} \label{SS:Complexes}
A {\bf simplicial complex} or simply a {\bf complex} $C$ is a family of finite non-empty sets that is closed under taking non-empty subsets, that is, 
if $\sigma\in C$ and $\tau\subseteq\sigma$ with $\tau\neq \emptyset$, then $\tau\in C$.
Notice that the empty set $\emptyset$ constitutes a complex, which we call the {\bf empty complex}. 
An element $\sigma$ of $C$ is called  a {\bf face} of $C$. The {\bf domain} of $C$ is the union $\bigcup C$ of all faces in $C$, 
and we denote it by  $\mathrm{dom}(C)$. 
A {\bf vertex} $v$ of $C$ is any element of $\mathrm{dom}(C)$.  A simplicial subcomplex or simply a {\bf subcomplex} $D$ of $C$ is a simplicial complex with $D\subseteq C$.

Let $C,D$ be two complexes. We view a function
\[f\colon \mathrm{dom}(C)\to \mathrm{dom}(D),\]
as a function which acts on the level of vertexes, faces and subcomplexes. 
If $\sigma$ is a face of $C$, $A$ is a subcomplex of $C$,  and $B$ is a subcomplex of $D$, then we let 
\[
f_{*}(\sigma) = \{f(v) \mid v\in\sigma\},\; \; f_{*}(A) = \{f_*(\tau) \mid \tau\in A\},
\]
\[\text{ and } \; f^{-1}_{*}(B) = \{\tau \in C\mid f_{*}(\tau)\in B\}.\]
If for all $\sigma\in C$ we have that $f_{*}(\sigma)\in D$, then we say that $f$ is {\bf simplicial} and we use the notation
\[
f\colon C\to D.
\]

A simplicial map $f\colon C\to D$ is an {\bf epimorphism}, if it surjective as a map from $C$ to $D$. It is an {\bf embedding}, if it is injective as a map from $C$ to $D$,. Finally, we say that $f$ is an {\bf isomorphism}, if it is both an embedding and an epimorphism.

A {\bf simplex} $\Delta$ is any finite complex with the property that every non-empty subset of $\mathrm{dom}(\Delta)$ is a face of $\Delta$. Every finite set $X$ generates a unique simplex $\mathcal{P}(X)\setminus \{ \emptyset\}$, where $\mathcal{P}(X)$ denotes the power set of $X$. We denote this simplex by $[X]$.  We will often use the handy $[v_1,\ldots,v_n]$  in place of $[\{v_1,\ldots,v_n\}]$. The {\bf dimension}  $\mathrm{dim}(\Delta)$ of a simplex $\Delta$  is by definition $(n-1)$, where $n$ is the size of $\mathrm{dom}(\Delta)$. Notice that the empty complex $\emptyset$ is a simplex with  $\mathrm{dim}(\emptyset)=(-1)$.  Up to isomorphism there is a unique simplex of dimension $n$. We set
\[\Delta^n=\mathcal{P}(\{0,1,\ldots,n\})\setminus \{ \emptyset\},\]
to be the ``canonical representative'' of its isomorphism class and we call it {\bf the $n$-dimensional simplex}. We will work here only with finite dimensional simplicial complexes. These are precisely the complexes $C$ for which there is a largest $n\in\mathbb{N}$ so that $\Delta^n$ embeds in $C$. We call this the dimension of $C$ and  denote it by $\mathrm{dim}(C)$.

If $A$ is a finite simplcial complex, we can identify ${\rm dom}(A)$ with the set  $\{ 0, \dots , n \}$ for some $n\in {\mathbb N}$. The {\bf geometric realization $|A|_{\mathbb{R}}$ 
 of} $A$ is  the set
\begin{equation}\label{E:geomr}
\big\{ (x_0, \dots, x_n)\in {\mathbb R}^{n+1}\colon x_i\geq 0 \text{ and } \sum_i x_i=  \sum_{i\in\sigma} x_i =  1\hbox{ for some } \sigma\in A\big\},
\end{equation}
If $B$ is a subcomplex of $A$, then the associated geometric 
realization of $B$ is 
\begin{equation}\label{E:geos}
\big\{ (x_0, \dots, x_n)\in {\mathbb R}^{n+1}\colon x_i\geq 0 \text{ and } \sum_i x_i=  \sum_{i\in\sigma} x_i =  1\hbox{ for some } \sigma\in B\big\}. 
\end{equation} 
Similarly, for a face $\sigma$ of $A$, its associated geometric realization is 
\begin{equation}\label{E:geot} 
\big\{ (x_0, \dots, x_n)\in {\mathbb R}^{n+1}\colon x_i\geq 0 \text{ and } \sum_i x_i=  \sum_{i\in\sigma} x_i =  1\big\}. 
\end{equation}
We will denote the sets in \eqref{E:geos} and \eqref{E:geot} by $|B|_{|\mathbb R}$ and 
$|\sigma|_{\mathbb R}$, respectively, suppressing their dependence on $A$, when it is clear from the context.

\section{Background in projective Fra{\"i}ss{\'e} theory}\label{S:Background}

While classical \Fraisse{} theory can be used, in theory, to approximate the dynamics of 
homeomorphism groups $\mathrm{Homeo}(K)$ of metrizable compact spaces $K$, the resulting \Fraisse{} classes are rarely natural for combinatorial investigations.  
The systematic study of homeomorphism groups via \Fraisse{} theoretic techniques came with the introduction of projective \Fraisse{} theory in \cite{IrSo}.
In short, the idea is to replace classical \Fraisse{} categories of embeddings between finite structures with categories $\mathcal C$ of epimorphisms 
between structures equipped with a binary reflexive and symmetric relation $R$, which often is the edge relation of a finite simplicial complex. 
The category is assumed to satisfy the  projective analogue of the \Fraisse{} axioms. In applications, this projective \Fraisse{} category $\mathcal{C}$ has to be  
chosen appropriately so that the profinite complex $\mathbb{K}$, which is the inverse limit of a generic inverse sequence in $\mathcal{C}$, captures the 
dynamics of the original space $K$ in the following sense: the edge relation $R^{\mathbb{K}}$ on $\mathbb{K}$ is 
an equivalence relation; the quotient $\mathbb{K}/R^{\mathbb{K}}$ is homeomorphic to $K$; and the quotient map ${\mathbb K}\to K$ induces 
a continuous homomorphism from 
$\mathrm{Aut}(\mathbb{K})$ to $\mathrm{Homeo}(K)$ whose image is dense. Projective \Fraisse{} theory has been used 
in the study of a number of homeomorphism groups; see \cite{BaKw,IrSo,Kw1,Kw2,PaSo}.

\subsection{Abstract Fra{\"i}ss{\'e} categories and generic sequences}\label{SS:AbstractFraisse}
Let  $\mathcal{E}$  be a category and let $f\in \mathcal{E}$. We denote by $\mathrm{Ob}(\mathcal{E})$ the collection of all objects of $\mathcal{E}$ and by 
$\mathrm{dom}(f)$, $\mathrm{codom}(f)$ the domain  and codomain of $f$.  A category $\mathcal{E}$ is {\bf essentially countable} if $\mathrm{Ob}(\mathcal{E})$ is countable up to isomorphism and for for every $o_1,o_2\in \mathrm{Ob}(\mathcal{E})$ there countably many $f\in \mathcal{E}$ with $\mathrm{dom}(f)=o_1$ and  $\mathrm{codom}(f)=o_2$.
A {\bf projective Fra{\"i}ss{\'e} category}, or simply a {\bf Fra{\"i}ss{\'e} category}, is an essentially countable category 
$\mathcal{E}$ which  satisfies the following two properties:
\begin{enumerate}
\item[(i)] ({\bf joint projection}) for any two $o_1, o_2\in \mathrm{Ob}(\mathcal{E})$ there are $e_1, e_2\in \mathcal{E}$ with  $\mathrm{codom}(e_1)=o_1$, 
$\mathrm{codom}(e_2)=o_2$, and  $\mathrm{dom}(e_1)=\mathrm{dom}(e_2)$;

\item[(ii)] ({\bf projective amalgamation}) for any two $e_1, e_2\in \mathcal{E}$ with $\mathrm{codom}(e_1)=\mathrm{codom}(e_2)$  there are $e'_1, e'_2\in \mathcal{E}$ with $e_1\circ e'_1= e_2\circ e'_2$.
\end{enumerate}

A sequence  $(e_n)$ of arrows in a category $\mathcal{E}$ is {\bf neat} if for every $n$ we have that $\mathrm{dom}(e_n)=\mathrm{codom}(e_{n+1})$. Let $(e_n)$ be a neat sequence. A sequence $(e'_m)$ is a {\bf block subsequence of} $(e_n)$ if there are $i_0<n_1<n_2<\dots$ such that,  
for each $m$, $e'_m =  e_{n_m}\circ\cdots \circ  e_{n_{m+1}-1}$. Note that $(e_m')$ is automatically neat.  Let $(e_n)$ and $(f_n)$ be neat sequences in $\mathcal{E}$ and let $\mathcal{F}$ be a subcategory of $\mathcal{E}$.  A neat sequence $(g_m)$ in $\mathcal{F}$ is called an {\bf  $\mathcal{F}$-isomorphism from $(e_n)$ to $(f_n)$} if there are block subsequences $(e'_m)$ and $(f'_m)$ of $(e_n)$ and $(f_n)$, respectively, such that, for each $m$,  
\[
e_m' = g_{2m} \circ g_{2m+1} \;\hbox{ and }\; f_m' = g_{2m+1} \circ g_{2m+2}
\]
In the situation above, we say that $(e_n)$ and $(f_n)$ are {\bf $\mathcal{F}$-isomorphic}.  Note that the sequence $(g'_m)$, where 
$g_m'=g_{m+1}$, is an $\mathcal{F}$-isomorphism from $(f_m)$ to $(e_m)$.  An $\mathcal{E}$-isomorphism will be called simple an {\bf  isomorphism} 
and in this situation we say that $(e_n)$ and $(f_n)$ are {\bf  isomorphic}.  
A neat sequence $(e_n)$ in $\mathcal{E}$ is a {\bf generic sequence for $\mathcal{E}$} if it satisfies the following two properties:
\begin{enumerate}
\item[(i)] ({\bf projective universality})  for every $o\in \mathrm{Ob}(\mathcal{E})$ there is $n\in\mathbb{N}$ and $e\in\mathcal{E}$ so that $\mathrm{dom}(e_n)=\mathrm{dom}(e)$ and $\mathrm{codom}(e)=o$;
\item[(ii)] ({\bf projective extension}) for every $e\in \mathcal{E}$ with $\mathrm{codom}(e_n)=\mathrm{codom}(e)$ there exists $m>n$ and $e'\in \mathcal{E}$ so that $ e_n\circ\cdots \circ e_m=e\circ e'.$   
\end{enumerate}

Note that a block subsequence of a generic sequence is also generic. The following  theorem relates the notion of a  Fra{\"i}ss{\'e} category with that of a generic sequence.

\begin{theorem}[Fra{\"i}ss{\'e}] \label{T:FraisseOriginal}
If $\mathcal{F}$ is a  Fra{\"i}ss{\'e} category, then  there exists a generic sequence for $\mathcal{F}$. 
Moreover, all generic sequences for $\mathcal{F}$  are $\mathcal{F}$-isomorphic. 
\end{theorem}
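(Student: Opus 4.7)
The plan is to follow the standard \Fraisse{}-style argument adapted to the projective categorical setting. The proof splits naturally into existence of a generic sequence, built by a bookkeeping induction that uses joint projection and projective amalgamation, and uniqueness up to $\mathcal{F}$-isomorphism, established by a back-and-forth zigzag that repeatedly invokes projective extension of each of the two given sequences.

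For existence, essential countability lets me enumerate a countable list of ``tasks'' of two kinds: (a) a universality task for each object $o$, requiring some future $e\in \mathcal{F}$ with $\mathrm{dom}(e)=\mathrm{dom}(e_k)$ and $\mathrm{codom}(e)=o$; (b) an extension task for each index $n$ and each $e\in \mathcal{F}$ with $\mathrm{codom}(e)=\mathrm{codom}(e_n)$, requiring $m>n$ and $e'\in \mathcal{F}$ with $e_n\circ \cdots \circ e_m = e\circ e'$. I pick $e_0$ arbitrarily and, at stage $n$, consult a diagonal enumeration of tasks to handle the next pending one. A type-(a) task is handled by applying joint projection to $\mathrm{dom}(e_n)$ and $o$, yielding $h_1\colon Y\to\mathrm{dom}(e_n)$ and $h_2\colon Y\to o$; setting $e_{n+1}=h_1$ makes $h_2$ a universality witness at index $n+1$. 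A type-(b) task with $e\colon B\to \mathrm{codom}(e_n)$ is handled by projective amalgamation of $e_n$ and $e$, producing $e_1',e_2'\in \mathcal{F}$ with $e_n\circ e_1'=e\circ e_2'$; then $e_{n+1}=e_1'$ and $e'=e_2'$ witness extension with $m=n+1$. Once satisfied, a task remains satisfied as the sequence is extended.

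For uniqueness, given generic sequences $(e_n)$ and $(f_n)$, I construct inductively increasing indices $n_0<n_1<\cdots$ and $m_0<m_1<\cdots$ together with a neat sequence $(g_m)$ in $\mathcal{F}$ satisfying $g_{2k}\circ g_{2k+1} = e_{n_k}\circ\cdots\circ e_{n_{k+1}-1}$ and $g_{2k+1}\circ g_{2k+2} = f_{m_k}\circ\cdots\circ f_{m_{k+1}-1}$. I initialize with $n_0=0$ and apply projective universality of $(f_n)$ to $\mathrm{codom}(e_0)$ to obtain $m'$ and $h\colon\mathrm{dom}(f_{m'})\to\mathrm{codom}(e_0)$; using the identification $\mathrm{dom}(f_{m'})=\mathrm{codom}(f_{m'+1})$ I set $m_0=m'+1$ and $g_0=h$, a morphism from $\mathrm{codom}(f_{m_0})$ to $\mathrm{codom}(e_{n_0})$. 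Then I alternate: at odd stages apply projective extension of $(e_n)$ to the most recently built $g_{2k}$ (whose codomain is $\mathrm{codom}(e_{n_k})$) to find $n_{k+1}$ and $g_{2k+1}$; at even stages, symmetrically apply projective extension of $(f_n)$ to $g_{2k+1}$ to find $m_{k+1}$ and $g_{2k+2}$. Neatness of $(g_m)$ and the block-subsequence identities then hold by construction.

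The main obstacle I anticipate lies in the bookkeeping in the existence proof: the list of type-(b) tasks itself grows as new codomains appear in the sequence, so the enumeration must be a carefully chosen diagonal one that eventually reaches every task once it has become meaningful. One should also verify that each step preserves all previously fulfilled tasks, which is immediate since universality and extension are preserved when the sequence is further extended. The uniqueness argument is comparatively clean once the initialization via universality of $(f_n)$ is set up correctly, and the conclusion that the resulting $(g_m)$ is an $\mathcal{F}$-isomorphism is forced by the equations defining the block decomposition.
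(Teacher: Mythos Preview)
The paper does not supply its own proof of this theorem; it is stated as classical background attributed to Fra\"iss\'e, so there is nothing to compare against. Your outline is the standard argument and is essentially correct.

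One small imprecision in the existence part: when you handle a type-(b) task $(n,e)$ at some later stage $k>n$, you must amalgamate $e$ against the composite $e_n\circ\cdots\circ e_k$ (both have codomain $\mathrm{codom}(e_n)$), not against $e_n$ alone, to obtain $e_1',e_2'$ with $(e_n\circ\cdots\circ e_k)\circ e_1'=e\circ e_2'$ and then set $e_{k+1}=e_1'$. Your text reads as though the task is always handled at stage $n$ itself, which conflicts with the diagonal bookkeeping you correctly flag as the main issue. This is a trivial fix, and your remarks show you are aware of the subtlety; just make the amalgamation step reflect the current stage.
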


One can simultaneously justify the terminology above and strengthen the first conclusion of Theorem \ref{T:FraisseOriginal} as follows. 
Let $\mathcal{F}$ be a \Fraisse{} category, and assume without a loss of generality that $\mathcal{F}$  is countable. The set of all neat sequences 
$\mathrm{Neat}(\mathcal{F})$ of $\mathcal{F}$ can be identified with a closed subspace of the space $\mathbb{N}^{\mathbb{N}}$ of all sequences of 
natural numbers. One can now show that generic sequences for $\mathcal F$ form a comeager subset of $\mathrm{Neat}(\mathcal{F})$, that is, 
a generic sequence for $\mathcal{F}$ in the above sense is also generic in the sense of Baire category.

\subsection{Concrete \Fraisse{} categories} \label{SS:Concrete}
In applications, one usually works with Fra{\"i}ss{\'e}  categories whose objects are structured sets and the morphisms are maps between these sets, preserving the additional structure. Here, 
by a {\bf concrete  Fra{\"i}ss{\'e}  category} we mean a \Fraisse{} category $\mathcal{F}$ consisting 
of epimorphisms $f\colon B\to A$ between finite simplicial complexes.  In this case, we always view the domain of each complex in $\mathrm{Ob}(\mathcal{F})$ as 
a topological space endowed with the discrete topology.  Often one may endow the underlying set $\mathrm{dom}(A)$ of each complex $A\in \mathrm{Ob}(\mathcal{F})$ with additional model theoretic structure so that the morphisms $f\colon B\to A$ in $\mathcal{F}$ preserve this structure in the sense of \cite{IrSo}. The interested reader may consult \cite{IrSo} for more details since the only instance of a model theoretic structure we are going to use here is implicit in the simplicial complex structure; see Section \ref{SS:Prespace}.

Let $\mathcal{F}$ be a concrete Fra{\"i}ss{\'e} category  and let  $(f_n)$ be a neat sequence of morphisms in $\mathcal{F}$ with $f_n\colon A_{n+1}\to A_n$. 
We can associate to  $(f_n)$ a {\bf profinite simplicial complex} $\mathbb{A}$:  a simplicial complex $\mathbb{A}$ whose domain $\mathrm{dom}(\mathbb{A})$ is a $0$-dimensional compact metrizable  space so that for each $m>0$, the set 
\[
\{(a_0,\ldots,a_{m-1})\in \mathrm{dom}(\mathbb{A})^m\mid  \{a_0,\ldots,a_{m-1}\} \in \mathbb{A}\},
\]
is  a closed subset of $\mathrm{dom}(\mathbb{A})^m$. This profinite complex is constructed as follows. Let $\mathrm{dom}(\mathbb{A})$ 
be the topological space that is the inverse limit of  $(\mathrm{dom}(A_k), f^l_k)$, where $f^l_k=  f_k \circ \cdots\circ f_{l-1}$, let $f^{\infty}_k\colon \mathrm{dom}(\mathbb{A})\to \mathrm{dom}(A_k)$ be the natural projection, and set 
\[
\{a_0,\ldots,a_{m-1}\} \in \mathbb{A}\;\hbox{ if and only if }\;\{f^{\infty}_k(a_0),\ldots,f^{\infty}_k(a_{m-1})\} \in A_k, \hbox{ for each }k.
\]
Notice that the maps $f^{\infty}_{k}$ are continuous simplicial maps from $\mathbb{A}$ to $A_k$, for all $k\geq 0$. 
If the sequence $(f_n)$ is generic for $\mathcal{F}$, we call $\mathbb{A}$ the {\bf projective \Fraisse{} limit of $\mathcal{F}$ induced by $(f_n)$} or simply 
the {\bf projective \Fraisse{} limit of $\mathcal{F}$}, since as we will see in the next paragraph, the structure $\mathbb{A}$ does not depend on $(f_n)$ up to isomorphism. We call a simplicial map $f\colon \mathbb{A}\to A$   {\bf approximable by $\mathcal{F}$} if there exists $k$ and a morphism $f'\colon A_k\to A$ in 
$\mathcal{F}$ such that $f= f'\circ f^{\infty}_k$. Notice that such a map is always continuous.

Let $(f_n)$ and $(e_n)$  be two generic sequences for $\mathcal{F}$, and let $\mathbb{A}$ and $\mathbb{B}$ be the induced projective \Fraisse{} limits.  
By an {\bf isomorphism} from $\mathbb{B}$ to $\mathbb{A}$ we mean a simplicial map $\phi \colon \mathbb{B}\to \mathbb{A}$ that is an isomorphism of simplicial complexes and continuous as a map from $\mathrm{dom}(\mathbb{B})$ to $\mathrm{dom}(\mathbb{A})$. 
Given an $\mathcal F$-isomorphism $(g_m)$ from $(e_n)$ to $(f_n)$ one may define $\phi\colon \mathbb{B}\to \mathbb{A}$ as the inverse limit of $(g_{2m+1})_m$ and $\phi^{-1}$ as the inverse limit of $(g_{2m})_m$. In this case, we say that $\phi$ is an {\bf $\mathcal{F}$-isomorphism}.
Every $\mathcal{F}$-isomorphism is approximable by $\mathcal{F}$ in the following sense: 
for all simplicial maps $f\colon {\mathbb A}\to A$ and $g\colon {\mathbb B}\to B$ which are approximable by $\mathcal{F}$, $f\circ \phi$ and 
$g\circ \phi^{-1}$ are also approximable by $\mathcal{F}$.  
By Theorem \ref{T:FraisseOriginal} the \Fraisse{} limits  $\mathbb{A}$ and $\mathbb{B}$ are always $\mathcal{F}$-isomorphic. Let 
\[
\mathrm{Aut}_{\mathcal{F}}(\mathbb{A})\, \text{  and  }\, \mathrm{Aut}(\mathbb{A})
\]
be the  groups of all $\mathcal{F}$-isomorphisms and all  isomorphisms from $\mathbb{A}$ to $\mathbb{A}$, respectively. We clearly have that  $\mathrm{Aut}_{\mathcal{F}}(\mathbb{A})\subseteq \mathrm{Aut}(\mathbb{A})$. 
The following alternative characterization of the projective \Fraisse{} limit of a concrete \Fraisse{} category  provides some context regarding the connection between \Fraisse{} theory and dynamics. This result can be easily proved by a ``back and forth'' argument using repeated application of property (ii) in the definition of a generic sequence; see~\cite{IrSo}.

\begin{theorem}\label{T:ultrahomogeneity}
Let $\mathcal{F}$ be a concrete projective \Fraisse{} category and let $\mathbb{A}$ be a profinite simplicial complex. Then $\mathbb{A}$ is the  \Fraisse{} limit of $\mathcal{F}$ if and only if 
\begin{enumerate}
\item[(i)] ({\bf projective universality}) for every $A\in\mathrm{Ob}(\mathcal{F})$ there is a simplicial map $f\colon \mathbb{A}\to A$ that is approximable by $\mathcal{F}$;
\item[(ii)] ({\bf projective ultrahomogeneity}) if $A\in\mathrm{Ob}(\mathcal{F})$ and $f,g\colon \mathbb{A}\to A$ are two simplicial maps which are approximable by $\mathcal{F}$, then there is $\varphi\in \mathrm{Aut}_{\mathcal{F}}(\mathbb{A})$ so that $f\circ \varphi=g$.
\end{enumerate}
\end{theorem}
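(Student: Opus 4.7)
The plan is to prove both implications by back-and-forth arguments, relying on the projective amalgamation and extension properties of $\mathcal{F}$ and of generic sequences.

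For the forward direction, assume $\mathbb{A}$ is the Fra\"iss\'e limit induced by a generic sequence $(f_n)$ with $f_n\colon A_{n+1}\to A_n$. Property (i) is immediate: given $A\in\mathrm{Ob}(\mathcal{F})$, the projective universality of $(f_n)$ yields some $n$ and $e\in\mathcal{F}$ with $\mathrm{dom}(e)=A_{n+1}$ and $\mathrm{codom}(e)=A$; then $e\circ f^\infty_{n+1}$ is approximable. For (ii), given approximable $f,g\colon\mathbb{A}\to A$, I would write $f=f'\circ f^\infty_k$ and $g=g'\circ f^\infty_k$ (after aligning indices by composing with suitable $f_i$'s). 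I would then construct a neat sequence $(g_m)$ in $\mathcal{F}$ witnessing that $(f_n)$ is $\mathcal{F}$-isomorphic to itself in such a way that the induced limit $\varphi$ satisfies $f\circ\varphi=g$. The inductive step first amalgamates $f',g'\colon A_k\to A$ via projective amalgamation, producing $h_0,h_1\colon A'\to A_k$ with $f'\circ h_0=g'\circ h_1$, and then applies the projective extension property of $(f_n)$ to realize a common refinement as a tail of the generic sequence. Alternating the roles of $f$ and $g$ at odd and even stages yields the required block-subsequence decomposition $e'_m=g_{2m}\circ g_{2m+1}$ and $f'_m=g_{2m+1}\circ g_{2m+2}$, while the initial amalgamation of $f',g'$ forces the commutation $f\circ\varphi=g$ in the limit.

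For the backward direction, assume $\mathbb{A}$ satisfies (i) and (ii), and let $\mathbb{B}$ be a Fra\"iss\'e limit of $\mathcal{F}$ induced by a generic sequence $(h_n)\colon B_{n+1}\to B_n$. The plan is to construct a simplicial isomorphism $\psi\colon\mathbb{B}\to\mathbb{A}$, whence $\mathbb{A}$ is itself a Fra\"iss\'e limit. I would inductively build approximable maps $\alpha_n\colon\mathbb{A}\to B_n$ (using (i) applied to the objects $B_n$) and, symmetrically, approximable maps $\gamma_n\colon\mathbb{B}\to A_{k_n}$ for suitable witnesses $A_{k_n}$. At each stage, projective amalgamation in $\mathcal{F}$ combines the currently relevant factorizations, while property (ii) inside $\mathbb{A}$ (and the analogous ultrahomogeneity of $\mathbb{B}$, already established by the forward direction applied to $\mathbb{B}$) adjusts the newly produced map by an $\mathcal{F}$-automorphism so that it commutes with $\alpha_{n-1}$ (respectively $\gamma_{n-1}$) along the relevant transition map. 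Passing to the inverse limits on both sides produces mutually inverse simplicial isomorphisms, proving $\mathbb{A}\cong\mathbb{B}$.

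The main obstacle is the orchestration of the back-and-forth so that the resulting sequences $(g_m)$ genuinely satisfy the block-subsequence structure required by the definition of $\mathcal{F}$-isomorphism, while simultaneously preserving the commutation data imposed at the initial step ($f\circ\varphi=g$ in the forward direction; compatibility of the $\alpha_n$ and $\gamma_n$ in the backward direction). This is handled by systematically pairing every application of projective amalgamation (which forks two approximations into a common refinement) with an application of projective extension (which shifts the refinement to a tail of the generic sequence), ensuring that no previously fixed commutation is disturbed. The remaining verifications, namely that the inverse limits are continuous simplicial isomorphisms and approximable by $\mathcal{F}$, are routine consequences of the construction.
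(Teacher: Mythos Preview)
Your proposal is correct and follows essentially the same approach the paper indicates: the paper does not give a detailed proof of this theorem but simply remarks that it ``can be easily proved by a `back and forth' argument using repeated application of property (ii) in the definition of a generic sequence'' and refers to \cite{IrSo}. Your outline---projective universality for (i), then an alternating amalgamation/extension construction for (ii), and a symmetric back-and-forth between $\mathbb{A}$ and a known Fra\"iss\'e limit $\mathbb{B}$ for the converse---is exactly this standard argument.
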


\subsection{Combinatorial representations of compact metrizable spaces} \label{SS:Prespace}

Let $\mathcal{F}$ be a concrete \Fraisse{} category. We endow the domain of each complex $A$ in $\mathrm{Ob}(\mathcal{F})$ with a binary relation $R^A$ which keeps track of the ($\leq 1$)-skeleton of $A$. That is, $a R^A a'$ if and only if $\{a,a'\}\in A$. Each map $f\colon B\to A$ in $\mathcal{F}$ preserves the relation $R$ in the sense of \cite{IrSo}: $aR^A a'$ if and only if there are $b\in f^{-1}(a), b'\in f^{-1}(a')$  so that $b R^B b'$. Moreover, if $\mathbb{A}$ is the projective \Fraisse{} limit of $\mathcal{F}$ induced by $(f_n)$, then the inverse limit $R^{\mathbb{A}}$ of the relation $R$ under $(f_n)$ coincides with the ($\leq 1$)-skeleton of $\mathbb{A}$. As a consequence, $R^{\mathbb A}$ is a symmetric and reflexive compact relation on $\mathrm{dom}(\mathbb{A})^2$. If it happens that  $R^{\mathbb A}$ is also transitive, then call $\mathbb{A}$  a {\bf pre-space} and we set
\begin{equation}\label{E:toprep}
|\mathbb{A}| = \mathrm{dom}(\mathbb{A})/ R^{\mathbb{A}},
\end{equation}
to be the {\bf topological realization} of $\mathbb{A}$ and let $\pi\colon \mathbb{A}\to |\mathbb{A}|$ be the map which sends each vertex $a$ of $\mathbb{A}$ 
to its $R^{\mathbb{A}}$-equivalence class $[a]$. Since every element of  $\mathrm{Aut}(\mathbb{A})$ preserves $R^{\mathrm{A}}$, we have that $\pi$ induces a 
homomorphism $\mathrm{Aut}(\mathbb{A})\to \mathrm{Homeo}(|\mathbb{A}|)$ which turns out to be continuous \cite{IrSo}.
We view ${\mathbb A}$ as a combinatorial representation of the topological space $|\mathbb{A}|$ and  the category $\mathcal{F}$ as the combinatorial representation of the subgroup of $\mathrm{Homeo}(|\mathbb{A}|)$ that is the closure of the image of $\mathrm{Aut}_{\mathcal{F}}(\mathbb{A})$ under the above embedding.

\section{The projective \Fraisse{} category of selection maps}\label{S:1}

In this section, we define for every finite simplicial complex $A$  the category $\mathcal{S}(A)$ of  selections on $A$.  
We then prove Theorem \ref{T:A} and Theorem \ref{T:topologicalRealization} which, if taken in conjunction, refine the statement of  
Theorem \ref{Theorem:Intro1} from the introduction. 
Finally we provide some alternative characterization of the  morphism in $\mathcal{S}(A)$.

Let $C$ be a simplicial complex. As usual, by $\beta C$ we denote the {\bf barycentric subdivision} of $C$. This is the simplicial complex that is defined as follows:
\begin{itemize}
\item[---] $\mathrm{dom}(\beta C)=C$, that is, vertices of $\beta C$ are all faces of $C$;
\item[---] the faces of $\beta C$ are all chains with respect to inclusion of faces of $C$, that is, all subsets $\{\sigma_0,\ldots,\sigma_{k-1}\}$ of $C$ with 
$\sigma_0\subseteq\cdots\subseteq \sigma_{k-1}$.
\end{itemize}
Iterating this process, we define $\beta^{k}C$  inductively  for  every $k\in\mathbb{N}$. We set   $\beta^{0}C=C$, and $\beta^{k+1}C=\beta(\beta^{k}C)$. See Figures \ref{fig:M1} and \ref{fig:MNew}. 

\begin{figure}[h]
\centering
\begin{tabular}{c c c c}
\includegraphics[width=20mm, angle=90,origin=c]{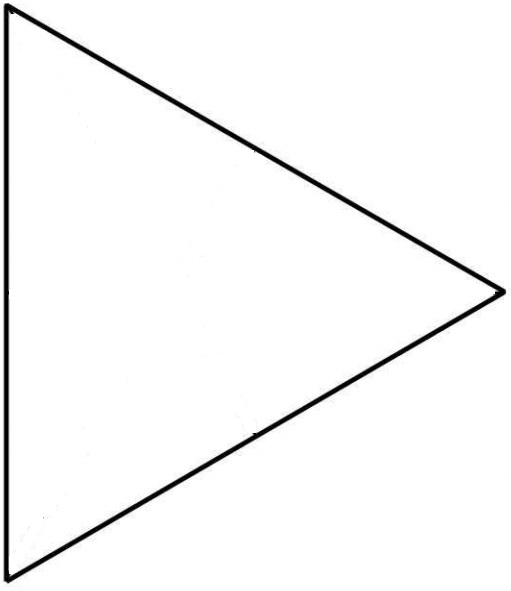} & \includegraphics[width=20mm, angle=90,origin=c]{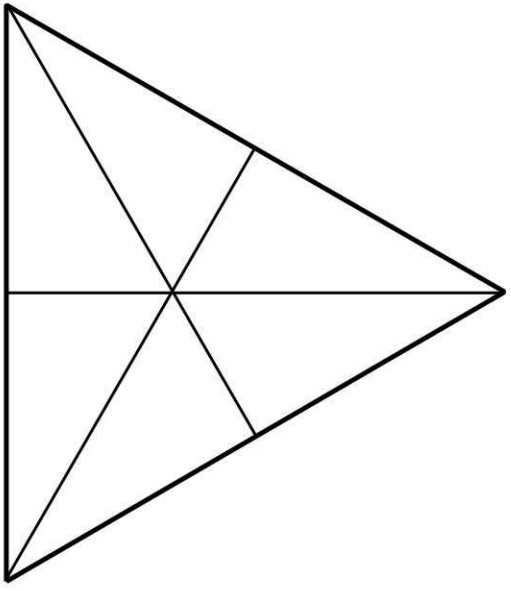} & \includegraphics[width=20mm, angle=90,origin=c]{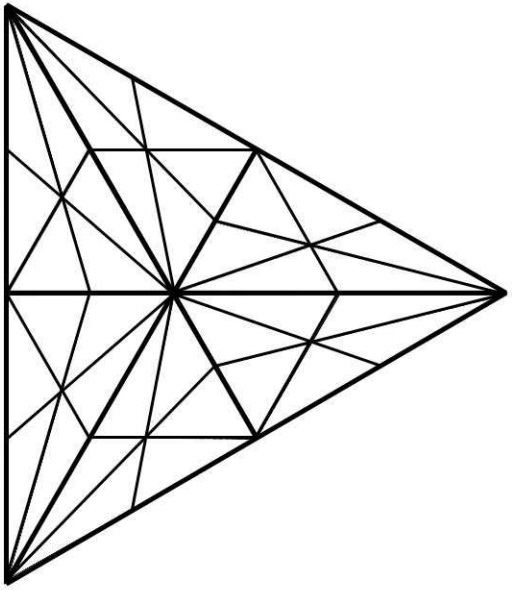} & \includegraphics[width=20mm, angle=90,origin=c]{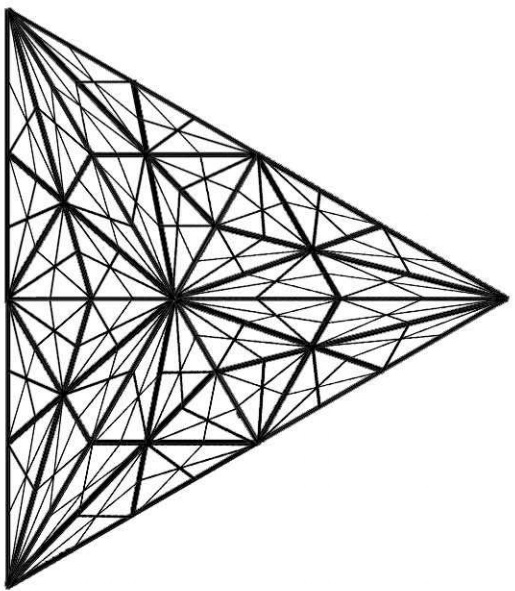}\\
\end{tabular}
\caption{The complex $\beta^k\Delta$, for $k=0,1,2,3$.} \label{fig:MNew}
\end{figure}

Let  $f:C\to D$ be  a simplicial map between simplicial complexes $C,D$. We define the {\bf barycentric subdivision of $f$} to be the map $\beta f\colon\beta C\to\beta D$, with  
\[
(\beta f)(\sigma)= f_{*}(\sigma) \; \hbox{ for all } \sigma\in\mathrm{dom}(\beta C).
\] 
Notice that since $f$ is simplicial, $f _*(\sigma)$ is a face of $D$ for every $\sigma\in C$, that is, for every element of ${\rm dom}(\beta C)$. 
Hence $\beta f$ is well defined. 
It is easy to check that $\beta f$ is simplicial and, in fact, an epimorphism whenever $f$ is an epimorphism. 

A function $s \colon C\to {\rm dom}(C)$ is called an {\bf elementary selection} if $s(\sigma)\in \sigma$ for each face $\sigma$ of $C$. 
It is easy to check that any such function induces a simplicial map from $\beta C$ to $C$ which is an epimorphism.  
We also use the name elementary selection for these simplicial maps. 

\begin{definition}\label{def:SelectionsCategory}
Let $A$ be any finite simplicial complex. We define  the category  $\mathcal{S}(A)$ of {\bf selections on $A$}  to be the smallest family of simplicial maps which
\begin{enumerate}
\item[---] contains all elementary selections $\beta^{k+1}A\to \beta^kA$, for all $k\geq 0$;
\item[---] contains the identity map $1_A:A\to A$;
\item[---] is closed under composition;
\item[---] is closed under the operation $f\to\beta f$ between simplicial maps.
\end{enumerate}
\end{definition}
Notice that $\mathcal{S}(A)$ consists entirely of epimorphisms and that a simplicial  complex $C$ is an object in $\mathcal{S}(A)$ if and only if $C$ is of the form $\beta^k A$ for some $k\geq 0$.

We can now prove the first part of the statement of Theorem \ref{Theorem:Intro1}. 

\begin{theorem}\label{T:A} 
If $A$ is a finite simplicial complex, then $\mathcal{S}(A)$ is a projective Fra\"{i}ss\'e{} category.
\end{theorem}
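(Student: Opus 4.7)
I would verify the three requirements for a projective \Fraisse{} category: essential countability, the joint projection property, and projective amalgamation.

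Essential countability is immediate, since the objects $\beta^k A$ are indexed by $k \in \mathbb{N}$ and the hom-set between any two finite simplicial complexes is finite. The joint projection property is also easy: for any $\beta^{k_1}A$ and $\beta^{k_2}A$, picking $m \geq \max(k_1,k_2)$ and composing arbitrary elementary selections yields epimorphisms $\beta^m A \to \beta^{k_i}A$ in $\mathcal{S}(A)$ with common domain $\beta^mA$.

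The substance lies in projective amalgamation, which I would approach in two stages.

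\emph{Stage 1 (key lemma).} For any two elementary selections $s_1, s_2 \colon \beta C \to C$ on a finite complex $C$, I claim there exists an elementary selection $t \colon \beta^2 C \to \beta C$ satisfying
\[
s_1 \circ \beta s_2 \;=\; s_2 \circ t.
\]
To construct $t$: a vertex $\tau$ of $\beta^2 C$ is a face of $\beta C$, i.e., a chain of faces $\tau = \{\sigma_0 \subsetneq \cdots \subsetneq \sigma_\ell\}$ of $C$. The set $(s_2)_*(\tau) = \{s_2(\sigma_0), \ldots, s_2(\sigma_\ell)\}$ is a face of $C$, so $s_1$ applied to it is an element of that face, which must equal $s_2(\sigma^*)$ for some $\sigma^* \in \tau$ (any choice among possibly several works); set $t(\tau) := \sigma^*$. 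Since $\sigma^* \in \tau$, $t$ is an elementary selection. For simpliciality: if $\tau_0 \subsetneq \cdots \subsetneq \tau_k$ is a chain of faces of $\beta C$, then each $t(\tau_i) \in \tau_i \subseteq \tau_k$, and since $\tau_k$ itself is a chain of faces of $C$, the set $\{t(\tau_i)\}$ is a sub-chain of $\tau_k$, hence a face of $\beta C$. The equation $s_2 \circ t = s_1 \circ \beta s_2$ holds vertexwise by construction and thus as simplicial maps. Both $\beta s_2$ and $t$ lie in $\mathcal{S}(A)$ (by closure under $\beta$ and the definition of elementary selections, respectively), so this gives a one-step amalgamation of $s_1$ and $s_2$ inside $\mathcal{S}(A)$.

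\emph{Stage 2 (iteration).} To extend this to general morphisms, I would use a structural induction. Every morphism of $\mathcal{S}(A)$ is a composition of ``generators'' of the form $\beta^j s$ for an elementary selection $s$ and $j \geq 0$. Applying $\beta^j$ functorially to Stage 1 produces an amalgamation of $\beta^j s_1$ with $\beta^j s_2$ via $\beta^{j+1} s_2$ and $\beta^j t$, both in $\mathcal{S}(A)$. The classical \Fraisse{} iteration---peel off the outermost generator of $f_1$, amalgamate it against $f_2$, then recurse on what remains---reduces the general instance to these one-step amalgamations. Closure of $\mathcal{S}(A)$ under composition and $\beta$ ensures that all intermediate maps remain in the category.

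\emph{Main obstacle.} The crux of the argument is Stage 1: the explicit definition of $t$ from the data of $s_1, s_2$ and the verification that the resulting face-selection is both elementary and simplicial, which relies on the nested-chain structure of $\beta^2 C$. Stage 2 is essentially routine \Fraisse{}-theoretic bookkeeping, but one must take care when amalgamating generators $\beta^j s$ and $\beta^{j'} s'$ with $j \neq j'$; these are handled by factoring out enough $\beta$'s from each side to reduce to the same-depth case, where $\beta^{\min(j,j')}$ of Stage 1 applies.
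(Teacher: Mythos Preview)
Your core construction in Stage~1 is correct and is essentially the paper's key Claim. However, there is a genuine gap between Stage~1 as you \emph{state} it and what Stage~2 needs.

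Your Stage~1 requires both $s_1$ and $s_2$ to be elementary selections. But in the iteration you describe, you must at some point amalgamate an elementary selection against a map of the form $\beta^{j}s'$ with $j>0$, and such a map is \emph{not} an elementary selection (for a vertex $\tau=\{\sigma_0\subsetneq\cdots\subsetneq\sigma_\ell\}$ of $\beta^2 C$, $(\beta s')(\tau)=\{s'(\sigma_0),\ldots,s'(\sigma_\ell)\}$ is a face of $C$, not typically one of the $\sigma_i$). Your proposed fix---``factor out $\beta^{\min(j,j')}$ and apply Stage~1''---does not work: after factoring, one side becomes $\beta^{|j-j'|}s'$, which is still not an elementary selection, so Stage~1 as stated does not apply. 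The same issue arises even in the simplest nontrivial iteration step: peeling one elementary selection off $f_1$ and amalgamating it against all of $f_2$ already requires you to handle a non-elementary-selection on the other side.

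The remedy is already contained in your own proof of Stage~1: you never use that $s_2$ is an elementary selection, only that it is simplicial (so that $(s_2)_*(\tau)$ is a face of $C$) and that $s_1$ is an elementary selection (so that $s_1((s_2)_*(\tau))$ lies in that face). Thus your construction actually proves the stronger statement that for any simplicial $f\colon\beta C\to C$ and any elementary selection $s\colon\beta C\to C$ there is an elementary selection $t$ with $s\circ\beta f=f\circ t$. This is exactly the paper's Claim (stated there for $f\colon\beta^l A\to\beta^k A$ with arbitrary $l\ge k$, which makes the induction even shorter). With this strengthened Stage~1 in hand, the paper's clean organization---show the one-sided amalgamation property for elementary selections directly, observe it is preserved under $\beta$ and under composition---goes through immediately, and your ``factoring out $\beta$'s'' maneuver becomes unnecessary.
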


We will need the following standard 
lemma whose proof is straightforward, and will be left to the reader.

\begin{lemma}\label{L:betaf}
Let $g\colon C_1\to C_2$ and $f\colon C_2\to C_3$ be simplicial maps. Then
\[\beta(f\circ g)=\beta f\circ\beta g.\]
\end{lemma}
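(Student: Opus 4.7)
The plan is to verify the identity vertex-by-vertex, exploiting the fact that any simplicial map is determined by its values on vertices. Both sides of the claimed equation are simplicial maps from $\beta C_1$ to $\beta C_3$: indeed, $f\circ g\colon C_1\to C_3$ is simplicial, so $\beta(f\circ g)$ makes sense and has codomain $\beta C_3$; and $\beta g\colon \beta C_1\to \beta C_2$, $\beta f\colon \beta C_2\to \beta C_3$ are simplicial by construction, hence so is their composite. So to prove equality as simplicial maps it suffices to prove equality as functions on the vertex set $\mathrm{dom}(\beta C_1)=C_1$.

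Fix therefore a vertex $\sigma$ of $\beta C_1$, equivalently a face $\sigma$ of $C_1$. On the left, unwinding the definition of $\beta(f\circ g)$ gives
\[
\beta(f\circ g)(\sigma)\;=\;(f\circ g)_{*}(\sigma)\;=\;\{f(g(v))\mid v\in\sigma\}.
\]
On the right, applying the definition of $\beta g$ first and then of $\beta f$ gives
\[
(\beta f\circ \beta g)(\sigma)\;=\;(\beta f)\bigl(g_{*}(\sigma)\bigr)\;=\;f_{*}\bigl(g_{*}(\sigma)\bigr)\;=\;\bigl\{f(w)\mid w\in\{g(v)\mid v\in\sigma\}\bigr\},
\]
which is the same set. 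The key elementary identity underlying this is the functoriality of the ``$(-)_{*}$'' operation on faces, namely $(f\circ g)_{*}(\sigma)=f_{*}(g_{*}(\sigma))$, which follows at once from the set-theoretic fact that the image of a set under a composition equals the iterated image.

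The main obstacle, such as it is, is purely bookkeeping: one must keep straight that a vertex of $\beta C_i$ \emph{is} a face of $C_i$, and that the definition $(\beta h)(\sigma)=h_{*}(\sigma)$ takes a vertex of $\beta C$ (a face of $C$) to a vertex of $\beta D$ (a face of $D$). Once the two types are aligned, the equality is immediate. No further properties of barycentric subdivision (for instance, its action on chains of faces) need to be invoked, since equality at the vertex level already determines equality of simplicial maps.
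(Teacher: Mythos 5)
Your proof is correct and is exactly the straightforward vertex-by-vertex verification the paper has in mind when it leaves this lemma to the reader: since a simplicial map is a function on vertices, and a vertex of $\beta C_1$ is a face $\sigma$ of $C_1$, the identity reduces to $(f\circ g)_{*}(\sigma)=f_{*}(g_{*}(\sigma))$, which you justify correctly. Nothing is missing.
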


\begin{proof}[Proof of Theorem~\ref{T:A}]
The only point that needs an argument is the projective amalgamation property. 

Consider the following property of a map $h\in {\mathcal S}(A)$: 
for each $f\in {\mathcal S}(A)$ with ${\rm codom}(f)= {\rm codom}(h)$, there exist $g_1, g_2\in {\mathcal S}(A)$ such that $f\circ g_1 = h\circ g_2$. 
Using Lemma~\ref{L:betaf} one easily sees that each map in ${\mathcal S}(A)$ is of the form $\beta^k (1_A)$ or is a composition of maps 
of the form $\beta^k (s)$, where $s$ is an elementary selection and $k\geq 0$. It follows that to prove the projective amalgamation,
it suffices to show that $1_A$ and each elementary selection have the property above and that if $h$ has it, then so does $\beta h$. 

It is clear that $1_A$ has the property. The map $\beta h$ inherits it from $h$ by the following observation. 
If, for $f_1\colon \beta^{l_1}A \to \beta^kA$ and $f_2\colon \beta^{l_2}A \to \beta^kA$, there are 
$g_1, g_2\in \mathcal{S}(A)$, $g_1\colon \beta^mA\to \beta^{l_1}A$ and $g_2\colon \beta^mA\to \beta^{l_2}A$ with 
$f_1\circ g_1 = f_2\circ g_2$, then, by Lemma \ref{L:betaf}, we also have 
\[
\beta f_1 \circ\beta g_1 = \beta f_2\circ\beta g_2
\] 
and $\beta g_1, \beta g_2\in\mathcal{S}(A)$. 
It remains to prove the property for elementary selections. This is an immediate consequence of 
the following claim. 

\begin{claim}
Let $f\colon\beta^{l}A\to\beta^{k}A$ be a simplicial map, with $l\geq k$, and let $s:\beta^{k+1}A\to\beta^{k}A$ be an elementary selection. 
Then there is an elementary selection $s':\beta^{l+1}A\to\beta^{l}A$  so that 
\begin{equation}\label{E:ama}
f\circ s' = s\circ \beta f. 
\end{equation}
\end{claim}

\noindent{\em Proof of Claim.} 
We define $s'$ by chasing the amalgamation diagram. Elements of ${\rm dom}(\beta^{l+1} A)$ 
are faces of $\beta^l A$, that is, they are of the form $\{v_1,\dots, v_p\}$, 
where each $v_i$ is in ${\rm dom}(\beta^{l} A)$. 
So, for each $\{v_1, \dots, v_p\}\in {\rm dom}(\beta^{l+1}A)$, we need to find $i_0\leq p$ and set 
\[
s'(\{ v_1, \dots, v_p\}) = v_{i_0}
\]
so that \eqref{E:ama} holds. We have that 
\[
\beta f(\{v_1,\dots, v_p\})=\{f(v_1), \dots , f(v_p)\} 
\] 
with the latter being a face of $\beta^{k+1}A$. Since $s$ is an elementary selection, we have that 
\[
s(\{f(v_1), \dots,  f(v_p)\})\in \{f(v_1), \dots,  f(v_p)\}.
\]
So we can pick, not necessarily in a unique way, some $i_0\leq p$ such that 
\[
s(\{f(v_1), \dots, f(v_p)\})=f(v_{i_0}).
\]
Set $s'(\{ v_1, \dots, v_p\})= v_{i_0}$. It follows that $s'$ is an elementary selection and that (\ref{E:ama}) is satisfied.
\end{proof}

Since $\mathcal{S}(A)$ is a concrete projective \Fraisse{} category we can consider the inverse system $(A_k, f^l_k)$ that is associated to the unique, 
up to isomorhism,  generic sequence $(f_n)$ for $\mathcal{S}(A)$; see Section~\ref{SS:Concrete}. We denote by $\beta^\infty A$ 
the projective Fra{\"i}ss{\'e} limit $\varprojlim (A_k, f^l_k)$ of $\mathcal{S}(A)$ and let $f^{\infty}_k\colon \beta^\infty A \to A_k$ be the natural projection map. 
Let $R$ be the associated binary relation on $\beta^\infty A$; see Section~\ref{SS:Prespace}.

\begin{lemma}\label{L:EquivalenceRelation}
The projective \Fraisse{} limit  $\beta^\infty A$ of $\mathcal{S}(A)$ is a pre-space.
\end{lemma}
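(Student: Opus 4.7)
The plan is to show that the reflexive and symmetric compact relation $R^{\beta^\infty A}$ is transitive, which is all that is needed for $\beta^\infty A$ to be a pre-space. Let $a,b,c \in \mathrm{dom}(\beta^\infty A)$ satisfy $aR^{\beta^\infty A}b$ and $bR^{\beta^\infty A}c$; writing $x_k = f^\infty_k(x)$, these hypotheses translate to $\{a_k, b_k\}, \{b_k, c_k\} \in A_k$ for every $k\geq 0$, and the goal is to deduce $\{a_k, c_k\} \in A_k$ for every $k$.

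The approach is to fix $k$, work one level higher at $A_{k+1}$, and exploit the structure of $f_k\in\mathcal{S}(A)$. Every morphism in $\mathcal{S}(A)$ is a composition of elementary selections and iterated barycentric subdivisions, so after suitably decomposing $f_k$ (inserting intermediate objects $\beta^j A$ as needed, which is permitted by the projective extension property of the generic sequence), it suffices to analyze the effect of an elementary selection $s\colon \beta C\to C$. Such $s$ sends each vertex $\sigma$ of $\beta C$ (that is, each face $\sigma$ of $C$) to a chosen vertex of $C$ lying in $\sigma$. Viewed through this lens, $a_{k+1}, b_{k+1}, c_{k+1}$ become faces of $A_k$, and the edges $\{a_{k+1}, b_{k+1}\}$ and $\{b_{k+1}, c_{k+1}\}$ force chain relations among them.

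The resulting case analysis splits into four configurations, depending on the directions of the two chains. In three of them — $a_{k+1}\subseteq b_{k+1}\subseteq c_{k+1}$, its reverse $c_{k+1}\subseteq b_{k+1}\subseteq a_{k+1}$, and $a_{k+1}\subseteq b_{k+1}\supseteq c_{k+1}$ — the images $a_k$ and $c_k$ both lie in a common face of $A_k$ (namely $c_{k+1}$, $a_{k+1}$, or $b_{k+1}$, respectively), so $\{a_k, c_k\}\in A_k$ follows at once. The only difficult configuration is $a_{k+1}\supseteq b_{k+1}\subseteq c_{k+1}$ with $a_{k+1}$ and $c_{k+1}$ incomparable in $A_k$: in this ``concentration'' case $s(a_{k+1})$ and $s(c_{k+1})$ may lie in disjoint complements of $b_{k+1}$ inside their respective faces, and the single-level argument fails, as witnessed already by the simplicial complex consisting of two $1$-simplices sharing a common vertex.

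The main obstacle is thus ruling out the persistence of this concentration configuration across all levels of the tower. The plan is to iterate the case analysis upward: at each level one either lands in one of the three favorable configurations (whence the conclusion $\{a_k, c_k\}\in A_k$ propagates down through the intermediate maps by their simpliciality) or produces a refined instance of the concentration configuration one step higher. The projective extension property of the generic sequence is then invoked to rule out indefinite persistence: applied to a strategically chosen selection morphism in $\mathcal{S}(A)$ — for instance, one whose action on faces meeting the pattern $b_{k+1}\subseteq a_{k+1}, c_{k+1}$ is incompatible with selecting mutually non-adjacent vertices — it forces some tail $f_k\circ\cdots\circ f_m$ to factor through a morphism that breaks the pattern at a finite level, which by descent yields $\{a_k, c_k\}\in A_k$ for the originally fixed $k$.
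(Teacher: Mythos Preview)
Your proposal identifies the right ingredients—the projective extension property and the chain structure forced by elementary selections—but it has a genuine gap and an unnecessary detour.

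The detour: you attempt to reduce to the case where the maps $f_k$ in the generic sequence are themselves elementary selections, by ``decomposing $f_k$'' and ``inserting intermediate objects.'' This is not justified. A general morphism in $\mathcal{S}(A)$ involves operators of the form $\beta^j(s)$, and $\beta s$ is \emph{not} an elementary selection (it sends a chain $\{\sigma_0\subsetneq\cdots\subsetneq\sigma_k\}$ to the set $\{s(\sigma_0),\ldots,s(\sigma_k)\}$, which need not belong to the chain). The projective extension property does not let you rewrite the generic sequence as a sequence of elementary selections.

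The gap: your entire argument hinges on the last paragraph, where you promise a ``strategically chosen selection morphism'' that ``breaks the pattern,'' but you never specify it. That choice is the whole proof. The paper's argument avoids your detour and fills your gap in one move: rather than analyzing whatever maps happen to occur in the generic sequence, it \emph{chooses} a specific elementary selection $s\colon\beta A_k\to A_k$ tailored to the three points—namely $s(\sigma)=a_2$ if $a_2\in\sigma$, else $s(\sigma)=a_1$ if $a_1\in\sigma$, else arbitrary—and uses projective extension to factor $f^\infty_k$ through $s$. This choice forces, for the resulting preimages $\sigma_0,\sigma_1,\sigma_2\in\beta A_k$, that $a_2\notin\sigma_1$ and $a_1\notin\sigma_0$, which in turn forces $\sigma_0\subseteq\sigma_1\subseteq\sigma_2$ and hence $\{a_0,a_2\}\in A_k$. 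In other words, the bad configuration $a_{k+1}\supseteq b_{k+1}\subseteq c_{k+1}$ is not ``ruled out by iteration''; it is prevented from ever arising by the choice of $s$. Your framework would become a correct proof if you specified exactly this $s$ and applied projective extension to it directly, dropping the attempt to decompose the $f_k$.
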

\begin{proof}
The only thing that needs to be shown is that $R$ is transitive. Let $x_0,x_1,x_2\in\mathrm{dom}(\beta^\infty A)$ with 
$\{x_0,x_1\}, \{x_1,x_2\}\in \beta^\infty A$. Let  $k>0$ and set $a_i:=f^{\infty}_k(x_i)$. It follows that both $\{a_0,a_1\} $ and 
$\{a_1,a_2\}$ are faces of  $A_k$. Consider the elementary selection map $s\colon \beta A_k\to A_k$ defined as follows. 
First, if $a_2\in \sigma\in A_k$, 
then  set $s(\sigma)=a_2$. If $a_1\in \sigma$ but $a_2\not\in \sigma$, then set $s(\sigma)=a_1$. 
For all remaining $\sigma\in A_k$ let $s(\sigma)$ be 
any element of $\mathrm{dom}(A_k)$. By property (ii) in the definition of a generic sequence there is $l>k$ an $g\colon A_l\to \beta A_k$ in $\mathcal{S}(A)$ 
so that $ s \circ g \circ f^{\infty}_l = f^{\infty}_k$. Let $\sigma_0,\sigma_1,\sigma_2\in A_k$ with $\sigma_i=g \circ f^{\infty}_l(x_i)$. But then both 
$\{\sigma_0,\sigma_1\}$ and $\{\sigma_1,\sigma_2\}$ are faces of $\beta A_k$ with $a_i=s(\sigma_i)$. Since $a_2\not\in \sigma_1$ and $a_1\not\in \sigma_0$ 
we have that $\sigma_0\subseteq \sigma_1$ and $\sigma_1\subseteq \sigma_2$. If follows that $\{\sigma_0,\sigma_2\}\in\beta A_k$ and 
since $s$ is simplicial: $\{a_0,a_2\}\in A_k$.
\end{proof}

We extend definition \eqref{E:toprep} of Section~\ref{SS:Prespace} as follows.
\begin{definition}
The {\bf topological realization} of a finite complex $A$ is the space
\[
|A| =|\beta^\infty A|=\mathrm{dom}(\beta^\infty A)/R^{\beta^\infty A}.
\]
\end{definition}
Before we characterize $|A|$ topologically, we need to introduce some notation. Let $B$ be any subcomplex of $A$ and notice that for every $m\geq 0$ the complex 
$\beta^m B$ is naturally included in  $\beta^m A$ as a subcomplex. Moreover, every map $f\colon \beta^n A \to \beta^m A$ in $\mathcal{S}(A)$ restricts to 
a map from $\beta^n B$ to $\beta^m B$ contained in $\mathcal{S}(B)$. As a consequence the generic sequence $(A_k, f^l_k)$ for $\mathcal{S}(A)$ can 
be restricted to a neat sequence in $\mathcal{S}(B)$ which we denote by $(B_k, f^l_k\res B)$. 
Clearly, the inverse limit of the above system is a closed subcomplex of $\beta^\infty A$. 
Moreover, since every map in $\mathcal{S}(B)$ can be extended, possibly in many ways, to a map in $\mathcal{S}(A)$, we see that the inverse system 
$(B_k, f^l_k\res B)$  is a generic sequence for $\mathcal{S}(B)$. As a consequence, the associated subcomplex of $\beta^\infty A$ is 
an isomorphic copy of  the projective \Fraisse{}  limit  of $\mathcal{S}(B)$ which we denote by $\beta^\infty  B$.  The following theorem provides 
a refined version of the second part of the statement of Theorem \ref{Theorem:Intro1}.

\begin{theorem}\label{T:topologicalRealization} Let $A$ be a finite complex. 
Then, $|A|$ is homeomorphic to the geometric realization $|A|_{\mathbb{R}}$ as in \eqref{E:geomr}, 
by a homeomorphism that sends $|B|$ onto the associated geometric subcomplex of $|A|_{\mathbb{R}}$ as in \eqref{E:geos}, for each subcomplex $B$ of $A$.
\end{theorem}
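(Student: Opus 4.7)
The plan is to construct a continuous surjection $\phi\colon \mathrm{dom}(\beta^\infty A)\to |A|_{\mathbb{R}}$ whose fibers coincide with the $R^{\beta^\infty A}$-equivalence classes; passing to the quotient will then yield a continuous bijection $\bar\phi\colon |A|\to |A|_{\mathbb{R}}$ between compact Hausdorff spaces, hence a homeomorphism, and compatibility with subcomplexes will be built into the construction.

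To set up, I would fix a generic sequence for $\mathcal{S}(A)$; since block subsequences of generic sequences are generic and the objects of $\mathcal{S}(A)$ are precisely the $\beta^k A$, we may reindex so that $A_k=\beta^kA$. Assign to each vertex $v$ of $\beta^kA$ its canonical barycentric position $r_k(v)\in|A|_{\mathbb{R}}$ defined inductively---vertices of $A$ go to the standard basis of $\mathbb{R}^{n+1}$, and for $k\ge 1$ a vertex $v$ of $\beta^kA$, being a face of $\beta^{k-1}A$, goes to the average of the positions of its members. The crucial geometric estimate is that for every morphism $g\colon \beta^lA\to \beta^kA$ in $\mathcal{S}(A)$ and every vertex $v$ of $\beta^lA$, the displacement $|r_l(v)-r_k(g(v))|$ is bounded by a tail of a geometric series in $\mathrm{mesh}(\beta^jA)$, decaying at ratio $n/(n+1)$. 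This is proved by induction on the defining operations of $\mathcal{S}(A)$: it is trivial on identities; for an elementary selection the selected vertex of a face $\sigma$ lies in the geometric realization of $\sigma$, as does the barycenter; composition is handled by the triangle inequality; and the case $f\mapsto \beta f$ follows from the convex-hull description of barycenters. It follows that $(r_k(f^\infty_k(x)))_k$ is uniformly Cauchy, so $\phi(x):=\lim_k r_k(f^\infty_k(x))$ defines a continuous map. Density of the vertices of $\beta^kA$ in $|A|_{\mathbb{R}}$ and compactness of the domain yield surjectivity, and $R^{\beta^\infty A}\subseteq \ker\phi$ is immediate because adjacent vertices of $\beta^kA$ lie in a common face of diameter at most $\mathrm{mesh}(\beta^kA)$.

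The heart of the theorem is the converse inclusion $\ker\phi\subseteq R^{\beta^\infty A}$. My plan here is to produce, given $x,y$ with $\phi(x)=\phi(y)=p$, an intermediate $z$ with $xRz$ and $zRy$, after which Lemma~\ref{L:EquivalenceRelation} yields $xRy$. For this, set $V_l:=\mathrm{st}(f^\infty_l(x))\cap \mathrm{st}(f^\infty_l(y))$, where $\mathrm{st}$ denotes the combinatorial star in $A_l$. The sets $V_l$ are finite, and because each $s_l\colon A_{l+1}\to A_l$ is simplicial one has $s_l(V_{l+1})\subseteq V_l$; once one verifies $V_l\ne\emptyset$ for $l$ large enough, non-emptiness propagates down along the inverse system, and a K\"onig-type argument yields $z\in\mathrm{dom}(\beta^\infty A)$ with $f^\infty_l(z)\in V_l$ for every $l$, the sought intermediate point. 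The main obstacle is precisely the non-emptiness of $V_l$: the geometric estimate places $r_l(f^\infty_l(x))$ and $r_l(f^\infty_l(y))$ within roughly $\mathrm{mesh}(\beta^lA)$ of $p$, but this is not automatically tight enough to force the two projections into the closed star of the carrier of $p$, and so not tight enough to give $V_l\ne\emptyset$ from geometric closeness alone; bridging this gap appears to require the refined combinatorics of Section~\ref{S:strcomp}, which should allow one to arrange that both $f^\infty_l(x)$ and $f^\infty_l(y)$ land in the closed star of the carrier of $p$ and so share common adjacent vertices.

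With injectivity in hand, $\bar\phi$ is a continuous bijection between compact Hausdorff spaces, hence a homeomorphism. For the subcomplex clause, restricting the generic sequence $(A_k,f^l_k)$ to $B\subseteq A$ produces a generic sequence for $\mathcal{S}(B)$, so $\beta^\infty B$ sits inside $\beta^\infty A$ as the inverse-limit subcomplex discussed prior to the statement. The restriction of $\phi$ to $\mathrm{dom}(\beta^\infty B)$ is the analogous barycentric-limit map for $B$, whose image is exactly $|B|_{\mathbb{R}}$ of \eqref{E:geos}; passing to the quotient identifies $|B|$ homeomorphically with $|B|_{\mathbb{R}}$, as required.
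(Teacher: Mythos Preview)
Your proposal has a genuine gap that you yourself identify: you cannot establish $V_l\neq\emptyset$ from the geometric displacement estimate alone, and your gesture toward Section~\ref{S:strcomp} does not constitute a proof. In fact, the stellar combinatorics of Section~\ref{S:strcomp} are not used in the paper to control carrier simplices of projected points; they are used for a different purpose entirely. There is also a subtler issue earlier: your displacement bound under $f\mapsto\beta f$ is not as automatic as you suggest, since $r_{k+1}(f_*(\sigma))$ is the average over the \emph{image} vertices $f_*(\sigma)$, not over $\{f(v):v\in\sigma\}$ with multiplicity, so the convex-hull step needs more care.

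The paper's route is structurally different and avoids your obstacle altogether. It does not try to prove $\ker\phi\subseteq R^{\beta^\infty A}$ for an \emph{arbitrary} generic sequence in $\mathcal{S}(A)$. Instead, using the domination closure theory (Section~\ref{S:CoinitialClosure}) together with Theorem~\ref{Theorem:CellularAndSelections}(ii), it shows that the subcategory $\mathcal{S}_0(\Delta)$ of compositions of elementary selections is dominating in $\mathcal{C}(\Delta)$; hence (Proposition~\ref{P:cos}) a generic sequence for $\mathcal{S}(\Delta)$ is also generic for $\mathcal{C}(\Delta)$, and there is another $\mathcal{C}(\Delta)$-generic sequence whose bonding maps lie in $\mathcal{S}_0(\Delta)$. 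For \emph{that} sequence, Lemma~\ref{L:geomsel} gives the homeomorphism with $|\Delta|_{\mathbb{R}}$ directly---precisely because elementary selections have exact control over carriers. The two generic sequences being $\mathcal{C}(\Delta)$-isomorphic transfers the result. The general finite complex $A$ is then obtained by gluing the simplex case over faces. So the hard work of Sections~\ref{S:strcomp}--\ref{S:ResultsOnHereditarilyCellularMaps} is not a tool for your $V_l\neq\emptyset$ argument; it is what allows one to \emph{replace} the generic sequence by a tame one and sidestep that argument entirely.
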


The proof of Theorem~\ref{T:topologicalRealization} must be postponed till Section~\ref{Su:proofend} as it requires notions and results proved in Sections~\ref{S:CoinitialClosure} and \ref{S:ResultsOnHereditarilyCellularMaps}. 
Here we foreshadow it with Lemma~\ref{L:geomsel} below. 
By arguments more direct than our proof of Theorem~\ref{T:topologicalRealization}, but using \cite[Chapters 8, 25, 26]{Da}, 
one can obtain partial information on the topological nature of $|A|$, for example, 
that $|A|$ is homeomorphic to $|A|_{\mathbb R}$ if ${\rm dim}(A)\leq 2$ or that the topological dimension of $|A|$ is equal to ${\rm dim}(A)$. 

\begin{lemma}\label{L:geomsel}
Fix $n\in {\mathbb N}$. Let $s_i\colon \beta^{n+i+1} A\to \beta^{n+i}A$ be elementary selections, for $i\in {\mathbb N}$. Consider 
the profinite simplicial complex ${\mathbb A}$ associated with the sequence $(s_i)$, 
and assume $R^{\mathbb A}$ is an equivalence relation. Then, ${\mathbb A}/R^{\mathbb A}$ is homeomorphic with $|A|_{\mathbb{R}}$ 
by a homeomorphism that sends ${\mathbb B}/R^{\mathbb A}$ onto the associated geometric subcomplex of $|A|_{\mathbb{R}}$, for each subcomplex $B$ of $A$, 
where ${\mathbb B}$ is the profinite simplicial complex associated with  $(s_i\res \beta^{n+i+1}B)$. 
\end{lemma}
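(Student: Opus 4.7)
The plan is to build a homeomorphism $\bar\pi \colon \mathbb A / R^{\mathbb A} \to |A|_{\mathbb R}$ as the descent of the natural map
\[
\pi\colon \mathrm{dom}(\mathbb A) \to |A|_{\mathbb R}, \qquad \pi\bigl((x_i)_i\bigr) = \lim_{i\to\infty} b(x_i),
\]
where $b(v) \in |A|_{\mathbb R}$ is the geometric barycenter of the vertex $v$ of $\beta^{n+i}A$, regarded for $i \geq 1$ as a face of $\beta^{n+i-1}A$ (and for $i = 0$ as a vertex of $\beta^n A$), with respect to a fixed geometric realization of $|A|_{\mathbb R}$.

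The first step is to verify that $\pi$ is well-defined, continuous, and factors through $R^{\mathbb A}$. The elementary-selection axiom $s_i(x_{i+1}) = x_i$ forces $x_i$ to be a vertex of the face $x_{i+1}$ of $\beta^{n+i}A$, so $b(x_i)$ and $b(x_{i+1})$ both lie in the simplex $|x_{i+1}|_{\mathbb R}$, whose diameter is at most $\mathrm{mesh}(\beta^{n+i}A)$. Since $\mathrm{mesh}(\beta^k A)$ decays geometrically in $k$, the sequence $(b(x_i))$ is uniformly Cauchy, giving well-definedness and continuity; the same estimate shows $\pi(x) = \pi(y)$ whenever $\{x,y\} \in \mathbb A$, so $\pi$ descends to a continuous $\bar\pi\colon \mathbb A/R^{\mathbb A} \to |A|_{\mathbb R}$. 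Surjectivity follows from a singleton-extension construction: for any vertex $w$ of $\beta^{n+k}A$, the sequence obtained by taking iterated singletons $\{w\}, \{\{w\}\}, \ldots$ is compatible because the elementary-selection axiom forces $s_\ell$ to fix any singleton face, and all its upward barycenters equal $b(w)$. Since the set of such $b(w)$ is dense in $|A|_{\mathbb R}$ and $\pi(\mathbb A)$ is compact, $\pi(\mathbb A) = |A|_{\mathbb R}$.

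The main obstacle is injectivity of $\bar\pi$: given $\pi(x) = \pi(y) = p$, we must deduce $\{x_i, y_i\} \in \beta^{n+i}A$ for every $i$. By simpliciality of the iterated selection $s_i \circ \cdots \circ s_{j-1}\colon \beta^{n+j}A \to \beta^{n+i}A$, it suffices to establish this at some large $j$. Two combinatorial facts drive the argument: (a) each $\beta^k A$ is a flag complex, so any pairwise-compatible set of vertices automatically forms a face; and (b) because each selection picks a vertex of its argument face, iteration shows that $x_i$ always lies in the carrier (in $\beta^{n+i}A$) of $b(x_j)$, for every $j \geq i$. Since $b(x_j), b(y_j) \to p$, for $j$ large both carriers lie in the closed star of the carrier of $p$ in $\beta^{n+i}A$. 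The delicate step is to leverage the transitivity hypothesis on $R^{\mathbb A}$, together with (a), to conclude that these local adjacencies chain together into the required face-membership; carrying out this geometric-combinatorial analysis in arbitrary dimension is the heart of the proof. Once injectivity is secured, $\bar\pi$ is a continuous bijection between compact Hausdorff spaces, hence a homeomorphism.

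Finally, the subcomplex statement follows immediately: every coordinate $x_i$ of a vertex of $\mathbb B$ is a face of $\beta^{n+i-1}B \subseteq \beta^{n+i-1}A$, so $b(x_i) \in |B|_{\mathbb R}$ and $\pi(\mathbb B) \subseteq |B|_{\mathbb R}$; the surjectivity argument applied within $B$ gives equality, and injectivity of $\bar\pi$ on all of $\mathbb A/R^{\mathbb A}$ makes the restriction to $\mathbb B/R^{\mathbb A}$ a homeomorphism onto the geometric subcomplex of $|A|_{\mathbb R}$ associated with $B$.
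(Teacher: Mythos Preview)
Your map $\pi$ coincides with the paper's map $g$: both send $(x_i)_i$ to $\lim_i r_i(x_i)$, where $r_i$ places each vertex of $\beta^{n+i}A$ at its geometric barycenter in $|A|_{\mathbb R}$. Your treatment of well-definedness, continuity, the forward implication $xR^{\mathbb A}y\Rightarrow \pi(x)=\pi(y)$, surjectivity via iterated singletons, and the subcomplex statement are all correct and essentially match the paper.

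The gap is the injectivity of $\bar\pi$. You assemble plausible ingredients---the flag structure of $\beta^k A$, the fact that $x_i$ lies in the carrier of $b(x_j)$ in $A_i$, and the observation that for large $j$ these carriers sit in the star of $\mathrm{carrier}(p,A_i)$---but then explicitly stop, writing that ``carrying out this geometric-combinatorial analysis in arbitrary dimension is the heart of the proof.'' That is exactly the content that must be supplied, and your sketch does not close: in dimension $\geq 2$, two vertices can each belong to a face of the star of a common face $\sigma_i$ without being adjacent to one another, so the star information alone does not force $\{x_i,y_i\}\in A_i$. You also name the transitivity hypothesis as the tool to finish, but never actually invoke it.

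The paper's route uses transitivity structurally rather than locally. It argues the contrapositive on whole $R^{\mathbb A}$-classes: since $R^{\mathbb A}$ is an equivalence relation, each class $C$ is a clique, hence $s^\infty_i(C)$ is the vertex set of a single face of $A_i$; write $(C)_i$ for the corresponding geometric simplex. The paper then argues that $g(C)\subseteq (C)_i$ for every $i$, and that for distinct classes $C\neq D$ one has $(C)_i\cap (D)_i=\emptyset$ for all sufficiently large $i$ (a common vertex at every level would yield, by compactness of $C\times D$, a point of $C\cap D$). These two facts give $g(C)\cap g(D)=\emptyset$ directly. The leverage is that packaging each class into a single simplex at every level converts the problem into disjointness of faces, bypassing any attempt to chain pairwise adjacencies.
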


\begin{proof} 
Let $A_i=\beta^{n+i}A$. Let $d$ be the Euclidean metric on $|A|_{\mathbb R}$. 
Let $\epsilon_i\geq 0$ be the supremum of the $d$-diameters of the simplexes in 
the standard $(n+i)$-th geometric barycentric subdivision of $|A|_{\mathbb{R}}$. We have that $\lim_i \epsilon_i =0$. 
Let $r_i\colon \mathrm{dom}(A_i)\to |A|_{\mathbb{R}}$ be the map that sends each vertex of $A_i$ to the corresponding vertex of 
the standard $(n+i)$-th geometric barycentric subdivision of $|A|_{\mathbb{R}}$, and set
\[
g_i:=r_i\circ s^{\infty}_i.  
\]
Each map $g_i\colon \mathrm{dom}({\mathbb A})\to |A|_{\mathbb{R}}$
 is clearly continuous. 
The sequence of maps $(g_i)$ converges uniformly since for every $x\in {\rm dom}({\mathbb A})$ and $i\leq j$, we have that
\[
d(g_i(x), g_j(x))\leq \epsilon_i. 
\]
Let $g\colon {\rm dom}({\mathbb A}) \to |A|_{\mathbb{R}}$ be the continuous function that is the uniform limit of the sequence $(g_i)$. Since 
the image of $g_i$ contains all vertices of the standard $(n+i)$-th geometric barycentric subdivision of $|A|_{\mathbb{R}}$, we see that the image of 
$g_i$ is $\epsilon_i$-dense in $|A|_{\mathbb{R}}$. Now, by compactness of $\mathrm{dom}({\mathbb A})$ and uniform convergence of $(g_i)$ to $g$, 
we get that  $g$ is surjective. We check that, for $x,y\in {\rm dom}({\mathbb A})$, 
\begin{equation}\label{E:invariants} 
g(x)=g(y)\;\hbox{ if and only if }\; xR^{\mathbb A}y. 
\end{equation}
If $xR^{\mathbb A}y$, then, for each $i$, we have  $d(g_i(x), g_i(y)) \leq \epsilon_i$; thus, $g(x)=g(y)$. 

Assume now that $\neg(x R^{\mathbb A} y)$. 
Let $C\subseteq {\mathbb A}$ be an $R^{\mathbb A}$-equivalence class.  Since $C$ forms a clique with respect to $R^{\mathbb A}$, 
so does $s^\infty_i(C)$ with respect to $R^{A_i}$. Therefore, for each $i$, $g_i(C)$ is the set of vertices of 
a simplex in the standard $(n+i)$-th geometric barycentric subdivision of $|A|_{\mathbb{R}}$. We denote that simplex by 
\[
(C)_i.
\]
Note that, if $C$ is an $R^{\mathbb A}$-equivalence class, then, for all $j\geq i$, $g_j(C)\subseteq (C)_i$. Hence, 
\begin{equation}\label{E:inclc} 
g(C)\subseteq (C)_i,\; \hbox{ for all }i.
\end{equation} 
Note further that if $C,\, D$ are distinct $R^{\mathbb A}$-equivalence classes, the 
\begin{equation}\label{E:inclc2}
(C)_i\cap (D)_i=\emptyset, \; \hbox{ for large enough }i.
\end{equation} 
So assuming that 
$\neg(x R^{\mathbb A} y)$, we have $[x]_{R^{\mathbb A}}\not= [y]_{R^{\mathbb A}}$, and obviously 
$x\in [x]_{R^{\mathbb A}}$ and $y\in [y]_{R^{\mathbb A}}$. Now  $g(x)\not= g(y)$ follows from \eqref{E:inclc} and \eqref{E:inclc2}. 

As a consequence of \eqref{E:invariants} and $g$ being a continuous surjection, we see that the map 
${\mathbb A}/R^{\mathbb A} \to |A|_{\mathbb{R}}$ induced by $g$ is a homeomorphism. The assertion about a subcomplex $B$ follows 
directly from the observation that, for each $i$, $s_i\res \beta^{n+i+1} B$ maps $\beta^{n+i+1} B$ onto $\beta^{n+i} B$ and from the definition 
of the sequence $(g_i)$. 
\end{proof}

\section{Domination closure of categories}\label{S:CoinitialClosure}

Throughout this section, we fix two  categories $\mathcal{F}$ and $\mathcal{E}$, with $\mathcal{F}\subseteq \mathcal{E}$
and $\mathrm{Ob}(\mathcal{F})=\mathrm{Ob}(\mathcal{E})$. 
 In Section~\ref{Su:coo} we develop the basic theory of the domination closure $[\mathcal{F}]$ of $\mathcal{F}$ in this abstract setup. 
 In Sections~\ref{SS:CoinitionalClosureFraisse} and \ref{SS:CoinitialClosureConcreteFraisse} we consider the cases where $\mathcal{F}$ is  
 additionally a \Fraisse{} category and a concrete \Fraisse{} category, respectively. An important example to keep in mind is the case where $\mathcal{F}$ is 
 the category $\mathcal{S}(\Delta)$ of selections  and  $\mathcal{E}$ is the ambient category $\mathcal{R}(\Delta)$ of all 
 face-preserving simplicial maps, i.e., all simplicial maps $f\colon \beta^l\Delta\to \beta^k\Delta$ with the  property that for every $\mathrm{X}\in \Delta$ 
we have that the restriction of $f$ on $\beta^l[\mathrm{X}]$ is an epimorphism from $\beta^l[\mathrm{X}]$ to $\beta^k[\mathrm{X}]$.
The computation of the domination closure of $\mathcal{S}(\Delta)$ in $\mathcal{R}(\Delta)$ will be done  in Section \ref{S:CategoriesOfMaps}.

\subsection{Domination closure---definition and main properties}\label{Su:coo}

Let $X$ be any subset of $\mathcal{E}$. We say that $\mathcal{F}$  {\bf is dominating in}  $X$ if we have that:
\begin{enumerate}
\item[(i)] $g\circ f \in X$ for every $f\in \mathcal{F}$ and $g\in X$ with $\mathrm{codom}(f)=\mathrm{dom}(g)$; 
\item[(ii)] for every $g'\in X$ there is $g''\in X$ so that $g'\circ g'' \in {\mathcal F}$. 
\end{enumerate}  
Notice that our use of the term ``dominating" is non-standard. However, it coincides  with its common use  (see e.g. \cite[Section 3.2]{Ku}) when $X$ is a category. In this case,  condition (i) is equivalent to $\mathcal{F}\subseteq X$, which in our context implies that $\mathrm{Ob}(X)=\mathrm{Ob}(\mathcal{F})$.

\begin{definition}
The {\bf domination closure of $\mathcal{F}$ in $\mathcal{E}$} is the union of all sets $X\subseteq \mathcal{E}$ in which $\mathcal{F}$ is dominating. We denote this union by $[\mathcal{F}]$.
\end{definition}

The next proposition collects the fundamental properties of domination closure. 

\begin{theorem}\label{P:lar}
Let $\mathcal{F}, \mathcal{E}$ be two categories with $\mathcal{F}\subseteq\mathcal{E}$ and
 $\mathrm{Ob}(\mathcal{F})=\mathrm{Ob}(\mathcal{E})$. 
\begin{enumerate}
\item[(i)] $[\mathcal{F}]$ is a category and $\mathcal{F}$ is dominating in $[\mathcal{F}]$; in particular, $\mathcal{F}\subseteq [\mathcal{F}]$.
\item[(ii)] $[[\mathcal{F}]]=[\mathcal{F}]$.
\end{enumerate}
\end{theorem}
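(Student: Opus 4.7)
The plan is to deduce both (i) and (ii) from two closure observations about the property of being dominating. The first is that if $\mathcal{F}$ is dominating in each set $X_\alpha \subseteq \mathcal{E}$, then $\mathcal{F}$ is dominating in $\bigcup_\alpha X_\alpha$: any element of the union lies in some $X_\alpha$, and the witnesses required by conditions (i) and (ii) of being dominating are then supplied inside that $X_\alpha \subseteq \bigcup_\alpha X_\alpha$. Applied to the family of all sets witnessing the definition of $[\mathcal{F}]$, this immediately shows that $\mathcal{F}$ is dominating in $[\mathcal{F}]$. Moreover, $\mathcal{F}$ is trivially dominating in itself (take $g'' = 1_{\mathrm{dom}(g')}$ in condition (ii), using that $\mathcal{F}$ is a category), so $\mathcal{F} \subseteq [\mathcal{F}]$. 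This settles the domination claim of (i) together with the inclusion $\mathcal{F} \subseteq [\mathcal{F}]$.

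To complete (i), I must show $[\mathcal{F}]$ is a category. Identities are already in $[\mathcal{F}]$ via $\mathcal{F}$, so only closure under composition needs argument. Given composable $g_1, g_2 \in [\mathcal{F}]$, I will introduce the set $Y$ of all finite compositions $h_1 \circ \cdots \circ h_n$ with $n \geq 1$ and each $h_i \in [\mathcal{F}]$, and verify that $\mathcal{F}$ is dominating in $Y$; this forces $Y \subseteq [\mathcal{F}]$ and in particular $g_1 \circ g_2 \in [\mathcal{F}]$. Condition (i) for $Y$ is easy: for $g = h_1 \circ \cdots \circ h_n \in Y$ and $f \in \mathcal{F}$, the factor $h_n \circ f$ lies in $[\mathcal{F}]$ by domination of $\mathcal{F}$ in $[\mathcal{F}]$, so $g \circ f$ is again a length-$n$ composition in $Y$. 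The main obstacle is condition (ii), which I will handle by induction on $n$: choosing $h_n' \in [\mathcal{F}]$ with $h_n \circ h_n' \in \mathcal{F}$, one rewrites $g \circ h_n'$ as $h_1 \circ \cdots \circ h_{n-2} \circ (h_{n-1} \circ h_n \circ h_n')$, whose final factor lies in $[\mathcal{F}]$ by domination; this is a length-$(n-1)$ element of $Y$, so the inductive hypothesis supplies $g'' \in Y$ with $g \circ h_n' \circ g'' \in \mathcal{F}$, and $h_n' \circ g''$ is then the required witness in $Y$.

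For (ii), the inclusion $[\mathcal{F}] \subseteq [[\mathcal{F}]]$ is automatic once (i) is known: $[\mathcal{F}]$ is then a category, hence trivially dominating in itself, and so belongs to its own domination closure. For the reverse inclusion $[[\mathcal{F}]] \subseteq [\mathcal{F}]$, it suffices to show that whenever $[\mathcal{F}]$ is dominating in some $X \subseteq \mathcal{E}$, so is $\mathcal{F}$, which will force $X \subseteq [\mathcal{F}]$. Condition (i) of domination of $\mathcal{F}$ in $X$ follows at once from $\mathcal{F} \subseteq [\mathcal{F}]$. For condition (ii), given $g \in X$ one first uses domination of $[\mathcal{F}]$ in $X$ to produce $g_1 \in X$ with $g \circ g_1 \in [\mathcal{F}]$, and then uses domination of $\mathcal{F}$ in $[\mathcal{F}]$ to produce $g_2 \in [\mathcal{F}]$ with $(g \circ g_1) \circ g_2 \in \mathcal{F}$; the composite $g_1 \circ g_2$ belongs to $X$ by condition (i) for $[\mathcal{F}]$ in $X$ applied to $g_1$ and $g_2$, and gives $g \circ (g_1 \circ g_2) \in \mathcal{F}$ as required.
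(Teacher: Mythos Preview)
Your proof is correct and follows essentially the same approach as the paper's: both reduce the domination and inclusion claims to the observation that unions of dominated sets are dominated, establish composition closure by an inductive argument on the length of a composition of elements of $[\mathcal{F}]$, and derive (ii) from the transitivity of domination (which the paper isolates as a separate lemma, while you argue it inline). The only cosmetic difference is that the paper proves the slightly more general statement that if $\mathcal{F}$ is dominating in any $X$ then it is dominating in the composition-closure of $X$, and carries out the induction as a direct construction of the witness $g_n'\circ\cdots\circ g_0'$ rather than a reduction to shorter compositions; the content is the same.
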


We start by establishing the following basic transitivity property of domination.  

\begin{lemma}\label{L:tea}
Let  $\mathcal{F}, \mathcal{F}'$ be two subcategories of  $\mathcal{E}$ and let $X$ be any subset of $\mathcal{E}$. 
If $\mathcal{F}$ is dominating in $\mathcal{F}'$ and $\mathcal{F}'$ is dominating in $X$, then $\mathcal{F}$ is dominating in $X$. 
\end{lemma}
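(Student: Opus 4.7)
The plan is to verify the two defining properties of domination for the pair $(\mathcal F, X)$ directly, chaining the hypotheses through $\mathcal F'$. A useful preliminary observation is that $\mathcal F\subseteq\mathcal F'$: since $\mathcal F'$ is a category, it contains the identity $1_{\mathrm{codom}(f)}$ for every $f\in\mathcal F$, and condition (i) of ``$\mathcal F$ dominates in $\mathcal F'$'' then gives $f = 1_{\mathrm{codom}(f)}\circ f\in\mathcal F'$.

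First I would verify condition (i): given $f\in\mathcal F$ and $g\in X$ with $\mathrm{codom}(f)=\mathrm{dom}(g)$, the inclusion $\mathcal F\subseteq\mathcal F'$ puts $f$ in $\mathcal F'$, and then condition (i) for ``$\mathcal F'$ dominates in $X$'' yields $g\circ f\in X$.

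Next I would verify condition (ii). Given $g'\in X$, use the fact that $\mathcal F'$ dominates in $X$ to choose $h\in X$ with $g'\circ h\in\mathcal F'$. Then, since $\mathcal F$ dominates in $\mathcal F'$, choose $k\in\mathcal F'$ with $(g'\circ h)\circ k\in\mathcal F$. Set $g'' := h\circ k$. By associativity, $g'\circ g'' = (g'\circ h)\circ k\in\mathcal F$. It remains to check $g''\in X$, but this is exactly condition (i) for ``$\mathcal F'$ dominates in $X$'' applied to $k\in\mathcal F'$ and $h\in X$.

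The argument is purely formal, so there is no real obstacle; the only point that requires a moment of care is that condition (i) functions in its ``multiplicative'' form, namely that post-composing an element of $X$ by an element of the dominating family on the right stays in $X$. This is precisely what allows the witness $g'' = h\circ k$ to lie in $X$ even though $k$ need not itself belong to $X$.
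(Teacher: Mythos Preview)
Your proof is correct and follows essentially the same route as the paper's: both verify the two conditions directly, first noting $\mathcal{F}\subseteq\mathcal{F}'$ (the paper invokes the remark after the definition of domination, while you spell out the identity-morphism argument), then chaining condition~(ii) through $\mathcal{F}'$ and using condition~(i) for $\mathcal{F}'$ in $X$ to ensure the composite witness $h\circ k$ lies in $X$. One terminological slip: what you call ``post-composing on the right'' is pre-composition, but the mathematics is right.
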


\begin{proof} First, we check point (ii) of  $\mathcal{F}$ being dominating in $X$. Let $g\in X$. There exists $f''\in X$ such that $g\circ f''\in \mathcal{F}'$. 
Now there exists $f'\in \mathcal{F}'$ such that 
$(g\circ f'')\circ f'\in \mathcal{F}$. So $g\circ (f''\circ f')\in \mathcal{F}$ and $f''\circ f'\in X$ since $X$ is closed under pre-composition by elements of $\mathcal{F}'$. 
To check point (i)  of $\mathcal{F}$ being dominating in $X$ notice that $\mathcal{F}\subseteq \mathcal{F}'$ since $\mathcal{F}$ is dominating in $\mathcal{F}'$, and 
$\mathcal{F}$ and $\mathcal{F}'$ are categories. 
If $g\in X$ and $f\in \mathcal{F}$ are such that $g\circ f$ is defined, then $f\in \mathcal{F}'$ since $\mathcal{F}\subseteq \mathcal{F}'$, and we get $g\circ f\in X$ 
since $\mathcal{F}'$ is dominating in $X$. We conclude that $\mathcal{F}$ is dominating in $X$. 
\end{proof}

\begin{proof}[Proof of Theorem~\ref{P:lar}]
(i) It is clear $\mathcal{F}$ is dominating  any union of sets which are dominated by $\mathcal{F}$. Thus, $\mathcal{F}$ is dominating in $[\mathcal{F}]$.  

Note that ${\mathrm{id}}_o\in [\mathcal{F}]$ for each $o\in \mathrm{Ob}(\mathcal{F})$ since $\mathcal{F}\subseteq [\mathcal{F}]$, which holds since $\mathcal{F}$ is dominating in itself. 
It remains to check that $[\mathcal{F}]$ is closed under composition. This conclusion is an immediate consequence from the following observation: 
if $\mathcal{F}$ is dominating in $X$, then $\mathcal{F}$ is dominating in the set obtained from $X$ by closing it under composition. To prove this observation,  
fix $g_0\circ \cdots \circ g_n$ with $g_0, \dots, g_n\in X$. First, we need to see that if $f\in \mathcal{F}$ and $(g_0\circ \cdots \circ g_n)\circ f$ is defined, 
then it is a product of elements of $X$. This is clear since $g_n\circ f\in X$ and 
\[
(g_0\circ \cdots \circ g_n)\circ f = g_0\circ \cdots \circ g_{n-1}\circ (g_n\circ f).
\] 

Now continue fixing $g_0\circ \cdots \circ g_n$ with $g_0, \dots, g_n\in X$. Find $g_n'\in X$ such that $f_n:= g_n\circ g_n'$ is in $\mathcal{F}$. 
Note that $g_{n-1}\circ f_n$ is in $X$ since $X$ is closed under pre-composition by elements of $\mathcal{F}$. 
So there exists
$g'_{n-1}\in X$ such that $f_{n-1}= (g_{n-1}\circ f_n)\circ g'_{n-1}$ is in $\mathcal{F}$. Now consider $g_{n-2}\circ f_{n-1}$, which is in $X$, 
and continue as above eventually producing $f_0\in \mathcal{F}$ and $g_0'\in X$ with $f_0= (g_0\circ f_1)\circ g_0'$. By unraveling the definitions, 
one easily checks that 
\[
\mathcal{F}\ni f_0 = (g_0\circ \cdots \circ g_n)\circ (g_n'\circ \cdots \circ g_0'). 
\]

(ii) One only needs to check that if $[\mathcal{F}]$ is dominating in a set $X$, then so is $\mathcal{F}$. This follows from (i) and Lemma~\ref{L:tea}. 
\end{proof}

There is another way of generating $[\mathcal{F}]$. Let $\mathcal{F}$ be a category and let $X\subseteq {\mathbb \mathcal{E}}$. Define 
$\delta_\mathcal{F}(X)$ 
to be the set of all $g\in \mathcal{E}$ that fulfill the following condition
\[  
\exists h\in X \; (g\circ h\in \mathcal{F})\; \hbox{ and }\; \forall f\in \mathcal{F} \; (\hbox{if } g\circ f \hbox{ is defined, then }g\circ f\in X).  
\]
Note that $\delta_{\mathcal{F}}$ is monotone, that is, $X\subseteq Y$ implies $\delta_{\mathcal{F}}(X)\subseteq 
\delta_{\mathcal{F}}(Y)$. Note also that the second part of the condition in the definition of $\delta_{\mathcal{F}}$ insures that $\delta_{\mathcal{F}}(X)\subseteq X$. It follows \cite[7.36]{Mos} that 
$\delta_{\mathcal{F}}$ has a greatest fixed point, that is, there exists a set $X_0\subseteq \mathcal{E}$ such that $\delta_{\mathcal{F}}(X_0)=X_0$ and, 
for each $X$ with $\delta_{\mathcal{F}}(X)=X$, we have $X\subseteq X_0$. 

\begin{proposition}\label{P:del}
$[\mathcal{F}]$ is equal to the greatest fixed point of  $\delta_{\mathcal{F}}$.
\end{proposition}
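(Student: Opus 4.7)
The plan is to show directly that the fixed points of $\delta_\mathcal{F}$ are precisely the subsets of $\mathcal{E}$ in which $\mathcal{F}$ is dominating, from which the result follows since $[\mathcal{F}]$ is by definition the union of all such subsets.

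First I would record the trivial but essential observation that $\delta_\mathcal{F}(X)\subseteq X$ for every $X\subseteq\mathcal{E}$. Indeed, the identity ${\rm id}_{{\rm codom}(g)}$ lies in $\mathcal{F}$, so for any $g\in\delta_\mathcal{F}(X)$ the second clause of the definition of $\delta_\mathcal{F}$, applied with $f={\rm id}_{{\rm codom}(g)}$, forces $g=g\circ f\in X$. Hence in checking whether $X$ is a fixed point one only needs to verify the reverse inclusion $X\subseteq\delta_\mathcal{F}(X)$.

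Next I would prove the key equivalence: $X=\delta_\mathcal{F}(X)$ if and only if $\mathcal{F}$ is dominating in $X$. For the forward direction, assume $X=\delta_\mathcal{F}(X)$ and pick any $g\in X$. The first clause of the definition of $\delta_\mathcal{F}(X)$ provides $h\in X$ with $g\circ h\in\mathcal{F}$, which is exactly condition (ii) for domination. The second clause of the definition says that $g\circ f\in X$ whenever $f\in\mathcal{F}$ and $g\circ f$ is defined, which is exactly condition (i) for domination. Conversely, suppose $\mathcal{F}$ is dominating in $X$ and pick $g\in X$. Condition (ii) provides $h\in X$ with $g\circ h\in\mathcal{F}$, giving the first clause of the $\delta_\mathcal{F}$-condition; condition (i) directly yields the second clause. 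Thus $g\in\delta_\mathcal{F}(X)$, so $X\subseteq\delta_\mathcal{F}(X)$, and combined with the inclusion from the previous paragraph we get $X=\delta_\mathcal{F}(X)$.

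Finally I would conclude the proposition. By definition, $[\mathcal{F}]=\bigcup\{X\subseteq\mathcal{E}:\mathcal{F}\text{ is dominating in }X\}$, which by the equivalence above equals the union of all fixed points of $\delta_\mathcal{F}$. Since $\delta_\mathcal{F}$ is monotone, for any family $(X_i)$ of fixed points one has $\delta_\mathcal{F}(\bigcup_i X_i)\supseteq\delta_\mathcal{F}(X_i)=X_i$ for each $i$, hence $\delta_\mathcal{F}(\bigcup_i X_i)\supseteq\bigcup_i X_i$, and combined with the automatic reverse inclusion $\bigcup_i X_i$ is itself a fixed point. Therefore $[\mathcal{F}]$ is a fixed point containing every fixed point, i.e., the greatest fixed point of $\delta_\mathcal{F}$, which is the desired conclusion. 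The argument is essentially definitional once the alignment between the two clauses defining $\delta_\mathcal{F}$ and the two clauses defining domination is made visible; the only mild subtlety I would expect is ensuring one remembers to exploit the identity morphisms in $\mathcal{F}$ to obtain the automatic inclusion $\delta_\mathcal{F}(X)\subseteq X$.
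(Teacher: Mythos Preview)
Your proof is correct and follows essentially the same approach as the paper: both rest on the observation that $\delta_{\mathcal{F}}(X)=X$ precisely when $\mathcal{F}$ is dominating in $X$, after which the conclusion is immediate from the definition of $[\mathcal{F}]$. One small slip: in your first paragraph you should take $f=\mathrm{id}_{\mathrm{dom}(g)}$ rather than $\mathrm{id}_{\mathrm{codom}(g)}$, since $g\circ f$ is defined only when $\mathrm{codom}(f)=\mathrm{dom}(g)$; otherwise your argument is slightly more self-contained than the paper's, which cites Theorem~\ref{P:lar}(i) rather than redoing the monotonicity step.
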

\begin{proof} Observe that $\delta_{\mathcal{F}}(X)=X$ precisely when $\mathcal{F}$ is dominating in $X$. Hence, the conclusion follows from the definition of $[\mathcal{F}]$ and Theorem~\ref{P:lar}(i).
\end{proof}

\subsection{Domination closure---Fra{\"i}ss{\'e} categories}\label{SS:CoinitionalClosureFraisse}

\begin{proposition}\label{P:cos}
Let $\mathcal{E}$ be an essentially countable category, and let $\mathcal{F},\mathcal{F}'$ be subcategories of $\mathcal{E}$ with $\mathrm{Ob}(\mathcal{F})=\mathrm{Ob}(\mathcal{E})$. Assume that $\mathcal{F}$ is dominating in $\mathcal{F}'$.  

\begin{enumerate}
\item[(i)] If $\mathcal{F}$ satisfies the joint projection property, 
then so does $\mathcal{F}'$. 

\item[(ii)] If $\mathcal{F}$ satisfies the projective amalgamation property, then so does $\mathcal{F}'$.  

\item[(iii)] Every generic sequence for $\mathcal{F}$ is generic for $\mathcal{F}'$. 

\item[(iv)] If ${\mathcal F}'$ has a generic sequence, then it has one whose morphisms 
are in ${\mathcal F}$. 
\end{enumerate}
\end{proposition}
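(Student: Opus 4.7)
The plan for parts (i) and (ii) is a direct diagram chase. For (i), joint projection in $\mathcal{F}$ produces, for any two objects $o_1, o_2$, morphisms in $\mathcal{F}$ (hence in $\mathcal{F}'$, since $\mathcal{F} \subseteq \mathcal{F}'$ by the domination hypothesis) with a common source and codomains $o_1, o_2$, so joint projection in $\mathcal{F}'$ follows immediately. For (ii), given $e_1, e_2 \in \mathcal{F}'$ sharing a codomain, first apply clause (ii) of domination to each to obtain $f_1, f_2 \in \mathcal{F}'$ with $e_i \circ f_i \in \mathcal{F}$; then apply projective amalgamation in $\mathcal{F}$ to the pair $e_1 \circ f_1,\, e_2 \circ f_2$ to obtain $h_1, h_2 \in \mathcal{F}$ with $(e_1 \circ f_1) \circ h_1 = (e_2 \circ f_2) \circ h_2$. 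The morphisms $e'_i := f_i \circ h_i \in \mathcal{F}'$ then amalgamate $e_1, e_2$.

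For (iii), let $(e_n)$ be generic for $\mathcal{F}$. Since $\mathcal{F} \subseteq \mathcal{F}'$, the sequence lies in $\mathcal{F}'$ and projective universality transfers verbatim. For projective extension, take $e \in \mathcal{F}'$ with $\mathrm{codom}(e) = \mathrm{codom}(e_n)$; apply domination to get $h \in \mathcal{F}'$ with $e \circ h \in \mathcal{F}$; then apply extension of $(e_n)$ in $\mathcal{F}$ to the $\mathcal{F}$-morphism $e \circ h$ to produce $m > n$ and $h' \in \mathcal{F}$ with $e_n \circ \cdots \circ e_m = (e \circ h) \circ h' = e \circ (h \circ h')$. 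The composite $h \circ h' \in \mathcal{F}'$ is the required extension witness.

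For (iv), starting from a generic sequence $(f_n)$ in $\mathcal{F}'$ with $f_n : A_{n+1} \to A_n$, I would build a sequence $(e_m)$ in $\mathcal{F}$ together with an $\mathcal{F}'$-isomorphism $(g_k)_{k\geq 0}$ from $(e_m)$ to a block subsequence $(f'_m)$ of $(f_n)$, via a zigzag induction. Set $g_0 = \mathrm{id}_{A_0}$ and $k_0 = 0$, and write $F_m = A_{k_m}$ and $E_m = \mathrm{codom}(g_{2m-1})$ (with $E_0 = A_0$). At each odd step $2m+1$, given $g_{2m} \in \mathcal{F}'$, apply clause (ii) of domination to obtain $g_{2m+1} \in \mathcal{F}'$ with $g_{2m} \circ g_{2m+1} \in \mathcal{F}$, and declare $e_m := g_{2m} \circ g_{2m+1}$. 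At each even step $2m+2$, given $g_{2m+1} \in \mathcal{F}'$ whose codomain is inductively of the form $A_{k_m}$, apply the extension property of $(f_n)$ at index $k_m$ to $g_{2m+1}$ to obtain $k_{m+1} > k_m$ and $g_{2m+2} \in \mathcal{F}'$ with $g_{2m+1} \circ g_{2m+2} = f_{k_m} \circ \cdots \circ f_{k_{m+1}-1} =: f'_m$. By construction $e_m \in \mathcal{F}$ for all $m$, and $(g_k)$ realizes $(e_m)$ as $\mathcal{F}'$-isomorphic to the block subsequence $(f'_m)$ of $(f_n)$, which is generic in $\mathcal{F}'$ because block subsequences of generic sequences are generic.

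The main obstacle in (iv) is the final step, showing that a sequence $\mathcal{F}'$-isomorphic to a generic sequence is itself generic. Universality for $(e_m)$ is straightforward: for any object $o$ and any large $k$, the composite $E_{k+1} \to F_k \to F_{m+1} \to o$ built from $g_{2k+1}$, a chain of $f'_j$'s, and a universality witness from $F_{m+1}$ to $o$ lies in $\mathcal{F}'$. Extension is more delicate: telescoping the zigzag relations $g_{2j+1}\circ g_{2j+2}=f'_j$ and $g_{2j}\circ g_{2j+1}=e_j$ gives $e_k \circ \cdots \circ e_{p-1} = g_{2k} \circ (f'_k \circ \cdots \circ f'_{p-2}) \circ g_{2p-1}$, so that factoring through a given $e \in \mathcal{F}'$ with codomain $E_k$ amounts to amalgamating $g_{2k}$ and $e$ over $E_k$ inside $\mathcal{F}'$. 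This amalgamation can be extracted from projective amalgamation in $\mathcal{F}'$ (which, in the essentially countable setting, follows from the existence of a generic sequence by combining universality with extension), or, more surgically, by weaving extension bookkeeping directly into the zigzag construction, so that each pending extension demand for a morphism $e \in \mathcal{F}'$ is discharged at an appropriate later odd step by a judicious choice of $g_{2m+1}$ supplied by domination applied to the relevant amalgamation datum.
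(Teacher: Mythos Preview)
Your arguments for (i), (ii), and (iii) are correct and coincide with the paper's. For (iv), your zigzag construction is essentially the paper's (the paper writes $g_i, h_i$ for your $g_{2i}, g_{2i+1}$ and starts with $g_0 = f'_0$ rather than an identity, but this is cosmetic), and you are right to flag the final genericity check as the substantive step; the paper dismisses it as ``immediate.''

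There is, however, a genuine error in your proposed justification: projective amalgamation in $\mathcal{F}'$ does \emph{not} follow from the mere existence of a generic sequence, even in the essentially countable setting. For a counterexample, take objects $A_0, A_1, \ldots$ together with $B, C$; unique morphisms $f^j_i \colon A_j \to A_i$ for $j \geq i$; morphisms $v_j \colon A_j \to C$ and $u_j, u'_j \colon A_j \to B$ for $j \geq 1$, all compatible with the $f^j_i$; and two distinct morphisms $e_1, e_2 \colon C \to B$ with $e_1 v_j = u_j$ and $e_2 v_j = u'_j$. Allow no morphisms from $B$ or $C$ into any $A_n$. Then $(f^{j+1}_j)_j$ is generic (extension is trivial, since the only arrows into $A_n$ are the $f^m_n$), yet $e_1$ and $e_2$ admit no amalgam. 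Your alternative---weaving extension demands into the choice of $g_{2m+1}$---hits the same wall: discharging a demand for $e$ with $\mathrm{codom}(e) = \mathrm{codom}(e_k)$ still requires amalgamating $e$ with the already-committed $g_{2k}$ over their common codomain, and domination alone does not supply that.

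The clean repair is to assume projective amalgamation in $\mathcal{F}'$ outright; this holds in every use the paper makes of (iv), and follows from part (ii) whenever $\mathcal{F}$ has it. Granting that, your telescoped identity $e_k \circ \cdots \circ e_{p-1} = g_{2k} \circ (f'_k \circ \cdots \circ f'_{p-2}) \circ g_{2p-1}$ yields extension directly: amalgamate $e$ and $g_{2k}$ to obtain $e \circ a = g_{2k} \circ b$, feed $b$ (whose codomain lies in the $(f_n)$-chain) into the extension property of $(f_n)$, and absorb the tail. Both your argument and the paper's one-line claim are completed this way.
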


\begin{proof} 
(i) If $\mathcal{F}$ is dominating in $\mathcal{F}'$, then $\mathcal{F}\subseteq\mathcal{F}'$. Since $\mathrm{Ob}(\mathcal{F})=\mathrm{Ob}(\mathcal{F}')$, it follows 
that if $\mathcal{F}$ satisfies the joint projection property, then so does $\mathcal{F}'$

(ii) Assume now that $\mathcal{F}$ satisfies the projective amalgamation property and let  $f_1', f_2'\in \mathcal{F}'$ having  the same codomain. 
Find $g_1', g_2'\in \mathcal{F}'$ 
such that $f_1'\circ g_1', f_2'\circ g_2'\in \mathcal{F}$. Let  $g_1, g_2\in \mathcal{F}$ with   
\[
(f_1'\circ g_1')\circ g_1= (f_2'\circ g_2')\circ g_2, 
\]
so
\[
f_1'\circ (g_1'\circ g_1)= f_2'\circ (g_2'\circ g_2). 
\]
Since $g_1'\circ g_1, g_2'\circ g_2\in \mathcal{F}'$ we have that $\mathcal{F}'$ satisfies the projective amalgamation property. 

(iii) Let now  $(f_i)$ be a generic sequence for $\mathcal{F}$. Since $\mathcal{F}\subseteq \mathcal{F}'$  
it is  enough to check that for each $f'\in \mathcal{F}'$ and $i_0$ with 
$f'$ and $f_{i_0}$ having the same codomains, there are $g\in \mathcal{F}'$ and $j_0>i_0$ such that $f'\circ g = f_{i_0}\circ \cdots \circ f_{j_0}$. 
Since $\mathcal{F}$ is dominating in $\mathcal{F}'$, there exists $f''\in \mathcal{F}'$ such that $f'\circ f''\in \mathcal{F}$. 
Since $(f_i)$ is Fra{\"i}ss{\'e} for $\mathcal{F}$, there are $j_0>i_0$ and $f\in \mathcal{F}$ such that 
\[
f'\circ (f''\circ f) = (f'\circ f'')\circ f = f_{i_0}\circ \cdots \circ f_{j_0},
\]
and we are done by taking $g= f''\circ f$. 

(iv) Let $(f_j')$ be a generic sequence for ${\mathcal F}'$. We find $g_i, h_i\in {\mathcal F}'$, $f_i\in {\mathcal F}$, 
and a sequence of natural numbers $(j_i)$ with 
\begin{equation}\label{E:smgen}
\begin{split}
j_i &< j_{i+1}\\
g_i\circ h_i &= f_i \\
h_i\circ g_{i+1} &= f'_{j_i}\circ \cdots \circ f'_{j_{i+1}}
\end{split}
\end{equation}
To start the construction we take $g_0= f_0'$ and $j_0=1$. 
Having constructed $g_i$, we find $h_i$ and $f_i$ by ${\mathcal F}$ being dominating in ${\mathcal F}'$. Having constructed $j_i$ and $h_i$, we find 
$j_{i+1}$ and $g_{i+1}$ by $(f_j')$ being generic for ${\mathcal F}'$. 

Properties \eqref{E:smgen} and genericity of $(f_j')$ for ${\mathcal F}'$ immediately imply that $(f_i)$ is generic for ${\mathcal F}'$, so this sequence is as required. 
\end{proof}

The following corollary follows immediately from Proposition~\ref{P:cos}. 

\begin{corollary}
If $\mathcal{F}$ is a Fra{\"i}ss{\'e} subcategory of an essentially countable category $\mathcal{E}$, then so is $[\mathcal{F}]$. 
Moreover, every generic sequence in $\mathcal{F}$ is also generic in $[\mathcal{F}]$.
\end{corollary}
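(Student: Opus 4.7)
The plan is to read this corollary as nothing more than Proposition~\ref{P:cos} applied to the particular choice $\mathcal{F}' = [\mathcal{F}]$, supplemented with a brief verification of essential countability of $[\mathcal{F}]$. All the real work has already been done: Theorem~\ref{P:lar}(i) tells us that $[\mathcal{F}]$ is a category containing $\mathcal{F}$ and that $\mathcal{F}$ is dominating in $[\mathcal{F}]$, while Proposition~\ref{P:cos} transfers the two structural \Fraisse{} axioms and genericity from $\mathcal{F}$ to any category in which $\mathcal{F}$ dominates.

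First I would observe that $[\mathcal{F}]$ is essentially countable. By definition $[\mathcal{F}]\subseteq \mathcal{E}$, and by the standing assumption from the start of Section~\ref{S:CoinitialClosure} we have $\mathrm{Ob}(\mathcal{F})=\mathrm{Ob}(\mathcal{E})$; since Theorem~\ref{P:lar}(i) gives $\mathcal{F}\subseteq[\mathcal{F}]\subseteq\mathcal{E}$ and $[\mathcal{F}]$ has the same objects as $\mathcal{F}$ (it is a subcategory of $\mathcal{E}$ on the same objects), the class of objects of $[\mathcal{F}]$ is countable up to isomorphism, and each hom-set of $[\mathcal{F}]$ is a subset of the corresponding countable hom-set of $\mathcal{E}$. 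Thus essential countability is inherited automatically.

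Next I would apply Theorem~\ref{P:lar}(i) to conclude that $\mathcal{F}$ is dominating in the category $[\mathcal{F}]$, so that the hypotheses of Proposition~\ref{P:cos} are satisfied with $\mathcal{F}' := [\mathcal{F}]$. Parts (i) and (ii) of Proposition~\ref{P:cos} then give, respectively, the joint projection property and the projective amalgamation property for $[\mathcal{F}]$. Together with essential countability, this shows that $[\mathcal{F}]$ is a \Fraisse{} category, establishing the first assertion. For the second assertion, Proposition~\ref{P:cos}(iii) applied with the same $\mathcal{F}'=[\mathcal{F}]$ says precisely that every generic sequence for $\mathcal{F}$ is a generic sequence for $[\mathcal{F}]$.

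There is no real obstacle here: the content of the corollary is entirely packaged inside Proposition~\ref{P:cos}, and the only thing not literally stated there is essential countability, which is immediate since $[\mathcal{F}]$ is a subcategory of the essentially countable ambient category $\mathcal{E}$ on the same objects. The existence of a generic sequence for $[\mathcal{F}]$, if one wished to record it separately, follows either from Theorem~\ref{T:FraisseOriginal} applied to the \Fraisse{} category $[\mathcal{F}]$, or more concretely by transporting any generic sequence for $\mathcal{F}$ (which exists by Theorem~\ref{T:FraisseOriginal} applied to $\mathcal{F}$) via the "moreover" clause.
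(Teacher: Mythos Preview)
Your proposal is correct and matches the paper's approach exactly: the paper states that the corollary follows immediately from Proposition~\ref{P:cos}, and your argument spells out precisely this application (with $\mathcal{F}'=[\mathcal{F}]$, the domination hypothesis supplied by Theorem~\ref{P:lar}(i)), together with the trivial observation that essential countability is inherited from $\mathcal{E}$.
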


The next theorem provides a characterization of the elements of $[\mathcal{F}]$ under the assumption that $\mathcal{F}$ is \Fraisse{}, 
and it will be used in Section \ref{S:ResultsOnRestrictedNearHomeo}.
To phrase this characterization, we need a notion of iso-sequence. 
A sequence $(g_i)$ is called an {\bf $\mathcal E$-iso-sequence for} $\mathcal{F}$ if each $g_i$ is in $\mathcal E$, the sequence $(g_i)$ 
is neat, and the sequences 
$(g_{2i}\circ g_{2i+1})$ and $(g_{2i+1}\circ g_{2i+2})$ are both generic sequences for $\mathcal{F}$. 
Note that the sequence $(g_i)$ is an isomorphism between the two generic sequences.

\begin{theorem}\label{T:iid}
Let $\mathcal{F}$ be a countable Fra{\"i}ss{\'e} category and let $g\in \mathcal{E}$. 
Then $g\in [\mathcal{F}]$ if and only if there exists an $\mathcal E$-iso-sequence $(g_i)$ for $\mathcal{F}$ such that $g=g_0$. 
\end{theorem}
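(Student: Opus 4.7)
The plan is to prove the two implications by explicit constructions. The $(\Leftarrow)$ direction rests on a parity observation about compositions along an iso-sequence, while the $(\Rightarrow)$ direction requires a back-and-forth argument that interleaves applications of domination with the satisfaction of genericity conditions; the latter is where the technical content of the argument lies.

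For $(\Leftarrow)$, assume $(g_i)$ is an $\mathcal{E}$-iso-sequence with $g_0=g$, and write $f_n=g_{2n}\circ g_{2n+1}$ and $f'_n=g_{2n+1}\circ g_{2n+2}$, both of which lie in $\mathcal{F}$. The key observation is that any chain $g_i\circ g_{i+1}\circ\cdots\circ g_j$ of even length (that is, with $j-i$ odd) lies in $\mathcal{F}$, since consecutive pairs factor as $f_n$'s or $f'_n$'s. Let $X$ be the subcategory of $\mathcal{E}$ generated by $\mathcal{F}\cup\{g_i:i\geq 0\}$; condition (i) of $\mathcal{F}$ being dominating in $X$ is immediate. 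For (ii), bring any $g'\in X$ into the normal form $a_0\circ g_{i_1}\circ a_1\circ\cdots\circ g_{i_m}\circ a_m$ with $a_j\in\mathcal{F}$, and induct on $m$. In the inductive step, apply projective extension of $(f_n)$ if $i_m$ is odd, or of $(f'_n)$ if $i_m$ is even, to $a_m$, obtaining $e'\in\mathcal{F}$ with $a_m\circ e' = g_{i_m+1}\circ\cdots\circ g_N$; then postcompose with $g_{N+1}$ so that $g_{i_m}\circ a_m\circ e'\circ g_{N+1} = g_{i_m}\circ g_{i_m+1}\circ\cdots\circ g_{N+1}$ is an even-length chain, hence an element of $\mathcal{F}$. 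This absorbs $g_{i_m}\circ a_m$ into a single $\mathcal{F}$-morphism and reduces $m$ by one.

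For $(\Rightarrow)$, set $g_0=g$ and use that $\mathcal{F}$ is dominating in $[\mathcal{F}]$ (Theorem~\ref{P:lar}(i)) to inductively choose $g_{i+1}\in[\mathcal{F}]$ with $g_i\circ g_{i+1}\in\mathcal{F}$. The content is to exploit the remaining freedom in these choices so that the interleaved sequences $(f_n)=(g_{2n}\circ g_{2n+1})$ and $(f'_n)=(g_{2n+1}\circ g_{2n+2})$ are generic for $\mathcal{F}$. Enumerate all projective-extension and projective-universality conditions for both sequences, and address them in a back-and-forth. To handle projective extension for $(f_n)$ against $e\in\mathcal{F}$ with $\mathrm{codom}(e)=\mathrm{codom}(f_n)$ at stage $N$, I would first amalgamate $f_n\circ\cdots\circ f_N$ and $e$ in $\mathcal{F}$ to produce $e_1,e_2\in\mathcal{F}$ with $(f_n\circ\cdots\circ f_N)\circ e_1 = e\circ e_2$; pick $g_{2N+2}\in[\mathcal{F}]$ by domination; amalgamate $g_{2N+2}$ and $e_1$ in the \Fraisse{} category $[\mathcal{F}]$ (which is \Fraisse{} by the corollary following Proposition~\ref{P:cos}) to get $h_1,h_2\in[\mathcal{F}]$ with $g_{2N+2}\circ h_1 = e_1\circ h_2$; and finally apply domination to $h_2$ to obtain $h_3\in[\mathcal{F}]$ with $h_2\circ h_3\in\mathcal{F}$, setting $g_{2N+3}=h_1\circ h_3$. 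This yields $f_{N+1} = g_{2N+2}\circ g_{2N+3} = e_1\circ(h_2\circ h_3)\in\mathcal{F}$ and $f_n\circ\cdots\circ f_{N+1} = e\circ(e_2\circ h_2\circ h_3)$, satisfying the task. Symmetric moves handle $(f'_n)$, and projective universality follows by combining the above with the joint projection property of $\mathcal{F}$.

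The hard part will be the interlocking amalgamations in $(\Rightarrow)$: one amalgamation inside $\mathcal{F}$ pre-arranges a factorization through $e$, a second amalgamation inside the larger category $[\mathcal{F}]$ aligns the newly chosen $g_{2N+2}$ with that factorization, and a further application of domination pulls the resulting $f_{N+1}$ back into $\mathcal{F}$. A parallel subtlety in $(\Leftarrow)$ is the parity bookkeeping: only even-length chains along the iso-sequence are guaranteed to lie in $\mathcal{F}$, forcing the choice between $(f_n)$ and $(f'_n)$ in projective extension to track the parity of $i_m$.
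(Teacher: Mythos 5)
Your argument is correct, and its overall architecture is the same as the paper's: for $(\Leftarrow)$ you exhibit an explicitly described set containing $g_0$ that is dominated by $\mathcal{F}$, the two key points being that compositions $g_i\circ g_{i+1}\circ\cdots\circ g_j$ of even length lie in $\mathcal{F}$ and that projective extension of the two block sequences supplies the required right inverses modulo $\mathcal{F}$; for $(\Rightarrow)$ you run the same back-and-forth, interleaving domination with amalgamation. The differences are in the packaging. In $(\Leftarrow)$ the paper works with the smaller set $C=\{f\circ g_i\circ f'\mid f,f'\in\mathcal{F},\ i\in\mathbb{N}\}$, checks that it is a fixed point of $\delta_{\mathcal{F}}$, and concludes via the greatest-fixed-point description of $[\mathcal{F}]$ (Proposition~\ref{P:del}); you instead take the subcategory generated by $\mathcal{F}\cup\{g_i\colon i\in\mathbb{N}\}$ and verify the domination conditions directly, by induction on the number of $g_i$'s occurring in a normal form --- the parity bookkeeping is identical, and the larger witnessing set costs you only that extra induction. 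In $(\Rightarrow)$ the paper's inductive step (its Claim) is leaner: it first uses domination to choose $g$ with $g_{j_0}\circ g\in\mathcal{F}$ and then performs a single amalgamation inside $\mathcal{F}$, setting $g_{j_0+1}=g\circ h$; your step amalgamates in $\mathcal{F}$, applies domination, amalgamates again inside $[\mathcal{F}]$ (legitimate, since $[\mathcal{F}]$ inherits projective amalgamation from $\mathcal{F}$ by Proposition~\ref{P:cos}(ii) together with Theorem~\ref{P:lar}(i)), and applies domination once more. So your version needs the fact that $[\mathcal{F}]$ amalgamates, which the paper's step avoids, but it yields the same outcome --- $f_{N+1}=g_{2N+2}\circ g_{2N+3}\in\mathcal{F}$ together with the desired factorization through $e$ --- and, with the standard countable bookkeeping that both you and the paper leave implicit, the two block sequences come out generic for $\mathcal{F}$.
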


\begin{proof} $(\Leftarrow)$ Fix an $\mathcal E$-iso-sequence $(g_i)$ with $g=g_0$. Define $C$ to consist of all morphisms of the form 
$f\circ g_i \circ f'$ with $f, f'\in \mathcal{F}$ and $i\in {\mathbb N}$. We show that $\delta_{\mathcal{F}}(C)=C$. 
It then follows from Proposition~\ref{P:del} that $C\subseteq [\mathcal{F}]$; in particular, we get $g_0\in [\mathcal{F}]$ since $g_0\in C$. 

Note that since $\delta_{\mathcal{F}}(X)\subseteq X$ for each $X$, it will suffice to prove that $C\subseteq \delta_{\mathcal{F}}(C)$. 
To see $C\subseteq \delta_{\mathcal{F}}(C)$, first, we need to show that if $g\in C$, then $g\circ h\in \mathcal{F}$ for some $h\in C$. So given $f', f''\in \mathcal{F}$ and $i$, we need to find $h\in C$ 
with $f'\circ g_i\circ f''\circ h\in \mathcal{F}$. Since $\mathcal{F}$ is closed under composition, it suffices to find $h\in C$ with $g_i\circ f''\circ h\in \mathcal{F}$. Since 
${\rm codom}(f'') = {\rm dom}(g_i)$, we have that 
\[
{\rm codom}(f'') = {\rm codom}(g_{i+1}\circ g_{i+2}). 
\]
Since $f''\in \mathcal{F}$, there exist $f\in \mathcal{F}$ and $j>i$ of the same parity as $i$ such that 
\[
f''\circ f = (g_{i+1}\circ g_{i+2})\circ \cdots \circ (g_{j+1}\circ g_{j+2}). 
\]
Then we have 
\[
(g_i\circ f'') \circ (f\circ g_{j+3}) = (g_i\circ g_{i+1})\circ \cdots \circ (g_{j+2}\circ g_{j+3}). 
\]
After noticing that the right-hand side of the equality above is in $\mathcal{F}$ and that $f\circ g_{j+3}$ is in $C$, we see that we can take $h= f\circ g_{j+3}$. 

To complete the argument for $C\subseteq \delta_{\mathcal{F}}(C)$, we need to show that if $g\in C$, then $g\circ f\in C$ for each $f\in \mathcal{F}$ for which $g\circ f$ is defined. 
This is clear since $g=f'\circ g_i\circ f''$ for some $i$ and $f', f''\in \mathcal{F}$ and, therefore, 
\[
g\circ f =  f'\circ g_i\circ (f'' \circ f)\in C, 
\]
as $F$ is closed under composition. 

$(\Rightarrow)$ Given $g\in [\mathcal{F}]$, we modifying the Fra{\"i}ss{\'e} sequence construction to build two Fra{\"i}ss{\'e} sequences and an isomorphism between them whose first element is $g$.  We only indicate the inductive step 
in the construction and leave the bookkeeping details of the construction to the reader. The inductive step is included in the following claim. 

\begin{claim}
Let $g=g_0, g_1, \dots, g_{j_0}\in [\mathcal{F}]$ be such that ${\rm dom}(g_i)={\rm codom}(g_{i+1})$ and $g_i\circ g_{i+1}\in \mathcal{F}$ for all $i<j_0$. Let 
$i_0\leq j_0$ be of the same parity as $j_0$. Let $f',f''\in \mathcal{F}$ have the same codomain and ${\rm dom}(f') = {\rm codom}(g_{i_0})$. Then there exist 
$g_{j_0+1}\in [\mathcal{F}]$ and $f\in \mathcal{F}$ such that 
\[
g_{j_0}\circ g_{j_0+1}\in \mathcal{F}\;\hbox{ and }\; f''\circ f = f' \circ (g_{i_0}\circ g_{i_0+1}) \circ \cdots \circ (g_{j_0}\circ g_{j_0+1}). 
\]
\end{claim}

\noindent {\em Proof of Claim.} Since $g_{j_0}\in [\mathcal{F}]$, there exists $g\in [\mathcal{F}]$ with $g_{j_0}\circ g\in \mathcal{F}$. Consider the following two morphisms in $\mathcal{F}$ 
with the same codomain: 
\[
f''\;\hbox{ and }\; f' \circ (g_{i_0}\circ g_{i_0+1}) \circ \cdots \circ (g_{j_0}\circ g). 
\]
Since $\mathcal{F}$ is Fra{\"i}ss{\'e}, there exist $f, h\in \mathcal{F}$ such that 
\[
f''\circ f = f' \circ (g_{i_0}\circ g_{i_0+1}) \circ \cdots \circ (g_{j_0}\circ g)\circ h. 
\]
Let $g_{j_0+1} = g\circ h$. By Theorem~\ref{P:lar}(i), we see that $g\circ h\in [\mathcal{F}]$, so $g_{j_0+1}$ is as required, which proves the claim and the theorem. 
\end{proof}

\subsection{Domination closure---concrete Fra{\"i}ss{\'e} categories}\label{SS:CoinitialClosureConcreteFraisse}

Let $\mathcal{F}$ be a concrete \Fraisse{} category and let $\mathbb{A}$ be its \Fraisse{} limit. 
Theorem~\ref{T:iid} reformulates as follows.

\begin{theorem}\label{T:car} 
If $\mathcal F$ is a  concrete Fra{\"i}ss{\'e} category, then $[{\mathcal F}]$ 
consists of all $h\colon B\to A$, for which there exist $g\colon {\mathbb A}\to B$ that is approximable by $\mathcal F$ and $\phi\in {\rm Aut}({\mathbb A})$ 
with $h\circ g\circ \phi$ in $\mathcal F$.
Also, $[{\mathcal F}]$ 
consists of all $h\colon B\to A$ such that for each $g\colon {\mathbb F}\to B$ that is approximable by $\mathcal F$, there exists $\phi\in {\rm Aut}({\mathbb F})$ 
with $h\circ g\circ \phi$ in $\mathcal F$.
\end{theorem}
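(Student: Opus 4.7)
The plan is to deduce both descriptions of $[\mathcal F]$ in parallel from Theorem~\ref{T:iid}, interpreting the phrase ``in $\mathcal F$'' applied to a map out of $\mathbb A$ as ``approximable by $\mathcal F$''. Let $\mathcal S_1$ (respectively $\mathcal S_2$) denote the class of $h\colon B\to A$ in $\mathcal E$ for which the condition in the first (respectively second) sentence of the theorem holds; the containment $\mathcal S_2\subseteq \mathcal S_1$ is immediate, so the task reduces to verifying $[\mathcal F]\subseteq \mathcal S_1$, $\mathcal S_1\subseteq \mathcal S_2$, and $\mathcal S_1\subseteq [\mathcal F]$.

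For $[\mathcal F]\subseteq \mathcal S_1$, Theorem~\ref{T:iid} supplies an $\mathcal E$-iso-sequence $(g_i)$ with $g_i\colon X_{i+1}\to X_i$ and $g_0=h$. The inverse limits $\mathbb A_0,\mathbb A_1$ of the two generic sequences $(g_{2i}\circ g_{2i+1})$ and $(g_{2i+1}\circ g_{2i+2})$ are each $\mathcal F$-isomorphic to $\mathbb A$ by Theorem~\ref{T:FraisseOriginal}; fix such identifications $\psi_0,\psi_1$ and write $\pi^{(j)}_m$ for the corresponding projections. The families $(g_{2m+1})$ and $(g_{2m})$ are compatible with the two inverse systems and induce mutually inverse simplicial isomorphisms $\theta\colon \mathbb A_0\to \mathbb A_1$ and $\theta'\colon \mathbb A_1\to \mathbb A_0$, characterised by $\pi^{(1)}_m\circ \theta = g_{2m+1}\circ \pi^{(0)}_{m+1}$ and $\pi^{(0)}_m\circ \theta' = g_{2m}\circ \pi^{(1)}_m$. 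Setting $g=\pi^{(1)}_0\circ \psi_1\colon \mathbb A\to B$ and $\phi=\psi_1^{-1}\circ \theta\circ \psi_0\in \mathrm{Aut}(\mathbb A)$, the relation $g_0\circ g_1\in \mathcal F$ together with $\pi^{(1)}_0\circ \theta = g_1\circ \pi^{(0)}_1$ telescopes $h\circ g\circ \phi$ into $\pi^{(0)}_0\circ \psi_0$, which is approximable by $\mathcal F$ since $\psi_0$ is an $\mathcal F$-isomorphism.

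The inclusion $\mathcal S_1\subseteq \mathcal S_2$ is a short application of projective ultrahomogeneity (Theorem~\ref{T:ultrahomogeneity}(ii)): given $h\in \mathcal S_1$ with witnesses $g_0,\phi_0$ and an arbitrary approximable $g\colon \mathbb A\to B$, ultrahomogeneity provides $\psi\in \mathrm{Aut}_{\mathcal F}(\mathbb A)$ with $g\circ \psi=g_0$, after which $\phi=\psi\circ \phi_0$ is the required automorphism.

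Finally, for $\mathcal S_1\subseteq [\mathcal F]$, the plan is to show that $\mathcal F$ is dominating in $\mathcal S_1$; by the definition of domination closure this places $\mathcal S_1$ inside $[\mathcal F]$. Closure of $\mathcal S_1$ under pre-composition by $\mathcal F$ is again an ultrahomogeneity argument, replacing $g$ by $f\circ g'$ for an arbitrary approximable $g'$. The core step is producing, for each $h\in \mathcal S_1$, some $h'\in \mathcal S_1$ with $h\circ h'\in \mathcal F$: writing $h\circ g\circ \phi = e\circ f^\infty_k$ with $e\in \mathcal F$, the composite $g\circ \phi\colon \mathbb A\to B$ is continuous with finite discrete codomain, so compactness of $\mathbb A$ yields some $m\geq k$ and a simplicial map $\tilde d\colon A_m\to B$ with $g\circ \phi = \tilde d\circ f^\infty_m$. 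Surjectivity of $f^\infty_m$ on faces then forces $h\circ \tilde d = e\circ f^k_m\in \mathcal F$, while $\tilde d\in \mathcal S_1$ is witnessed by $g'=f^\infty_m$ and $\phi'=\phi^{-1}$, since $\tilde d\circ f^\infty_m\circ \phi^{-1}=g$ is approximable. The main obstacle is confirming that this factoring map $\tilde d$ belongs to the ambient category $\mathcal E$: it has to be read off from the composite $g\circ \phi$ via the epimorphism property of $f^\infty_m$, which in the concrete settings of interest requires $\mathcal E$ to be stable under such finite-level factorizations.
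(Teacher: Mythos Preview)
Your forward direction $[\mathcal F]\subseteq\mathcal S_1$ and the passage $\mathcal S_1\subseteq\mathcal S_2$ match the paper's argument almost verbatim: both use Theorem~\ref{T:iid} to produce an iso-sequence, take the two generic limits, transport them to $\mathbb A$ via $\mathcal F$-isomorphisms, and read off $g$ and $\phi$; and both invoke projective ultrahomogeneity for the second sentence of the theorem.

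For the backward direction $\mathcal S_1\subseteq[\mathcal F]$ you take a genuinely different route. The paper, given $h\in\mathcal S_1$, directly manufactures an $\mathcal E$-iso-sequence $(h_i')$ with $h_0'=h$: it asserts that the given $\phi\in\mathrm{Aut}(\mathbb A)$ is induced by an isomorphism $(h_k)$ between generic sequences, and then splices $h$, a finite-level factor of $g$, and a tail of $(h_k)$ together. You instead show that $\mathcal F$ is dominating in $\mathcal S_1$ and appeal to the definition of $[\mathcal F]$. Your argument is cleaner in that it avoids rebuilding the iso-sequence machinery, using only a single finite-level factorization $g\circ\phi=\tilde d\circ f^\infty_m$; the paper's argument, on the other hand, makes the connection to Theorem~\ref{T:iid} fully explicit and symmetric with the forward direction.

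The obstacle you flag at the end---that the factoring map $\tilde d$ must lie in the ambient category $\mathcal E$---is real, and you are right to isolate it. But note that the paper's proof carries exactly the same hidden hypothesis: when it writes ``let $\phi$ be induced by an isomorphism $(h_k)$'', it is asserting that an arbitrary $\phi\in\mathrm{Aut}(\mathbb A)$ comes from an $\mathcal E$-isomorphism of sequences, which amounts to the same finite-level factorization landing in $\mathcal E$. In the concrete settings of the paper (where $\mathcal E=\mathcal R(\Delta)$ or $\mathcal R_\star(\Delta)$ consists of all face-preserving simplicial epimorphisms) this is automatic, so neither proof has a gap in the intended applications; your explicit acknowledgment of the assumption is, if anything, an improvement in rigor over the paper's presentation.
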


\begin{proof} Assume $h\colon B\to A$ is in $[{\mathcal F}]$. By Theorem~\ref{T:iid}, we can fix an iso-sequence $(h_k)$ for $\mathcal F$ such that $h_0=h$. Let 
${\mathbb B}$ and ${\mathbb C}$ be the projective limits of Fra{\"i}ss{\'e} sequences $((h_{B})_i)$ and $((h_{C})_i)$ for $\mathcal F$, where 
$(h_{B})_i=h_{2i}\circ h_{2i+1}$ and $(h_{C})_i = h_{2i+1}\circ h_{2i+2}$.  Fix continuous isomorphisms 
$\psi_C\colon {\mathbb C} \to {\mathbb A}$ and $\psi_B\colon {\mathbb A}\to {\mathbb B}$. Since ${\mathbb A}$, ${\mathbb C}$ and ${\mathbb B}$ 
are all projective limits of Fra{\"i}ss{\'e} sequences for $\mathcal F$, both $\psi_B$ and $\psi_C$ are $\mathcal F$-isomorphisms. 
Let $\phi'\colon {\mathbb B}\to {\mathbb C}$ be the isomorphism induced by $(h_k)$ (which is not necessarily an $\mathcal F$-isomorphism). 
Define 
\[
g_B = (h_B)^\infty_0\circ \psi_B,\;  g_C = (h_C)^\infty_0\circ \psi_C^{-1}, \hbox{ and } \phi=\psi_C\circ \phi' \circ \psi_B. 
\]
It is now routine to check that $g_B$ and $g_C$ are approximable by $\mathcal F$, and  $h\circ g_C\circ \phi = g_B$.

Now assume $h\circ g \circ \phi$ is approximable by $\mathcal F$, for some $\phi\in {\rm Aut}({\mathbb A})$ 
and similarly is $g\colon {\mathbb A}\to B$. Let $\phi$ be induced by an isomorphism $(h_k)$ from $(f_i)$ to $(f_i)$, where 
$(f_i)$ is a Fra{\"i}ss{\'e} sequence whose limit is $\mathbb A$. Since $g$ is in $\mathcal F$, we can 
find $i_0$ such that for all $i\geq i_0$ there exists an $f'$ that is approximable by $\mathcal F$ such that $g = f'_i\circ f^\infty_{i}$. Since $h\circ g\circ \phi$ is 
approximable by $\mathcal F$, we can find $k_0$ such that ${\rm codom}(h_{k_0})= {\rm codom}(f_{j_0})$ for some $j_0\geq i_0$ and 
$h\circ f'_{j_0}\circ h_{k_0}$ is in $\mathcal F$. Consider the sequence $(h_i')$ defined by 
\[
\begin{split}
h_0' &= h;\\
h_1' &= f_{j_0}'\circ h_{k_0};\\
h_i' &= h_{k_0+i-1},\hbox{ for }i\geq 2.
\end{split}
\]
It is now easy to see that the sequence $(h_i')$ is an iso-sequence for $\mathcal F$ and $h'_0=h$. So $h\in [{\mathcal F}]$ by Theorem~\ref{T:iid}. 

The second sentence of the theorem follows immediately from the first one after we notice that, by universality of $\mathbb A$, for each 
$B\in \mathrm{Ob}({\mathcal F})$, there exists $g\colon {\mathbb A}\to B$ in ${\mathcal F}$ and that, by ultrahomogeneity of $\mathbb A$, 
for any $g, g'\colon {\mathbb A}\to B$ in $\mathcal F$ there exist $\phi'\in\mathrm{Aut}_{\mathcal{F}}(\mathbb{A})$ with 
$g'=g\circ \phi'$; see Theorem \ref{T:ultrahomogeneity}. 
\end{proof}

\section{Results in the theory of stellar moves}\label{S:strcomp}

In this section, we review the basic theory of stellar moves  from \cite{Ne,Al} and we prove new results that will be used in Section~\ref{S:CategoriesOfMaps}. 
In particular, in Section~\ref{S:Systems}, we introduce the notion of an induced system and, in Section~\ref{S:Cell-systems}, its specialization a cell-system.
Variations of the notion of a cell-system have been used in the literature of transversely cellular maps; see \cite{Akin,Co1,Mc}. 
Proposition~\ref{P:TameCellSystemAdmitsStarring} establishes the property of cell-systems, which will make it possible, in  
Section~\ref{S:CategoriesOfMaps}, to give a lower estimate on the domination closure $[\mathcal{S}(\Delta)]$ of  $\mathcal{S}(\Delta)$ in $\mathcal{R}(\Delta)$. 
The proof of Proposition~\ref{P:TameCellSystemAdmitsStarring} relies on two technical results---Theorems~\ref{T: star with strongly internal} and 
\ref{T:SystemTransformation}---which we find interesting in their own right.

\subsection{Basic notions from stellar theory}\label{S:StellarTheory}

We review some  basic notions of stellar theory. This theory is the combinatorial counterpart to PL-topology that was introduced in \cite{Ne,Al}. 
A more modern treatment can be found in \cite{Li}. Notationally we follow \cite{Li}, building upon the definitions of Section~\ref{SS:Complexes}.

Let $\sigma$ be a finite set and let 
$[\sigma]=\mathcal{P}(\sigma)\setminus \{\emptyset\}$
be the associated simplex. The {\bf boundary} $\partial [\sigma]$ of $[\sigma]$ is the simplicial complex  $[\sigma]\setminus\{\sigma\}$. 
Notice that $[\emptyset] =\emptyset$ and $\partial \emptyset=\emptyset$. For sets $\sigma,\tau$, if $\sigma\cap\tau=\emptyset$, we define the {\bf join} $\sigma\star\tau$ of $\sigma$ 
and $\tau$ to  be the union $\sigma\cup\tau$.  In other words, the expression $\sigma\star\tau$ implies $\sigma\cap\tau=\emptyset$ and 
$\sigma\star\tau=\sigma\cup\tau$. Similarly, for complexes $A,B$ with 
$\mathrm{dom}(A)\cap\mathrm{dom}(B)=\emptyset$, 
we set 
\[
A\star B=A\cup\{\sigma \star \tau \colon \; \sigma\in A, \; \tau\in B\} \cup B
\] 
to be {\bf join} of $A$ and $B$. Notice that $A\star\emptyset=\emptyset\star A=\emptyset$. 
Let $A$ be a complex and $\sigma$ be a set. We define the {\bf open star} $\mathrm{st}(\sigma,A)$ of $\sigma$ in $A$ to be the collection
\[
\mathrm{st}(\sigma,A) = \{\tau\in A \colon \sigma\subseteq\tau\}, 
\]
and the {\bf link} $\mathrm{lk}(\sigma, A)$ of $\sigma$ in $A$ to be the complex 
\[
\mathrm{lk}(\sigma, A) = \{\tau\in A \colon \sigma\star\tau\in A\}. 
\]
In particular, 
$\mathrm{st}(\emptyset, A)=\mathrm{lk}(\emptyset, A)=A$, and $\mathrm{st}(\sigma, A)=\mathrm{lk}(\sigma, A)=\emptyset$, if $\sigma\not\in A, \sigma\neq \emptyset$.

Let $A$ be a simplicial complex, $\sigma$ be a finite set and $a$ some point.  We define the {\bf stellar subdivision $(\sigma, a)$ on $A$} to be the simplicial complex 
\[
(\sigma,a)A= \begin{cases}
\big( A \setminus \mathrm{st}(\sigma,A)\big) \cup \big([a]\star\partial[\sigma]\star\mathrm{lk}(\sigma, A)\big),& \text{ if } \sigma\in A \text{ and }a\not\in {\rm dom}(A);\\
A,& \text{ otherwise}. 
\end{cases}
\]
Similarly, we define the {\bf stellar weld $(\sigma, a)^{-1}$ on $A$} to be the simplicial complex 
\[
\begin{split}
(\sigma&,  a)^{-1} A= 
\begin{cases} 
\big(A\setminus \mathrm{st}(\{a\},A)\big) \cup \big([\sigma]\star L\big),&  \text{ if } \sigma\not\in A,\, a\in \mathrm{dom}(A), \text{ and}  \\ &  \mathrm{lk}(\{a\},A)= \partial [\sigma] \star L, \text{ for some } L  ;\\
A,&  \text{ otherwise}; 
\end{cases}
\end{split}
\]
where $L$ is a complex. The notation is justified by the easily checked identities 
\[
(\sigma, a)^{-1} (\sigma, a) A = A, 
\]
for each complex $A$ with $\sigma\in A$ or $a\not\in {\rm dom}(A)$, and 
\[
(\sigma, a) (\sigma, a)^{-1} A = A, 
\]
for each complex $A$ with $\sigma\not\in A$ or $a\in {\rm dom}(A)$. 
In the first clauses of the definitions of $(\sigma, a)$ and $(\sigma, a)^{-1}$, we say that $(\sigma, a)$ or $(\sigma, a)^{-1}$, respectively, is {\bf essential for $A$}. 
We say that $(\sigma, a)$ and $(\sigma, a)^{-1}$ are {\bf based on } $\sigma$, and we call $a$ the {\bf vertex of} $(\sigma, a)$ and $(\sigma, a)^{-1}$, respectively.
We refer to both stellar subdivisions and stellar welds using the term {\bf stellar moves}.

Two simplicial complexes $A,B$ are {\bf stellar equivalent} if there exists a sequence $\delta_0, \dots , \delta_k$ of stellar moves such that $\delta_k \cdots \delta_0A=B$.  A {\bf stellar $n$-ball} is a simplicial complex $B$ stellar equivalent to the $n$-simplex $\Delta$. A {\bf stellar $(n-1)$-sphere} is a simplicial complex $S$ stellar equivalent to $\partial \Delta$. Notice that $\Delta^{-1}=\emptyset =\partial \Delta^{-1}$ is the only complex that is both a stellar $(-1)$-ball and a stellar $(-2)$-sphere.  A {\bf stellar $n$-manifold} is a simplicial complex $M$ with the property that the link $\mathrm{lk}(\{v\},M)$ of  every vertex $v$ of $M$ is either a stellar $(n-1)$-sphere or a stellar $(n-1)$-ball. This implies (see \cite{Li}) that $\mathrm{lk}(\sigma,M)$ is either a stellar sphere or 
a stellar ball of dimension $(n-\mathrm{dim}(\sigma)-1)$. The {\bf boundary $\partial M$ of a stellar $n$-manifold} is the collection of all faces $\sigma$ of 
$M$ whose link in $M$ is not a sphere. Notice that, since $\emptyset$ is both a sphere and a ball, this is not the same as the collection of all faces whose link in $M$ is a ball.
The boundary $\partial M$ forms a subcomplex of $M$ which in particular is a stellar $(n-1)$-manifold without boundary when $n\geq(-1)$. If $\delta$ is a stellar move  and $M$ is stellar manifold then it is easy to see that $\partial (\delta M)= \delta (\partial M)$. As a consequence $\partial B$ is always a stellar $(n-1)$-sphere when $B$ is a stellar $n$-ball and $\partial S=\emptyset$ if $S$ is a stellar sphere. Let $B^p$ and $S^p$  be, respectively, a stellar ball and a sphere of dimension $p\geq 0$; and similarly let $B^q$ and $S^q$  be a stellar ball and a sphere
of  dimension $q\geq 0$. Then the following useful facts follow directly from the definitions:
\begin{equation}
 \label{SB-identities}
\begin{aligned}
 B^p \star B^q  \text{ and } B^p \star S^q \text{ are stellar } (p+q+1)\text{-balls}; \\ S^p \star S^q \text{ is a stellar } (p+q+2)\text{-sphere}.
\end{aligned}
\end{equation}

\subsection{Strongly internal moves} \label{S: Tame balls}
    
Let $B$ be a stellar ball and let $\delta$ be a stellar move based on $\sigma$. We say that $\delta$ is {\bf internal} for $B$ if  $\delta$ is essential for $M$ but not essential $\partial M$.
Let $B$ be a stellar $n$-ball. A {\bf starring} of $B$ is a sequence  $\delta_0,\ldots,\delta_{k}$ of
stellar moves so that:
\begin{itemize}
\item for every $i\leq k$, $\delta_{i}$ is internal for $\delta_{i-1}\cdots\delta_1\delta_0 B$;
\item $\delta_{k}\ldots\delta_{0} B =[v]\star\partial B$ for some element $v$. 
\end{itemize}

The following theorem of Alexander and Newman is the main technical result on which most of the theory of stellar manifolds is based.

\begin{theorem}[\cite{Ne,Al} see also \cite{Li}]\label{Th: starring}
Every stellar $n$-ball $B$ admits a starring. 
\end{theorem}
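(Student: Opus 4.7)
The plan is to prove the theorem by a double induction: an outer induction on the dimension $n$ of $B$, and an inner induction on the minimum number of stellar moves needed to transform $\Delta^n$ into $B$.

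For the outer base cases, $n = -1$ and $n = 0$ are trivial. For the inner base case (at arbitrary $n$), $B = \Delta^n$ admits a starring consisting of the single stellar subdivision $\delta = (\sigma, a)$ based on the top face $\sigma = \{0, 1, \ldots, n\}$ with $a$ a fresh vertex: one checks directly that $\delta \Delta^n = [a] \star \partial \Delta^n$, and this move is essential for $\Delta^n$ yet not for $\partial \Delta^n$ (the top face does not lie in $\partial \Delta^n$), hence internal.

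For the inductive step, write $B = \delta_k B'$, where $B'$ is obtained from $\Delta^n$ by one fewer stellar move, so by the inner inductive hypothesis admits a starring $\epsilon_1, \ldots, \epsilon_m$ with $\epsilon_m \cdots \epsilon_1 B' = [v] \star \partial B'$. I split by the nature of $\delta_k$.

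\emph{If $\delta_k$ is internal for $B'$:} Then $\partial B = \partial B'$, and the inverse move $\delta_k^{-1}$ is internal for $B$, since the vertex or face it acts upon was added interiorly by $\delta_k$. The composition $\epsilon_m \cdots \epsilon_1 \delta_k^{-1}$ transforms $B$ into $[v] \star \partial B' = [v] \star \partial B$ through internal moves, yielding the required starring.

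\emph{If $\delta_k$ is essential also for $\partial B'$:} The boundary is genuinely altered, and this is the main obstacle. My approach would be to observe that although $\delta_k$ is a boundary move for $B'$, it becomes internal for the cone $[v] \star \partial B'$: the supporting face of $\delta_k$ lies in $\partial B'$, whose link inside the cone picks up the apex $v$, placing the face into the interior of $[v] \star \partial B'$. Hence applying $\delta_k$ to $[v] \star \partial B'$ is an internal move producing $[v] \star \partial B$. It then remains to transport the starring of $B'$ across $\delta_k$ to obtain a sequence of internal moves starting from $B$ rather than from $B'$. This requires converting each $\epsilon_i$ into an analogous move applicable to the boundary-altered intermediate complex, using the outer inductive hypothesis applied to the stellar $(n-1)$-sphere $\partial B'$ and a collar argument near the boundary.

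The principal obstacle is precisely the boundary-move case: the starring of $B'$ operates on complexes derived from $B'$, whereas we need moves starting at $B = \delta_k B'$, and the structure near the boundary has been modified. Handling this cleanly requires either a commutation argument between $\delta_k$ and each $\epsilon_i$, or a direct construction inside a collar neighborhood of $\partial B'$, both relying on the lower-dimensional case of the theorem to supply the necessary internal moves on the boundary sphere. This is the technical heart of the Alexander--Newman argument, and where essentially all of the bookkeeping resides.
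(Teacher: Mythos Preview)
The paper does not prove this theorem; it is stated as a classical result of Alexander and Newman, with a reference to Lickorish's exposition, and is then used as a black box. So there is no proof in the paper to compare against.

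Evaluating your attempt on its own: the internal-move case is fine, but the boundary-move case contains a concrete error, not just an incomplete sketch. You claim that a boundary move $\delta_k$ (one that is essential for $\partial B'$) ``becomes internal for the cone $[v]\star\partial B'$,'' reasoning that the link of the supporting face $\sigma$ inside the cone ``picks up the apex $v$, placing the face into the interior.'' This is false. The boundary of $[v]\star\partial B'$ is exactly $\partial B'$, since the cone on a sphere is a ball with that sphere as boundary. Computing the link confirms it: $\mathrm{lk}(\sigma,[v]\star\partial B')=[v]\star\mathrm{lk}(\sigma,\partial B')$ is a cone on a sphere, hence a ball, and a face whose link is a ball lies in the boundary. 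So $\sigma\in\partial([v]\star\partial B')$, the move $\delta_k$ remains essential for this boundary, and it is \emph{not} internal for the cone. Your proposed passage from $[v]\star\partial B'$ to $[v]\star\partial B$ is therefore a boundary move, and the argument breaks precisely at the step you hoped would work.

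The genuine Alexander--Newman argument does use a double induction of roughly the shape you describe, but the boundary case is handled by a substantially more intricate mechanism: one shows (via a chain of lemmas, using the lower-dimensional hypothesis on links) that any stellar move on the boundary can be simulated by a sequence of internal moves on the ball. This is not a matter of one boundary move quietly becoming internal on a cone; it requires real work, and is indeed where the content of the theorem lives.
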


Here, we introduce the notion of a strongly internal stellar move and we show, in Theorem \ref{T: star with strongly internal}, 
that the starring of a stellar ball can always be assumed to be strongly internal, as long as the boundary of $B$ is an induced subcomplex. 
    
\begin{definition}\label{def: tame ball}
A stellar ball $B$ is {\bf tame} if its boundary $\partial B$ is an induced subcomplex of $B$, i.e., $\sigma\in B$ and 
$\sigma\subseteq\mathrm{dom}(\partial B)$ implies $\sigma\in\partial B$.
\end{definition}

Notice that $\Delta$, in contrast to $[v]\star \partial \Delta$,  is not a tame $n$-ball. However, every stellar ball becomes tame after a barycentric  subdivision.  It is easy to see that a stellar subdivision of a tame ball is tame. The same is not always true for a weld of a tame ball. 

\begin{definition}
Let $M$ be  a stellar manifold and let $\delta$ be a stellar move based on $\sigma$. We say that $\delta$ is {\bf strongly internal for $M$}, if  $\sigma\not\subseteq\mathrm{dom}(\partial M)$.
\end{definition}

It is easy to check that the class of tame balls is invariant under strongly internal stellar moves. 
The following is the main result of this subsection.

\begin{theorem}\label{T: star with strongly internal}
Let $B$ be a tame ball. Then $B$ can be starred using only strongly internal moves.
\end{theorem}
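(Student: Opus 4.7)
The plan is to revisit the Alexander--Newman starring argument of Theorem~\ref{Th: starring} and refine it so that, starting from a tame ball $B$, every move in the produced sequence is strongly internal. I would proceed by induction on the dimension $n$ of $B$, with tameness as a persistent invariant of the intermediate complexes.

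The first step is an easy but crucial observation: on any tame ball $M$, every internal stellar subdivision is automatically strongly internal. If $(\sigma,a)$ is internal for $M$, then $\sigma\in M$ and $\sigma\notin\partial M$; were $\sigma\subseteq\mathrm{dom}(\partial M)$, tameness would force $\sigma\in\partial M$, contradicting $\sigma\notin\partial M$. In addition, I would verify directly from the definitions that strongly internal subdivisions (and strongly internal welds) preserve tameness, so any sequence of such moves produces a chain of tame balls. Consequently, any internal starring of $B$ that happens to use only subdivisions already yields the conclusion.

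The only remaining obstruction is the possible appearance, somewhere in the Alexander--Newman procedure, of an internal weld $(\sigma,a)^{-1}$ that fails to be strongly internal, i.e., with $\sigma\subseteq\mathrm{dom}(\partial M)$. The key tool is a local replacement lemma: such a weld can be simulated by a preparatory block of strongly internal subdivisions followed by a strongly internal weld. Concretely, the essentiality of the weld provides the decomposition $\mathrm{lk}(\{a\},M)=\partial[\sigma]\star L$; one applies the inductive form of Theorem~\ref{T: star with strongly internal} in dimension $n-1$ to a tame refinement of the ball $[\sigma]\star L$ inside $M$, and then joins the resulting strongly internal starring with $\{a\}$. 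By the join identities \eqref{SB-identities} and the fact that $\{a\}\notin\partial M$ in the relevant case, the joined moves are strongly internal in $M$, and their cumulative effect matches the original weld.

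The main obstacle will be the global bookkeeping required to iterate this local replacement through all steps of the Alexander--Newman sequence while keeping the intermediate balls tame and guaranteeing that the final complex is still of the form $[v]\star\partial B$. In particular, one must check that the auxiliary interior vertices introduced by each replacement can be removed later by strongly internal welds---a point for which tameness is again essential---and that they do not block any subsequent prescribed move. I expect this coordination between the local replacement lemma and the outer Alexander--Newman induction to be the genuinely new content of the proof; the base cases $n\leq 1$ are immediate, since a tame stellar $1$-ball is an interval and can be starred by a single strongly internal subdivision at an interior edge.
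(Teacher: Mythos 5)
Your preliminary observations are correct (on a tame ball every internal subdivision is automatically strongly internal, and strongly internal moves preserve tameness; both facts are used in the paper as well), but the centerpiece of your plan, the ``local replacement lemma,'' is false, and the tameness invariant you maintain is exactly what rules it out. Suppose $M$ is tame and $(\sigma,a)^{-1}$ is internal but not strongly internal, so $\sigma\subseteq\mathrm{dom}(\partial M)$, $\sigma\notin M$, and the boundary is unchanged by the move. Then $(\sigma,a)^{-1}M$ contains the face $\sigma$, which is not in $\partial\bigl((\sigma,a)^{-1}M\bigr)=\partial M$ even though all of its vertices lie in $\mathrm{dom}(\partial M)$; hence $(\sigma,a)^{-1}M$ is \emph{not} tame. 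Since strongly internal moves preserve tameness, no block of strongly internal subdivisions and welds applied to the (tame) complex preceding the offending weld can have ``cumulative effect matching the original weld.'' So the non-strongly-internal welds occurring in the Alexander--Newman sequence cannot be simulated locally; one is forced to restructure the sequence globally, and that is precisely the part your sketch leaves to ``bookkeeping.'' (Your base case is also off: a single edge is not tame, and subdividing it at that edge is not strongly internal; a genuinely tame $1$-ball is starred by strongly internal welds, not by one subdivision.)

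The paper handles exactly this difficulty by a global commutation argument rather than a dimension induction. Starting from the internal starring given by Theorem~\ref{Th: starring}, let $\delta_p=(\sigma,a)^{-1}$ be the first move that is not strongly internal (necessarily a weld, by your first observation). Since $\sigma$ has all vertices on the boundary but is not a boundary face, it cannot survive into the final cone $[v]\star\partial B$; the first later move destroying it must be the subdivision $(\sigma,a')$, and $\sigma$ is present in all intermediate complexes. Lemma~\ref{L: stellar commutator} then allows one to commute $(\sigma,a')$ backwards past the intervening moves at the cost of inserting only strongly internal moves, until it sits right after $\delta_p$, where the pair $(\sigma,a')(\sigma,a)^{-1}$ collapses to the vertex-renaming subdivision $(\{a\},a')$, which is strongly internal; induction on the number of bad moves finishes the proof. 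Any repair of your approach would need such a forward-looking cancellation step; a purely local simulation of the bad weld by strongly internal moves is impossible.
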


The proof of Theorem \ref{T: star with strongly internal} will rely on the following lemma.

\begin{lemma}\label{L: stellar commutator}
Let $K$ be a stellar ball and let $\sigma\in K$. Let also $\delta$ be an internal stellar move on $K$ so that $\sigma\in \delta K$. Then $(\sigma,a)\delta K= \bar{\varepsilon}(\sigma,a) K$, where $\bar{\varepsilon}$ is a sequence of strongly internal moves with the exception of exactly one entry $\varepsilon_j$, which is equal to $\delta$. 
\end{lemma}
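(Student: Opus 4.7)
The plan is to proceed by case analysis on whether $\delta$ is a subdivision or a weld and on the relative position of the base of $\delta$ with respect to $\sigma$ inside $K$. The easy scenario is when the region modified by $\delta$ is disjoint from the star of $\sigma$ in $K$---for a subdivision $\delta=(\tau,b)$ this is the condition $\sigma\cup\tau\notin K$. In that case $(\sigma,a)$ and $\delta$ act on disjoint subcomplexes and commute literally, so one can take $\bar{\varepsilon}=(\delta)$, the single-term sequence.

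The interesting scenario is when the modified regions overlap, i.e., $\sigma\cup\tau\in K$ (or the analogous condition when $\delta$ is a weld). Here $(\sigma,a)\delta K$ and $\delta(\sigma,a)K$ triangulate the same subregion of $|K|_{\mathbb R}$ via different simplicial patterns; in particular, when $\sigma\subsetneq\tau$ the move $\delta$ is even vacuous on $(\sigma,a)K$ because its base face $\tau$ has already been destroyed by $(\sigma,a)$. My plan is to bridge $(\sigma,a)K$ with $(\sigma,a)\delta K$ by performing $\delta$ at the appropriate stage and, around it, inserting a sequence of bistellar-flip-style rearrangements. Each such flip is realized as a stellar subdivision that introduces an auxiliary vertex in the interior of the contested region, followed by a stellar weld that eliminates it in favor of the required combinatorial pattern. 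The weld case $\delta=(\tau,b)^{-1}$ is handled dually, or simply by inverting to the subdivision $(\tau,b)$ and using the already-established commutation relation.

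The hard part of the argument, and the reason the lemma requires care, will be to verify that every auxiliary move in $\bar{\varepsilon}$ can be chosen to be strongly internal, i.e., based on a face with a vertex outside $\mathrm{dom}(\partial\cdot)$. The leverage here is provided by the fresh vertex $b$ introduced by $\delta$: since $\delta$ is internal for $K$, the vertex $b$ lies in the interior of $\delta K$, so any auxiliary move based on a face containing $b$ is automatically strongly internal. When $\sigma$ is interior to $K$, the new vertex $a$ plays the parallel role. A local analysis inside the closed star of $\sigma\cup\tau$ then allows one to arrange every auxiliary base face to contain $a$, $b$, or a pre-existing interior vertex of $K$ inherited from $K$ through $(\sigma,a)$ and $\delta$, which is the essential content of the lemma.
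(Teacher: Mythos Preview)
Your outline follows the same architecture as the paper's proof: case split on subdivision versus weld, trivial commutation when the supports do not interact, and a short auxiliary sequence in the overlapping case. Two points deserve comment.

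First, the heart of the argument is the explicit commutation identity, which you only gesture at with ``bistellar-flip-style rearrangements.'' The paper imports this from Lickorish \cite[Lemma~3.4]{Li}: writing $\rho=\sigma\cap\tau$, $\sigma=\sigma_0\star\rho$, $\tau=\tau_0\star\rho$, one has
\[
(\sigma,a)(\tau,b)K=(\{b\}\star\sigma_0,d)^{-1}\,(\{a\}\star\tau_0,d)\,(\tau,b)\,(\sigma,a)K,
\]
so $\bar\varepsilon$ consists of $\delta=(\tau,b)$ together with exactly one subdivision--weld pair, not an indeterminate number of flips. You should state and verify this identity rather than leave it as an intention. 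Note also that the paper's trivial case is ``$\sigma\cap\tau=\emptyset$ \emph{or} no common extending face,'' which is strictly broader than your $\sigma\cup\tau\notin K$.

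Second, your strong-internality argument has a gap. You propose that each auxiliary base contains $a$, $b$, or a pre-existing interior vertex. The base $\{b\}\star\sigma_0$ is fine because $b$ is new and $\delta$ is internal. But for $\{a\}\star\tau_0$ your argument only covers the case $\sigma\notin\partial K$; when $\sigma\in\partial K$ the vertex $a$ lands on the boundary, and there is no ``pre-existing interior vertex'' sitting in $\tau_0$ in general. One must argue directly that $\{a\}\cup\tau_0\not\subseteq\mathrm{dom}(\partial M)$ from the hypothesis that $(\tau,b)$ is internal; this is where the actual work lies, and you should not paper over it.

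Your inversion idea for the weld case is legitimate and in fact yields exactly the formula the paper derives by a lengthy direct computation: inverting the subdivision identity gives
\[
(\sigma,a)(\tau,b)^{-1}K=(\tau,b)^{-1}\,(\{a\}\star\tau_0,d)^{-1}\,(\{b\}\star\sigma_0,d)\,(\sigma,a)K,
\]
so this shortcut is worth keeping.
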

\begin{proof}
Assume first that $\delta$ is an internal subdivision $(\tau,b)$. If $\sigma\cap\tau=\emptyset$, or if there is no face in $K$ which extends both $\sigma$ and $\tau$, then it is easy to see that $(\sigma,a)(\tau,b)K=(\tau,b)(\sigma,a)K$. Otherwise, let $\rho=\sigma\cap\tau$, $\sigma=\sigma_0\star\rho$, $\tau=\tau_0\star\rho$ and $\sigma_0\star\tau_0\star\rho\in K$. By the proof of Lemma 3.4 in \cite{Li} we have that
\[(\sigma,a)(\tau,b)K= (\{b\}\star\sigma_0,d)^{-1}(\{a\}\star\tau_0,d)(\tau,b)(\sigma,a)K.\]
Notice that both moves $(\{b\}\star\sigma_0,d)^{-1}$ and $(\{a\}\star\tau_0,d)$  here are strongly internal. 

We now take care of the case when $\delta$ is an internal weld $(\tau,b)^{-1}$. Again, if $\sigma\cap \tau=\emptyset$, or if $\sigma\not\in\mathrm{lk}(\{b\},K)=\mathrm{lk}(\tau,\delta K)\star\partial[\tau]$, then it is easy to see that $(\sigma,a)(\tau,b)K=(\tau,b)(\sigma,a)K$. Let therefore $\sigma\in\mathrm{lk}(\{b\},K)$, with $\sigma\cap\tau=\rho\neq\emptyset$, and set $\sigma=\sigma_0\star\rho$ and $\tau=\tau_0\star\rho$ with $\sigma_0\star\tau_0\star\rho\in \delta K$. Or equivalently, with
$[\sigma_0]\star \partial [\tau]\star[b]$ being a subcomplex of $K$. We will show that 
\[(\sigma,a)(\tau,b)^{-1}K= (\tau,b)^{-1} \; (\{a\}\star\tau_0,d)^{-1} \; (\{b\}\star\sigma_0,d) (\sigma,a)K.\]

It is actually enough to consider how the sequences transform the subcomplex $A=[\sigma_0]\star\partial[\tau]\star[b]$ of $K$. It is easy to compute that  
\[(\sigma,a)(\tau,b)^{-1} A= [a]\star\partial[\sigma]\star[\tau_0].\]
For the second sequence of stellar moves we use the identity 
\[\partial [X\star Y]=(\partial [X]\star [Y])\cup(\partial [Y]\star [X]),\]
 and we compute as follows:
\begin{gather*}
A = [\sigma_0]\star\partial[\tau]\star[b] = \big([\sigma_0]\star[b] \star \partial[\rho]\star [\tau_0]\big) \cup \big([\sigma_0]\star[b] \star \partial[\tau_0]\star [\rho]\big)=\\
=\big([\sigma_0]\star[b] \star \partial[\rho]\star [\tau_0]\big) \cup \big([\sigma]\star[b] \star \partial[\tau_0]\big)  \xrightarrow{(\sigma,a)} \\
\big([\sigma_0]\star[b] \star \partial[\rho]\star [\tau_0]\big) \cup \big([a]\star\partial[\sigma]\star[b] \star \partial[\tau_0]\big)= \\
\big([\sigma_0]\star[b] \star \partial[\rho]\star [\tau_0]\big) \cup \big([a]\star\partial[\sigma_0]\star[\rho]\star[b] \star \partial[\tau_0]\big) \cup \big([a]\star[b]\star[\sigma_0]\star\partial[\rho] \star \partial[\tau_0]\big)\\
\xrightarrow{(\{b\}\star\sigma_0,d)}  \big([d]\star[\sigma_0] \star \partial[\rho]\star [\tau_0]\big) \cup \big( [d]\star\partial[\sigma_0]\star[b] \star \partial[\rho]\star [\tau_0]\big) \cup\\
\cup \big([a]\star\partial[\sigma_0]\star[\rho]\star[b] \star \partial[\tau_0]\big)\cup \\
\cup \big([d]\star[a]\star[\sigma_0]\star\partial [\rho] \star \partial[\tau_0]\big) \cup \big([d]\star[a]\star[b]\star\partial[\sigma_0]\star \partial[\rho] \star \partial[\tau_0]\big).\\
\end{gather*}
Merging together the first and fourth term  of the last expression, as well as the second and fifth term, we have that the above expression is equal to

\begin{gather*}
\big(\partial([a]\star[\tau_0])\star [d] \star[\sigma_0]\star \partial [\rho]\big) \cup \big([a]\star\partial[\sigma_0]\star [\rho]\star[b] \star \partial[\tau_0]\big)\cup\\
\cup\big(\partial([a]\star[\tau_0])\star[d]\star[b]\star\partial[\sigma_0]\star\partial [\rho]\big)\xrightarrow{([a]\star\tau_0,d)^{-1}} \\
\big([a]\star[\tau_0]\star[\sigma_0]\star \partial [\rho]\big) \cup \big([a]\star\partial[\sigma_0]\star[\rho]\star[b] \star \partial[\tau_0]\big)\cup\\
\cup\big(\{a\}\star\partial[\sigma_0]\star[\tau_0]\star[b]\star\partial [\rho]\big)=\\
= \big([a]\star[\tau_0]\star[\sigma_0]\star \partial [\rho]\big) \cup \big([a]\star\partial[\sigma_0]\star[b]\star\partial ( [\tau_0]\star [\rho]) \big)\xrightarrow{(\tau,b)^{-1}} \\
= \big([a]\star[\tau_0]\star[\sigma_0]\star \partial [\rho]\big) \cup \big([a]\star\partial[\sigma_0]\star [\tau_0]\star [\rho] \big)=\\
=[a]\star\partial[\sigma]\star[\tau_0].
\end{gather*}
\end{proof}

\begin{proof}[Proof of Theorem~\ref{T: star with strongly internal}]
By Theorem~\ref{Th: starring} let $\bar{\delta}=\delta_0,\ldots,\delta_{k-1}$ be a sequence of internal moves which star $B$. 
Set $B_{-1}=B$ and $B_i=\delta_i B_{i-1}$, for every $i<k$. If for every $i<k$, $\delta_i$ is strongly internal for $B_{i-1}$, then we are done. 
Assume therefore that there is at least one non-strongly internal move in the starring sequence and pick $p$ to be the smallest index $p<k$ so that 
$\delta_p$ is not strongly internal. We have therefore that $B_{p-1}$ is a tame $n$-ball, and since internal subdivisions on tame balls are strongly internal, 
we have that $\delta_p=(\sigma,a)^{-1}$ for some set $\sigma$ with $\partial [\sigma]\in \partial B_{p-1}=\partial B_{i}$, for every $i<k$. 
Notice that $\sigma\not\in B_{k-1}=[a]\star\partial B_{k-1}$, since otherwise $\sigma\in\partial B_{k-1}= \partial B_{p}$, contradicting that $\delta_p$ is internal. 
Let $r$ be the smallest index $r>p$ with $\sigma\not\in B_r$. Then, $\delta_r$ cannot be a weld $(\tau,b)^{-1}$ because in this case $b$ would belong to 
$\sigma$ contradicting that $\delta_r$ is internal. Therefore, $\delta_r=(\sigma,a')$. Hence, for every $i$ with $p<i<r$ we have that $\sigma\in B_i$, and therefore, 
by Lemma \ref{L: stellar commutator} we can replace the sequence $\delta_0,\ldots,\delta_p,\ldots,\delta_r,\ldots ,\delta_{k-1}$ with a new starring of $B$:
\[\delta_0,\ldots,\delta_p, \delta_{r},\bar{\varepsilon}_{p+1}, \ldots,\bar{\varepsilon}_{r-1},\delta_{r+1},\ldots, \delta_{k-1}.\]
Replacing $\delta_{r}\delta_{p}$ in the above starring with $(\{a\},a')$ we produce a starring of $B$ with strictly fewer non-internal moves than 
the initial sequence $\bar{\delta}$. By induction on the number non-internal moves of $\bar{\delta}$ the proof is complete.
\end{proof}

\subsection{Systems and their transformations}\label{S:Systems}

In this section we study certain decompositions of simplicial complexes into coherent collections of subcomplexes, which we call (abstract) systems.  In Theorem~\ref{T:SystemTransformation} we provide sufficient conditions under which  such a system can be transformed to a ``canonical form.'' Concrete examples of transformable systems will be provided in Section \ref{S:Cell-systems}.

A {\bf system} is a collection $\mathcal S$ of non-empty complexes that is closed under non-empty intersections, that is, for all $C_1, C_2\in {\mathcal S}$, if 
$C_1\cap C_2\not= \emptyset$, then 
$C_1\cap C_2\in {\mathcal S}$. 
The union $\bigcup{\mathcal S}$ of all complexes in $\mathcal S$ 
is a complex; we write 
\[
U_{\mathcal S} = \bigcup{\mathcal S}. 
\]
We say that $\mathcal S$ is a {\bf system on $U_{\mathcal S}$}. 
We associate with $\mathcal S$ the partial order 
\[
P_{\mathcal S} = ({\mathcal S}, \subseteq),
\]
that is, the elements of $P_{\mathcal S}$ are complexes in $\mathcal S$ and the partial order relation is inclusion. 
Recall that the {\bf chain complex}  of a poset $P$ is simplicial complex $\mathcal{C}(P)$ which contains all finite non-empty linearly order subsets of $P$. 
In particular, $\mathrm{dom}(\mathcal{C}(P))=P$. This construction allows us to associate with $\mathcal S$ the  complex 
\[
{\mathcal C}(P_{\mathcal S}).
\] 
We note that $P_{\mathcal S}$ has the following property: if two elements $C_1, C_2$ of $P$ have a lower bound in $P$, then they have the greatest 
lower bound $C_1\cap C_2$. 
For $C\in {\mathcal S}$, we set 
\[
D_{\mathcal S}(C)=\bigcup \{ C'\in {\mathcal S}\mid C'\subsetneq C\}.
\]
Note that $D_{\mathcal S}(C)$ is a complex; if $C$ is a minimal element of $P_{\mathcal S}$, then 
$D_{\mathcal S}(C)$ is the empty complex.

Our goal will be to use the structure of a system $\mathcal S$ to transform 
$U_{\mathcal S}$ into $\mathcal{C}(P_{\mathcal S})$. In order to achieve this goal in Theorem~\ref{T:SystemTransformation}, 
we need to introduce notions that concern the interaction of a system with stellar moves. 
We say that a move $\delta$ that is based on $\sigma$ and whose vertex is $a$ is {\bf concentrated on $C\in {\mathcal S}$ with respect to $\mathcal S$} provided that 
\[
([a]\star [\sigma])\cap U_{\mathcal S} \subseteq C, \; a\not\in {\rm dom}(D_{\mathcal S}(C)),\hbox{ and }\sigma\not\subseteq {\rm dom}(D_{\mathcal S}(C)). 
\]
We say that a sequence $\overline{\delta}$ of moves is {\bf concentrated on $C$} if each entry of $\overline{\delta}$ is concentrated on $C$. 
Let $\delta$ and $\delta'$ be moves with 
$\delta$ having vertex $a$ and $\delta'$ based on $\sigma'$ with vertex $a'$. We say that $\delta'$ is {\bf free from} $\delta$ if $\delta$ is a weld or 
we have $a\not= a'$ and $a\not\in \sigma'$. 

A {\bf starring} of the system $\mathcal S$ is a collection $(\overline{\delta}_C\colon C\in {\mathcal S})$ of sequences of moves such that for some 
$v_C$, for $C\in {\mathcal S}$, the following properties 
hold for $C, C'\in {\mathcal S}$. 
\begin{enumerate}
\item[---]  $\overline{\delta}_C$ is concentrated on $C$. 
\item[---] $\overline{\delta}_C (C) = [v_C]\star D_{\mathcal S}(C)$.
\item[---] $v_C\not\in {\rm dom}(C)$, and if $C\not= C'$, then $v_C\not= v_{C'}$.
\item[---] If $C\not\subseteq C'$ and $\delta$ and $\delta'$ are entries of $\overline{\delta}_C$ and $\overline{\delta}_{C'}$, respectively, then 
$\delta'$ is free from $\delta$. 
\end{enumerate} 
A {\bf linearization} 
$\overline{\delta}$ of $(\overline{\delta}_C\mid C\in {\mathcal S})$ is a concatenation $\overline{\delta}_{C_1}\cdots \overline{\delta}_{C_m}$ of all sequences in 
the above family such that $C_i\subseteq C_j$ implies $j<i$.  Finally,  we say that a system $\mathcal S$ is {\bf induced} provided that, for each $C\in {\mathcal S}$, if $\sigma \subseteq {\rm dom}(C)$ and 
$\sigma \in U_{\mathcal S}$, then $\sigma \in C$. 
The following proposition is the main result of this section.

\begin{theorem}\label{T:SystemTransformation}
Let $\mathcal S$ be an induced system, let $(\overline{\delta}_C\mid C\in {\mathcal S})$ be a starring of $\mathcal S$, and 
let $\overline{\delta}$ be a linearization of  $(\overline{\delta}_C\mid C\in {\mathcal S})$. Then $v_C\to C$ is an isomorphism from 
$\overline{\delta} (U_{\mathcal S})$ to $\mathcal{C}(P_{\mathcal S})$.
 \end{theorem}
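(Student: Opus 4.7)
The plan is to induct on $|\mathcal S|$. The base cases $|\mathcal S|\leq 1$ are immediate: for $\mathcal S=\{C\}$, one has $D_{\mathcal S}(C)=\emptyset$, so $\overline{\delta}_C(C)=[v_C]$, which matches $\mathcal{C}(P_{\mathcal S})$ via $v_C\mapsto C$. For the inductive step, let $C_1$ be the complex indexed by the first block of $\overline{\delta}$. Since the linearization obeys ``$C_i\subseteq C_j\Rightarrow j<i$'', $C_1$ is maximal in $P_{\mathcal S}$. Set $\mathcal S':=\mathcal S\setminus\{C_1\}$ and $\overline{\delta}=\overline{\delta}_{C_1}\overline{\delta}'$. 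Maximality of $C_1$ makes $\mathcal S'$ closed under non-empty intersection (hence an induced system) and yields $D_{\mathcal S}(C)=D_{\mathcal S'}(C)$ for each $C\in\mathcal S'$; consequently $(\overline{\delta}_C\mid C\in\mathcal S')$ with linearization $\overline{\delta}'$ is a starring of $\mathcal S'$, and the inductive hypothesis applies to it.

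Next, I would establish that
\[
\overline{\delta}_{C_1}(U_{\mathcal S})=U_{\mathcal S'}\cup\bigl([v_{C_1}]\star D_{\mathcal S}(C_1)\bigr),
\]
glued along $D_{\mathcal S}(C_1)$. The supporting lemma, proved by analysing a single move $\delta\in\overline{\delta}_{C_1}$, is that $\delta$ modifies only faces in the evolving $C_1$-part. For a subdivision $(\sigma,a)$ concentrated on $C_1$, the concentration conditions give $\sigma\cap U_{\mathcal S}\subseteq C_1$ and $\sigma\not\subseteq\mathrm{dom}(D_{\mathcal S}(C_1))$. Maximality of $C_1$ then forces every face $\tau\in U_{\mathcal S}$ with $\sigma\subseteq\tau$ to lie in $C_1$: otherwise $\tau\in C'$ for some $C'\neq C_1$ in $\mathcal S$, the induced property yields $\sigma\in C'$, and $\sigma\in C_1\cap C'\subseteq D_{\mathcal S}(C_1)$ contradicts $\sigma\not\subseteq\mathrm{dom}(D_{\mathcal S}(C_1))$. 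An entirely parallel argument, using $a\notin\mathrm{dom}(D_{\mathcal S}(C_1))$, handles welds. Hence the moves of $\overline{\delta}_{C_1}$ leave $D_{\mathcal S}(C_1)$ pointwise intact and transform only inside $C_1$; the stipulated end effect $\overline{\delta}_{C_1}(C_1)=[v_{C_1}]\star D_{\mathcal S}(C_1)$ then delivers the claim.

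Then I would apply $\overline{\delta}'$ to the decomposition above. Since $C_1\not\subseteq C$ for every $C\in\mathcal S'$, the freeness clause of starring forces every move of $\overline{\delta}'$ to be free from every subdivision of $\overline{\delta}_{C_1}$; in particular, no move of $\overline{\delta}'$ has $v_{C_1}$ as vertex or in its base, so the join $[v_{C_1}]\star(-)$ commutes past $\overline{\delta}'$, giving
\[
\overline{\delta}'\bigl(U_{\mathcal S'}\cup[v_{C_1}]\star D_{\mathcal S}(C_1)\bigr)=\overline{\delta}'(U_{\mathcal S'})\cup[v_{C_1}]\star\overline{\delta}'\bigl(D_{\mathcal S}(C_1)\bigr).
\]
By the inductive hypothesis, $\overline{\delta}'(U_{\mathcal S'})\cong\mathcal{C}(P_{\mathcal S'})$ via $v_C\mapsto C$. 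To identify $\overline{\delta}'(D_{\mathcal S}(C_1))$ with $\mathcal{C}(P_{\mathcal S_1})$ for $\mathcal S_1:=\{C\in\mathcal S:C\subsetneq C_1\}$, I reuse the concentration/locality argument to show that moves indexed by $C\in\mathcal S'\setminus\mathcal S_1$ (i.e., $C$ incomparable with $C_1$) act trivially on $U_{\mathcal S_1}=D_{\mathcal S}(C_1)$: a base vertex in $U_{\mathcal S_1}$ would lie in some $C\cap C''\subsetneq C$ with $C''\subsetneq C_1$, hence in $D_{\mathcal S}(C)$, contradicting concentration. The effective action on $U_{\mathcal S_1}$ is therefore by the moves indexed by $\mathcal S_1$ in their linearization order---a linearization of a starring of $\mathcal S_1$---so a second appeal to the inductive hypothesis completes the identification. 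Assembling, $\overline{\delta}(U_{\mathcal S})\cong\mathcal{C}(P_{\mathcal S'})\cup[v_{C_1}]\star\mathcal{C}(P_{\mathcal S_1})=\mathcal{C}(P_{\mathcal S})$, the last equality being the decomposition of chains in $P_{\mathcal S}$ into those avoiding $C_1$ and those containing $C_1$.

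The main obstacle will be the precise maintenance of the local/global decomposition of the evolving complex at every intermediate stage: vertices appear and disappear during $\overline{\delta}_{C_1}$, so one must argue by induction on the length of $\overline{\delta}_{C_1}$ that the decomposition $U_{\mathcal S'}\cup(\text{current }C_1\text{-part})$, with intersection $D_{\mathcal S}(C_1)$, is preserved throughout; an analogous stability argument is needed in the third paragraph to ensure that incomparable-$C$ moves never disturb the evolving $U_{\mathcal S_1}$-part. The induced property and the essentiality requirements for stellar moves are precisely what keep these stability arguments tractable.
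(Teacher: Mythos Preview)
Your outline is sound and the key lemmas you isolate (locality of $\overline{\delta}_{C_1}$ to the $C_1$-part; triviality on $U_{\mathcal S_1}$ of moves indexed by $C$ incomparable with $C_1$; commutation of $[v_{C_1}]\star(-)$ past $\overline{\delta}'$) are correct. The paper proves the same theorem by a different bookkeeping device: rather than recursing on $|\mathcal S|$ and invoking the hypothesis twice (once for $\mathcal S'$, once for $\mathcal S_1$), it introduces an explicit invariant ``good for $U$'' on upward-closed $U\subseteq P_{\mathcal S}$ describing the exact shape of the evolving system after the blocks indexed by $U$ have been applied, and inducts along the linearization showing the invariant advances from $U_k$ to $U_{k+1}$. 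The technical content is the same---both approaches rest on the fact that a move concentrated on $C$ commutes with the rest of the system and preserves inducedness---but the paper packages this into standalone lemmas (its Lemmas~\ref{L:tog} and~\ref{L:frin}) that handle the weld case with care: for a weld $(\sigma,a)^{-1}$ one must verify that $\mathrm{lk}(\{a\},-)$ has the form $\partial[\sigma]\star L$ not only in $C$ but in the ambient union, and this is where the induced hypothesis really earns its keep. Your final paragraph correctly flags exactly this obstacle; the paper's ``good for $U$'' formula is one clean way to maintain the needed link description at every intermediate stage, whereas your double-recursion would need to carry a similar explicit description of the evolving $U_{\mathcal S'}\cup[v_{C_1}]\star(\text{evolving }U_{\mathcal S_1})$ decomposition through each move. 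Either route works; the paper's buys a single linear induction at the cost of a heavier invariant, yours buys a lighter invariant at the cost of a branching recursion.
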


In the proof of the above proposition, it will be convenient, for notational reasons, to shift emphasis from the system $\mathcal S$ to the partial order $P_{\mathcal S}$. 
We will write $P$ for $P_{\mathcal S}$. Then the complexes in $\mathcal S$ are naturally indexed by $P$, namely each complex in $\mathcal S$ is an element of 
$P$ and vice versa. We
will write $(E_p)_{p\in P}$, or simply $(E_p)$, for the system $\mathcal S$. In particular, 
when we say that $(E_p)_{p\in P}$ is a system, we understand that $P$ is a partial order 
with the property that if $p, p'\in P$ have a lower bound in $P$, they have a greatest lower bound, which we denote by $p\wedge p'$, that each $E_p$ is a complex, 
\[
E_p\cap E_{p'} = \begin{cases}
E_{p\wedge p'}, &\text{ if } p\text{ and } p'\text{ have a lower bound in }P;\\
\emptyset, &\text{ otherwise,}
\end{cases}
\]
and the map $P\ni p\to E_p$ is injective.

We will need two auxiliary lemmas below. The proof of the first one is straightforward, and we leave it to the reader.

\begin{lemma}\label{L:eas} 
Let $(E_p)_{p\in P}$ be a system. If $\sigma\in E_p$ and $\sigma \cap (E_q\setminus D(E_q))\not=\emptyset$, then $p\geq q$. 
\end{lemma}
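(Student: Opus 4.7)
The plan is to pick a witness vertex and track what its membership in $E_q \setminus D(E_q)$ forces on the poset side.

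First, I would unwind the hypothesis $\sigma \cap (E_q \setminus D(E_q)) \neq \emptyset$ by fixing a vertex $v \in \sigma$ with $v \in E_q$ and $v \notin D(E_q)$. Since $\sigma \in E_p$, every vertex of $\sigma$ is in $E_p$, so $v \in E_p \cap E_q$. In particular, $E_p \cap E_q$ is non-empty.

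Next, I would invoke the defining property of a system (rephrased with the index poset $P$): since $E_p \cap E_q \neq \emptyset$, the elements $p$ and $q$ admit a common lower bound in $P$, and hence a greatest lower bound $p \wedge q$ with $E_p \cap E_q = E_{p \wedge q}$. Thus $v \in E_{p \wedge q}$, and in particular $p \wedge q \leq q$.

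The key step is then to rule out $p \wedge q < q$. If this strict inequality held, then by injectivity of the indexing $p \mapsto E_p$ combined with $E_{p \wedge q} = E_p \cap E_q \subseteq E_q$, we would get $E_{p \wedge q} \subsetneq E_q$. By the definition of $D(E_q) = \bigcup\{E_{q'} \mid E_{q'} \subsetneq E_q\}$, this would place $v$ in $D(E_q)$, contradicting the choice of $v$. Hence $p \wedge q = q$, which means $q \leq p$, as required. I do not expect any real obstacle here; the argument is a direct chase of the definitions, and the only subtlety is remembering to use injectivity of $p \mapsto E_p$ to convert the poset inequality into a strict inclusion of complexes.
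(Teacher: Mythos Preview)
Your proof is correct and is exactly the straightforward argument the paper has in mind; the paper itself omits the proof entirely, stating only that it is ``straightforward, and we leave it to the reader.'' Your tracking of a witness vertex into $E_p\cap E_q=E_{p\wedge q}$ and the use of injectivity of $p\mapsto E_p$ to force $p\wedge q=q$ is the expected route, with no gaps.
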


\begin{lemma}\label{L:tog}
Let $(E_p)_{p\in P}$ be a system, let $q\in P$, and let $\delta$ be a move concentrated on $E_q$. 
\begin{enumerate}
\item[(i)] If $p\not\geq q$, then $\delta(E_p) = E_p$ and, for each $p'\in P$, $\delta(E_{p'})\cap E_p = E_{p'\wedge p}$, if 
$p$ and $p'$ have a lower bound in $P$, and $\delta(E_{p'})\cap E_p =\emptyset$, otherwise. 
\end{enumerate}
Assume, additionally, that the system $(E_p)_{p\in P}$ is induced and that if $\delta$ is a weld based on $\sigma$ with vertex $a$ and 
${\rm lk}(\{a\}, E_q) = \partial [\sigma]\star L_q$ for a complex $L_q$, 
then ${\rm lk}(\{a\}, \bigcup_p E_p) = \partial [\sigma]\star L$ for a complex $L$. Then 
\begin{enumerate}
\item[(ii)] $(\delta(E_p))_{p\in P}$ is an induced system; 

\item[(iii)] $\delta(\bigcup_p E_p) = \bigcup_p \delta(E_p)$. 
\end{enumerate}
\end{lemma}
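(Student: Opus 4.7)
The plan is to analyze the two types of stellar moves $\delta=(\sigma,a)$ and $\delta=(\sigma,a)^{-1}$ separately, exploiting in each case the three clauses of concentration together with the unique decomposition of faces with respect to join. The foundational step, which yields part (i) entirely, is to establish that when $p\not\geq q$ we have both $a\notin\mathrm{dom}(E_p)$ and $\sigma\notin E_p$. Both are proved by the same pattern: if $\{a\}\in E_p$ (respectively, $\sigma\in E_p$), then this face lies in $([a]\star[\sigma])\cap U_{\mathcal S}\subseteq E_q$ by concentration, so it belongs to $E_p\cap E_q=E_{p\wedge q}$; since $p\not\geq q$ forces $p\wedge q<q$, the face lies in $D_{\mathcal S}(E_q)$, contradicting the second or third concentration clause. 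With these two facts, $\delta$ is not essential for $E_p$ when $p\not\geq q$, giving $\delta(E_p)=E_p$. The identity $\delta(E_{p'})\cap E_p=E_{p'\wedge p}$ then follows in two directions: the inclusion $E_{p'\wedge p}\subseteq\delta(E_{p'})\cap E_p$ holds because $p'\wedge p\leq p\not\geq q$, so the two facts apply to $E_{p'\wedge p}$ and $\delta$ leaves it untouched; for the reverse, take $\tau\in\delta(E_{p'})\cap E_p$, note $a\notin\tau$, and split on whether $\tau$ is inherited from $E_{p'}$ (so $\tau\in E_{p'}\cap E_p=E_{p'\wedge p}$) or is newly created by $\delta$, which forces $\tau\ni a$ for subdivisions (excluded) or $\tau\supseteq\sigma$ for welds (excluded by $\sigma\notin E_p$).

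For part (iii), which asserts that $\delta$ commutes with the union, the subdivision case follows by direct computation using the elementary identities $\mathrm{lk}(\sigma,\bigcup_p E_p)=\bigcup_p\mathrm{lk}(\sigma,E_p)$ and $\mathrm{st}(\sigma,\bigcup_p E_p)=\bigcup_p\mathrm{st}(\sigma,E_p)$, noting that the added term $[a]\star\partial[\sigma]\star\mathrm{lk}(\sigma,E_p)$ is empty when $\sigma\notin E_p$. For welds, the added hypothesis $\mathrm{lk}(\{a\},\bigcup_p E_p)=\partial[\sigma]\star L$ is exactly what makes $\delta$ essential for $\bigcup_p E_p$ with the correct global join structure. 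One still needs $\sigma\notin\bigcup_p E_p$; I would verify this by contradiction: $\sigma\in E_{p'}$ would place $\sigma$ in $E_q$ via concentration, but combined with $a\in\mathrm{dom}(E_q)$ and the join-decomposition uniqueness of $\mathrm{lk}(\{a\},E_q)=\partial[\sigma]\star L_q$, this is incompatible with the weld being essential for $E_q$.

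Granted (iii), part (ii) has two components. The system property $\delta(E_p)\cap\delta(E_{p'})=\delta(E_{p\wedge p'})$ follows from part (i) unless both $p,p'\geq q$. In that remaining case, I would decompose each $\delta(E_?)$ into retained and newly created parts and intersect termwise, using the uniqueness of the join decomposition $\tau=\sigma_0\star\tau_0$ to deduce $\mathrm{lk}(\sigma,E_p)\cap\mathrm{lk}(\sigma,E_{p'})=\mathrm{lk}(\sigma,E_{p\wedge p'})$ for subdivisions and $L_p\cap L_{p'}=L_{p\wedge p'}$ for welds. For the inducedness of $(\delta(E_p))_p$, I take $\tau\in\delta(\bigcup_p E_p)=\bigcup_p\delta(E_p)$ with $\tau\subseteq\mathrm{dom}(\delta(E_p))$ and split on whether $\tau$ is retained or newly created. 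If retained, $\tau\in\bigcup_p E_p$ and $\tau\subseteq\mathrm{dom}(E_p)$, so inducedness of the original system places $\tau\in E_p$, and since $\tau$ is retained, $\tau\in\delta(E_p)$. If newly created, $\tau$ contains either the new vertex $a$ (subdivision) or the base $\sigma$ (weld); in either case the remaining join component lies in $\mathrm{dom}(E_p)$ by assumption on $\tau$ and in $U_{\mathcal S}$ by construction, hence in $E_p$ by inducedness, placing $\tau$ in the corresponding link or $L$-component of $E_p$ and thereby in $\delta(E_p)$.

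The hardest part will be the weld case of (iii), where the global link decomposition of $\bigcup_p E_p$ at $\{a\}$ must be matched with the individual link decompositions at each $E_p$; the added hypothesis on $\mathrm{lk}(\{a\},\bigcup_p E_p)$ is precisely the minimal condition that makes this matching possible, and the remaining weld arguments (essentiality on $\bigcup_p E_p$, inducedness of $(\delta(E_p))_p$, and the system property in the both-$\geq q$ case) cascade from it.
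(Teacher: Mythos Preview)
Your plan is correct and follows essentially the same route as the paper: part (i) via the concentration clauses, and parts (ii)--(iii) by splitting into retained versus newly created faces and matching the local link/star decompositions with the global one. The only organizational difference is that the paper isolates, as a standalone Claim, the equivalence (for $p\geq q$) between $\mathrm{lk}(\{a\},E_p)=\partial[\sigma]\star L_p$ and $\mathrm{lk}(\{a\},\bigcup_r E_r)=\partial[\sigma]\star L$ together with the identity $L_p=L\cap E_p$; you allude to this matching in your final paragraph but do not state it as a lemma, and a few routine checks you omit (injectivity of $p\mapsto\delta(E_p)$, and the reduction to the case where $\delta$ is essential for $E_q$) are handled explicitly there.
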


\begin{proof} If $E\subseteq F$ are complexes, we say that $E$ is {\bf induced in} $F$ if for all $\sigma\subseteq {\rm dom}(E)$ with $\sigma\in F$, we 
have $\sigma \in E$. We note the following straightforward consequences of $E$ being induced in $F$ that we will use in the remainder of the proof: 
for complexes $A, B$  and for $\tau\in F$, we have 
\begin{equation}\notag
\begin{split}
A\star B\subseteq F &\Longrightarrow \bigl( E\cap (A\star B) = (E\cap A)\star (E\cap B)\bigr)\\
\tau\in E &\Longrightarrow \bigl( {\rm lk}(\tau, E)= {\rm lk}(\tau, F)\cap E\bigr) \\
\tau\in E &\Longrightarrow \bigl( {\rm st}(\tau, E)= {\rm st}(\tau, F)\cap E\bigr). 
\end{split}
\end{equation}

Let the move $\delta$ be based on $\sigma$ and have vertex $a$.  Set also 
\[
E=\bigcup_p E_p. 
\]

Point (i) follows since $\delta$ is concentrated on $E_q$ and, therefore, for $p\not\geq q$, $\sigma\not\subseteq {\rm dom}(E_p)$, if $\sigma\not=\emptyset$, 
and $a\not\in {\rm dom}(E_p)$.

In proving points (ii) and (iii), we only consider the case $\delta= (\sigma, a)^{-1}$, the case $\delta=(\sigma, a)$ being easier. 
We also assume that 
\begin{equation}\label{E:ase}
[a]\star\partial [\sigma] \subseteq E_q.
\end{equation}
Otherwise, since $\delta$ is concentrated on $E_q$, we would have $[a]\star \partial [\sigma]\not\subseteq E$. This would imply 
that $\delta(E)=E$ and $\delta(E_p)=E_p$ for all $p\in P$, and there would be nothing to prove.

We make the following observation. 
\begin{claim}
Fix $p\geq q$. The following conditions are equivalent. 
\begin{enumerate} 
\item[(a)] ${\rm lk}(\{a\}, E_p) = \partial [\sigma]\star L_p, \hbox{ for some complex }L_p$;
\item[(b)] ${\rm lk}(\{a\}, E) = \partial [\sigma]\star L, \hbox{ for some complex }L$.
 \end{enumerate}
Furthermore, if the two conditions above hold, then 
\begin{enumerate}
\item[(c)] $L_p= L\cap E_p$ and $L = \bigcup_{p\geq q} (L\cap E_p)$.
\end{enumerate} 
\end{claim}

\noindent {\em Proof of Claim.} 
Assume (b). Since $a\in {\rm dom}(E_q)$ and since each $E_p$ is induced in $E$, we have that for all $p\geq q$ 
\begin{equation}\label{E:lkc}
{\rm lk}(\{a\}, E_p) = E_p\cap {\rm lk}(\{a\}, E). 
\end{equation}
Again, since $E_p$ is induced in $E$, we get that for each $p$ 
\[
E_p\cap (\partial [\sigma]\star L) = (E_p\cap \partial [\sigma]) \star (E_p\cap L). 
\]
By \eqref{E:ase}, the above equality gives that, for $p\geq q$, 
\begin{equation}\label{E:stp}
E_p\cap (\partial [\sigma]\star L) = \partial [\sigma]\star (E_p\cap L). 
\end{equation}
Thus, by \eqref{E:lkc} and \eqref{E:stp}, we have 
\begin{equation}\notag
{\rm lk}(\{a\}, E_p) = \partial [\sigma]\star (E_p\cap L), \hbox{ for all }p\geq q, 
\end{equation}
that is (a) holds, and the first equality in (c) is justified as well. 

To see that (a) implies (b), note that, by the above argument, (a) implies that  ${\rm lk}(\{a\}, E_q) = \partial [\sigma]\star L_q$.
This equality gives (b) by the lemma's additional assumption to points (ii) and (iii). 

To finish the proof of the claim, it remains to justify the second equality in (c). Only the inclusion $\subseteq$ requires an argument. For each $\tau\in L$ 
there is $p$ such that $\tau\in E_p$. We only need to check that such a $p$ can be found with $p\geq q$.  
This follows from Lemma~\ref{L:eas} since $a\in {\rm dom}(E_q)$, $a\not\in {\rm dom}(D(E_q))$ and $L\subseteq {\rm lk}(\{a\}, E)$. The claim is proved.

\smallskip

From this point on, we assume that the two equivalent conditions (a) and (b) from the claim hold. Otherwise, by Claim and point (i), 
we would have $\delta(E)=E$ and $\delta(E_p)=E_p$ for all $p\in P$, and there would be nothing to prove. 

To prove point (ii), we first show that $(\delta(E_p))$ is a system. We need to see that 
\begin{equation}\label{E:epep}
 \delta(E_p)\cap \delta(E_{p'}) = \begin{cases}
 \delta(E_{p\wedge p'}), &\text{ if $p, p'$ have a lower bound in $P$};\\
 \emptyset, &\text{ otherwise}. 
 \end{cases}
\end{equation}
If $q\leq p, p'$, then $p, p'$ have a lower bound in $P$ and the conclusion follows from our assumption that point (a) in the claim holds and from 
\[
[\sigma]\star L_{p\wedge p'} = [\sigma] \star (L_p\cap L_{p'}) = ([\sigma]\star L_p)\cap ([\sigma]\star L_{p'}),
\]
where the first equality holds by point (c) of the claim, and the second one by disjointness of $[\sigma]$ with $L_p$ and $L_{p'}$. 
If $q\not\leq p$ or $q\not\leq p'$, then \eqref{E:epep} follows from point (i). We also need to see that the map $P\ni p\to \delta(E_p)$ is injective. 
Let $p_1, p_2\in P$ be distinct. If $p_1\not\geq q$ or $p_2\not\geq q$, then the inequality $\delta(E_{p_1})\not= \delta(E_{p_2})$ follows from (i). So assume 
$p_1, p_2\geq q$. We can suppose that $E_{p_1}$ has a face $\tau$ that is not in $E_{p_2}$. Since $a$ is in both ${\rm dom}(E_{p_1})$ and ${\rm dom}(E_{p_2})$, 
we can further assume that $a\not\in \tau$. Then, by the definition of welds, $\tau\in \delta(E_{p_1})\setminus \delta(E_{p_2})$, so 
$\delta(E_{p_1})\not= \delta(E_{p_2})$, as required. 

It remains to check that for each $p_0\in P$, $\delta(E_{p_0})$ is induced in $\bigcup_p \delta(E_p)$. 
First, we point out the following easy implication: for each $p_0\in P$,
\begin{equation}\label{E:old}
v\in {\rm dom}\bigl(\bigcup_p E_p\bigr)\cap {\rm dom}\bigl(\bigcup_p \delta(E_p)\bigr)\Rightarrow\bigl(v\in {\rm dom}(E_{p_0})\Leftrightarrow v\in {\rm dom}(\delta(E_{p_0}))\bigr). 
\end{equation}
Now, fix $p_0$, and let $\tau\subseteq {\rm dom}(\delta(E_{p_0}))$ and $\tau\in  \bigcup_p \delta(E_p)$. 

Case 1. $\tau\in \bigcup_p E_p$

It follows that, for each $v\in \tau$, we have $\{ v\}\in \bigcup_p E_p\cap \delta(E_{p_0})$, so by \eqref{E:old}, $\tau\subseteq E_{p_0}$. Since $E_{p_0}$ 
is induced in $\bigcup_pE_p$, we get $\tau\in E_{p_0}$. Then, again by \eqref{E:old}, we have $\tau\in \delta(E_{p_0})$, as required. 

Case 2. $\tau\not\in \bigcup_p E_p$

In this case, 
\begin{equation}\label{E:tall}
\tau=\sigma\star \tau',\;\hbox{ for some }\tau'\in L. 
\end{equation}
First we check that 
\begin{equation}\label{E:pql}
p_0\geq q. 
\end{equation}
Since $\tau$ contains $\sigma$, we see that it contains a vertex in ${\rm dom}(E_{q})\setminus {\rm dom}(D(E_{q}))$, 
so Lemma~\ref{L:eas} gives \eqref{E:pql}.
Note now that, for each vertex $v\in \tau'$, we have that $\{ v\}\in \bigcup_p E_p\cap \delta(E_{p_0})$, so by \eqref{E:old}, we get $\tau'\subseteq E_{p_0}$. 
Since $E_{p_0}$ is induced in $\bigcup_pE_p$, we obtain $\tau'\in E_{p_0}$. Therefore, $\tau'\in E_{p_0}\cap L = L_{p_0}$. Thus, by \eqref{E:pql}, 
$\tau\in \delta(E_{p_0})$, as required.

To see point (iii), note first that 
\[
E\cap \delta(E) = \bigcup_p E_p\cap \delta(E_p).
\]
The above equality follows from the definition of welds and from
\[
{\rm st}(\{a\}, E) = \bigcup_{p\geq q} {\rm st}(\{a\}, E_p) = \bigcup_p {\rm st}(\{a\}, E_p),
\]
which is a consequence of $E_p$ being induced in $E$ for each $p$ and $a\in {\rm dom}(E_p)$ precisely when $p\geq q$. 
Thus, to get (iii), in light of (i), it remais to show that each face of $\delta (E)$ that is not in $E$ is in $\delta (E_p)$ for some 
$p$ and that, given $p\geq q$, each face of $\delta (E_p)$ that is not in $E_p$ is in $\delta (E)$. 
The faces of $\delta (E)$ not in $E$ are all in $[\sigma]\star L$
while, for $p\geq q$, the faces of $\delta (E_p)$ not in $E_p$ are all in $[\sigma]\star L_p$. Thus, (iii) follows from (c) in the claim. 
\end{proof}

\begin{lemma}\label{L:frin}
Let $(E_p)_{p\in P}$ be a system, let $q\in P$, and let $\delta$ be a move concentrated on $E_q$ and such that $(\delta(E_p))_{p\in P}$ is an 
induced system. 
\begin{enumerate}
\item[(i)] If $\delta'$ is a move concentrated on $E_q$ with respect to the system $(E_p)_{p\in P}$, then $\delta'$ is concentrated on $\delta(E_q)$ with 
respect to the system $(\delta(E_p))_{p\in P}$. 

\item[(ii)] If $\delta'$ is a move concentrated on $E_{p'}$ with respect to the system $(E_p)_{p\in P}$ and $\delta'$ is free from $\delta$, then 
$\delta'$ is concentrated on $\delta(E_{p'})$ with respect to the system $(\delta(E_p))_{p\in P}$.
\end{enumerate}
\end{lemma}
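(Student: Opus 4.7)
The plan is to prove (i) and (ii) in parallel. The core identification I aim to establish is that, as complexes, $D_{(\delta(E_p))}(\delta(E_r))=D_{\mathcal{S}}(E_r)$ for $r\in\{q,p'\}$. This at once gives the two conditions in the definition of being concentrated that constrain $a'$ and $\sigma'$ to avoid $\mathrm{dom}(D)$. The remaining support condition is then verified using the induced-system hypothesis on $(\delta(E_p))_{p\in P}$.

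For the identification, the concentration of $\delta$ on $E_q$ gives $a\notin\mathrm{dom}(D(E_q))$ and $\sigma\not\subseteq\mathrm{dom}(D(E_q))$. Hence for every $p''<q$ (i.e., $E_{p''}\subsetneq E_q$), neither essentiality clause for $\delta$ on $E_{p''}$ can hold, so $\delta(E_{p''})=E_{p''}$. Moreover, no face of $E_{p''}$ contains $\sigma$ (for a subdivision) nor $a$ (for a weld), so $E_{p''}\cap\mathrm{st}(\sigma,E_q)=\emptyset$, respectively $E_{p''}\cap\mathrm{st}(\{a\},E_q)=\emptyset$, and therefore $E_{p''}\subseteq\delta(E_q)$. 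The reverse implication $\delta(E_{p''})\subsetneq\delta(E_q)\Rightarrow p''<q$ follows because $p''\ge q$ with $p''\ne q$ yields $\delta(E_{p''})\supsetneq\delta(E_q)$ by monotonicity of $p\mapsto\delta(E_p)$. This gives the desired identification.

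For the support condition $([a']\star[\sigma'])\cap U_{(\delta(E_p))}\subseteq\delta(E_r)$, I rewrite the ambient union as $\delta(U_{\mathcal{S}})$ by Lemma~\ref{L:tog}(iii) and split on whether a given face $\tau$ in the intersection lies in $U_{\mathcal{S}}$. If $\tau\in U_{\mathcal{S}}$, then $\tau\in[a']\star[\sigma']\cap U_{\mathcal{S}}\subseteq E_r$ by hypothesis, and $\tau\in\delta(U_{\mathcal{S}})$ forces $\sigma\not\subseteq\tau$ (subdivision) or $a\notin\tau$ (weld), so $\tau$ is not removed by the move on $E_r$, and $\tau\in\delta(E_r)$. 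If $\tau\notin U_{\mathcal{S}}$, I show $\mathrm{dom}(\tau)\subseteq\mathrm{dom}(\delta(E_r))$ and invoke the induced hypothesis to conclude $\tau\in\delta(E_r)$. In the subdivision case $a\in\tau$: in (ii) the freeness condition $a\ne a'$ and $a\notin\sigma'$ contradicts $\tau\subseteq\{a'\}\cup\sigma'$, so this sub-case is vacuous; in (i), $\tau\setminus\{a\}\in U_{\mathcal{S}}\cap[a']\star[\sigma']\subseteq E_q$, hence $\mathrm{dom}(\tau)\subseteq\{a\}\cup\mathrm{dom}(E_q)=\mathrm{dom}(\delta(E_q))$. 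In the weld case $\tau$ contains $\sigma$ but not $a$, and a vertex-by-vertex reading of the hypothesis gives $\mathrm{dom}(\tau)\subseteq\mathrm{dom}(E_r)\setminus\{a\}=\mathrm{dom}(\delta(E_r))$.

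The principal technical obstacle is the identification $D_{(\delta(E_p))}(\delta(E_r))=D_{\mathcal{S}}(E_r)$, which requires careful bookkeeping of how the essentiality of $\delta$ interacts with the poset $P$; the rest is a routine case analysis, and the freeness hypothesis in (ii) does exactly the work of preventing the newly introduced vertex $a$ from landing inside the support $[a']\star[\sigma']$ of $\delta'$.
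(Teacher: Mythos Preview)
Your argument for part (i) is essentially correct and close to the paper's: both establish that $\delta(E_{p''})=E_{p''}$ for all $p''<q$, hence $D_{(\delta(E_p))}(\delta(E_q))=D_{\mathcal S}(E_q)$, which immediately handles the two conditions on $a'$ and $\sigma'$.

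However, your claimed identification $D_{(\delta(E_p))}(\delta(E_{p'}))=D_{\mathcal S}(E_{p'})$ for part (ii) is \emph{false in general}, and your argument does not establish it. The reasoning you give (``for every $p''<q$, $\delta(E_{p''})=E_{p''}$'') only addresses complexes below $E_q$. But $D_{\mathcal S}(E_{p'})$ is a union over all $p''<p'$, and when $q<p'$ there are indices $p''$ with $q\leq p''<p'$ for which $\delta$ \emph{is} essential on $E_{p''}$; for these, $\delta(E_{p''})\neq E_{p''}$, and hence the two $D$-complexes differ as complexes (and, when $\delta$ is a subdivision, even their domains differ by the new vertex $a$). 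So the step ``this at once gives the two conditions \dots'' does not go through for (ii).

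The paper avoids this trap by never claiming such an identification for $p'$. Instead it works at the level of individual vertices via the observation
\[
v\in \mathrm{dom}\Bigl(\bigcup_p E_p\Bigr)\cap \mathrm{dom}\Bigl(\bigcup_p \delta(E_p)\Bigr)\ \Longrightarrow\ \bigl(v\in\mathrm{dom}(E_{p_0})\Leftrightarrow v\in\mathrm{dom}(\delta(E_{p_0}))\bigr),
\]
valid for every $p_0\in P$. The freeness hypothesis (if $\delta$ is a subdivision then $a\neq a'$ and $a\notin\sigma'$; if $\delta$ is a weld the new domain only shrinks) guarantees that every vertex of $\{a'\}\cup\sigma'$ lying in $\mathrm{dom}\bigl(\bigcup_p\delta(E_p)\bigr)$ already lies in $\mathrm{dom}\bigl(\bigcup_p E_p\bigr)$. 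The displayed equivalence then transfers all three concentration conditions from $(E_p)$ to $(\delta(E_p))$ simultaneously, with the inclusion $([a']\star[\sigma'])\cap\bigcup_p\delta(E_p)\subseteq\delta(E_{p'})$ reduced (using that $(\delta(E_p))$ is induced) to the vertex-level inclusion. This is exactly the role your case analysis was trying to play, but the vertex-level formulation lets the paper handle (i) and (ii) uniformly without the erroneous complex-level identification.

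A minor further point: you invoke Lemma~\ref{L:tog}(iii) to rewrite $\bigcup_p\delta(E_p)$ as $\delta(U_{\mathcal S})$, but that lemma requires the \emph{original} system to be induced (plus a link condition in the weld case), neither of which is among the hypotheses here. The paper's proof does not need this rewriting.
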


\begin{proof} Let $\delta$ be based on $\sigma$ and have vertex $a$, and let $\delta'$ be based on $\sigma'$ and have vertex $a'$. 
We point out again that, for each $p_0\in P$, 
\begin{equation}\label{E:oldn}
v\in {\rm dom}\bigl(\bigcup_p E_p\bigr)\cap {\rm dom}\bigl(\bigcup_p \delta(E_p)\bigr)\Rightarrow\bigl(v\in {\rm dom}(E_{p_0})\Leftrightarrow v\in {\rm dom}(\delta(E_{p_0}))\bigr). 
\end{equation}

(i) We assume that 
\begin{equation}\label{E:hgf}
([a]\star [\sigma])\cap \bigcup_p E_p \subseteq E_q, \; a\not\in {\rm dom}(D(E_q)),\hbox{ and }\sigma \not\subseteq {\rm dom}(D(E_q)) 
\end{equation}
and 
\begin{equation}\label{E:tru}
([a']\star [\sigma'])\cap \bigcup_p E_p \subseteq E_q, \; a'\not\in {\rm dom}(D(E_q)),\hbox{ and }\sigma'\not\subseteq {\rm dom}(D(E_q)),
\end{equation}
and need to prove 
\[
([a']\star [\sigma'])\cap \bigcup_p \delta(E_p)  \subseteq \delta(E_q), \; a'\not\in {\rm dom}(D(\delta(E_q))),\hbox{ and }\sigma'\not\subseteq {\rm dom}(D(\delta(E_q))). 
\]
It follows from $\sigma \not\subseteq {\rm dom}(D(E_q))$ that 
\[
{\rm dom}(D(\delta(E_q)))\subseteq {\rm dom}(D(E_q)),
\]
which gives 
\[
 a'\not\in {\rm dom}(D(\delta(E_q)))\hbox{ and }\sigma'\not\subseteq {\rm dom}(D(\delta(E_q)))
\]
directly from the assumptions. It remains to prove 
\[
([a']\star [\sigma'])\cap \bigcup_p \delta(E_p)  \subseteq \delta(E_q). 
\]
Since the system $(\delta(E_p))_p$ is assumed to be induced, it suffices to show that 
\begin{equation}\label{E:aprs}
(\{ a'\} \cup \sigma') \cap  {\rm dom}(\bigcup_p \delta(E_p)) \subseteq {\rm dom}(\delta(E_q)). 
\end{equation}
Let $v$ be a vertex that is an element of the set on the left hand side of the inclusion \eqref{E:aprs}. If $v\in {\rm dom}(\bigcup_p E_p)$, 
then $v\in {\rm dom}(\delta(E_q))$ by \eqref{E:oldn} and the first inclusion in \eqref{E:tru}. 
So assume that $v\not\in  {\rm dom}(\bigcup_p E_p)$. It follows that $\delta$ is a division, $v= a$, and $\sigma\in \bigcup_pE_p$. 
This last condition implies by the first inclusion in \eqref{E:hgf} that $\sigma\in E_q$. It follows that $v=a\in {\rm dom}(\delta(E_q))$, as required by 
\eqref{E:aprs}.

(ii) We assume that 
\begin{equation}\label{E:tru2}
([a']\star [\sigma'])\cap \bigcup_p E_p \subseteq E_{p'}, \; a'\not\in {\rm dom}(D(E_{p'})),\hbox{ and }\sigma'\not\subseteq {\rm dom}(D(E_{p'})),
\end{equation}
and need to prove 
\[
([a']\star [\sigma'])\cap \bigcup_p \delta(E_p)  \subseteq \delta(E_{p'}), \; a'\not\in {\rm dom}(D(\delta(E_{p'}))),\hbox{ and }\sigma'\not\subseteq {\rm dom}(D(\delta(E_{p'}))). 
\]
But using the assumption that $\delta(E_{p'})$ is induced in $ \bigcup_p \delta(E_p)$, it suffices to show 
\begin{equation}\label{E:aprs2}
\begin{split}
&(\{ a'\} \cup \sigma') \cap  {\rm dom}(\bigcup_p \delta(E_p)) \subseteq {\rm dom}(\delta(E_{p'}))\; \hbox{ and }\\
&a'\not\in {\rm dom}(D(\delta(E_{p'}))),\; \sigma'\not\subseteq {\rm dom}(D(\delta(E_{p'}))). 
\end{split}
\end{equation}
Since $\delta'$ is free from $\delta$, $\delta$ is a weld or we have $a\not= a'$ and $a\not\in \sigma'$. It follows that 
\[
(\{ a'\} \cup \sigma') \cap  {\rm dom}(\bigcup_p \delta(E_p)) \subseteq {\rm dom}(\bigcup_p E_p).
\]
So if $v$ belongs to the set on the left hand side of the above inclusion, then, by \eqref{E:oldn}, for each $p_0\in P$, 
$v\in {\rm dom}(E_{p_0})$ if and only if $v\in {\rm dom}(\delta(E_{p_0}))$. Thus, \eqref{E:aprs2} follows from \eqref{E:tru2}. 
\end{proof}

We introduce a technical notion that will be helpful in the proof of Theorem~\ref{T:SystemTransformation}. Let 
$U\subseteq P$ be upward closed. We say that a system $(E_p)_{p\in P}$ is {\bf good for} $U$ if, for some $v_q$ with $q\in U$, we have, for each $p\in P$, 
\begin{equation}\label{E:goodness}
E_p = \bigcup_{(q_1, \dots , q_k)} \bigl( [v_{q_1}]\star \cdots \star [v_{q_k}]\star \bigcup_{r\leq q_k, r\not\in U} E_r\bigr), 
\end{equation}
where $(q_1, \dots , q_k)$ run over the set of all sequences of elements of $P$ such that 
\begin{equation}\label{E:auxg}
q_k\leq \cdots \leq q_1\leq p\;\hbox{ and }\; q_1, \dots, q_k\in U,
\end{equation}
and where we assume that $v_q$, $q\in U$, are pairwise distinct and $v_q\not\in {\rm dom}(E_r)$ for each $q\in U$ and $r\in P\setminus U$. Note that 
formula eqref{E:goodness} determines the whole system $(E_p)_{p\in P}$ from the complexes $E_r$ with $r\in P\setminus U$. 

We register the following basic lemma. 

\begin{lemma}\label{L:vgoo}
Let $(E_p)_{p\in P}$ be a system. 
\begin{enumerate}
\item[(i)] $(E_p)_{p\in P}$ is good for $U=\emptyset$. 

\item[(ii)] If $(E_p)_{p\in P}$ is good for $U= P$, then $v_p\to p$ induces an isomorphism between $\bigcup_{p\in P} E_p$ and ${\mathcal C}(P)$. 
\end{enumerate}
\end{lemma}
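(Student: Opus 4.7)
My plan is to prove both parts by directly unpacking the definition of ``good for $U$'' at the two extremes $U=\emptyset$ and $U=P$; neither requires any of the deeper stellar machinery.

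For point (i), when $U=\emptyset$ there are no elements of $U$, so the only admissible sequence $(q_1,\dots,q_k)$ satisfying \eqref{E:auxg} is the empty sequence $k=0$. The corresponding term on the right hand side of \eqref{E:goodness} reduces to $\bigcup_{r\leq p,\, r\notin U}E_r=\bigcup_{r\leq p}E_r$. Hence the condition for being good for $\emptyset$ is precisely $E_p=\bigcup_{r\leq p}E_r$, which holds automatically: for $r\leq p$, $p$ is a lower bound of $\{r,p\}$, so $r\wedge p=r$ exists, and the system axiom gives $E_r\cap E_p=E_{r\wedge p}=E_r$, i.e.\ $E_r\subseteq E_p$; and $r=p$ yields $E_p\subseteq \bigcup_{r\leq p}E_r$. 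So (i) requires no actual verification beyond this rewriting. Any choice of vertices $v_q$ (vacuously, for $q\in U=\emptyset$) works.

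For point (ii), assume $(E_p)$ is good for $U=P$, with distinguished vertices $v_p$. Since there are no $r\in P\setminus U$, each term $\bigcup_{r\leq q_k,r\notin U}E_r$ in \eqref{E:goodness} is empty, so the formula expresses $E_p$ as the union of the joins $[v_{q_1}]\star\cdots\star[v_{q_k}]$ ranging over chains $q_k\leq\cdots\leq q_1\leq p$ of elements of $P$, where we use pairwise distinctness of the $v_q$'s to force $q_k<\cdots<q_1$ in each nontrivial summand.

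With this description in hand, I would verify the isomorphism in two steps. First, I show that the map $\phi\colon{\rm dom}(\bigcup_p E_p)\to P$, $v_q\mapsto q$, is a bijection: by the goodness hypothesis, every vertex of any $E_p$ equals some $v_q$ with $q\in U=P$ (there are no vertices coming from $E_r$ with $r\notin U$), and distinct $q$'s give distinct $v_q$'s, while every $q\in P$ appears as a vertex of $E_q$ via the singleton-chain summand $[v_q]$. Second, I show that $\phi_*$ matches faces: a finite subset $\{v_{q_1},\dots,v_{q_k}\}$ is a face of $\bigcup_p E_p$ iff it is a face of some $E_p$, which by the formula happens iff $\{q_1,\dots,q_k\}$ forms a chain in $P$; but this is precisely the condition defining the faces of ${\mathcal C}(P)$. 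Thus $\phi$ is a simplicial isomorphism from $\bigcup_p E_p$ onto ${\mathcal C}(P)$, proving (ii).

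The only minor subtlety is keeping track of the empty-sequence summand (which is empty once $U=P$ because there are no $r\notin U$) and the convention that the join expression $[v_{q_1}]\star\cdots\star[v_{q_k}]$ appears without an empty trailing factor; with that bookkeeping, the proof reduces to the two bijection-and-faces checks above, so there is no real obstacle.
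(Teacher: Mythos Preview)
Your proof is correct and follows essentially the same approach as the paper's own proof, which simply unpacks the definition of ``good for $U$'' at the two extremes and notes that \eqref{E:goodness} reduces to $E_p=\bigcup_{r\leq p}E_r$ when $U=\emptyset$ and to $E_p=\bigcup_{(q_1,\dots,q_k)}[v_{q_1}]\star\cdots\star[v_{q_k}]$ when $U=P$. You supply more detail than the paper does, particularly in spelling out the vertex bijection and the face correspondence in part (ii), but the underlying argument is the same.
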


\begin{proof}
If $p\not\in U$, then only the empty sequence satisfies \eqref{E:auxg} 
for $(q_1, \dots, q_k)$, so \eqref{E:goodness} boils down to 
\[
E_p=\bigcup_{r\leq p} E_r,
\]
which is true of an arbitrary system, so (i) follows. On the other hand, being good for $U= P$ means 
that, for each $p\in P$, 
\[
E_p=  \bigcup_{(q_1, \dots , q_k)} [v_{q_1}]\star \cdots \star [v_{q_k}], 
\]
where $(q_1, \dots , q_k)$ run over the set of all sequences of elements of $P$ such that 
$q_k\leq \cdots \leq q_1\leq p$, which implies (ii). 
\end{proof}

\begin{lemma}\label{L:goodu}
Assume that $(E_p)_{p\in P}$ is an induced system that is good for $U\subseteq P$. Let $\overline{\delta}$  be a sequence of moves concentrated 
on $E_{p_0}$, for some  $p_0\in P\setminus U$ that is maximal in $P\setminus U$. Then $(\overline{\delta}(E_p))_{p\in P}$ is an induced system 
that is good for $U$. 
\end{lemma}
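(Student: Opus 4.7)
My plan is to induct on the length of $\overline{\delta}$. For the inductive step, Lemma~\ref{L:frin}(i) ensures that each successive move remains concentrated on the transformed image of $E_{p_0}$, so it suffices to handle a single move $\delta$ based on $\sigma$ with vertex $a$. The key structural observation is that since $U$ is upward closed and $p_0$ is maximal in $P\setminus U$, any $r\notin U$ with $r\ge p_0$ must equal $p_0$. Consequently, for $r\notin U$ with $r\neq p_0$ we have $r\not\ge p_0$, and Lemma~\ref{L:tog}(i) gives $\delta(E_r)=E_r$. Thus, among the building blocks $\{E_r : r\notin U\}$ appearing in the goodness formula, only $E_{p_0}$ is altered.

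Next I will argue that $(\delta(E_p))_{p\in P}$ is induced by invoking Lemma~\ref{L:tog}(ii). This requires verifying the link condition in the weld case: assuming $\delta=(\sigma,a)^{-1}$ with $\mathrm{lk}(\{a\},E_{p_0})=\partial[\sigma]\star L_{p_0}$, I must show $\mathrm{lk}(\{a\},\bigcup_p E_p)=\partial[\sigma]\star L$ for some complex $L$. A short argument combining Lemma~\ref{L:eas} with $a\in\mathrm{dom}(E_{p_0})\setminus\mathrm{dom}(D(E_{p_0}))$ and the induced property shows that $a\notin\mathrm{dom}(E_r)$ for every $r\notin U$ with $r\neq p_0$. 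Hence $\mathrm{lk}(\{a\},\bigcup_{r\le q_k,\,r\notin U}E_r)=\mathrm{lk}(\{a\},E_{p_0})=\partial[\sigma]\star L_{p_0}$ for every $q_k$ with $p_0\le q_k$. Feeding this into the goodness formula and using the identity $\mathrm{lk}(\{a\},[V]\star F)=[V]\star\mathrm{lk}(\{a\},F)$---valid since the $v_q$'s are fresh and hence differ from $a$---yields the desired factorization, and then Lemma~\ref{L:tog}(ii) applies.

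Finally, to establish that $(\delta(E_p))_{p\in P}$ is good for $U$ with the same data $v_q$, I will apply $\delta$ term-by-term to the goodness formula for $E_p$. Each outer join commutes with $\delta$, since the $v_{q_i}$ are disjoint from $\sigma\subseteq\mathrm{dom}(E_{p_0})$ and from $\{a\}$ by freshness of the $v_q$'s on building blocks with index outside $U$. For the inner union, Lemma~\ref{L:tog}(iii) applied to the sub-system $\{E_r : r\le q_k,\,r\notin U\}$---which inherits induced-ness from the full system, and on which $\delta$ is concentrated at $E_{p_0}$ when $p_0\le q_k$ and acts trivially when $p_0\not\le q_k$, with the weld link condition inherited from the previous paragraph---gives $\delta(\bigcup_r E_r)=\bigcup_r \delta(E_r)$. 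Combining these, $\delta(E_p)$ satisfies the goodness formula with building blocks $\delta(E_r)$, which is the required goodness. The main obstacle I anticipate is the link-factorization in the weld case, which requires carefully threading the join structure of the goodness formula through the link operation; however, the freshness of the $v_q$'s together with Lemma~\ref{L:eas} make this manageable.
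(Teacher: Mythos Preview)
Your overall strategy matches the paper's: induct on the length of $\overline{\delta}$ via Lemma~\ref{L:frin}(i), reduce to a single move, verify the weld link hypothesis using the freshness of the $v_q$'s and Lemma~\ref{L:eas}, apply Lemma~\ref{L:tog}(ii) to get that $(\delta(E_p))$ is induced, and then commute $\delta$ through the goodness formula. Your verification that $a\notin\mathrm{dom}(E_r)$ for $r\notin U$, $r\neq p_0$, and your link computation are correct and essentially identical to the paper's.

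There is, however, a gap in your final paragraph. You justify commuting $\delta$ with the joins $[v_{q_1}]\star\cdots\star[v_{q_k}]\star(-)$ and with the inner union $\bigcup_{r\le q_k,\,r\notin U}E_r$ (via Lemma~\ref{L:tog}(iii) on the sub-system), but you never justify commuting $\delta$ with the \emph{outer} union over the index sequences $(q_1,\dots,q_k)$. Knowing $\delta(\text{term})$ for each term does not by itself give $\delta(E_p)=\bigcup\delta(\text{term})$; that identity is precisely what requires a separate appeal to Lemma~\ref{L:tog}(iii), and your sub-system $\{E_r:r\le q_k,\,r\notin U\}$ does not see the $v_q$'s and so cannot deliver it.

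The paper closes this gap by introducing, for each $p$, the induced system
\[
\mathcal{S}(p)=\bigl\{[v_{q_1}]\star\cdots\star[v_{q_\ell}]\star E_r \;\bigm|\; r\le q_\ell\le\cdots\le q_1\le p,\; r\notin U,\; q_i\in U\bigr\},
\]
whose union is $E_p$ by goodness. Your link computation (applied to $\mathcal{S}(p)$ rather than to the full system) shows that $\delta$ satisfies the additional hypothesis of Lemma~\ref{L:tog}(ii),(iii) for $\mathcal{S}(p)$ whenever $p\ge p_0$; a single application of Lemma~\ref{L:tog}(iii) to $\mathcal{S}(p)$ then gives $\delta(E_p)=\bigcup_{(q_1,\dots,q_\ell,r)}\delta\bigl([v_{q_1}]\star\cdots\star[v_{q_\ell}]\star E_r\bigr)$, after which your join-commutation argument finishes. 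In short, replace your two-step ``outer join then inner union'' commutation by one application of Lemma~\ref{L:tog}(iii) to the finer system $\mathcal{S}(p)$; this is the organizing device your argument is missing.
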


\begin{proof} The lemma is proved by induction on the length of the sequence $\overline{\delta}$. Note that, by Lemma~\ref{L:frin}(i), if $\delta_1$ and $\delta_2$ 
are moves concentrated on $E_{p_0}$ in the system $(E_p)_{p\in P}$ and $(\delta_1(E_p))_{p\in P}$ is a system, 
then $\delta_2$ is concentrated on $\delta_1(E_{p_0})$ in the system $(\delta_1(E_p))_{p\in P}$. It follows from this observation that it suffices to show the lemma  
in the case when $\overline{\delta}$ consists of a single move $\delta$. There are two cases; $\delta$ is a division or $\delta$ is a weld. The first case is easy and 
we leave it to the reader. 

Assume $\delta$ is a weld based on $\sigma$ with vertex $a$ that is concentrated on $E_{p_0}$. 
For convenience of notation, we expand the partial order $P$ to a partial order $\widehat{P}$ by adding 
two new elements $0, \infty$ to $P$ and declaring that $0\leq p\leq \infty$ for each $p\in P$, and defining $E_0= \emptyset$. 
Fix $p\in \widehat{P}\setminus \{ 0\}$, and consider the family of complexes 
\[
{\mathcal S}(p) = \{ [v_{q_1}]\star \cdots \star [v_{q_\ell}]\star E_r\mid r\leq q_\ell\leq \cdots \leq q_1\leq p,\; r\not\in U,\; q_1, \dots, q_\ell\in U\}.
\]
The family ${\mathcal S}(p)$ 
is an induced system, that is, it is closed under non-empty intersections and each complex in the family is closed under unions within $U_{{\mathcal S}(p)}$. 
We leave it to the reader to check these properties. Since the system $(E_p)_{p\in P}$ is good for $U$, we have
\begin{equation}\label{E:fiin}
U_{{\mathcal S}(p)}=E_p, \hbox{ for } p\in P,\; \hbox{ and } U_{{\mathcal S}(\infty)}=\bigcup_{p\in P} E_p.
\end{equation}

Note that if $\widehat{P}\ni p\geq p_0$, then $E_{p_0}$ is an element of ${\mathcal S}(p)$, in which case, if 
\[
{\rm lk}(\{a\}, E_{p_0}) = \partial [\sigma] \star L_{p_0},
\]
for a complex $L_{p_0}$, then, by maximality of $p_0$ in $P\setminus U$, 
\[
{\rm lk}(\{a\}, U_{{\mathcal S}(p)}) = \partial [\sigma] \star \bigl( L_{p_0} \star \bigcup_{(q_1, \dots, q_k)} [v_{q_1}]\star \cdots \star [v_{q_k}]\bigr), 
\]
where $(q_1, \dots , q_k)$ run over all sequences of elements of $P$ such that 
\[
p_0< q_k\leq\cdots \leq q_1\leq p,\; q_1, \dots, q_k\in U. 
\]
It follows that in the above situation $\delta$ fulfills the assumption of points (ii) and (iii) of Lemma~\ref{L:tog} with respect to the system $(E_p)_{p\in P}$ 
(by the second equality in \eqref{E:fiin}) and with respect to the system ${\mathcal S}(p)$ for each $p\in P$. 
Therefore, from Lemma~\ref{L:tog}(ii) applied to the system $(E_p)_{p\in P}$, 
we get that $(\delta(E_p))_{p\in P}$ is an induced system. 
By Lemma~\ref{L:tog}(iii) applied to ${\mathcal S}(p)$ with $p\in P$, using the first equality in \eqref{E:fiin}, we have 
\begin{equation}\notag
\delta \bigl(E_p \bigr)
= \bigcup_{(q_1, \dots , q_\ell, r)} \delta\bigl([v_{q_1}]\star \cdots \star [v_{q_\ell}]\star E_r\bigr). 
\end{equation}
where $(q_1, \dots , q_\ell, r)$ run over the set of all sequences of elements of $P$ such that 
\begin{equation}\label{E:spec}
r\leq q_\ell\leq \cdots \leq q_1\leq p,\; r\not\in U,\; q_1, \dots, q_\ell\in U. 
\end{equation}
Thus, to see that the system $(E_p)$ is good for $U$, it suffices to check that 
\begin{equation}\label{E:gfd}
\delta\bigl([v_{q_1}]\star \cdots \star [v_{q_\ell}]\star E_r\bigr) =  [v_{q_1}]\star \cdots \star [v_{q_\ell}]\star \delta(E_r),
\end{equation}
for $(q_1, \dots , q_\ell, r)$ as in \eqref{E:spec}. Since $\delta$ is assumed to concentrate on $E_{p_0}$ and $p_0\in P\setminus U$, 
we see that $a\not= v_q$ and $v_q\not\in \sigma$, for all $q\in U$. Thus, 
\eqref{E:gfd} follows. 
\end{proof}

\begin{proof}[Proof of Theorem~\ref{T:SystemTransformation}]   
Let $\overline{\delta}(p_1)\cdots \overline{\delta}(p_m)$ be the linearization $\overline{\delta}$ of $(\overline{\delta}(p)\colon p\in P)$.
For $k\leq m$, let 
\[
U_k=\{ p_i\colon i\leq k\}. 
\]
Note that $U_k$ is upward closed.

By induction on $k\leq m$, we will prove the following statement: 
\[
\bigl( (\overline{\delta}(p_1)\cdots \overline{\delta}(p_k)\bigr) (C_p))_{p\in P}
\]
is an induced system that is good for $U_k$ and, for $p\not\in U_k$, 
\[
\bigl((\overline{\delta}(p_1)\cdots \overline{\delta}(p_k)\bigr)(C_p) = C_p. 
\]

Note that when $k=m$, then $U_k=P$; therefore, by Lemma~\ref{L:vgoo}(ii), the statement for $k=m$ yields the proposition. 
So it remains to prove the above statement. Set 
\[
D^k_p = (\overline{\delta}(p_1)\cdots \overline{\delta}(p_k)\bigr) (C_p). 
\]

If $k=0$, then $U_k=\emptyset$, and there is nothing to prove by Lemma~\ref{L:vgoo}(i). Now, we assume the statement holds for $k<m$, and we prove it for $k+1$. 
Since $p_{k+1}\in P\setminus U_k$ is maximal in $P\setminus U_k$, it follows from Lemma~\ref{L:goodu} (with a use of Lemma~\ref{L:frin}(ii)) and our inductive assumption 
that $(D^{k+1}_p)$ is an induced system that is good for $U_k$. 
This last condition means that, for each $p\in P$,  
\begin{equation}\label{E:rec}
D^{k+1}_p 
= \bigcup_{(q_1, \dots , q_\ell, r)} [v_{q_1}]\star \cdots \star [v_{q_\ell}]\star D^{k+1}_r, 
\end{equation}
where $(q_1, \dots , q_\ell, r)$ run over the set of all sequences of elements of $P$ such that 
\[
r\leq q_\ell\leq \cdots \leq q_1\leq p,\; r\not\in U_k,\; q_1, \dots, q_\ell\in U_k. 
\]
Moreover, in \eqref{E:rec}, since $r\not\in U_k$, by inductive assumption, we have 
\[
D^{k+1}_r = \overline{\delta}(p_{k+1})(C_r).
\]
Furthermore, by maximality of $p_{k+1}$ in $P\setminus U_k$, we have $r\not\geq p_{k+1}$ or $r=p_{k+1}$. In the first case, by 
Lemma~\ref{L:tog}(i), 
\[
D^{k+1}_r  = \overline{\delta}(p_{k+1})(C_r) = C_r,
\]
while, in the second case $r=p_{k+1}$, by our assumption on $\overline{\delta}(p_{k+1})$, 
\[
D^{k+1}_{r} = \overline{\delta}(p_{k+1})(C_{p_{k+1}}) = [ v_{p_{k+1}}] \star D(C_{p_{k+1}}) = [ v_{p_{k+1}}] \star  \bigcup_{q<p_{k+1}} C_q.
\]
It follows that the system $(D^{k+1}_p)$ is good for $U_{k+1}$ and $D^{k+1}_r = C_r$ if $r\not\in U_{k+1}$. The proposition is proved. 
\end{proof}

\subsection{Starrings of cell-systems}\label{S:Cell-systems}

Here we introduce the notion of a cell-system as a special type of a system. Cell-systems and Proposition \ref{P:TameCellSystemAdmitsStarring}, 
which we prove below will be important in Section~\ref{S:CategoriesOfMaps}.

\begin{definition} \label{Def:CellSystem}
A {\bf cell-system} is a system $\mathcal{S}$ so that every complex $C$ in $\mathcal{S}$ is a stellar ball with $D_{\mathcal S}(C)=\partial C$. If each $C$ is additionally a tame ball then we say that $\mathcal{S}$ is a {\bf tame cell-system}. 
\end{definition}

As in Section \ref{S:Systems}, we associate to each cell-system the poset $P_{\mathcal{S}}=(\mathcal{S},\subseteq)$ and 
the simplicial complex $\mathcal{C}(P_{\mathcal{S}})$ of all finite, non-empty, linearly ordered subsets of $P_{\mathcal{S}}$.

\begin{proposition}\label{P:TameCellSystemAdmitsStarring}
Every tame cell-system $\mathcal{S}$ is induced and admits a starring. Moreover, if $\overline{\delta}$ is the linearization of such a starring and 
$\delta\overline{\varepsilon}$ is an initial segment of  $\overline{\delta}$, then there is some $C\in\mathcal{S}$ so that $\delta$ is strongly internal for 
$\overline{\varepsilon}C$.
\end{proposition}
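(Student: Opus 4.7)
The plan is to handle the three assertions of the proposition in sequence: inducedness of $\mathcal{S}$, existence of a starring, and the strong-internality property of each move in the linearization.

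For inducedness, I would prove by well-founded induction on $C' \in \mathcal{S}$ (ordered by strict inclusion) that for any $\sigma \in C'$ with $\sigma \subseteq \mathrm{dom}(C)$, we have $\sigma \in C$. If $C' \subseteq C$ the claim is immediate. Otherwise the intersection $C \cap C'$ is nonempty (it contains the vertices of $\sigma$) and hence lies in $\mathcal{S}$; since $C \cap C' \subsetneq C'$, we get $C \cap C' \subseteq D_\mathcal{S}(C') = \partial C'$, so $\sigma \subseteq \mathrm{dom}(\partial C')$. Tameness of $C'$ then forces $\sigma \in \partial C'$, i.e.\ $\sigma \in C''$ for some $C'' \subsetneq C'$ in $\mathcal{S}$, and the inductive hypothesis applied to $C''$ yields $\sigma \in C$. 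The base case, $D_\mathcal{S}(C') = \emptyset$, corresponds to $C'$ being a single vertex and is trivial.

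For the starring, I would apply Theorem \ref{T: star with strongly internal} to each tame ball $C \in \mathcal{S}$ to obtain a strongly internal sequence $\overline{\delta}_C$ realizing $\overline{\delta}_C(C) = [v_C]\star\partial C = [v_C]\star D_\mathcal{S}(C)$. The crucial bookkeeping choice is to select all vertices introduced by divisions across the entire family $(\overline{\delta}_C)_{C\in\mathcal{S}}$ to be mutually distinct and to lie outside $\mathrm{dom}(U_\mathcal{S})$. With this choice, condition (c) on the $v_C$'s is automatic, and freeness (condition (d)) follows easily: if $\delta \in \overline{\delta}_C$ is a division with vertex $a$, then $a$ is distinct from every vertex involved in $\overline{\delta}_{C'}$ and lies outside $\mathrm{dom}(C')$, so $a \neq a'$ and $a \notin \sigma'$; if $\delta$ is a weld, freeness is vacuous. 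To verify concentration of $\delta$ on $C$ with respect to the original system: strong internality gives $\sigma \not\subseteq \mathrm{dom}(\partial C) = \mathrm{dom}(D_\mathcal{S}(C))$, and internality implies that the vertex $a$ of $\delta$ lies off $\partial C$, hence off $\mathrm{dom}(D_\mathcal{S}(C))$. For the clause $([a]\star[\sigma])\cap U_\mathcal{S} \subseteq C$, any face $\tau \in [a]\star[\sigma]$ belonging to $U_\mathcal{S}$ cannot contain a fresh vertex, so its vertices lie in $\mathrm{dom}(C)$; inducedness of $\mathcal{S}$ then gives $\tau \in C$.

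For the ``moreover'' statement, fix an initial segment $\delta\overline{\varepsilon}$ of the linearization $\overline{\delta} = \overline{\delta}_{C_1}\cdots\overline{\delta}_{C_m}$ and let $\delta \in \overline{\delta}_{C_k}$. The linearization ordering $C_i \subseteq C_j \Rightarrow j < i$ forces $C_i \not\subseteq C_k$ for every $i < k$. By induction on the number of moves already applied, using Lemma \ref{L:frin} to propagate concentration (via freeness from condition (d)) and Lemma \ref{L:tog}(i) to conclude that each move from $\overline{\delta}_{C_i}$ with $i < k$ leaves the image of $C_k$ unchanged, we see that $C_k$ is unaffected by the entire prefix $\overline{\delta}_{C_1}\cdots\overline{\delta}_{C_{k-1}}$. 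Consequently $\overline{\varepsilon}(C_k)$ equals the state of $C_k$ after only the initial portion of $\overline{\delta}_{C_k}$ preceding $\delta$, and by construction of $\overline{\delta}_{C_k}$ as a strongly internal starring of $C_k$, the move $\delta$ is strongly internal for this state. Taking $C = C_k$ completes the argument. The main obstacle throughout is verifying concentration \emph{with respect to the original system} $\mathcal{S}$: this relies on the combination of inducedness and the fresh-vertex convention to ensure that every face of $[a]\star[\sigma]$ visible in $U_\mathcal{S}$ lands inside $\mathrm{dom}(C)$.
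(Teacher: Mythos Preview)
Your proposal is correct and follows essentially the same approach as the paper. The only presentational differences are that you prove inducedness by well-founded induction rather than by choosing a minimal $D$ containing $\sigma$ (equivalent arguments), and for the ``moreover'' clause you cite Lemma~\ref{L:frin} together with Lemma~\ref{L:tog}(i) to show that earlier blocks leave $C_k$ untouched, whereas the paper phrases the same induction via Lemma~\ref{L:tog}(ii),(iii); both routes rely on the fresh-vertex convention and strong internality in the same way.
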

\begin{proof}
To see that $\mathcal{S}$ is induced notice that  if $\sigma\in U_{\mathcal{S}}$, then $\sigma\in D$ for some $D\in\mathcal{S}$. But if $\sigma\subseteq \mathrm{dom}(C)$ and $C\neq D$, then $\sigma\subseteq \mathrm{dom}(C\cap D)\subseteq \mathrm{dom}(\partial D)$, contradicting that $D$ is a tame ball.

By Theorem \ref{T: star with strongly internal}, there is a strongly internal starring  $\overline{\delta}_C$ for every $C\in \mathcal{S}$. We may assume without loss of generality that if $\delta$ is a subdivision from  $\overline{\delta}_C$ with vertex $a$, $\delta'$ is a subdivision from  $\overline{\delta}_{C'}$ with vertex $a'$, and $C\neq C'$, then
\begin{equation}\label{E:starring}
a\not\in \mathrm{dom}(U_{\mathcal{S}}) \text{ and } a'\neq a.
\end{equation}
To see this, let $\delta$ be a subdivision on vertex $a$ with $\overline{\delta}_C=\overline{\varepsilon}\delta \overline{\zeta}$, and let $v\not\in\mathrm{dom}(U_{\mathcal{S}})$ be any new element that is not the vertex of any subdivision of  $\overline{\delta}_{C'}$, for any $C'\in\mathcal{S}$. We may now modify $\overline{\delta}_C$ by renaming all occurrences of $a$ in $\delta \overline{\zeta}$ to $v$. 
For every fixed $C\in\mathcal{S}$ we apply this procedure to every  subdivision in  $\overline{\delta}_C$  starting from the beginning  of $\overline{\delta}_C$  and moving in linear order to the end.

\begin{claim}
$(\overline{\delta}_C: C\in\mathcal{S})$
is a starring of $\mathcal{S}$
\end{claim}
\begin{proof}[Proof of Claim.]
First we show that every entry  $\delta$ of $\overline{\delta}_C$ is concentrated on $C$ with respect to $\mathcal{S}$. Let $\overline{\delta}_C=\overline{\varepsilon}\delta \overline{\zeta}$ and assume that $\delta$ is based on $\sigma$ and has $a$ as a vertex. Since $\overline{\delta}_C$ is a strongly internal starring of $C$ it immediately follows that   $a\not\in {\rm dom}(D_{\mathcal S}(C))$ and  $\sigma\not\subseteq {\rm dom}(D_{\mathcal S}(C))$. Let now $\rho\in([a]\star[\sigma])\cap U_{\mathcal{S}}$. Since $\delta$ is essential for $\overline{\varepsilon}C$, we have that $\rho\subseteq \mathrm{dom}(\overline{\varepsilon}C)$ or $\rho\subseteq \mathrm{dom}(\delta\overline{\varepsilon}C)$, depending on whether $\delta$ is a subdivision or a weld. Either way, $\rho\subseteq \mathrm{dom}(C)\bigcup V$, where $V$ is the collection of all vertexes which are introduced by any subdivision in $\overline{\varepsilon}\delta$.  By the first part of (\ref{E:starring})  we have that $V\cap \mathrm{dom}(U_{\mathcal{S}})=\emptyset$ and therefore  $\rho\subseteq \mathrm{dom}(C)$. But this implies $\rho \in C$ since otherwise we have that $\rho\in D$ for some $D\in\mathcal{S}$ with $C\neq D$, and therefore $\rho\subseteq \mathrm{dom}(C\cap D)\subseteq \mathrm{dom}(\partial D)$, contradicting that $D$ is a tame ball.

It is clear that  $\overline{\delta}_C C =[v_C]\star\partial C= [v_C]\star D_{\mathcal{S}(C)}$ for some $v_C$. By  (\ref{E:starring}) we have that $v_C\not\in {\rm dom}(C)$ and $v_C\not= v_{C'}$ when   $C\not= C'$. We are left to show that whenever $C\not\subseteq C'$, any two entries $\delta$ and $\delta'$  of $\overline{\delta}_C$ and $\overline{\delta}_{C'}$, respectively, are free.  This follows from (\ref{E:starring})  in the only non-trivial case, i.e., the case when $\delta=(\sigma,a)$ is a subdivision.
\end{proof}

Let finally $\overline{\delta}$  be a linearization of  $(\overline{\delta}_C: C\in\mathcal{S})$. A simple induction on the length of $\overline{\delta}$,  based on the assumption that $\overline{\delta}_C$ is strongly internal starring of $C$ and  Lemma \ref{L:tog}(ii),(iii), proves the second part of the Proposition.
\end{proof}

As a consequence we have the following corollary for cell-systems which are not necessarily tame. Let $\mathcal{S}$ be any cell-system and let $\overline{\delta}=\delta_1,\ldots,\delta_k$ be a sequence of stellar moves. We say that {\bf $\overline{\delta}$ factors through $\mathcal{S}$}, if for all $l\leq k$ we have that $\delta_l\cdots\delta_1\mathcal{S}:=\{\delta_l\cdots\delta_1 C\mid C\in \mathcal{S}\}$ is still a system.

\begin{corollary}\label{C: stellar-equivalent to Poset}
Let $\mathcal{S}$ be a cell-system. Then there is a sequence $\overline{\delta}$ of stellar moves  which factor through $\mathcal{S}$, with $\overline{\delta}\, U_{\mathcal{S}}= \mathcal{C}(P_{\mathcal{S}})$.  
\end{corollary}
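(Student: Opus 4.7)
The plan is to reduce the statement to the tame case handled by Proposition \ref{P:TameCellSystemAdmitsStarring}. Since every stellar ball becomes tame after one barycentric subdivision, the idea is first to barycentrically subdivide $\mathcal{S}$ via a sequence of stellar subdivisions that factors through $\mathcal{S}$, producing a tame cell-system $\mathcal{S}'$ whose poset is canonically isomorphic to $P_\mathcal{S}$. Then one applies Proposition \ref{P:TameCellSystemAdmitsStarring} to $\mathcal{S}'$ and concatenates the two sequences of moves.

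For the tamification step, enumerate the faces of $U_\mathcal{S}$ of size at least $2$ as $\sigma_1,\ldots,\sigma_N$ in order of non-increasing cardinality, pick pairwise distinct fresh vertices $a_1,\ldots,a_N$ disjoint from $\mathrm{dom}(U_\mathcal{S})$, and set $\overline{\varepsilon} = (\sigma_N,a_N)\cdots(\sigma_1,a_1)$. A direct induction on $i$ shows that for each $C\in\mathcal{S}$ the partial applications $\overline{\varepsilon}_i C$ realize the corresponding partial barycentric subdivision of $C$, so in particular $\overline{\varepsilon} C = \beta C$ under the identification $a_i\leftrightarrow \sigma_i$; and $\beta C$ is a tame stellar ball as observed after Definition \ref{def: tame ball}. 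The key verification is that $\overline{\varepsilon}$ factors through $\mathcal{S}$, which amounts to the identity
\[
(\sigma_i,a_i)(C)\cap(\sigma_i,a_i)(C') = (\sigma_i,a_i)(C\cap C'),
\]
valid at each step for any two complexes $C,C'$ in $\overline{\varepsilon}_{i-1}\mathcal{S}$ and the fresh vertex $a_i$. This in turn follows from the link and star identities $\mathrm{lk}(\sigma,C)\cap\mathrm{lk}(\sigma,C')=\mathrm{lk}(\sigma,C\cap C')$ and $\mathrm{st}(\sigma,C)\cap\mathrm{st}(\sigma,C')=\mathrm{st}(\sigma,C\cap C')$ together with freshness of $a_i$. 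Since $\beta$ commutes with intersection and with $\partial$, the map $C\mapsto\beta C$ is a poset isomorphism $P_\mathcal{S}\to P_{\overline{\varepsilon}\mathcal{S}}$ satisfying $D_{\overline{\varepsilon}\mathcal{S}}(\beta C)=\partial(\beta C)$, so $\overline{\varepsilon}\mathcal{S}$ is a tame cell-system.

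To conclude, apply Proposition \ref{P:TameCellSystemAdmitsStarring} to $\overline{\varepsilon}\mathcal{S}$ and let $\overline{\delta}'$ be a linearization of the resulting starring. Theorem \ref{T:SystemTransformation} then provides an isomorphism $\overline{\delta}'\overline{\varepsilon} U_\mathcal{S}\cong \mathcal{C}(P_{\overline{\varepsilon}\mathcal{S}})$ sending each apex vertex $v_{\beta C}$ to $\beta C$, and composing with the poset isomorphism above yields $\overline{\delta}'\overline{\varepsilon} U_\mathcal{S}\cong\mathcal{C}(P_\mathcal{S})$, which becomes an equality after labelling each $v_{\beta C}$ as $C$. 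Setting $\overline{\delta}=\overline{\delta}'\overline{\varepsilon}$, the first half factors through by the preceding paragraph, and the second half factors through $\overline{\varepsilon}\mathcal{S}$ because at each step the starring move is concentrated on a single cell, so Lemma \ref{L:tog}(ii), (iii) preserves the induced system property throughout (this is already implicit in the inductive proof of Theorem \ref{T:SystemTransformation}). The only substantive obstacle is the intersection identity for stellar subdivisions asserted above; everything else is bookkeeping of the canonical identifications between $P_\mathcal{S}$, $P_{\overline{\varepsilon}\mathcal{S}}$, and the vertices produced by the starring.
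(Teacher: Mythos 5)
Your proposal is correct and follows essentially the same route as the paper's proof: realize the barycentric subdivision of $U_{\mathcal S}$ by stellar subdivisions of the faces in a never $\subseteq$-increasing order (which visibly factors through $\mathcal S$ and turns $\mathcal S$ into the tame cell-system $\beta\mathcal S$ with $P_{\beta\mathcal S}\cong P_{\mathcal S}$), then apply Proposition~\ref{P:TameCellSystemAdmitsStarring} and Theorem~\ref{T:SystemTransformation} to a linearized starring of $\beta\mathcal S$, using Lemma~\ref{L:tog}(ii),(iii) to see that the starring moves keep the family a system. The only differences are cosmetic (explicit intersection bookkeeping for the subdivision step and explicit citation of Theorem~\ref{T:SystemTransformation}, which the paper leaves implicit).
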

\begin{proof}
Subdividing all faces of $U_{\mathcal{S}}$ in a never $\subseteq$-increasing order produces a sequence $\overline{\varepsilon}$  that has the property that $\overline{\varepsilon}\, U_{\mathcal{S}}=\beta U_{\mathcal{S}}$ and $\overline{\varepsilon}\, C=\beta C$, for all $C\in\mathcal{S}$.   Notice now that $\beta \mathcal{S}:= \{\beta C \mid C\in\mathcal{S}\}$ is a tame cell system on $\overline{\varepsilon}\, C$.  By Proposition \ref{P:TameCellSystemAdmitsStarring} we have a starring $\overline{\zeta}$ of $\beta \mathcal{S}$. Set $\overline{\delta}=\overline{\varepsilon}\overline{\zeta}$ and notice that $\overline{\delta}$ factors through $\mathcal{S}$ since all welds involved in $\overline{\zeta}$
satisfy  the second part of Lemma \ref{L:tog}. The rest follows from  the observation that the posets $P_{\mathcal{S}}$ and $P_{\beta\mathcal{S}}$  are isomorphic.
\end{proof}

\section{Categories of maps and computation of the domination closure}\label{S:CategoriesOfMaps}

\subsection{Definitions of relevant categories}\label{Su:defrel}

We defined the generic combinatorial simplex $\DD$ as the  \Fraisse{} limit of the projective \Fraisse{} category $\mathcal{S}(\Delta)$ of selection maps.  In this section we define categories of maps associated to $\DD$ that are broader than $\mathcal{S}(\Delta)$.

A {\bf stellar $n$-simplex} is a stellar  $n$-ball $A$ together with a family $(A_{\mathrm{X}}\colon \mathrm{X}\in \Delta)$ indexed by the faces of the $n$-simplex $\Delta$, so that $A=A_{\{0,\ldots,n\}}$, and  for every $\mathrm{X},\mathrm{Y}\in \Delta$ we have that:
\begin{itemize}
\item[---] $A_{\mathrm{X}}$ is a stellar ball of the same dimension as $[\mathrm{X}]$;
\item[---] $A_{\mathrm{X}\cap\mathrm{Y}}=A_{\mathrm{X}}\cap A_{\mathrm{Y}}$, where we set $A_\emptyset = \emptyset$;
\item[---] $\partial A_{\mathrm{X}}=\bigcup_{\mathrm{Z}\subsetneq\mathrm{X}} A_{\mathrm{Z}}$.
\end{itemize}
We will often refer to such a stellar $n$-simplex by its maximum complex $A$. Notice that for every $k\geq 0$ we can identify the complex $\beta^k\Delta$ with the stellar $n$-simplex $(\beta^k[\mathrm{X}]\colon \mathrm{X}\in \Delta)$. These identifications will be implicit throughout the rest of this paper.
Let $A,B$ be stellar $n$-simplexes. A {\bf face-preserving}  map from $B$ to $A$ is a simplicial map $f\colon B\to A$ so that $f(B_{\mathrm{X}})=A_{\mathrm{X}}$, for all $\mathrm{X}\in \Delta$. We will denote by $f_{\mathrm{X}}$ the restriction of $f$ on $B_{\mathrm{X}}$. 

By $\mathcal{R}_{\star}(\Delta)$ we denote the collection of all face-preserving maps between stellar $n$-simplexes. 
We will consider three subcategories of $\mathcal{R}_{\star}(\Delta)$:  
the category $\mathcal{S}_{\star}(\Delta)$  of selections on stellar simplexes; 
the category $\mathcal{C}_{\star}(\Delta)$ of hereditarily cellular maps on stellar simplexes; 
and the category $\mathcal{H}_{\star}(\Delta)$ of restricted near-homeomorphisms on stellar simplexes.  
By restricting these categories to stellar simplexes of the form $\beta^k\Delta$, we get the full subcategories: 
$\mathcal{S}(\Delta)$, $\mathcal{C}(\Delta)$, $\mathcal{H}(\Delta)$, and 
$\mathcal{R}(\Delta)$. 

\subsubsection{The category $\mathcal{S}_{\star}(\Delta)$  of selection maps on stellar simplexes} 
For every stellar $n$-simplex  and  $k\geq0$ we identify $\beta^{k} A$ with the stellar $n$-simplex $(\beta^k A_{\mathrm{X}}: \mathrm{X}\in \Delta)$. Since all maps in  the category $\mathcal{S}(A)$  are face-preserving we can view $\mathcal{S}(A)$  as a subcategory of $\mathcal{R}_{\star}(\Delta)$. We define the category $\mathcal{S}_{\star}(\Delta)$ of {\bf selections on stellar $n$-simplexes} to be the union of all categories of the form $\mathcal{S}(A)$, where $A$ ranges over all stellar $n$-simplexes.

\subsubsection{The category $\mathcal{C}_{\star}(\Delta)$ of hereditarily cellular maps} 
We will need the following notions from \cite{Co1}. Let $A$ be a complex. For every $\sigma\in A$ consider the subcomplex 
\[
D(\sigma,A):=\big\{ \{\sigma_0\subseteq\ldots\subseteq\sigma_k\}\colon  \sigma\subseteq \sigma_0\big\}
\]
of $\beta A$. This complex is called the {\bf dual to $\sigma$ in $A$}. Similarly,  if $f\colon B\to A$ is a simplicial map, the subcomplex 
\[
D(\sigma,f):=(\beta f)^{-1} D(\sigma,A)=\big\{ \{\tau_0\subseteq\ldots\subseteq\tau_k\}\colon  \sigma\subseteq f_{*}(\tau_0)\big\} 
\]
of $\beta B$ is called the {\bf dual to $\sigma$ with respect to $f$}. It is a theorem of Cohen \cite{Co1} that whenever $B$ is  stellar manifold, so is $D(\sigma,f)$; see Theorem~\ref{Cohen's original theorem}. 
A simplicial map $f\colon M\to N$ between stellar manifolds is called  {\bf cellular} if for every face $\sigma$ of $N$, $D(\sigma,f)$ is a stellar ball.
The map $f$ is {\bf transversely cellular} if both $f$ and $f\res \partial M$ are cellular. Transversely cellular maps were first considerer in \cite{Co1}.  
Here we will consider the  category $\mathrm{C}_{\star}(\Delta)$ of the {\bf hereditarily cellular maps}. These are  all face-preserving maps  $f\colon B\to A$ between the stellar $n$-simplexes with the property that for every $\mathrm{X}\in \Delta$, the map $f_{\mathrm{X}}\colon B_{\mathrm{X}}\to A_{\mathrm{X}}$ is cellular. 

\subsubsection{The category $\mathcal{H}_{\star}(\Delta)$ of restricted near-homeomorphisms}\label{SS: Near-homeos}

While $\mathcal{C}_{\star}(\Delta)$ captures the piecewise linear structure on $|\Delta|$, the topological manifold structure is reflected by 
the category $\mathcal{H}_{\star}(\Delta)$, which we now define. 
Let $A$, $B$ be stellar $n$-simplexes. 
The map $\varphi\colon B\to A$ is called a {\bf restricted near-homeomorphism} if it is the limit of a uniformly convergent sequence $(\varphi_p)$ of 
functions $\varphi_p \colon |B| \to |A|$ each of which is a homeomorphism that maps $|B_X|$ to $|A_X|$ for each $X\in \Delta$.
A face-preserving map $f\colon  B\to A$ is a {\bf restricted near-homeomorphism} if the geometric realization $|f|\colon |B| \to |A|$ 
of $f$ is a restricted near-homeomorphism. We collect all these maps in the category $\mathcal{H}_{\star}(\Delta)$.

\subsection{Survey of results from literature}\label{S:SurveyLiterature}
Here, we collect  some results from the literature needed in our proofs of Theorems~\ref{Theorem:CellularAndSelections} 
and \ref{Theorem:HeredNearHomeoand Selection}.

The following theorem provides a useful criterion for showing when a stellar $n$-manifold is a stellar $n$-ball. Let $C$ be a simplicial complex and let $\sigma\subsetneq \tau$ be two faces of $C$. We say that $\sigma$ is a {\bf free face} of $\tau$ if t$\tau$ is there is no other $\rho\in C$ with $\sigma\subsetneq \rho$. In that case $C':= C\setminus \{\sigma,\tau\}$ is also a complex and we say that $C$ {\bf elementary collapses to} $C'$. 
We say that $C$ {\bf collapses to $D$} if there is a sequence $C_0,\ldots, C_{n-1}$ of complexes so that: $C_0=C$, $C_{n-1}=D$,  and $C_i$ elementary collapses to $C_{i+1}$, for all $i$. If in the above we have that $D=[v]$, for some $v\in \mathrm{dom}(C)$, then we say that $C$ is {\bf collapsible}.

\begin{theorem}[Whitehead \cite{Wh}]\label{T:Collapsible}
A collapsible stellar manifold  is a stellar ball.
\end{theorem}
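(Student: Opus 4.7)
The plan is to proceed by induction on the number $N$ of $n$-dimensional simplices of $M$, where $n = \dim(M)$. For the base case $N = 1$, the stellar $n$-manifold $M$ equals $[\tau]$ for a single $n$-simplex $\tau$, which is by definition a stellar $n$-ball. So assume $N \geq 2$.

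Since $M$ is collapsible, the collapsing sequence begins with some elementary collapse removing a free pair $(\sigma, \tau)$. My first step would be to verify that in any stellar manifold, such a free pair must have $\tau$ top-dimensional and $\sigma$ an $(n-1)$-face of $\tau$ lying in $\partial M$: if $\tau$ were not top-dimensional, then since $M$ is a manifold, $\tau$ (and hence $\sigma$) would be contained in some top-dimensional simplex $\tau' \supsetneq \tau$, contradicting freeness of $\sigma$. Thus $\lk(\sigma, M) = \{v\}$, a stellar $0$-ball, where $\tau = \sigma \cup \{v\}$, and in particular $\sigma \in \partial M$.

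Next, I would set $M_1 = M \setminus \{\sigma, \tau\}$ and verify that $M_1$ is itself a stellar $n$-manifold. Links of vertices outside $\tau$ remain unchanged; the link of the apex $v \in \tau \setminus \sigma$ loses only the free face $\sigma$ of $\lk(v, M)$; and for each $w \in \sigma$, the link $\lk(w, M_1)$ is obtained from the stellar $(n-1)$-ball $\lk(w, M)$ by the elementary collapse of the free pair $(\sigma \setminus \{w\}, \tau \setminus \{w\})$. The resulting complex is again a stellar $(n-1)$-ball: this is the technical heart of the argument and is handled by a nested induction on dimension, since a collapsed stellar $(n-1)$-manifold that remains collapsible is itself a stellar ball by the theorem in dimension $n-1$. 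Once $M_1$ is confirmed to be a stellar $n$-manifold, the tail of the original collapsing sequence shows that $M_1$ is still collapsible; by the primary induction on $N$, $M_1$ is a stellar $n$-ball.

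Finally, $M = M_1 \cup [\tau]$ is the union of two stellar $n$-balls glued along the common subcomplex $\partial[\tau] \setminus \{\sigma\}$, which is a stellar $(n-1)$-ball contained in both $\partial M_1$ and $\partial[\tau]$. The classical Newman sum-of-balls theorem, available within the stellar-move framework developed in Section~\ref{S:strcomp}, concludes that $M$ is a stellar $n$-ball. The main obstacle is the verification that $M_1$ remains a stellar manifold, since one must recognize each collapsed link $\lk(w, M_1)$ as still being a stellar $(n-1)$-ball; the nested induction on dimension is indispensable here, and it must be phrased so that the inductive hypothesis applies to every new link generated by the collapse at $(\sigma, \tau)$.
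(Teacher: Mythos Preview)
The paper does not give its own proof of this statement; Theorem~\ref{T:Collapsible} is simply quoted from Whitehead~\cite{Wh} in the literature survey of Section~\ref{S:SurveyLiterature}, so there is no in-paper argument to compare against.

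Your proposed argument has a genuine gap at the step asserting that $M_1 = M \setminus \{\sigma, \tau\}$ is again a stellar $n$-manifold. Take $M = [\{a,b,c\}] \cup [\{b,c,d\}]$, two triangles glued along the edge $\{b,c\}$; this is a collapsible stellar $2$-ball. Collapse the free pair $\sigma = \{a,b\}$, $\tau = \{a,b,c\}$, so the apex is $v = c$. Then $\lk(c, M)$ is the path $a\text{--}b\text{--}d$, a stellar $1$-ball, while $\lk(c, M_1) = \{a\} \cup [\{b,d\}]$ is a point disjoint from an edge, which is neither a stellar $1$-ball nor a $1$-sphere. Hence $M_1$ is not a stellar manifold and your inner induction on $N$ cannot be applied to it. The failure is exactly in your treatment of the apex: you write that $\lk(v, M_1)$ ``loses only the free face $\sigma$ of $\lk(v, M)$'', but $\sigma$ is a top-dimensional simplex of $\lk(v, M)$, not a free face of anything there, and deleting a single maximal simplex from a ball need not yield a ball or even a manifold when $v \in \partial M$. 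A parallel issue lurks for $w \in \sigma$: to invoke the theorem in dimension $n-1$ on $\lk(w, M_1)$ you would need to know that this complex is both a stellar $(n-1)$-manifold and collapsible, and neither follows from the single elementary collapse you perform on the ball $\lk(w, M)$.

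The standard route to Whitehead's theorem avoids this obstacle by invoking regular-neighborhood theory: since $M$ collapses to a point $p$, the manifold $M$ is a regular neighborhood of $p$ in itself, and a regular neighborhood of a point in a combinatorial manifold is a combinatorial ball. An inductive scheme of the shape you attempt can be made to work, but only after strengthening the setup so that the intermediate stages remain inside a class to which the induction applies; it does not go through by assuming that the result of an arbitrary first elementary collapse of a manifold is again a manifold.
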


One useful fact relating collapsibility with the dual structure in the sense of Cohen is the following proposition, which can be proved by a simple induction.

\begin{proposition}[\cite{Co1}, Proposition 5.5]\label{P:CollapseRegularNeigh}
Let $f\colon B\to A$ be a simplicial map, and let $\sigma\in A$. Then $D(\sigma,f)$ collapses to the subcomplex $(\beta f)^{-1}(\sigma)$ of $\beta B$.
\end{proposition}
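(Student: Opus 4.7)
The plan is to produce an explicit sequence of elementary collapses carrying $D(\sigma,f)$ down to $(\beta f)^{-1}(\sigma)$.

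First, unwind the definitions. A face of $\beta B$ is a chain $c=\{\tau_0\subsetneq\cdots\subsetneq\tau_k\}$ of faces of $B$; it lies in $D(\sigma,f)$ precisely when $\sigma\subseteq f_*(\tau_0)$, and in $(\beta f)^{-1}(\sigma)$ precisely when, in addition, $f_*(\tau_k)\subseteq\sigma$, equivalently $f_*(\tau_i)=\sigma$ for every $i$. For every vertex $\tau$ of $D(\sigma,f)$, define the \emph{projection} $\widetilde{\tau}:=\tau\cap f^{-1}(\sigma)$. One checks $f_*(\widetilde{\tau})=\sigma$ and $\widetilde{\tau}\subseteq\tau$, with equality precisely when $\tau$ is a vertex of $(\beta f)^{-1}(\sigma)$.

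Next, build a matching on \emph{bad} faces, where $c$ is bad if $c\in D(\sigma,f)\setminus(\beta f)^{-1}(\sigma)$. For any such $c$, let $j=j(c)$ be the smallest index with $\sigma\subsetneq f_*(\tau_j)$, and set $v(c):=\widetilde{\tau}_j$. Then $v(c)\subsetneq\tau_j$, and when $j\geq 1$ one has $\tau_{j-1}\subseteq v(c)$, since $f_*(\tau_{j-1})=\sigma$ forces $\tau_{j-1}\subseteq f^{-1}(\sigma)$ and $\tau_{j-1}\subseteq\tau_j$. Define the pairing $c \longleftrightarrow c\triangle\{v(c)\}$. If $v(c)\notin c$, inserting $v(c)$ just after $\tau_{j-1}$ (or at the head when $j=0$) produces a valid chain $c\cup\{v(c)\}$, still bad, with $j$ shifted to $j+1$ and the same $v$; if $v(c)\in c$, a short case analysis forces $v(c)=\tau_{j-1}$ and removal is the inverse operation. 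Hence the rule is an involution on bad faces, pairing them into $(c,c\cup\{v(c)\})$ that differ by the single vertex $v(c)$.

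Realise the matching as a sequence of elementary collapses by processing the matched pairs in order of decreasing $\dim\tau_{j(c)}$, breaking ties by decreasing $|c\cup\{v(c)\}|$. For each pair $(c,c\cup\{v(c)\})$, the step requires that at the moment of processing the chain $c$ is a free face, i.e., its only surviving coface in the current complex is $c\cup\{v(c)\}$. A competing coface $c\cup\{\rho\}$ with $\rho\neq v(c)$ must itself be bad; by inspecting how the invariants $(j,v)$ behave when one vertex is inserted, the matched pair containing $c\cup\{\rho\}$ is strictly earlier in the chosen order, so it has already been removed.

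\textbf{Main obstacle.} The crux of the argument is precisely the last verification: that the chosen order yields genuine elementary collapses, equivalently that the matching is acyclic in the sense of discrete Morse theory. This reduces to a careful case analysis of how the pair $(j(c),v(c))$ changes when a single vertex is added to, or removed from, a bad chain, and it is there that the structural identities $\widetilde{\tau}_i\subseteq\widetilde{\tau}_{i+1}$ and $\tau_{j-1}\subseteq v(c)\subsetneq\tau_{j}$ are used in full. Once acyclicity is settled, the collapse sequence strips off exactly the bad faces of $D(\sigma,f)$ and leaves the subcomplex $(\beta f)^{-1}(\sigma)$.
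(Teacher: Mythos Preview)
The paper does not give its own proof; it simply cites Cohen and remarks that the statement ``can be proved by a simple induction.'' So there is no in-paper argument to compare your approach against. Your discrete-Morse style matching, based on the projection $\widetilde{\tau}=\tau\cap f^{-1}(\sigma)$, is a natural and correct idea: the involution $c\mapsto c\triangle\{v(c)\}$ on bad chains is well defined and the verification that it is an involution is sound.

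The gap is in the ordering you propose. Processing pairs by \emph{decreasing} $\dim\tau_{j(c)}$ does not make $c$ free at its turn. Concretely, suppose $v(c)\notin c$ and take any $\rho$ with $\tau_{j-1}\subsetneq\rho\subsetneq\tau_j$ and $\sigma\subsetneq f_*(\rho)$ (for instance, with $f(1)=f(2)=a$, $f(3)=f(4)=b$, $\sigma=\{a\}$, take $c=\{\{1\}\subsetneq\{1,2,3,4\}\}$ and $\rho=\{1,3\}$). Then $d=c\cup\{\rho\}$ is a coface of $c$ with $d\ne c'$, but the pivot of $d$'s matched pair is $\rho$, which has \emph{strictly smaller} dimension than $\tau_j$; under your order, $d$'s pair is processed \emph{after} $(c,c')$, so $c$ is not free when you reach it. The case analysis you allude to does not cover this insertion.

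Your matching is nonetheless acyclic, and that is all that is needed. Along any gradient path $c_0\to c_0'\to c_1\to c_1'\to\cdots$ (up along the matching, down along an unmatched facet), the only facet of $c_i'=\{\tau_0,\dots,\tau_{j-1},v(c_i),\tau_j,\dots,\tau_k\}$ that is matched upward and is different from $c_i$ is $c_i'\setminus\{\tau_j\}$: removing any $\tau_\ell$ with $\ell\ne j$ leaves $v(c_i)$ in the chain and equal to the new $v$, so that facet is matched \emph{down}; removing $\tau_j$ makes the new pivot $\tau_{j+1}$. Hence the pivot dimension strictly increases along every gradient path, there are no cycles, and the standard discrete Morse machinery yields a collapse. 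Replace your explicit ordering by this acyclicity argument (or, equivalently, order pairs by a reverse topological sort of the gradient DAG) and the proof goes through.
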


The first part of the following theorem is Theorem 10.1 from \cite{Co1}. The second part combines Corollaries IV.12 and V.3 from  \cite{Akin}.

\begin{theorem}\label{Theorem:Akin-Cohen}
Let $M, L, K$ be stellar manifolds, let $f\colon M\to L$ be a transversely cellular map, and let $g\colon L\to K$ be a simplicial map.
\begin{enumerate}
\item[(i)] (Cohen) If $f'\colon M'\to L'$ is a simplicial map between    stellar manifolds so that after taking the geometric realizations we have $|M|_{\mathbb{R}}=|M'|_{\mathbb{R}}$, $|N|_{\mathbb{R}}=|N'|_{\mathbb{R}}$, and $|f|_{\mathbb{R}}=|f'|_{\mathbb{R}}$ then $f'$  is also transversely cellular. 
\item[(ii)] (Akin) $g$ is transversely cellular if and only if $g\circ f$ is.
\end{enumerate} 
\end{theorem}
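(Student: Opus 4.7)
The plan for part (i) is to exploit the fact that transverse cellularity is a PL-topological invariant rather than a purely combinatorial one. Since $|M|_{\mathbb{R}} = |M'|_{\mathbb{R}}$ and $|L|_{\mathbb{R}} = |L'|_{\mathbb{R}}$ with $|f|_{\mathbb{R}} = |f'|_{\mathbb{R}}$, I would first produce a common stellar refinement $\tilde{f}\colon \tilde{M} \to \tilde{L}$ that simultaneously refines both $f$ and $f'$ through sequences of stellar subdivisions of the domain and codomain. Then the argument reduces to two symmetric steps: transverse cellularity of $f$ passes to $\tilde{f}$, and it descends from $\tilde{f}$ to $f'$. For each step the key is to analyze how $D(\sigma, f)$ transforms when a single stellar subdivision is applied to $M$ or to $L$; in both cases the new dual should collapse onto the old one (or onto the preimage of a sub-face), so combining Proposition~\ref{P:CollapseRegularNeigh} with Whitehead's Theorem~\ref{T:Collapsible} shows the new dual remains a stellar ball. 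The boundary case is handled by the same analysis applied to $f \res \partial M$.

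For the forward direction of part (ii), namely that transverse cellularity of $f$ and $g$ implies transverse cellularity of $g \circ f$, I would begin with the identity $D(\sigma, g \circ f) = (\beta f)^{-1}(D(\sigma, g))$ of subcomplexes of $\beta M$. Decomposing $D(\sigma, g)$ according to its minimum element $\tau_0$, each ``stalk'' of $D(\sigma,g \circ f)$ over such a chain is essentially a join involving $D(\tau_0, f)$, which is a stellar ball by hypothesis. The whole complex $D(\sigma, g \circ f)$ can then be assembled by gluing these stellar balls along boundary pieces dictated by the face structure of the stellar ball $D(\sigma, g)$. A careful induction on the dimension of $D(\sigma, g)$, collapsing from the top face down while applying Proposition~\ref{P:CollapseRegularNeigh} at each step, should show that $D(\sigma, g \circ f)$ is collapsible and hence, by Theorem~\ref{T:Collapsible}, is a stellar ball.

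The harder direction is the converse in part (ii): assuming both $g \circ f$ and $f$ are transversely cellular, deduce that $g$ is transversely cellular. My plan is to use that transverse cellularity of $f$ makes $f$ invertible ``up to stellar equivalence'': by iteratively welding the cellular duals $D(\sigma, f)$ in a carefully chosen order (with the tools of Section~\ref{S:strcomp}, particularly the strongly internal starring machinery to keep the boundary under control), one should be able to replace $M$ with a subdivision of $L$ and $f$ with a composition of elementary stellar moves whose geometric realization is $|f|_{\mathbb{R}}$. Then $g$ and $g \circ f$ agree up to such moves on the domain side, and part (i) together with the forward direction of (ii) lets one transfer transverse cellularity from $g \circ f$ to $g$. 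The main obstacle I anticipate is the bookkeeping required to keep all welds and subdivisions simultaneously compatible with the boundary decomposition, which is precisely what the ``transverse'' in transverse cellularity controls; managing the interplay between moves internal to $M$ and moves on $\partial M$ is where the proof will become most delicate, and is likely where Akin's original argument invokes nontrivial PL-topological input beyond the combinatorics exposed in the paper.
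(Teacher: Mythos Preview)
The paper does not give its own proof of this theorem. It appears in Section~\ref{S:SurveyLiterature}, ``Survey of results from literature,'' and is simply attributed: part~(i) is Theorem~10.1 of Cohen~\cite{Co1}, and part~(ii) combines Corollaries~IV.12 and~V.3 of Akin~\cite{Akin}. There is therefore no in-paper argument to compare your proposal against; the statement is quoted as background and used as a black box in the proof of Lemma~\ref{L: Cellular has composition,anti-composition,triangulations}.

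That said, a brief comment on your sketch. Your outline for~(i) via common stellar refinements and the stability of the dual under single stellar moves is in the spirit of Cohen's original argument, and the identity $D(\sigma, g\circ f) = (\beta f)_*^{-1}\bigl(D(\sigma, g)\bigr)$ you invoke for the forward direction of~(ii) is correct and is indeed the starting point. For the converse of~(ii), however, your plan to ``invert $f$ up to stellar equivalence'' using the tools of Section~\ref{S:strcomp} is anachronistic: those tools are developed later in this paper for a different purpose (starring tame cell-systems), not to recover Akin's result. Akin's actual argument goes through his theory of cone complexes and a factorization of transversely cellular maps; it is not a consequence of the combinatorics assembled here. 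If you want to supply a self-contained proof, you should consult~\cite{Akin} directly rather than trying to bootstrap from Section~\ref{S:strcomp}.
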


We will need a version of the following theorem due to Cohen. 

\begin{theorem}[Cohen \cite{Co1}]\label{Cohen's original theorem}
Let $M$ be a stellar manifold of dimension $n$ and let $f\colon M\to A$ be a simplicial map. Let also $\sigma$ be in the image of $f$. Then 
\begin{enumerate}
\item[(i)] $D(\sigma,f)$ is a combinatorial manifold of dimension $n-\mathrm{dim}(\sigma)$;
\item[(ii)] $\partial D(\sigma,f)$ is the union of $D(\sigma,f\res \partial M)$ with $\big( \bigcup_{\sigma\subset\tau}D(\sigma,f) \big)$. 
\end{enumerate}
\end{theorem}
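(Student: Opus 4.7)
The plan is to prove both parts by induction on the dimension $n = \dim(M)$. The base case $n=0$ is immediate: $M$ is a discrete vertex set, $D(\sigma, f) = \{\{\tau\} : f_*(\tau) = \sigma\}$ is a $0$-manifold with empty boundary, and both (i) and (ii) hold vacuously since $\partial M = \emptyset$ and no proper superset of $\sigma$ belongs to $A$.

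The inductive step rests on a link computation. Given a vertex $\tau$ of $D(\sigma,f)$---that is, a face $\tau\in M$ with $\sigma\subseteq f_*(\tau)$---the link of $\tau$ in $\beta M$ decomposes as the join $\beta(\partial[\tau])\star\beta(\mathrm{lk}(\tau,M))$, where the two factors correspond to chains of proper subfaces of $\tau$ and chains of proper superfaces of $\tau$, respectively. Intersecting this join with $D(\sigma,f)$, the ascending factor is unchanged (because $\sigma\subseteq f_*(\tau')$ automatically holds when $\tau'\supsetneq\tau$), while the descending factor becomes $D(\sigma, f|_{\partial[\tau]})$. Since $\partial[\tau]$ is a stellar $(\dim(\tau)-1)$-sphere with $\dim(\tau)-1<n$, the inductive hypothesis applies and shows $D(\sigma, f|_{\partial[\tau]})$ is a combinatorial manifold of dimension $\dim(\tau)-\dim(\sigma)-1$. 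Meanwhile $\beta(\mathrm{lk}(\tau,M))$ is a stellar sphere or ball of dimension $n-\dim(\tau)-1$, since $M$ is a stellar $n$-manifold. Combining the two factors using the join identities \eqref{SB-identities}, we conclude that $\mathrm{lk}(\tau, D(\sigma,f))$ is a stellar sphere or ball of dimension $n-\dim(\sigma)-1$, establishing (i).

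For (ii), the task is to identify for which vertices $\tau$ the link $\mathrm{lk}(\tau, D(\sigma,f))$ is a sphere (i.e., when $\tau\notin\partial D(\sigma,f)$). The ascending factor $\beta(\mathrm{lk}(\tau,M))$ is a sphere precisely when $\tau\notin\partial M$. For the descending factor, the inductive version of (ii) applied to $\partial[\tau]$ (which has empty boundary) yields $\partial D(\sigma, f|_{\partial[\tau]}) = \bigcup_{\sigma\subsetneq\tau'}D(\tau', f|_{\partial[\tau]})$, and a direct analysis shows this union is empty exactly when $f_*(\tau)=\sigma$: if $f_*(\tau)=\sigma$ then every $\rho\subsetneq\tau$ has $f_*(\rho)\subseteq\sigma$, so no $\tau'\supsetneq\sigma$ can satisfy $\tau'\subseteq f_*(\rho)$; conversely, if $\sigma\subsetneq f_*(\tau)$, one exhibits an appropriate $\tau'$ together with a proper $\rho\subsetneq\tau$ realizing $\tau'\subseteq f_*(\rho)$ by choosing $\tau'$ strictly between $\sigma$ and $f_*(\tau)$ or by exploiting non-injectivity of $f|_\tau$. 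Consequently, $\mathrm{lk}(\tau, D(\sigma,f))$ is a sphere iff $\tau\notin\partial M$ and $f_*(\tau)=\sigma$, which is the precise complement of the set of vertices described in the right-hand side of (ii); identifying the two sides of (ii) as subcomplexes then follows by matching their vertex sets together with the induced simplicial structure inherited from $\beta M$.

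The main obstacle will be the converse direction of (ii): producing, when $\sigma\subsetneq f_*(\tau)$, an actually nonempty summand $D(\tau', f|_{\partial[\tau]})$, since the naive choice $\tau'=f_*(\tau)$ fails in the edge case where $f|_\tau$ is injective and $f_*(\tau)\setminus\sigma$ is a single vertex. In that edge case, one must verify directly that the descending factor is a stellar $0$-ball (a single vertex) rather than a sphere, so that the join with the ascending factor is still a ball. Keeping track of the sphere/ball dichotomy therefore requires careful attention to the paper's low-dimensional conventions---in particular that the empty complex is simultaneously a stellar $(-1)$-ball and a $(-2)$-sphere---when interpreting links at the extreme dimensions. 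Apart from this bookkeeping, the induction is routine.
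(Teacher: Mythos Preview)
The paper does not give its own proof of this theorem; it is quoted from Cohen~\cite{Co1} as part of the literature survey, so there is no in-paper argument to compare against. I will therefore assess your outline on its own.

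Your link decomposition $\mathrm{lk}(\{\tau\},D(\sigma,f))\cong D(\sigma,f|_{\partial[\tau]})\star\beta(\mathrm{lk}(\tau,M))$ is correct, but the step ``combining the two factors using the join identities \eqref{SB-identities}'' does not go through. Those identities apply only when \emph{both} join factors are stellar balls or spheres, whereas your inductive hypothesis yields merely that $D(\sigma,f|_{\partial[\tau]})$ is a combinatorial manifold. A join of a sphere with a general manifold need not even be a manifold (the suspension of a torus is the standard counterexample), so nothing prevents the link from failing to be a ball or sphere. The same issue reappears in your argument for (ii): you use that the descending factor has empty boundary when $f_*(\tau)=\sigma$, but a closed combinatorial manifold need not be a sphere, so ``empty boundary'' alone does not make the join a sphere.

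What is missing is an auxiliary lemma specific to maps out of a single simplex: for any simplicial $g\colon[\tau]\to A$ and any $\sigma\subseteq g_*(\tau)$, the complex $D(\sigma,g|_{\partial[\tau]})$ is a stellar sphere when $g_*(\tau)=\sigma$ and a stellar ball when $\sigma\subsetneq g_*(\tau)$. This is established in Cohen's paper by a direct combinatorial identification (the dual decomposes as an iterated join of subdivided simplices indexed by the fibres $g^{-1}(u)\cap\tau$ for $u\in\sigma$), and once you have it your induction for both (i) and (ii) closes up exactly as you describe. Your ``edge case'' paragraph touches the $0$-dimensional instance of this lemma but does not supply the general statement.
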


In our arguments concerning near-homeomorphisms, we will make use of the following theorem, which is a refined version of a theorem of 
Brown \cite{Br}; see also \cite[Theorem 1.10.23]{vM}. The proof of this refinement is implicit in \cite{Br}. 

\begin{theorem}\label{T:Brown}
Let $Y_n$ be compact metrizable spaces for $n\in {\mathbb N}$, and let $Y^i_n\subseteq Y_n$, for $i=0, \dots , m$, be closed. Let 
$f_n\colon Y_{n+1}\to Y_n$ be uniform approximable by homeomorphisms $h\colon Y_{n+1}\to Y_n$ such that 
$h(Y^i_{n+1})=Y^i_n$ for $i\leq m$. Consider 
\[
Y_\infty =\varprojlim (Y_n,f_n)\;\hbox{ and }\; Y^i_\infty =\varprojlim (Y^i_n, f\res Y^i_n), \hbox{ for }i\leq m.
\]
Then the projection map $f^\infty_0\colon Y_\infty\to Y_0$ is uniformly approximable by homeomorphisms 
$h\colon Y_\infty\to Y_0$ such that $h(Y^i_\infty)= Y^i_0$ for $i\leq m$. 
\end{theorem}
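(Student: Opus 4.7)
The plan is to reproduce Brown's classical telescoping construction of a near-homeomorphism between $Y_\infty$ and $Y_0$, keeping track of the fact that, by hypothesis, the approximating homeomorphisms $h_n\colon Y_{n+1}\to Y_n$ can be chosen additionally to satisfy $h_n(Y_{n+1}^i)=Y_n^i$ for every $i\le m$. Fix $\epsilon>0$ and positive reals $\delta_n$ with $\sum_n \delta_n<\epsilon$. Inductively pick such homeomorphisms $h_n$ with $d(h_n,f_n)$ small enough---using uniform continuity of the finite compositions $h_0\circ\cdots\circ h_{n-1}$ already produced---that the maps
\[
\phi_n:=h_0\circ\cdots\circ h_{n-1}\circ\pi_n\colon Y_\infty\to Y_0,
\]
where $\pi_n$ is the canonical projection, form a uniformly Cauchy sequence with $d(\phi_{n+1},\phi_n)<\delta_n$. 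The uniform limit $\phi\colon Y_\infty\to Y_0$ is then continuous, and a telescoping estimate yields $d(\phi,f_0^\infty)<\epsilon$.

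The main obstacle, and the core of Brown's original argument, is to show that $\phi$ is a homeomorphism. I would do this by constructing in parallel a continuous inverse $\psi\colon Y_0\to Y_\infty$ as the uniform limit of maps $\psi_n\colon Y_0\to Y_\infty$ produced by running an analogous telescoping construction from the other end, using the inverse homeomorphisms $h_n^{-1}$; provided the $\delta_n$ were chosen sufficiently small, $(\psi_n)$ is uniformly Cauchy, and the cancellations $h_n^{-1}\circ h_n=\mathrm{id}$ force $\phi\circ\psi=\mathrm{id}_{Y_0}$ and $\psi\circ\phi=\mathrm{id}_{Y_\infty}$ in the limit.

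It remains to verify that $\phi(Y_\infty^i)=Y_0^i$ for each $i\le m$. Since every $h_n$ restricts to a homeomorphism $Y_{n+1}^i\to Y_n^i$, uniform approximation gives the inclusion $f_n(Y_{n+1}^i)\subseteq Y_n^i$; moreover, a uniform limit of surjections between compact metric spaces is itself surjective (extract a convergent subsequence of preimages by compactness), so $f_n\res Y_{n+1}^i\colon Y_{n+1}^i\to Y_n^i$ is surjective, and consequently the projection $\pi_n\res Y_\infty^i\colon Y_\infty^i\to Y_n^i$ is surjective as well. Hence $\phi_n(Y_\infty^i)=h_0\circ\cdots\circ h_{n-1}(Y_n^i)=Y_0^i$ for every $n$, and applying the same compactness argument to the uniform limit $\phi\res Y_\infty^i$ yields $\phi(Y_\infty^i)=Y_0^i$, completing the proof.
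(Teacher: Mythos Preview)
Your overall plan---reproduce Brown's telescoping construction while tracking the closed subsets $Y^i_n$---is exactly the paper's approach, and your verification that $\phi(Y^i_\infty)=Y^i_0$ is correct (in fact more explicit than the paper's, which only notes that $F_0$ maps $Y^0_\infty$ \emph{into} $Y^0_0$ and leaves surjectivity implicit).

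The gap is in the step you flag as ``the core of Brown's original argument'': showing that $\phi$ is a homeomorphism. Your proposed construction of an inverse $\psi\colon Y_0\to Y_\infty$ as a uniform limit of maps $\psi_n\colon Y_0\to Y_\infty$ built ``from the other end'' using the $h_n^{-1}$ does not make sense as stated. The compositions $h_{n-1}^{-1}\circ\cdots\circ h_0^{-1}$ land in $Y_n$, a different space for each $n$, and there is no canonical continuous map $Y_n\to Y_\infty$ to follow them with; the resulting sequence $(y,h_0^{-1}(y),h_1^{-1}h_0^{-1}(y),\dots)$ lies in $\varprojlim(Y_n,h_n)$, not in $Y_\infty=\varprojlim(Y_n,f_n)$. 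So there is no evident way to define the maps $\psi_n$ whose uniform limit you want to take.

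Brown's actual argument, which the paper quotes, runs differently: one shows that for \emph{every} $N$ the sequence $\bigl((h_N\circ\cdots\circ h_{n-1})\circ\pi_n\bigr)_{n\ge N}$ converges uniformly to some $F_N\colon Y_\infty\to Y_N$, and that the map $x\mapsto (F_0(x),F_1(x),\dots)$ is a homeomorphism from $Y_\infty$ onto $\varprojlim(Y_n,h_n)$. Since each $h_n$ is itself a homeomorphism, the zeroth projection of $\varprojlim(Y_n,h_n)$ is a homeomorphism onto $Y_0$, whence $\phi=F_0$ is a homeomorphism. Replacing your description of the inverse construction with this argument fixes the proof.
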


\begin{proof} We indicate how to derive the theorem from what is proved in \cite{Br}. 
For simplicity of notation we set $m=0$, that is, there is only one sequence of closed set in our assumptions---$Y^0_n$, for $n\in {\mathbb N}$. 
For each $n\in {\mathbb N}$, fix a metric $d_n$ on $Y_n$ compatible with the topology on $Y_n$. 
By \cite[Theorems 1 and 2]{Br} and the proof of \cite[Theorem 3]{Br}, the following statement holds. 

\noindent {\em Statement}. If $K_n$ is a family of continuous functions from $Y_{n+1}$ to $Y_n$ 
such that $f_n$ is uniformly approximable by functions in $K_n$, then for each 
sequence of real numbers $\delta_n>0$, $n\in {\mathbb N}$, there are $h_n\in K_n$ with 
\[
\sup_{x\in Y_{n+1}} d_n(h_n(x), f_n(x)) <\delta_n, 
\]
and such that, for each $N$, the sequence of functions from $Y_\infty$ to $Y_N$
\begin{equation}\label{E:FNs}
\bigl((h_N\circ \cdots \circ h_n)\circ f^\infty_{n}\bigr)_{n\geq N}
\end{equation}
converges uniformly, and if we let 
\begin{equation}\label{E:FN}
F_N(x) = \lim_{n\geq N} (h_N\circ \cdots \circ h_n)\circ f^\infty_{n}(x),
\end{equation}
then the function 
\begin{equation}\label{E:prfu}
Y_\infty\ni x \to (F_0(x), F_1(x), F_2(x), \dots )\in  \varprojlim_n (Y_n, h_n),  
\end{equation}
is a homeomorphism. (The uniformity of the convergence of \eqref{E:FNs} is established in \cite[formula (1.3)]{Br}.) 

Note further the following three easy to justify points. 

(1) If each $h_n$ is a homeomorphism, then all the projection maps in $\varprojlim_n (Y_n, h_n)$ are homeomorphisms, so 
the function \eqref{E:prfu} being a homeomorphism implies that $F_0$ is a homeomorphism. 

(2) If each $h_n$ is such that 
$h_n(Y^0_{n+1})= Y^0_n$, then, by its definition \eqref{E:FN}, $F_0$ maps $Y_\infty^0$ to $Y^0_0$. 

(3) From the definition \eqref{E:FN} of $F_0$, if each $h_n$ uniformly approximates 
$f_{n}$ closely enough, then $F_0$ uniformly approximates $f_{0}^\infty$ with a given in advance degree of precision.  

Now, by assumption, we can uniformly approximate each $f_n$ as closely as we wish by homeomorphisms $h_n$ with 
$h_n(Y^0_{n+1}) = Y^0_n$. 
It follows from Statement and points (1--3) that $f_{0}^\infty$ is uniformly approximated, to an arbitrary degree of precision, by homeomorphisms $F_0$ with 
$F_0(Y_\infty^0)=Y^0_0$, as required. 
\end{proof}

\subsection{The categories $\mathcal{C}_{\star}(\Delta)$ and $\mathcal{C}(\Delta)$---lower bounds on  $[{\mathcal S}_\star(\Delta)]$ and $[{\mathcal S}(\Delta)]$} \label{S:ResultsOnHereditarilyCellularMaps}

The theorem below gives lower estimates on the coininitial closure of ${\mathcal S}_*(\Delta)$. 

\begin{theorem}\label{Theorem:CellularAndSelections}
Let $\Delta$ be the $n$-simplex. 
\begin{enumerate}
\item[(i)] $\mathcal{S}_{\star}(\Delta)\subseteq \mathcal{C}_{\star}(\Delta)$ and $\mathcal{S}(\Delta)\subseteq \mathcal{C}(\Delta)$. 

\item[(ii)] $\mathcal{C}_{\star}(\Delta)\subseteq [ \mathcal{S}_{\star}(\Delta)]$ and $\mathcal{C}(\Delta)\subseteq [\mathcal{S}(\Delta)]$.
In fact, for every $f\in \mathcal{C}_{\star}(\Delta)$, there exists $g\in \mathcal{C}_{\star}(\Delta)$ such that $f\circ g$ is the composition of elementary selections 
or an identity map. Similarly, for every $f\in \mathcal{C}(\Delta)$, there exists $g\in \mathcal{C}(\Delta)$ such that $f\circ g$ is the composition of elementary selections 
or an identity map.
\end{enumerate}
\end{theorem}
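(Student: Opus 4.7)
The strategy splits into two separate arguments. For part~(i), I would verify that $\mathcal{C}_\star(\Delta)$ contains all generators of $\mathcal{S}_\star(\Delta)$ and is closed under the defining operations. For part~(ii), I would attach to each $f\in\mathcal{C}_\star(\Delta)$ a natural cell-system determined by its dual cells and exploit Corollary~\ref{C: stellar-equivalent to Poset} to produce $g$.

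For part~(i), closure of $\mathcal{C}_\star(\Delta)$ under composition is immediate from Theorem~\ref{Theorem:Akin-Cohen}(ii) applied face-by-face, and closure under $f\mapsto \beta f$ follows from a direct analysis of the dual cells of $\beta f$ (using Theorem~\ref{Theorem:Akin-Cohen}(i), together with the fact that $\beta$ preserves stellar manifolds). It then suffices to prove that each face-restriction $s_X\colon \beta A_X\to A_X$ of an elementary selection is cellular. Fix $\sigma\in A_X$. By Theorem~\ref{Cohen's original theorem}, $D(\sigma,s_X)$ is a stellar manifold, so by Theorem~\ref{T:Collapsible} it suffices to verify collapsibility. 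By Proposition~\ref{P:CollapseRegularNeigh}, $D(\sigma,s_X)$ collapses to $(\beta s_X)^{-1}(\sigma)$, which, after unfolding the definitions, is the order complex of
\[
U_\sigma=\{\tau\in A_X\colon s_X(\tau)=\sigma\}.
\]
The defining property $s_X(\tau)\in\tau$ forces $\{\sigma\}$ to be the minimum element of $U_\sigma$. Hence $(\beta s_X)^{-1}(\sigma)$ is a cone and therefore collapsible, so $D(\sigma,s_X)$ is a stellar ball.

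For part~(ii), given $f\colon B\to A$ in $\mathcal{C}_\star(\Delta)$, I would associate to $f$ the family $\mathcal{S}_f$ of subcomplexes of $\beta B$ built from the dual cells $D(\sigma, f_X)$ for $X\in\Delta$ and $\sigma\in A_X$. Hereditary cellularity makes each of these a stellar ball, and the intersection identity $D(\sigma,f)\cap D(\sigma',f)=D(\sigma\cup\sigma',f)$ (empty when $\sigma\cup\sigma'\notin A$), combined with Cohen's description of the boundary in Theorem~\ref{Cohen's original theorem}(ii), exhibits $\mathcal{S}_f$ as a cell-system on $\beta B$ whose poset is naturally identified with the face poset of $A$ in reverse. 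Consequently $\mathcal{C}(P_{\mathcal{S}_f})\cong\beta A$. Corollary~\ref{C: stellar-equivalent to Poset} supplies a sequence $\overline{\delta}$ of stellar moves factoring through $\mathcal{S}_f$ that carries $\beta B$ to this canonical copy of $\beta A$. Reading each subdivision in $\overline{\delta}$ as the selection-type simplicial map it induces in the reverse direction (and passing to a common refinement to handle welds), I would extract a hereditarily cellular map $g\colon C\to B$, with $C$ an iterated barycentric subdivision of $A$, such that $f\circ g$ factors as a composition of elementary selections $C\to\cdots\to\beta A\to A$.

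The main obstacle is this last step: verifying that the simplicial map produced from $\overline{\delta}$ literally satisfies the elementary-selection condition $s(v)\in v$ at every intermediate subdivision stage, rather than merely being face-preserving. Here the second conclusion of Proposition~\ref{P:TameCellSystemAdmitsStarring}---that each initial segment of the linearization $\overline{\delta}$ is strongly internal with respect to some cell---is essential. It ensures that the apex vertex $v_\sigma$ introduced by starring $D(\sigma, f_X)$ lies over $\sigma$ in a manner compatible with all subsequent moves, so that composing with $f$ routes $v_\sigma$ into $\sigma\subseteq A_X$. Propagating this compatibility through every level of subdivision in $\overline{\delta}$ is what yields $f\circ g$ as an iterated elementary selection.
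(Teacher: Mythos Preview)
Your outline for part~(i) has a genuine gap. When you fix $\sigma\in A_X$ and invoke Proposition~\ref{P:CollapseRegularNeigh}, the subcomplex $(\beta s_X)^{-1}(\sigma)$ lives in $\beta(\beta A_X)$: its vertices are chains $c=\{\tau_0\subsetneq\cdots\subsetneq\tau_k\}$ in $A_X$ with $(s_X)_*(c)=\{s_X(\tau_0),\dots,s_X(\tau_k)\}=\sigma$. Your set $U_\sigma=\{\tau\in A_X: s_X(\tau)=\sigma\}$ only type-checks when $\sigma$ is a vertex of $A_X$, since $s_X(\tau)$ is always a vertex; so your cone argument covers just the $0$-dimensional faces. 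For higher-dimensional $\sigma$ the preimage is an order complex of chains (not of single faces), and it has no obvious minimum. The paper sidesteps this by observing that every elementary selection is a composition of \emph{connection maps} $e\colon(\sigma,a)A\to A$, and then proving directly (Lemma~\ref{L:SimpleCellularMaps}) that connection maps and star-contractions lie in $\mathcal{C}_\star(\Delta)$; closure under composition and $\beta$ (Lemma~\ref{L: Cellular has composition,anti-composition,triangulations}) finishes (i).

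For part~(ii) there is a more serious problem. The poset $P_{\mathcal{S}_f}$ you describe is indexed by pairs $(\sigma,X)$ with $\sigma\in A_X$, not by faces of $A$ alone, because $D(\sigma,f_X)\subseteq\beta B_X$ depends on $X$. Hence $\mathcal{C}(P_{\mathcal{S}_f})$ is \emph{not} isomorphic to $\beta A$; see Figure~\ref{fig:M3}, where $\mathcal{C}(P_{\mathcal{A}})$ is visibly larger. The paper handles this by introducing the canonical map $\mu\colon\mathcal{C}(P_{\mathcal{A}})\to\beta A$ sending $D(\sigma,A_X)\mapsto\sigma$, and proving the key Lemma~\ref{L: connection}: there is a \emph{cellular connection} (a zig-zag of connection maps with commuting triangles over $\beta A$) from $\beta f\circ s$ to $\mu$, built from the linearized starring of the tame system $\beta\mathcal{S}_f$. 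This is where Proposition~\ref{P:TameCellSystemAdmitsStarring} actually enters. Once the cellular connection exists, Lemma~\ref{L: factoring through stellar moves} lifts each zig-zag step to an elementary selection (or identity) on the $B$-side, producing $g$ with $f\circ g$ a composition of elementary selections. Your sketch conflates the two stages---you try to go directly from $\beta B$ to $\beta A$ via stellar moves and then read off selections, but without the intermediate target $\mu$ and the square-by-square lifting of Lemma~\ref{L: factoring through stellar moves}, there is no mechanism that forces the resulting map to satisfy $s(v)\in v$ at every level.
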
 

In point (ii) of the theorem above, the inclusion $\mathcal{C}_{\star}(\Delta)\subseteq [ \mathcal{S}_{\star}(\Delta)]$ is equivalent to saying that for every 
$f\in \mathcal{C}_{\star}(\Delta)$, there exists $g\in \mathcal{C}_{\star}(\Delta)$ such that $f\circ g$ is a selection map. The additional statement in point (ii) asserts 
that more is true; the map $f\circ g$ can be obtained from elementary selections and identity maps by composition only without resorting to the operation 
$h\to\beta h$ of barycentric subdivision on maps. The analogous remark applies to the inclusion $\mathcal{C}(\Delta)\subseteq [\mathcal{S}(\Delta)]$. 
This subtle point will turn out to be important in our proof of Theorem~\ref{T:topologicalRealization} in Section~\ref{Su:proofend}. 
(Note, however, that the operation of barycentric subdivision is essential in proving in Theorem~\ref{T:A} that the category ${\mathcal S}(\Delta)$ is projective 
Fra{\"i}ss{\'e}.) 

We also have the following auxiliary result that may be of independent interest. 

\begin{theorem}\label{Theorem:JointProjection}
For every stellar $n$-simplexes $A,B$ there is $k\geq 0$ and a map $h\colon \beta^kB\to A$ with $h\in \mathcal{C}_{\star}(\Delta)$. 
\end{theorem}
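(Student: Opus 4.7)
The plan is to proceed by induction on $n=\dim(\Delta)$, and within each inductive step to construct $h$ dimension by dimension over the skeleta of $\Delta$. The case $n=0$ is trivial.

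For the inductive step, fix stellar $n$-simplexes $A$ and $B$ and assume the theorem for smaller $n$. I will build, by a secondary induction on $p=0,1,\dots,n$, a common integer $k_p$ together with compatible hereditarily cellular maps $h_X\colon \beta^{k_p}B_X\to A_X$ for every face $X\in\Delta$ with $\dim X\leq p$, where compatibility means that the restriction of $h_X$ to $\beta^{k_p}B_Y$ equals $h_Y$ whenever $Y\subseteq X$. The case $p=0$ is the unique vertex map. For the step $p-1\to p$, the pieces built so far glue, for each $Z\in\Delta$ with $\dim Z=p$, to a face-preserving cellular boundary map $\partial h_Z\colon \beta^{k_{p-1}}\partial B_Z\to\partial A_Z$; I then extend each $\partial h_Z$ to a hereditarily cellular $h_Z\colon \beta^{k_Z}B_Z\to A_Z$ by the Key Lemma below, and set $k_p=\max_Z k_Z$, replacing earlier maps by compositions with suitable selections (which are hereditarily cellular by Theorem~\ref{Theorem:CellularAndSelections}(i)). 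Compatibility across different $Z$ of dimension $p$ is automatic because all extensions agree with the fixed boundary data on $B_Z\cap B_{Z'}=B_{Z\cap Z'}$. At $p=n$ this yields the desired $h\colon\beta^{k_n}B\to A$.

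\emph{Key Lemma.} Let $B'$, $A'$ be stellar $d$-balls ($d\geq 1$) whose boundaries carry compatible stellar $(d-1)$-simplex structures, and let $\partial h\colon\beta^k\partial B'\to\partial A'$ be a face-preserving cellular simplicial epimorphism whose restriction to each face of the stellar-simplex structure on $\partial B'$ is itself cellular. Then there exist $k'\geq k$ and a hereditarily cellular simplicial map $h\colon\beta^{k'}B'\to A'$ extending $\partial h\circ s$ for a selection $s\colon\beta^{k'}\partial B'\to\beta^k\partial B'$.

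To prove the Key Lemma I take $k'$ large enough that $\beta^{k'}B'$ is a tame ball in the sense of Definition~\ref{def: tame ball}. By Theorem~\ref{T: star with strongly internal} there is a strongly internal starring of $\beta^{k'}B'$ to the cone $[v]\star\beta^{k'}\partial B'$, leaving $\beta^{k'}\partial B'$ untouched. On this cone I define an extension $\tilde h$ of $\partial h\circ s$ by sending $v$ to any chosen interior vertex of $A'$; the resulting map is simplicial by the cone structure and is cellular because, for each face $\tau$ of $A'$, the dual $D(\tau,\tilde h)$ is a stellar ball, as can be checked by a case analysis on whether $\tau$ lies on the boundary (where the dual is a cone over a cellular preimage coming from $\partial h$) or in the interior of $A'$ (where the dual is a collapsible subcomplex by Proposition~\ref{P:CollapseRegularNeigh} and Theorem~\ref{T:Collapsible}). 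It then remains to translate the starring sequence into a hereditarily cellular simplicial map $\beta^{k''}B'\to[v]\star\beta^{k'}\partial B'$ for some $k''\geq k'$, after which Theorem~\ref{Theorem:Akin-Cohen}(ii) completes the proof by composition.

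The main obstacle is precisely this last translation: a strongly internal starring mixes stellar divisions, which naturally yield transversely cellular maps from the finer complex to the coarser, with stellar welds, which run the opposite way. My plan is to handle each weld by performing one extra barycentric subdivision and realizing the weld as a transversely cellular collapse via the geometric-identification part of Theorem~\ref{Theorem:Akin-Cohen}(i). Because strongly internal moves leave the boundary untouched, the resulting cellular map automatically respects the face-preserving stellar-simplex structure, and its composition with $\tilde h$ is hereditarily cellular.
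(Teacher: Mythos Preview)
There are two genuine gaps in the Key Lemma, and together they leave the proof incomplete.

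First, your map $\tilde h\colon [v]\star\beta^{k'}\partial B'\to A'$ is not simplicial in general. For it to be simplicial, the image $w$ of $v$ must satisfy $\{w\}\cup\tau\in A'$ for every $\tau\in\partial A'$; that is, $A'$ would already have to be the cone $[w]\star\partial A'$. An arbitrary stellar $d$-ball does not have this form (for instance $A'=\Delta^d$ has no interior vertex at all). You could instead map to $[w]\star\partial A'$ for a new point $w$ --- that \emph{is} simplicial, and cellularity of the cone-to-cone map can be checked --- but then you still need a hereditarily cellular map from $[w]\star\partial A'$ to $A'$, and producing one reintroduces precisely the zig-zag problem you are trying to avoid.

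Second, and more seriously, the ``translation'' step is not a plan but a hope, and the directions are reversed. Along the starring sequence $C_0=\beta^{k'}B'\rightsquigarrow C_m=[v]\star\beta^{k'}\partial B'$, a division $\delta_l$ yields a connection map $C_l\to C_{l-1}$ (later to earlier) while a weld yields $C_{l-1}\to C_l$ (earlier to later). Since you want a single map from a subdivision of $C_0$ to $C_m$, it is the \emph{divisions} that point the wrong way, not the welds. Your appeal to Theorem~\ref{Theorem:Akin-Cohen}(i) does not reverse anything: that result only says ``transversely cellular'' is invariant under retriangulation of the \emph{same} geometric map; it does not manufacture a simplicial map in the opposite direction. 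What is actually needed is precisely the mechanism of the paper's Lemma~\ref{L: factoring through stellar moves}: given a cellular $g\colon D\to C$ and a connection map $e$ between $C$ and $C'$ in \emph{either} direction, one refines $D$ by a single elementary selection and obtains a cellular $g'\colon D'\to C'$ making the square commute. The paper's proof applies this lemma directly --- no induction on $n$, no cone construction --- starting from $g_0=\mathrm{id}_B$ and walking along a sequence of stellar moves from $B$ to $A$ that factors through the face cell-system (supplied by Corollary~\ref{C: stellar-equivalent to Poset}), arriving at the desired $h\colon\beta^kB\to A$.
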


The following corollary is a consequence of Theorems~\ref{Theorem:CellularAndSelections} and \ref{Theorem:JointProjection} above, 
Theorem~\ref{T:A}, and results of Section~\ref{S:CoinitialClosure}. Theorem~\ref{Theorem:JointProjection}
is used in checking that $\mathcal{C}_{\star}(\Delta)$ has the joint projection property. 

\begin{corollary} \label{Corollary:CellularIsFraisse}
Let $\Delta$ be the $n$-simplex. Then $\mathcal{C}_{\star}(\Delta)$ and $\mathcal{C}(\Delta)$ are projective \Fraisse{} classes and their \Fraisse{} limit is 
isomorphic to $\DD$.
\end{corollary}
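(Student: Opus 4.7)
The plan is to treat the two categories separately, since $\mathcal{C}(\Delta)$ and $\mathcal{C}_{\star}(\Delta)$ stand in rather different relations to $\mathcal{S}(\Delta)$. For $\mathcal{C}(\Delta)$, I would invoke the domination machinery from Section~\ref{S:CoinitialClosure}: by Theorem~\ref{T:A} together with Lemma~\ref{L:EquivalenceRelation}, the category $\mathcal{S}(\Delta)$ is projective \Fraisse{} and has \Fraisse{} limit $\DD$; and by Theorem~\ref{Theorem:CellularAndSelections}(ii), $\mathcal{S}(\Delta)$ is dominating in $\mathcal{C}(\Delta)$ inside the essentially countable ambient $\mathcal{R}(\Delta)$. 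Since $\mathcal{S}(\Delta)$ and $\mathcal{C}(\Delta)$ have the same object class $\{\beta^k\Delta\colon k\geq 0\}$, Proposition~\ref{P:cos} immediately transfers joint projection and projective amalgamation from $\mathcal{S}(\Delta)$ to $\mathcal{C}(\Delta)$, and part (iii) of the same proposition shows that any generic sequence for $\mathcal{S}(\Delta)$ is also generic for $\mathcal{C}(\Delta)$. Consequently the \Fraisse{} limit of $\mathcal{C}(\Delta)$ coincides with that of $\mathcal{S}(\Delta)$, namely $\DD$.

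The case of $\mathcal{C}_{\star}(\Delta)$ is more delicate because its object class---all stellar $n$-simplexes---properly contains that of $\mathcal{S}(\Delta)$, so no subcategory of the form $\mathcal{S}(A)$ can dominate it. My plan is therefore to verify the \Fraisse{} axioms directly. Essential countability is clear. For joint projection of two stellar $n$-simplexes $A_1,A_2$, Theorem~\ref{Theorem:JointProjection} produces a hereditarily cellular map $h\colon \beta^k A_2\to A_1$, which I would pair with the $k$-fold composition of elementary selections $\beta^k A_2\to A_2$ from $\mathcal{S}(A_2)\subseteq \mathcal{C}_{\star}(\Delta)$ to obtain the required cospan with common domain $\beta^k A_2$.

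The amalgamation step is the substantive one. Given $f_1\colon B_1\to A$ and $f_2\colon B_2\to A$ in $\mathcal{C}_{\star}(\Delta)$, my strategy is to push each $f_i$ back into the rigid \Fraisse{} category $\mathcal{S}(A)$ and amalgamate there. Invoking the precise form of Theorem~\ref{Theorem:CellularAndSelections}(ii), I obtain $g_i\in \mathcal{C}_{\star}(\Delta)$ such that each $f_i\circ g_i$ is a composition of elementary selections on $A$, or the identity on $A$; in either case $f_i\circ g_i$ lies in $\mathcal{S}(A)$, and by pre-composing with further elementary selections I can arrange that $g_1$ and $g_2$ share a common domain $\beta^\ell A$. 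Since $\mathcal{S}(A)$ is projective \Fraisse{} by Theorem~\ref{T:A}, there exist $h_1,h_2\in \mathcal{S}(A)$ with $(f_1\circ g_1)\circ h_1=(f_2\circ g_2)\circ h_2$, and then $e'_i:=g_i\circ h_i\in \mathcal{C}_{\star}(\Delta)$ provides the required amalgamation.

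To identify the \Fraisse{} limit as $\DD$, the plan is to show that any generic sequence $(f_n)$ for $\mathcal{S}(\Delta)$, regarded as a sequence in $\mathcal{C}_{\star}(\Delta)$, remains generic. Projective universality for a stellar simplex $A$ follows by taking a cellular map $\beta^m\Delta\to A$ from Theorem~\ref{Theorem:JointProjection} and composing it with selections $\beta^k\Delta\to\beta^m\Delta$, for some $n$ such that $\mathrm{dom}(f_n)=\beta^k\Delta$ with $k\geq m$. Projective extension reduces, via Theorem~\ref{Theorem:CellularAndSelections}(ii), to the corresponding property inside $\mathcal{S}(\Delta)$, which is available by genericity of $(f_n)$ there. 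The step I expect to demand the most care is the amalgamation, in particular the verification that the maps furnished by the refined form of Theorem~\ref{Theorem:CellularAndSelections}(ii) genuinely land in $\mathcal{S}(A)$ (and not merely in some $\mathcal{S}(A')$ for an unrelated stellar simplex $A'$), so that the \Fraisse{} amalgamation of $\mathcal{S}(A)$ can be invoked on the nose; once this bookkeeping is handled, the limit identification is a routine variation on Proposition~\ref{P:cos}(iii).
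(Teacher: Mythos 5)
Your proposal is correct and follows essentially the same route as the paper: $\mathcal{C}(\Delta)$ via domination and Proposition~\ref{P:cos}, joint projection for $\mathcal{C}_{\star}(\Delta)$ via Theorem~\ref{Theorem:JointProjection}, amalgamation via the refined form of Theorem~\ref{Theorem:CellularAndSelections}(ii), and the identification of the limit with $\DD$. The only (harmless) deviations are implementation-level: where the paper invokes projective amalgamation of $\mathcal{S}_{\star}(\Delta)$ together with Proposition~\ref{P:cos}, you unwind the domination argument and amalgamate directly in $\mathcal{S}(A)$ (your worry about landing in $\mathcal{S}(A)$ is automatically settled, since an elementary selection with codomain $C$ has domain $\beta C$, so any composition of elementary selections ending at $A$ runs down the $\beta$-tower over $A$), and where the paper identifies the limit by a back-and-forth between $\DD'$ and $\DD$, you show that an $\mathcal{S}(\Delta)$-generic sequence remains generic for $\mathcal{C}_{\star}(\Delta)$, which amounts to the same verification.
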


We start proving the results above by establishing some basic closure properties of  
$\mathcal{C}_{\star}(\Delta)$ and $\mathcal{C}(\Delta)$.

\begin{lemma}\label{L: Cellular has composition,anti-composition,triangulations}
Let $A,B,C$ be stellar $n$-simplexes, and let $f\colon B\to A$ and $g\colon C\to B$ be face preserving maps. 
\begin{enumerate}
\item[(i)] The identity map $\mathrm{id}\colon A\to A$ is in $\mathcal{C}_{\star}(\Delta)$.
\item[(ii)]  If $f\in \mathcal{C}_{\star}(\Delta)$ then $\beta f\in \mathcal{C}_{\star}(\Delta)$;
\item[(iii)]  $g\in\mathcal{C}_{\star}(\Delta)$ implies that $f \circ g\in\mathcal{C}_{\star}(\Delta)$ if and only if 
$f\in\mathcal{C}_{\star}(\Delta)$.
\end{enumerate} 
Analogous statements hold for $\mathcal{C}(\Delta)$.
\end{lemma}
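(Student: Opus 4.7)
The plan is to prove (i) directly from the definitions, then to isolate and prove the following auxiliary claim, which is the technical heart of the lemma: if $f\in\mathcal{C}_{\star}(\Delta)$, then $f_X$ is \emph{transversely} cellular for every $X\in\Delta$. Granting this claim, both (ii) and (iii) follow immediately from Theorem~\ref{Theorem:Akin-Cohen}. The corresponding statements for $\mathcal{C}(\Delta)$ are obtained by specialization, since every $\beta^k\Delta$ is itself a stellar $n$-simplex and the operations $\mathrm{id}$, $\beta(\cdot)$, and composition all preserve this form.

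For (i), fix $X\in\Delta$ and $\sigma\in A_X$. By Theorem~\ref{Cohen's original theorem}(i), $D(\sigma,\mathrm{id}_X)=D(\sigma,A_X)$ is a combinatorial manifold; by Proposition~\ref{P:CollapseRegularNeigh}, it collapses onto the single vertex $\{\sigma\}$ of $\beta A_X$ and is therefore collapsible; and by Whitehead's theorem (Theorem~\ref{T:Collapsible}) it is consequently a stellar ball. Hence each $\mathrm{id}_X$ is cellular and $\mathrm{id}\in\mathcal{C}_{\star}(\Delta)$.

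For the auxiliary claim, I would argue by induction on $\dim X$, the base case $\dim X=0$ being trivial since $\partial B_X=\emptyset$. In the inductive step, $f_X$ is cellular by assumption, so only $f_X\res\partial B_X$ needs attention. Fix $\sigma\in\partial A_X$ and let $Y_0$ be the smallest face of $X$ with $\sigma\in A_{Y_0}$. Using that every chain in $\partial B_X=\bigcup_{Y\subsetneq X}B_Y$ lies inside a single $B_Y$ (because the $B_Y$ intersect as $B_{Y\cap Y'}$), one verifies the decomposition
\[
D(\sigma,f_X\res\partial B_X)=\bigcup_{Y_0\subseteq Y\subsetneq X}D(\sigma,f_Y),\qquad D(\sigma,f_Y)\cap D(\sigma,f_{Y'})=D(\sigma,f_{Y\cap Y'}).
\]
By the inductive hypothesis each $D(\sigma,f_Y)$ is a stellar ball. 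Adding these one at a time in an order compatible with inclusion (starting from $D(\sigma,f_{Y_0})$), and repeatedly invoking the classical fact that two stellar balls glued along a common boundary subball form a stellar ball, one concludes that the union is a stellar ball. An alternative route is to recognize $\{D(\sigma,f_Y)\colon Y_0\subseteq Y\subsetneq X\}$ as an induced cell-system in the sense of Section~\ref{S:Systems}, whose poset has the minimum $Y_0$ (hence its order complex is a cone), and to appeal to Corollary~\ref{C: stellar-equivalent to Poset}.

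Granting the auxiliary claim, (ii) follows because $(\beta f)_X=\beta(f_X)$ is a simplicial map between stellar manifolds with $|\beta(f_X)|_{\mathbb R}=|f_X|_{\mathbb R}$, so Theorem~\ref{Theorem:Akin-Cohen}(i) converts the transverse cellularity of $f_X$ into the transverse cellularity, and hence the cellularity, of $(\beta f)_X$. For (iii), the auxiliary claim tells us each $g_X$ is transversely cellular, so Theorem~\ref{Theorem:Akin-Cohen}(ii) applied to $(f\circ g)_X=f_X\circ g_X$ yields that $f_X$ is transversely cellular if and only if $(f\circ g)_X$ is; the $(\Rightarrow)$ direction gives $f\in\mathcal{C}_{\star}(\Delta)\Rightarrow f\circ g\in\mathcal{C}_{\star}(\Delta)$, while the $(\Leftarrow)$ direction combined with the auxiliary claim applied to $f\circ g$ (itself hereditarily cellular) yields the converse. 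The principal obstacle is executing the gluing step in the auxiliary claim rigorously: one must verify not only that each new piece $D(\sigma,f_Y)$ is a ball, but also that it meets the union built so far precisely along the union of the smaller $D(\sigma,f_{Y'})$ and that this common region is a boundary subball of both pieces being glued. This bookkeeping is where the bulk of the technical labor lives, though the cell-system framework from Section~\ref{S:Systems} is tailored to make it tractable.
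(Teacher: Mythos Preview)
Your overall strategy—isolating the auxiliary claim that each $f_X$ is transversely cellular and then invoking Theorem~\ref{Theorem:Akin-Cohen} for (ii) and (iii)—is exactly what the paper does, and your proof of (i) via collapsibility and Whitehead's theorem is a clean alternative to the paper's direct identification of $D(\sigma,A_X)$ with a cone $[v]\star\beta(\lk(\sigma,A_X))$.

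The gap is in your proof of the auxiliary claim: the collection $\{D(\sigma,f_Y)\colon Y_0\subseteq Y\subsetneq X\}$ is \emph{not} a cell-system. By Theorem~\ref{Cohen's original theorem}(ii),
\[
\partial D(\sigma,f_Y)\;=\;D(\sigma,f_Y\!\res\!\partial B_Y)\;\cup\;\bigcup_{\tau\supsetneq\sigma,\ \tau\in A_Y}D(\tau,f_Y),
\]
and the second union is nonempty whenever $\sigma$ is not a maximal face of $A_Y$. Hence $D_{\mathcal S}(D(\sigma,f_Y))=\bigcup_{Y'\subsetneq Y}D(\sigma,f_{Y'})$ is in general a \emph{proper} subcomplex of $\partial D(\sigma,f_Y)$, violating Definition~\ref{Def:CellSystem}, and Corollary~\ref{C: stellar-equivalent to Poset} does not apply. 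The same obstruction undermines your gluing route: when you attach $D(\sigma,f_Y)$, its intersection $D(\sigma,f_Y\!\res\!\partial B_Y)$ with the previously built union is only part of $\partial D(\sigma,f_Y)$, so the hypothesis of the ball-gluing lemma (attachment along a full boundary ball) is not met as stated.

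The paper's fix is to enlarge the system, letting \emph{both} $Y$ and $\tau\supseteq\sigma$ vary:
\[
\mathcal B \;=\; \big\{\,D(\tau,f_Y)\;\big|\;Y\subsetneq X,\ \tau\in A_Y,\ \sigma\subseteq\tau\,\big\}.
\]
Now Cohen's formula shows $\partial D(\tau,f_Y)$ is exactly the union of the strictly smaller elements of $\mathcal B$, so $\mathcal B$ \emph{is} a cell-system, with $U_{\mathcal B}=D(\sigma,f_X\!\res\!\partial B_X)$. The analogous system for the identity, $\mathcal A=\{D(\tau,A_Y)\}$, has an isomorphic poset and $U_{\mathcal A}=D(\sigma,\partial A_X)$, which is a ball since $\partial A_X$ is a stellar sphere. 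Corollary~\ref{C: stellar-equivalent to Poset} then makes $U_{\mathcal B}$ stellar equivalent to $U_{\mathcal A}$, hence a ball. No induction on $\dim X$ is needed.
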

\begin{proof} 
Notice that all these properties remain true after passing to subcollections $\mathcal{C}$ of arrows from $\mathcal{C}_{\star}(\Delta)$ with the property that if 
$f\colon B\to A$ is in $\mathcal{C}_{\star}(\Delta)$ with $A,B\in\mathrm{Ob}(\mathcal{C})$ then $f\in\mathcal{C}$. Hence, it suffice to prove them only for 
$\mathcal{C}_{\star}(\Delta)$.

For (i), let $\mathrm{X}\in \Delta$ and  $\sigma\in A_{\mathrm{X}}$. We need to show that $D(\sigma,\mathrm{id}_{\mathrm{X}})=D(\sigma,A_{\mathrm{X}})$ is a stellar ball. But $D(\sigma,A_{\mathrm{X}})$ is isomorphic to  the complex $[v]\star \beta(\mathrm{lk}(\sigma,A_{\mathrm{X}}))$, for some point $v$. The rest follows from (\ref{SB-identities}) and the fact that in any stellar manifold $M$, the  complex $\mathrm{lk}(\sigma,M)$ is always, either a stellar ball or a stellar sphere. 

Properties  (ii) and (iii)  follow directly from Theorem \ref{Theorem:Akin-Cohen}, given the next claim.
\begin{claim}
Let $f\colon B\to A$ be in $\mathcal{C}_{\star}(\Delta)$. Then $f$ is transversely cellular on every face, i.e., 
for all $\mathrm{X}\in\Delta$ both $f\res B_{\mathrm{X}}$ and   $f\res \partial B_{\mathrm{X}}$ are cellular.
\end{claim}

\noindent {\em Proof of Claim.}
Let $f\colon B\to A$ be a hereditarily cellular map. We need to show that  $f\res \partial B_{\mathrm{X}}$ is cellular for all $\mathrm{X}\in\Delta$.  
The argument below will not depend on $\mathrm{X}$ and it therefore suffices to show that $(f\res \partial B)\colon \partial B \to \partial A$ is cellular. 
Fix some $\sigma\in \partial A$. By Theorem \ref{Cohen's original theorem}(ii), and since $f_{\mathrm{Y}}$ is cellular for each $\mathrm{Y}\in\Delta$, the collection: 
\[\mathcal{B}=\big\{ D(\tau ,f_{\mathrm{Y}}) \mid \; \mathrm{Y} \in\partial \Delta, \; \tau\in A_{\mathrm{Y}}, \;  \sigma\subseteq \tau   \big\}\]
is a cell-system with $U_{\mathcal{B}}=D(\sigma, f\res \partial B)$. Arguing as in (1) we have that the family:
\[\mathcal{A}=\big\{ D(\tau ,A_{\mathrm{Y}}) \mid \; \mathrm{Y} \in\partial \Delta, \; \tau \in A_{\mathrm{Y}}, \;  \sigma\subseteq \tau\big\}\]
is also a cell-system with $U_{\mathcal{A}}=D(\sigma,\partial A)$. Since $\partial A$ is a stellar sphere we have that $D(\sigma,\partial A)$ is a stellar ball. Hence it suffices to show that  the complexes $U_{\mathcal{A}}$ and $U_{\mathcal{B}}$ are stellar equivalent. This follows from Corollary \ref{C: stellar-equivalent to Poset}, since  the posets $(\mathcal{A},\subseteq)$ and $(\mathcal{B},\subseteq)$ are isomorphic.  The claim and, therefore, the lemma are now proved. 
\end{proof}

The above lemma immediately gives the following proposition.

\begin{proposition}\label{C:CelluAreCat}
Both  collections $\mathcal{C}_{\star}(\Delta)$ and $\mathcal{C}(\Delta)$ are categories.
\end{proposition}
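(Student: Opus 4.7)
The plan is to observe that the proposition is an immediate consequence of Lemma~\ref{L: Cellular has composition,anti-composition,triangulations}. Since both $\mathcal{C}_{\star}(\Delta)$ and $\mathcal{C}(\Delta)$ are already collections of face-preserving simplicial maps between stellar $n$-simplexes (respectively, between barycentric subdivisions of $\Delta$), to check that they form categories we only need to verify two things: the existence of identity morphisms, and closure under composition.

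First, for every object $A$ (a stellar $n$-simplex or a barycentric subdivision of $\Delta$), the identity map $\mathrm{id}_A\colon A\to A$ belongs to the class by part (i) of Lemma~\ref{L: Cellular has composition,anti-composition,triangulations}. Second, suppose $g\colon C\to B$ and $f\colon B\to A$ are both in $\mathcal{C}_{\star}(\Delta)$. Then the hypothesis $g\in \mathcal{C}_{\star}(\Delta)$ of part (iii) of that lemma is satisfied, and applying the ``if'' direction of the stated equivalence together with the assumption $f\in \mathcal{C}_{\star}(\Delta)$ yields $f\circ g\in \mathcal{C}_{\star}(\Delta)$. Hence $\mathcal{C}_{\star}(\Delta)$ is closed under composition. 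The identical argument, using the analogous statements for $\mathcal{C}(\Delta)$ asserted at the end of Lemma~\ref{L: Cellular has composition,anti-composition,triangulations}, shows that $\mathcal{C}(\Delta)$ is also a category.

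There is no real obstacle to overcome here; all the content was already packaged into the preceding lemma, and the proposition simply records the consequence that the closure properties (i) and (iii) suffice to promote the collections from sets of morphisms to genuine (sub)categories of $\mathcal{R}_{\star}(\Delta)$ and $\mathcal{R}(\Delta)$, respectively.
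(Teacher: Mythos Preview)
Your proposal is correct and matches the paper's approach: the paper simply states that the proposition follows immediately from Lemma~\ref{L: Cellular has composition,anti-composition,triangulations}, and you have spelled out exactly how parts (i) and (iii) of that lemma yield identities and closure under composition.
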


Let $A$ be a finite simplicial complex and let $\sigma$ be a finite set. 
Consider the complex $\langle \sigma\rangle A$ that is defined by enumerating the set $\{\tau  \in A \mid \sigma\subseteq \tau\}$ in a never  $\subseteq$-increasing order $\sigma_1,\sigma_2,\ldots,\sigma_m=\sigma$ and subdividing these faces in this order, i.e.,
\[\langle \sigma\rangle A:= (\sigma_m,\sigma_m)\cdots (\sigma_2,\sigma_2)(\sigma_1,\sigma_1)A.\]
It is not difficult to see that any two  never $\subseteq$-increasing enumerations give rise to the same complex and therefore $\langle \sigma\rangle A$ is well defined. Let $a$ be a new point.  By a {\bf connection map} $e\colon(\sigma,a)A\to A$ we mean any map that extends the identity map on $\mathrm{dom}(A)$  and satisfies $e(a)\in \sigma$. By the {\bf star-contraction}  $t\colon \langle \sigma\rangle A\to (\sigma,a)A$ we mean the  map that extends the identity map on $\mathrm{dom}(A)$  and satisfies $t(\tau)=a$ for every $\tau\supseteq \sigma$. It is easy to see that both connection maps and star-contractions are simplicial epimorphisms. We extend these definitions on any stellar simplex $A=(A_{\mathrm{X}}\mid \mathrm{X}\in\Delta)$ to get the stellar simplexes $(\sigma,a) A:=((\sigma,a) A_{\mathrm{X}}\mid \mathrm{X}\in\Delta)$ and   $\langle \sigma\rangle A:=(\langle \sigma\rangle A_{\mathrm{X}}\mid \mathrm{X}\in\Delta)$. It follows that the associated connection maps $e$ and the star-contraction $t$ are elements of $\mathcal{R}_{\star}(\Delta)$.

\begin{lemma}\label{L:SimpleCellularMaps}
Let  $A$ be  stellar simplex, let $\sigma\in A$, and let $a$ be a new point. Then $\mathcal{C}_{\star}(\Delta)$ contains 
\begin{enumerate}
\item[(i)] all connection maps, $e\colon (\sigma,a) A\to A$;
\item[(ii)] the contraction map, $t\colon \langle \sigma\rangle A\to (\sigma,a)A$.
\end{enumerate}
\end{lemma}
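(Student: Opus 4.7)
The plan for both (i) and (ii) is to apply one template. Writing $f$ for either $e$ or $t$, it suffices, for each $\mathrm{X}\in\Delta$, to show that the dual $D(\tau, f_{\mathrm{X}})$ is a stellar ball for every face $\tau$ of the codomain of $f_{\mathrm{X}}$. The domain of $f_{\mathrm{X}}$ is a stellar ball (obtained from $A_{\mathrm{X}}$ by stellar moves), so Cohen's Theorem~\ref{Cohen's original theorem} gives that $D(\tau, f_{\mathrm{X}})$ is a stellar manifold, Proposition~\ref{P:CollapseRegularNeigh} gives that it collapses onto $(\beta f_{\mathrm{X}})^{-1}(\tau)$, and Whitehead's Theorem~\ref{T:Collapsible} gives that a collapsible stellar manifold is a stellar ball. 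Hence it is enough to show that $(\beta f_{\mathrm{X}})^{-1}(\tau)$ is collapsible. If $\sigma\notin A_{\mathrm{X}}$ then both the move $(\sigma,a)$ and the subdivisions defining $\langle\sigma\rangle$ are inessential for $A_{\mathrm{X}}$, so $f_{\mathrm{X}}$ is an identity and Lemma~\ref{L: Cellular has composition,anti-composition,triangulations}(i) handles it; so one can fix $\mathrm{X}$ with $\sigma\in A_{\mathrm{X}}$ and, for brevity, write $A':=A_{\mathrm{X}}$. Face-preservation of $e$ and $t$, needed in order to regard them as morphisms in $\mathcal{R}_{\star}(\Delta)$, will follow by exhibiting an explicit preimage for each face of the codomain during the enumerations below.

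For (ii), I would first record the following description of $\langle\sigma\rangle A'$: its vertex set is $\mathrm{dom}(A')\cup\{w_\xi\colon\xi\in\mathrm{st}(\sigma,A')\}$, where each $\xi\supseteq\sigma$ contributes the new vertex $w_\xi$ from its stellar subdivision, and its faces are (a) the faces of $A'$ not containing $\sigma$ and (b) the sets $\rho'\cup\{w_{\xi_1},\ldots,w_{\xi_q}\}$ with $\sigma\subseteq\xi_1\subsetneq\cdots\subsetneq\xi_q$ faces of $A'$, $\rho'\subseteq\xi_1$, and $\sigma\not\subseteq\rho'$. For $\tau'\in(\sigma,a)A'$ with $a\notin\tau'$, the unique face of $\langle\sigma\rangle A'$ with $t$-image $\tau'$ is $\tau'$ itself, so $(\beta t)^{-1}(\tau')$ is a single vertex. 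For $\tau'=\{a\}\cup\sigma_0\cup\tau_0$ with $\sigma_0\subsetneq\sigma$ and $\sigma\cup\tau_0\in A'$, the faces of $\langle\sigma\rangle A'$ with $t$-image $\tau'$ correspond bijectively to non-empty chains $\xi_1\subsetneq\cdots\subsetneq\xi_q$ in the poset $P:=\{\xi\in A'\colon\sigma\cup\tau_0\subseteq\xi\}$ via $(\xi_1,\ldots,\xi_q)\mapsto\sigma_0\cup\tau_0\cup\{w_{\xi_1},\ldots,w_{\xi_q}\}$; hence $(\beta t)^{-1}(\tau')\cong\beta\mathcal{C}(P)$. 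Since $P$ has minimum $\sigma\cup\tau_0$, the order complex $\mathcal{C}(P)$ is the cone $[\sigma\cup\tau_0]\star\mathcal{C}(P\setminus\{\sigma\cup\tau_0\})$, hence collapsible, and so is $\beta\mathcal{C}(P)$.

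For (i), set $v_0:=e(a)\in\sigma$. Any face $\rho$ of $(\sigma,a)A'$ with $e_*(\rho)=\tau$ satisfies $\rho\setminus\{a\}\in\{\tau,\tau\setminus\{v_0\}\}$, with the latter requiring $v_0\in\tau$ and $a\in\rho$, so the only candidates are $\tau$, $\tau\cup\{a\}$, and $(\tau\setminus\{v_0\})\cup\{a\}$, and one checks which of these actually lie in $(\sigma,a)A'$. If $v_0\notin\tau$, only $\tau$ does. If $v_0\in\tau$ and $\sigma\subseteq\tau$, only $(\tau\setminus\{v_0\})\cup\{a\}$ does. If $v_0\in\tau$, $\sigma\not\subseteq\tau$, and $\sigma\cup\tau\notin A'$, again only $\tau$ does. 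If $v_0\in\tau$, $\sigma\not\subseteq\tau$, and $\sigma\cup\tau\in A'$, then all three are faces of $(\sigma,a)A'$ and the only proper inclusions among them are $\tau\subsetneq\tau\cup\{a\}$ and $(\tau\setminus\{v_0\})\cup\{a\}\subsetneq\tau\cup\{a\}$, producing a path on three vertices. In all four cases $(\beta e)^{-1}(\tau)$ is collapsible. The main obstacle in both halves is the combinatorial bookkeeping around stellar subdivisions: one needs clean face descriptions of $\langle\sigma\rangle A'$ and $(\sigma,a)A'$ in order to enumerate preimages correctly; once these are in hand, collapsibility follows instantly, from the cone argument in (ii) and from finite pictures in (i), and the Cohen--Whitehead template closes the argument.
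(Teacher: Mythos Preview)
Your proof is correct and follows essentially the same template as the paper: reduce, via Proposition~\ref{P:CollapseRegularNeigh} and Whitehead's Theorem~\ref{T:Collapsible}, to showing that $(\beta f_{\mathrm{X}})^{-1}(\tau)$ is collapsible, then do a case analysis on whether the ``new'' vertex is involved. Your case analysis is in fact more careful than the paper's terse version---for $e$ you correctly distinguish the sub-cases where the preimage degenerates to a point rather than a three-vertex path, and for $t$ you correctly identify the poset as $\{\xi:\sigma\cup\tau_0\subseteq\xi\}$ (which depends on $\tau_0$), whereas the paper just writes $\mathrm{st}(\sigma,A)$; either way the poset has a minimum, so the cone argument goes through.
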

\begin{proof}
Let $f\colon C\to B$ be any of the maps above.  Let also $\mathrm{X}\in \Delta$ and let $\rho\in B_{\mathrm{X}}$.  
By Theorem~\ref{T:Collapsible}, it suffices to show that $D(\rho,f_{\mathrm{X}})$ is collapsible. By Proposition~\ref{P:CollapseRegularNeigh}, we have that  $D(\rho,f_{\mathrm{X}})$ collapses to $D:=(\beta f_{\mathrm{X}})^{-1}(\rho)$, and therefore, it suffices to show that $D$ is collapsible. To see this, consider three cases. If $f=e$, $e(a)=b$, and $b\in\rho$, then $D$ is isomorphic to $\beta [\{a,b\}]$ which is clearly collapsible. If $f=t$ and $a\in\rho$, then $D$ is isomorphic to the chain complex of the poset $\big( \mathrm{st}(\sigma,A), \subseteq \big)$ which is a cone, and therefore, collapsible. In all other cases $D$ is a singleton.
\end{proof}

Let $C,D$ be two stellar simplexes and let $\overline{\delta}=\delta_1,\ldots,\delta_k$ be a sequence of stellar moves which factor through the cell-system $(C_{\mathrm{X}}\mid \mathrm{X}\in \Delta)$; see Section \ref{S:Cell-systems}.
A {\bf zig-zag connection $\overline{e}$ from $C$ to $D$} is a sequence of connection maps $e_l$ between consecutive stellar simplexes $C_{l-1},C_l=\delta_l C_{l-1}$:
\[D \overset{e_k}{\longleftarrow} C_{k-1} \overset{e_{k-1}}{\longrightarrow} C_{k-2} \longleftarrow \cdots  \overset{e_3}{\longleftarrow}  C_2 \overset{e_2}{\longrightarrow} C_1  \overset{e_1}{\longleftarrow} C\]
whose direction depends on whether $\delta_l$ is a division or a weld. The next lemma shows that, within $\mathcal{C}_{\star}(\Delta)$, we can always find a composable sequence $\overline{s}$ of elementary selection maps from $\mathcal{S}(C)\subseteq\mathcal{S}_{\star}(\Delta)$, which ``refines'' $\overline{e}$.

\begin{lemma}\label{L: factoring through stellar moves}
Let $g\colon D\to C$ be in $\mathcal{C}_{\star}(\Delta)$ and let $e$ be a connection map between the stellar simplexes $C$ and $C'$. Then there is a map $g'\colon D' \to C'$ in $\mathcal{C}_{\star}(\Delta)$ and an elementary selection or an identity map $s\colon D'\to D$, so that the square diagram formed by $g,g',e,s$ commutes. 
\begin{center}
\begin{tikzcd}[sep=large, every label/.append style={font=\normalsize}]
C' \arrow[r, "e", swap ] &  C   &  C'   &  C\arrow[l, "e"]  \\
D'  \arrow[u, "{g'}", dotted] \arrow[r,  "{s}", dotted]  & D \arrow[u, "{g}"]  & D'  \arrow[u, "{g'}", dotted ] \arrow[r,  "{s}", dotted] &  D  \arrow[u, "{g}"]   
\end{tikzcd}
\end{center}
\end{lemma}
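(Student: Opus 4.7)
The plan is to split on the direction of the connection map $e$. When $e\colon C\to C'$ arises from a subdivision $C=(\sigma',a')C'$, I would take $D'=D$, $s=\mathrm{id}_D$, and $g':=e\circ g$; the square commutes tautologically, and $g'\in\mathcal{C}_{\star}(\Delta)$ by Lemma~\ref{L:SimpleCellularMaps}(i) combined with closure under composition from Lemma~\ref{L: Cellular has composition,anti-composition,triangulations}(iii).

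The substantive case is $e\colon C'\to C$ with $C'=(\sigma,a)C$ and $b:=e(a)\in\sigma$. I would take $D':=\beta D$ and construct $s\colon\beta D\to D$ and $g'\colon\beta D\to C'$ simultaneously on vertices $\tau\in\mathrm{dom}(\beta D)=D$: if $\sigma\subseteq g_{*}(\tau)$ then $b\in g_{*}(\tau)$, so I select $s(\tau)\in g^{-1}(b)\cap\tau$ and set $g'(\tau):=a$; otherwise I choose $s(\tau)\in\tau$ arbitrarily and set $g'(\tau):=g(s(\tau))$. By construction $s$ is an elementary selection and $e\circ g'=g\circ s$ holds on vertices. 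To check that $g'$ is simplicial, I would analyze the image of a chain $\tau_0\subsetneq\cdots\subsetneq\tau_k$ in $D$: letting $j$ be the least index with $\sigma\subseteq g_{*}(\tau_j)$, or $k+1$ if none exists, the image is either a subset of $g_{*}(\tau_k)$ missing $\sigma$ (hence a face of $C\setminus\mathrm{st}(\sigma,C)\subseteq C'$) when $j=k+1$, or $\{g(s(\tau_0)),\dots,g(s(\tau_{j-1}))\}\cup\{a\}$ when $j\leq k$; in the latter case the non-$a$ part together with $\sigma$ sits inside $g_{*}(\tau_j)\in C$, while its intersection with $\sigma$ is a proper subset of $\sigma$ by minimality of $j$, so the image lies in $[a]\star\partial[\sigma]\star\mathrm{lk}(\sigma,C)\subseteq C'$. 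Face-preservation of $g'$ is then arranged by an adapted choice of the free values of $s$ on each face of $\Delta$, using surjectivity of $g_X$ to produce chains in $\beta D_X$ whose $g'$-images cover each face of $C'_X$.

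The hard part will be verifying $g'\in\mathcal{C}_{\star}(\Delta)$. On faces $X$ with $\sigma\notin C_X$, the restriction satisfies $g'_X=g_X\circ s_X$, so cellularity follows from Lemmas~\ref{L:SimpleCellularMaps} and \ref{L: Cellular has composition,anti-composition,triangulations} once $s_X$ is recognized, in the standard way, as a composition of star-contractions and connection maps. On faces $X$ with $\sigma\in C_X$, I would compute the dual $D(\rho,g'_X)\subseteq\beta^2 D_X$ for each $\rho\in C'_X$. If $a\notin\rho$, the dual relates through the $\beta$-subdivision on $\beta D_X$ to $D(\rho,g_X)$, which is a stellar ball by hypothesis. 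If $a\in\rho$, writing $\rho=\{a\}\cup\rho_0\cup\rho_1$ with $\rho_0\subsetneq\sigma$ and $\rho_1\in\mathrm{lk}(\sigma,C_X)$, the chains contributing to $D(\rho,g'_X)$ are precisely those whose minimum element in $\beta D_X$ contains a face of $D_X$ whose $g_{*}$-image covers $\sigma\cup\rho_1$; this identifies $D(\rho,g'_X)$ with a subdivision of $D(\sigma\cup\rho_1,g_X)$, which is a stellar ball by Theorem~\ref{Cohen's original theorem}, with collapsibility secured by Proposition~\ref{P:CollapseRegularNeigh} and Theorem~\ref{T:Collapsible}. The central obstacle is this bookkeeping in the case $a\in\rho$: one must carefully track how chains in $\beta^2 D_X$ that thread through $\{a\}$ correspond to chains in the Cohen dual of $\sigma\cup\rho_1$ under $g_X$, and confirm that the resulting subcomplex is collapsible.
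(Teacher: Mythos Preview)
Your Case 1 is exactly the paper's argument. The gap is in Case 2, where you attempt to verify $g'\in\mathcal{C}_{\star}(\Delta)$ by direct computation of duals in $\beta^2 D_X$. The claimed identification of $D(\rho,g'_X)$ with ``a subdivision of $D(\sigma\cup\rho_1,g_X)$'' when $a\in\rho$ is not justified, and you correctly flag it as the central obstacle without resolving it. The problem is real: because you allow $s(\tau)$ to be chosen \emph{arbitrarily} in $\tau$ when $\sigma\not\subseteq g_*(\tau)$, the value $g'(\tau)=g(s(\tau))$ may depend on $\tau$ and not just on $g_*(\tau)$, so $g'$ need not factor through $\beta g$. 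This blocks any clean reduction to the known cellularity of $g$, and your dual bookkeeping in $\beta^2 D_X$ (both in the $a\in\rho$ and the $a\notin\rho$ subcase) is then genuinely delicate and left unfinished.

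The paper sidesteps all of this with one move: it makes the choices \emph{consistently at the level of $C$}. For each $\eta\in C$ with $\sigma\not\subseteq\eta$ fix once and for all $a_\eta\in\eta$, and define $h\colon\beta C\to(\sigma,a)C$ by $h(\eta)=a$ if $\sigma\subseteq\eta$ and $h(\eta)=a_\eta$ otherwise. Then $h$ is visibly the star-contraction $t\colon\langle\sigma\rangle C\to(\sigma,a)C$ followed by a sequence of connection maps, so $h\in\mathcal{C}_{\star}(\Delta)$ by Lemma~\ref{L:SimpleCellularMaps} and closure under composition. Setting $g':=h\circ\beta g$ gives $g'\in\mathcal{C}_{\star}(\Delta)$ immediately from Lemma~\ref{L: Cellular has composition,anti-composition,triangulations}(ii),(iii), with no dual computation whatsoever. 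The selection $s\colon\beta D\to D$ is then chosen \emph{after the fact} to make $g\circ s=e\circ g'$: for $\rho\in D$, pick $b_\rho\in\rho$ with $g(b_\rho)=b$ if $\sigma\subseteq g_*(\rho)$, and $g(b_\rho)=a_{g_*(\rho)}$ otherwise. The point is that by routing $g'$ through $\beta C$ rather than defining it directly, cellularity comes for free from the already-established lemmas.
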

\begin{proof}
If $e$ is a connection map from $C$ to $C'$, then let $D'=D$, $s=\mathrm{id}_{D'}$ and $g'=e\circ g\circ s$. The map $g'$ is restricted cellular by Lemma \ref{L:SimpleCellularMaps} and since by (ii) of Lemma \ref{L: Cellular has composition,anti-composition,triangulations}, $\mathcal{C}_{\star}(\Delta)$ is closed under composition. 

If $e$ is a connection map from $C'$ to $C$ then $C'=(\sigma,a)C$, for some $\sigma\in C$. Let $b=e(a)$ and set  $D'=\beta D$. For every $\tau\in C$, with $\tau\not\supseteq\sigma$, pick a point $a_\tau\in \tau$ and consider the map $h: \beta C \to (\sigma,a) C$ with $h(\tau)=a$ if $\sigma\subseteq\tau$ and $h(\tau)=a_{\tau}$ otherwise. The map $h$ is formed by composing star-contraction map $t\colon \langle \sigma\rangle A\to (\sigma,a)A$ with a sequence of connection maps.
By Lemma \ref{L:SimpleCellularMaps} and since by (ii) of Lemma \ref{L: Cellular has composition,anti-composition,triangulations} $\mathcal{C}_{\star}(\Delta)$ is closed under composition, $h$ belongs to $\mathcal{C}_{\star}(\Delta)$.

Set $g'= h \circ \beta g$ and let $\rho\in D$. If $g_{*}(\rho)\supseteq \sigma$ let $b_\rho\in\rho$ with $g(b_\rho)=a_{\sigma}$. Otherwise, set $\tau=g_{*}(\rho)$. Since $a_\tau\in\tau$, there is a point $b_\rho$ in $\rho$ with $g(b_\rho)=a_\tau$. Let $s\colon\beta D\to D$ with $s(\rho)=b_\rho$. Then $s$ is an elementary selection and $g \circ s= e\circ g'$.
\end{proof}

We can now prove Theorem \ref{Theorem:JointProjection}.

\begin{proof}[Proof of Theorem \ref{Theorem:JointProjection}]
Let $A,B$ be two stellar $n$-simplexes we will find $h\colon \beta^kB\to A$ with $h\in \mathcal{C}_{\star}(\Delta)$, for some large enough $k\geq 0$. 

By Corollary \ref{C: stellar-equivalent to Poset}, we have a sequence  $\overline{\delta}=\delta_1,\ldots,\delta_k$ of stellar moves factoring through the stellar simplex cell-system $B=(B_{\mathrm{X}}\mid \mathrm{X}\in \Delta)$, with $\overline{\delta}B=A$. 
Choose any zig-zag sequence $\overline{e}$ of connection maps following the transformation $B\to\overline{\delta}B$. Set $B_0=B$ and let $g_0\colon B_0\to B$ be the identity map. By Lemma \ref{L: factoring through stellar moves}, we can inductively produce a stellar simplex $B_l\in\mathcal{S}(B)$ 
and a $\mathcal{C}_{\star}(\Delta)$-map $g_l\colon B_l \to \delta_l \cdots \delta_1 B$, for every $l$ with $0<l\leq k$. Set $C=B_k$ and $g=g_k$.
\end{proof}

Let $f\colon C\to A$ and $f'\colon D\to A$ be two maps in $\mathcal{C}(\Delta)$. On the level of the stellar simplexes $C, D$  one can always find a zig-zag connection $\overline{e}$ from $C$ to $D$ using Corollary \ref{C: stellar-equivalent to Poset}. The next lemma uses the full strength of Proposition \ref{P:TameCellSystemAdmitsStarring} to provide conditions under which we can ``lift'' this zig-zag connection to the level of the hereditarily cellular maps $f, f'$. A {\bf cellular connection $(\overline{e},\overline{f})$ from $f$ to $f'$} is a zig-zag connection $e_1,\ldots,e_k$ from $C$ to $D$, together with a sequence $f_0,\ldots,f_k\in\mathcal{C}_{\star}(\Delta)$, with $f_0=f$, $f_k=f'$, so that for all $l>0$ the triangle formed by $f_{l-1},e_l,f_l$ commutes.

\begin{center}
\begin{tikzcd}[sep=large, every label/.append style={font=\normalsize}]
& & &  A & & & \\ 
\\
\\
D \ar[uuurrr, "f_k" description] & C_{k-1}\ar[l,  "e_k"] \ar[r,  "e_{k-1}" swap]\ar[uuurr, "f_{k-1}" description] & C_{k-2}  \ar[uuur, "f_{k-2}" description]  &\cdots \ar[l,  "e_{k-2}"]  &
C_2 \ar[l,  "e_3"] \ar[r,  "e_2" swap] \ar[uuul, "f_2" description] & C_1   \ar[uuull, "f_1" description] & C \ar[l,  "e_1"] \ar[uuulll, "f_0" description] \\
\end{tikzcd}
\end{center}

\begin{lemma}\label{L: connection}
Let $f\colon B\to A$ and $g\colon C\to A$ be two maps in $\mathcal{C}_{\star}(\Delta)$ and let $s\colon \beta\beta B\to \beta B$, $t\colon \beta\beta C\to \beta C$ be the selection maps defined by the assignment
\begin{equation}\label{eq:s,t selection}
\{\sigma_0\subsetneq\ldots\subsetneq\sigma_j\}\to\sigma_0.
\end{equation}
Then there is a cellular connection $(\overline{e},\overline{f})$, from $\beta f\circ s$ to $\beta g\circ t$.
\end{lemma}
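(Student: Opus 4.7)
The strategy is to exhibit both $\beta f \circ s$ and $\beta g \circ t$ as the endpoints of cellular connections leading into a single canonical cellular map $h\colon \mathcal{C}(P_A) \to \beta A$, where $P_A$ is a poset depending only on the target stellar simplex $A$. Concatenating the two connections (with one reversed, using the obvious symmetry in the definition of a zig-zag connection) then yields the required cellular connection from $\beta f \circ s$ to $\beta g \circ t$ via $h$.

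To construct $h$ and the connection to it, I would first exploit hereditary cellularity of $f$ and $g$. By Theorem~\ref{Cohen's original theorem}, for each $\mathrm{X} \in \Delta$ and $\sigma \in A_{\mathrm{X}}$, the dual $D(\sigma, f_{\mathrm{X}}) \subseteq \beta B_{\mathrm{X}}$ is a stellar ball whose boundary decomposes according to the dual structure. Since $f$ is face-preserving, these duals assemble into a cell-system $\mathcal{D}_f$ on $\beta B$ whose associated poset is canonically isomorphic to a poset $P_A$ depending only on $A$, and the analogous cell-system $\mathcal{D}_g$ on $\beta C$ gives the same poset $P_A$. Passing to one further barycentric subdivision makes both cell-systems tame, and by Proposition~\ref{P:TameCellSystemAdmitsStarring} together with Corollary~\ref{C: stellar-equivalent to Poset}, I obtain sequences $\overline{\delta}_B$ and $\overline{\delta}_C$ of stellar moves factoring through $\beta \mathcal{D}_f$ and $\beta \mathcal{D}_g$ respectively that transform $\beta\beta B$ and $\beta\beta C$ both onto the chain complex $\mathcal{C}(P_A)$.

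Next, I would translate each stellar move into a connection map between successive stellar simplexes and lift the entire zig-zag to a cellular connection. The guiding observation is that $\beta f \circ s$ takes the value $f_*(\sigma_0)$ on the vertex $\{\sigma_0 \subsetneq \cdots \subsetneq \sigma_j\}$ of $\beta\beta B$, so it is determined on each dual cell $\beta D(\sigma, f_{\mathrm{X}})$ by the contraction that sends the whole cell to the single vertex $\sigma$ of $\beta A$. This factorization through the chain complex of the cell-system poset is precisely what the starrings in $\overline{\delta}_B$ realize, and at every intermediate stage the new domain is obtained from a single connection map and the new cellular map by composition with an elementary star-contraction, as in Lemma~\ref{L:SimpleCellularMaps} and the argument of Lemma~\ref{L: factoring through stellar moves} applied on the domain side. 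The canonical map $h\colon \mathcal{C}(P_A) \to \beta A$ appears at the end of the lifting: on vertices it sends each element $(\mathrm{X}, \sigma) \in P_A$ to $\sigma \in \beta A$, and its hereditary cellularity follows because each of its dual complexes is a cone over a simplicial poset, hence collapsible and a stellar ball by Theorem~\ref{T:Collapsible}.

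The main obstacle will be to verify at each stage that the intermediate map $f_l\colon C_l \to \beta A$ is genuinely in $\mathcal{C}_{\star}(\Delta)$, that is, that for every $\mathrm{X} \in \Delta$ and $\rho \in \beta A_{\mathrm{X}}$ the dual complex $D(\rho, (f_l)_{\mathrm{X}})$ is a stellar ball. Because each stellar move is strongly internal to a single dual cell in the cell-system (by the second part of Proposition~\ref{P:TameCellSystemAdmitsStarring}), the change in $D(\rho, (f_l)_{\mathrm{X}})$ between consecutive stages is localized inside that dual cell, and the collapsibility argument of Lemma~\ref{L:SimpleCellularMaps} adapts to show that the new dual remains collapsible and hence a stellar ball by Theorem~\ref{T:Collapsible}. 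Once this verification is in place for both $\overline{\delta}_B$ and $\overline{\delta}_C$, concatenating the two resulting cellular connections from $\beta f \circ s$ and $\beta g \circ t$ into $h$ completes the proof.
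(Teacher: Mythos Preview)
Your overall strategy matches the paper's: both reduce to building a cellular connection from $\beta f\circ s$ (and symmetrically $\beta g\circ t$) to a canonical map $\mu\colon \mathcal{C}(P_{\mathcal{A}})\to\beta A$ sending $D(\sigma,A_{\mathrm X})\mapsto\sigma$, using the tame cell-system $\beta\mathcal{B}=\{\beta D(\sigma,f_{\mathrm X})\}$ on $\beta\beta B$ and a linearized starring $\overline{\delta}$ from Proposition~\ref{P:TameCellSystemAdmitsStarring} together with Theorem~\ref{T:SystemTransformation}. You also correctly identify the governing invariant, which in the paper's notation reads $\beta f\circ s(v)=\mu(\mathrm{Supp}(v,P_{\mathcal A}))$.

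There is, however, a genuine gap in your treatment of the intermediate maps $f_l$. The paper does \emph{not} rebuild each $f_l$ via star-contractions or by a domain-side analogue of Lemma~\ref{L: factoring through stellar moves}; it simply sets $f_l=f_{l-1}\circ e_l$ when $\delta_l$ is a subdivision and $f_l=f_{l-1}\!\upharpoonright\!(\mathrm{dom}(D^{l-1})\setminus\{a_l\})$ when $\delta_l$ is a weld (so that $f_{l-1}=f_l\circ e_l$). The crucial point you miss is that cellularity of $f_l$ then follows \emph{immediately} from Lemma~\ref{L: Cellular has composition,anti-composition,triangulations}(iii), which in turn rests on Akin's cancellation theorem (Theorem~\ref{Theorem:Akin-Cohen}(ii)): since $e_l\in\mathcal{C}_\star(\Delta)$ by Lemma~\ref{L:SimpleCellularMaps}, one gets $f_l\in\mathcal{C}_\star(\Delta)$ from $f_{l-1}\in\mathcal{C}_\star(\Delta)$ in both directions. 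Your proposed alternative---tracking collapsibility of $D(\rho,(f_l)_{\mathrm X})$ directly through each strongly internal move---is not obviously workable: a weld removes a vertex, and there is no general principle saying that restricting a cellular map to a welded domain preserves collapsibility of duals. That is exactly why Akin's result is invoked. Finally, note that the paper is careful to choose each connection target $b_l\in\sigma_l$ in the \emph{interior} of the relevant dual cell $D^{l-1}_{\pi(l)}$ (or $D^l_{\pi(l)}$); this is what propagates the invariant $f_l(v)=\mu(\mathrm{Supp}(v,P_{\mathcal A}))$ and forces $f_k=\mu$ at the end. Your sketch does not record this choice, and without it the terminal identification fails.
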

\begin{proof}

Consider the system $\mathcal{A}=\{ D(\sigma, A_{\mathrm{X}}) \mid \mathrm{X}\in \Delta, \sigma\in A_{\mathrm{X}}\}$ and  let $\mathcal{C}(P_{\mathcal{A}})$ be the chain complex of the associated poset $P_{\mathcal{A}}=(\mathcal{A},\subseteq)$; see Figure \ref{fig:M3}. Let also
\[\mu\colon \mathcal{C}(P_{\mathcal{A}})\to \beta A,\]
be the simplicial map defined by the assignment $D(\sigma, A_{\mathrm{X}})\to \sigma$. 
\begin{figure}[h]
\centering
\begin{tabular}{c c c}
\begin{tikzpicture}[scale=0.30]
		\node  (0) at (0, 3) {};
		\node  (1) at (-4, -2) {};
		\node  (2) at (4, -2) {};
		\node  (3) at (0, -2) {};
		\draw (0.center) to (1.center);
		\draw (2.center) to (0.center);
		\draw (3.center) to (0.center);
		\draw (1.center) to (3.center);
		\draw (2.center) to (3.center);
\end{tikzpicture}
&
\begin{tikzpicture}[scale=0.35]
		\node  (0) at (0, 3) {};
		\node  (1) at (-4, -2) {};
		\node  (2) at (4, -2) {};
		\node  (3) at (0, -2) {};
		\node  (4) at (-2, 0.4999999) {};
		\node  (5) at (1.999999, 0.4999999) {};
		\node  (6) at (-2, -2) {};
		\node  (7) at (1.999999, -2) {};
		\node  (8) at (0, 0.4999999) {};
		\draw (0.center) to (1.center);
		\draw (2.center) to (0.center);
		\draw (3.center) to (0.center);
		\draw (1.center) to (3.center);
		\draw (2.center) to (3.center);
		\draw (1.center) to (8.center);
		\draw (0.center) to (6.center);
		\draw (3.center) to (4.center);
		\draw (7.center) to (0.center);
		\draw (3.center) to (5.center);
		\draw (2.center) to (8.center);
\end{tikzpicture}
&
\begin{tikzpicture}[scale=0.236]
		\node  (0) at (0, 3) {};
		\node  (1) at (-4, -2) {};
		\node  (2) at (4, -2) {};
		\node  (3) at (0, -2) {};
		\node  (4) at (-2, 0.4999999) {};
		\node  (5) at (1.999999, 0.4999999) {};
		\node  (6) at (-2, -2) {};
		\node  (7) at (1.999999, -2) {};
		\node  (8) at (0, 0.4999999) {};
		\node  (9) at (-6, -3.25) {};
		\node  (10) at (6, -3.25) {};
		\node  (11) at (0, 4.75) {};

		\node  (12) at (-5.1 , -2) {};
		
		\node  (13) at (-4, -3.25) {};
		\node  (14) at (-0.9999998, 3.5) {};
		\node  (15) at (1.000001, 3.5) {};
		\node  (16) at (3.999999, -3.25) {};
		\node  (17) at (5, -2) {};
		\node  (18) at (3, 0.7500001) {};
		\node  (19) at (-3, 0.75) {};
		\node  (20) at (0, -3.25) {};
		\node  (21) at (-2, -3.25) {};
		\node  (22) at (2, -3.25) {};
		
		\draw (0.center) to (1.center);
		\draw (2.center) to (0.center);
		\draw (3.center) to (0.center);
		\draw (1.center) to (3.center);
		\draw (2.center) to (3.center);
		\draw (1.center) to (8.center);
		\draw (0.center) to (6.center);
		\draw (3.center) to (4.center);
		\draw (7.center) to (0.center);
		\draw (3.center) to (5.center);
		\draw (2.center) to (8.center);
		\draw (9.center) to (8.center);
		\draw (8.center) to (10.center);
		\draw (8.center) to (11.center);
		\draw (10.center) to (11.center);
		\draw (9.center) to (11.center);
		\draw (9.center) to (10.center);
		\draw (20.center) to (3.center);
		\draw (18.center) to (5.center);
		\draw (4.center) to (19.center);
		\draw (14.center) to (0.center);
		\draw (15.center) to (0.center);
		\draw (2.center) to (17.center);
		\draw (2.center) to (16.center);
		\draw (1.center) to (13.center);
		\draw (12.center) to (1.center);
		\draw (6.center) to (21.center);
		\draw (7.center) to (22.center);
		\draw (22.center) to (2.center);
		\draw (2.center) to (18.center);
		\draw (0.center) to (18.center);
		\draw (19.center) to (0.center);
		\draw (19.center) to (1.center);
		\draw (21.center) to (1.center);
		\draw (21.center) to (3.center);
		\draw (22.center) to (3.center);
\end{tikzpicture}
\\
$A$ & $\beta A$ & $\mathcal{C}(P_{\mathcal{A}})$\\
\end{tabular}
\caption{} \label{fig:M3}
\end{figure}

Since the existence of cellular paths is a symmetric and transitive relation between maps in $\mathcal{C}_{\star}(\Delta)$, it suffice to find  cellular connections from  $\beta f\circ s$ to $\mu$ and  from $\beta g\circ t$ to $\mu$. Since the cases are similar, the lemma follows from the next claim.

\begin{claim}
There is a cellular connection $(\overline{e},\overline{f})$, from $\beta f\circ s$ to $\mu$. 
\end{claim}

\noindent{\em Proof of Claim.} 
In the process of this proof we will consider various cell-systems $\mathcal{S}$ whose associated poset $(\mathcal{S},\subseteq)$ is isomorphic to $P_{\mathcal{A}}$. for For notational purposes it  will be convenient to have them indexed by $P_{\mathcal{A}}$. If $(D_p)_{p\in P_{\mathcal{A}}}$ is such a cell-system and $v$ is a vertex of the complex $\bigcup_p D_p$, then we  denote by $\mathrm{Supp}(v,P_{\mathcal{A}})$ the least $p\in P_{\mathcal{A}}$ with $v\in\mathrm{dom}(D_p)$. It is easy to see that such $p$ always exists  and it coincides with the unique $p\in P_{\mathcal{A}}$ so that $v\in\mathrm{dom}(D_p) \setminus \mathrm{dom}(\partial D_p)$.

Consider the collection $\mathcal{B}:=\{D(\sigma, f_{\mathrm{X}})\mid \mathrm{X}\in \Delta, \sigma\in A_{\mathrm{X}} \}$. By Theorem \ref{Cohen's original theorem}(ii) and since $f$ is hereditarily cellular, we have that $\mathcal{B}$ is a cell-system on $\beta B$. As a consequence, $\beta \mathcal{B}:= \{\beta D(\sigma, f_{\mathrm{X}})\mid \mathrm{X}\in \Delta, \sigma\in A_{\mathrm{X}} \}$ is a tame cell-system  on $\beta\beta B$. 
Notice that the assignment  $\beta D(\sigma, f_{\mathrm{X}})\to D(\sigma, A_{\mathrm{X}})$ is an isomorphism between the posets $P_{\mathcal{B}}$ and $P_{\mathcal{A}}$. We can therefore use $P_A$ to index  $\beta \mathcal{B}$ as $(D_p)_{p\in P_A}$ and we set $D=\beta\beta B$. Notice that by the choice of the map $s$ we have that 
\[\beta D(\sigma, f_{\mathrm{X}})= (s_{\mathrm{X}})^{-1}_{*}\big(D(\sigma, f_{\mathrm{X}})\big).\]
From this it follows that for every $v\in\mathrm{dom}(D)$ we have that 
\begin{equation}\label{eq:mu}
\beta f\circ s(v)=  \mu\big(\mathrm{Supp}(v,P_{\mathcal{A}})\big).
\end{equation}
By Proposition \ref{P:TameCellSystemAdmitsStarring} the system $(D_p)_p$ admits a starring. Let $\overline{\delta}=\delta_1,\ldots,\delta_k$ be a  linearization of this starring. By Theorem~\ref{T:SystemTransformation} we can assume without the loss of generality that  $\overline{\delta} D= \mathcal{C}(P_{\mathcal{A}})$. After possibly removing some moves from $\overline{\delta}$ we can also assume that $\overline{\delta}$ is essential for $D$.
By Proposition \ref{P:TameCellSystemAdmitsStarring} it then follows that for every initial segment $\overline{\varepsilon}\delta$  of $\overline{\delta}$ there is a unique $p\in P_{\mathcal{A}}$ so that $\delta$ is strongly internal for $\overline{\varepsilon} D_p$.  Let $\pi\colon\{1,\ldots,k\}\to P_{\mathcal{A}}$ be the map witnessing this unique assignment and set $D^l=\delta_l\cdots\delta_1 D$ and $D_p^l=\delta_l\cdots\delta_1 D_p$, for all $p\in P_{\mathcal{A}}$ and all $l\leq k$. By Lemma \ref{L:tog} we have that $(D_p^l)_p$ is cell-system on $D^l$, for all $l\leq k$.

We can now define the cellular connection $(\overline{e},\overline{f})$ as follows. If $\delta_l$ is a stellar subdivision $(\sigma_l,a_l)$, then $\delta_l$ is strongly internal for $D_{\pi(l)}^{l-1}$. We can therefore pick some $b_l\in\sigma_l$ which lies in the interior of $D_{\pi(l)}^{l-1}$. Let $e_l\colon D^l\to D^{l-1}$ be the connection map  with $e_l(a_l)=b_l$ and let $f_l\colon D^l\to \beta A$ be the composition $f_{l-1}\circ e_l$.  Similarly, if $\delta_l$ is a stellar weld $(\sigma_l,a_l)^{-1}$ we pick some $b_l\in\sigma_l$ which lies in the interior of $D_{\pi(l)}^{l}$. Let $e_l\colon D^{l-1}\to D^{l}$ be the connection map with $e(a_l)=b_l$, and let $f_l\colon D^l\to \beta A$ be the restriction $f_{l-1}\res \mathrm{dom}(D^{l-1})\setminus \{a_l\}$. It is immediate that for all $l>0$, the triangle formed by $f_{l-1},e_l,f_l$ commutes. By Lemma \ref{L: Cellular has composition,anti-composition,triangulations} we have that $f_l\in\mathcal{C}_{\star}(\Delta)$.
Finally, by a straightforward induction based on  (\ref{eq:mu}) above and the choice of $b_l$, we get that for every $l\leq k$ and every $v\in\mathrm{dom}(D_l)$, 
we have that
\[f_l(v)=  \mu\big(\mathrm{Supp}(v,P_{\mathcal{A}})\big).\] 
It follows that $f_k=\mu$ and therefore $(\overline{e},\overline{f})$ is the desired cellular connection. This finish the proof of the claim and the theorem. 
\end{proof}

We can now establish Theorem \ref{Theorem:CellularAndSelections} and Corollary \ref{Corollary:CellularIsFraisse}.

\begin{proof}[Proof of Theorem~\ref{Theorem:CellularAndSelections}]
(i) To see that they respectively contain $\mathcal{S}_{\star}(\Delta)$ and $\mathcal{S}(\Delta)$, 
notice first that every elementary selection map $s\colon \beta A\to A$ is a composition of connection maps. 
The rest follows from Lemma \ref{L: Cellular has composition,anti-composition,triangulations} and Lemma \ref{L:SimpleCellularMaps}.

(ii) By Proposition~\ref{C:CelluAreCat}, both $\mathcal{C}_{\star}(\Delta)$ and $\mathcal{C}(\Delta)$ are categories. Thus, by point (i), 
it suffices to show the last two sentences of the statement of point (ii), that is, 
we need to see that 
for every $f\in\mathcal{C}_{\star}(\Delta)$, there is $g\in\mathcal{C}_{\star}(\Delta)$ with $f\circ g$ being a composition of elementary selections 
or an identity map, and, similarly, for every $f\in\mathcal{C}(\Delta)$, there is $g\in\mathcal{C}(\Delta)$ with $f\circ g$ 
a composition of elementary selections or an identity map. 
Notice that if $f\in\mathcal{C}(\Delta)$, $g\in \mathcal{C}_{\star}(\Delta)$, and $f \circ g$ is defined, then $g\in \mathcal{C}(\Delta)$. 
As a consequence, it is enough to prove only the statement about $\mathcal{C}_{\star}(\Delta)$.

Let $f\colon B\to A$ be in $\mathcal{C}_{\star}(\Delta)$ and let $s\colon\beta^2 B\to \beta B$ 
be the elementary selection map defined by the assignment (\ref{eq:s,t selection}) in Lemma \ref{L: connection}. Set 
\[
f'= \beta f \circ s.
\]
Since for each elementary selection map $r\colon \beta A \to A$ we can find a elementary selection map $r'\colon \beta B \to B$ 
so that $r \circ \beta f = f \circ r'$, it will suffice to find a $g\in \mathcal{C}_{\star}(\Delta)$ so that $f'\circ g$ is a composition of elementary selections or an identity map. 

\begin{center}
\begin{tikzcd}[sep=large, every label/.append style={font=\normalsize}]
& B \arrow[r, "f" ] & A  \\
\beta\beta B \ar[r, "s" ] \ar[rr, "f'" description, bend right=20  ]  & \beta B  \ar[u, "{r'}", swap] \ar[r,  "{\beta f}"]  &  \beta A \ar[u, "{r}", swap] \\ 
\end{tikzcd}
\end{center}
Let $C_0=\beta \beta A$ and let $f_0\colon C_0 \to \beta A$ be the map   $(\beta(\mathrm{id}_A))\circ t$, where $t\colon \beta\beta A\to \beta A$ is the elementary selection map defined by the assignment (\ref{eq:s,t selection}) in Lemma \ref{L: connection} and $\mathrm{id}_A$ is the identity on $A$. By Lemma \ref{L: connection} there exists a cellular connection $(\overline{e},\overline{f})$, from $f_0$ to $f_k=f'$. 

\begin{center}
\begin{tikzpicture}[baseline= (a).base]
\node[scale=.9] (a) at (0,0){
\begin{tikzcd}[every label/.append style={font=\normalsize}]
& & & & & & & & \beta A \\ 
\\
\\
C_k \ar[r,  "e_k"]\ar[uuurrrrrrrr, "f_k=f'"  description, bend left=15] & \cdots & \cdots   &   C_{l+1} \ar[l]  \ar[uuurrrrr, bend left=15] & C_{l}\ar[l] \ar[r , "e_l", swap] \ar[uuurrrr, "f_l"  description, bend left=15] & C_{l-1} \ar[uuurrr, bend left=15]& \cdots \ar[l] & \cdots \ar[r,  "e_1"] & C_0 \ar[uuu, "f_0"  description ]\\
\end{tikzcd}
};
\end{tikzpicture}
\end{center}

Let $D_0=C_0$, and set $g_0\colon D_0\to C_0$ to be the identity map. By induction, using Lemma~\ref{L: factoring through stellar moves}, 
we can find, for every $l\in \{1,\ldots,k\}$, an elementary selection or an idenity map $s_l\colon D_l\to D_{l-1}$ and a $\mathcal{C}(\Delta)$-map 
$g_l\colon D_l \to C_l$ so that the square diagram formed by $g_{l-1},e_l,s_l,g_l$ commutes.  
\begin{center}
\begin{tikzcd}[every label/.append style={font=\normalsize},  column sep=scriptsize]
C_k \ar[r,  "e_k"] & \cdots & \cdots   &   C_{l+1} \ar[l]   & C_{l}\ar[l] \ar[r , "e_l"] & C_{l-1} & \cdots \ar[l] & \cdots \ar[r,  "e_1"] & C_0 \\
\\
\\
D_k \ar[r, "s_{k}", swap, outer sep=+5pt]\ar[uuu, "g_{k}" description]& \cdots & \cdots  \ar[r]&   D_{l+1} \ar[r] \ar[uuu]& D_{l} \ar[r, "s_{l}", swap, outer sep=+5pt]\ar[uuu, "g_{l}" description] & D_{l-1} \ar[r] \ar[uuu] & \cdots & \cdots \ar[r , "s_{1}", swap, outer sep=+5pt] & D_0 \ar[uuu, "g_{0}" description] \\

\end{tikzcd}
\end{center}
Let  $g=g_k$, and notice that $f'\circ g$ is equal to $f_0\circ g_0\circ s_1\circ \cdots \circ s_k$, which is a composition of elementary selections, since $\beta(\mathrm{id}_A)=\mathrm{id}_{\beta A}$. 
\end{proof}

\begin{proof}[Proof of Corollary \ref{Corollary:CellularIsFraisse}]
By Proposition \ref{P:cos}, Theorem \ref{Theorem:Intro1}, and Theorem \ref{Theorem:CellularAndSelections} 
we have that $\mathcal{C}(\Delta)$ is a projective \Fraisse{} class whose \Fraisse{} limit is isomorphic to $\DD$.

Since  $\mathcal{S}_{\star}(\Delta)$ satisfies the projective amalgamation property, 
by Proposition \ref{P:cos}  and Theorem \ref{Theorem:CellularAndSelections} so does $\mathcal{C}_{\star}(\Delta)$. Moreover, $\mathcal{C}_{\star}(\Delta)$ 
satisfies the joint projection property since $\mathcal{S}_{\star}(\Delta)\subseteq \mathcal{C}_{\star}(\Delta)$ by 
Theorem~\ref{Theorem:JointProjection}. 
Thus,  $\mathcal{C}_{\star}(\Delta)$ is a projective \Fraisse{} class. To see that  the projective \Fraisse{} limit $\DD'$ of $\mathcal{C}_{\star}(\Delta)$ is isomorphic to 
$\DD$ one defines a ``back and forth" system between 
$\DD'$ and $\DD$ using: the projective extension property of $\DD'$,
together with the fact that $\mathcal{C}(\Delta)\subseteq \mathcal{C}_{\star}(\Delta)$;
 and the projective extension property of $\DD$, together with Theorem \ref{Theorem:JointProjection}.
\end{proof}

\subsection{Proof of Theorem~\ref{T:topologicalRealization}}\label{Su:proofend}

We come back now to the proof of the theorem on the topological realization of the projective Fra{\"i}ss{\'e} limit of the category ${\mathcal S}(A)$ for 
a finite complex $A$. 

\begin{proof}[Proof of Theorem~\ref{T:topologicalRealization}]
The reader may consult Section~\ref{SS:Concrete} for notation in this proof.

We first prove the theorem for the special case when $A=\Delta$. Let ${\mathcal S}_0(\Delta)$ be the category whose objects are barycentric subdivisions 
$\beta^k\Delta$ of $\Delta$, $k\in {\mathbb N}$, and whose morphisms 
are identity maps on $\beta^k\Delta$ and compositions of elementary selections. 
By Theorem~\ref{Theorem:CellularAndSelections}(i) it is clear that 
\[
{\mathcal S}_0(\Delta)\subseteq {\mathcal S}(\Delta)\subseteq {\mathcal C}(\Delta), 
\]
and by Theorem~\ref{Theorem:CellularAndSelections}(ii), ${\mathcal S}_0(\Delta)$ and ${\mathcal S}(\Delta)$ are dominating in ${\mathcal C}(\Delta)$. 

Let $(g_i)$ be a generic sequence for ${\mathcal S}(\Delta)$, so $g_i\colon B_{i+1}\to B_i$, where 
each $B_i$ is some barycentric subdivision of $\Delta$. Let ${\mathbb B}$ be the profinite simplicial complex associated with $(g_i)$. Let 
$R^{\mathbb B}$ be the edge relation on ${\mathbb B}$. By Lemma~\ref{L:EquivalenceRelation}, $R^{\mathbb B}$ is an equivalence relation. 
Our aim is to show that ${\mathbb B}/R^{\mathbb B}$ is homeomorphic to $|\Delta|_{\mathbb R}$ by 
a homeomorphism that respects the structure of subcomplexes of $\Delta$ as in the conclusion of Theorem~\ref{T:topologicalRealization}. 
Since ${\mathcal S}(\Delta)$ is dominating in ${\mathcal C}(\Delta)$, by Proposition~\ref{P:cos}(iii),
the sequence $(g_i)$ is generic for ${\mathcal C}(\Delta)$. Since ${\mathcal S}_0(\Delta)$ is dominating in ${\mathcal C}(\Delta)$,
it follows from Proposition~\ref{P:cos}(iv) and Corollary~\ref{Corollary:CellularIsFraisse} that there exists a generic sequence 
$(f_i)$ for ${\mathcal C}(\Delta)$ such that each $f_i$ is in ${\mathcal S}_0(\Delta)$. Let $f_i\colon A_{i+1}\to A_i$, where each $A_i$ is a barycentric subdivision 
of $\Delta$. Let ${\mathbb A}$ be the profinite simplicial complex associated with $(f_i)$, 
and let $R^{\mathbb A}$ be the edge relation on ${\mathbb A}$. 
Since both $(g_i)$ and $(f_i)$ are generic for ${\mathcal C}(\Delta)$, 
they are isomorphic in ${\mathcal C}(\Delta)$ by Theorem~\ref{T:FraisseOriginal}. In particular, 
there is an isomorphism 
\[
\phi\colon {\mathbb A} \to {\mathbb B}. 
\]
Thus, $R^{\mathbb A}$ is an equivalence relation and $\phi$ induces a homeomorphism 
\[
{\mathbb A}/R^{\mathbb A}\to {\mathbb B}/R^{\mathbb B}. 
\]
Since each $f_i$ is a composition of elementary selections or an identity map, 
the inverse sequence $(A_i,f_i)$ can be viewed as an inverse sequence as in Lemma~\ref{L:geomsel}. 
Thus, by Lemma~\ref{L:geomsel}, ${\mathbb A}/R^{\mathbb A}$, and, therefore, also ${\mathbb B}/R^{\mathbb B}$, is homeomorphic to $|\Delta|_{\mathbb R}$ by 
a homeomorphism that respects the structure of subcomplexes of $\Delta$ as in the conclusion of Theorem~\ref{T:topologicalRealization}.

The general case reduces to the special case $A=\Delta$ as follows. Let $A$ be an arbitrary finite complex. 
Then $|A|$ is a compact metric space that can be represented as follows.  For each face $\sigma\in A$, there is a compact non-empty subspace $X_\sigma$ of $|A|$ such that, with letting 
$X_\emptyset = \emptyset$, we have
\begin{equation}\label{E:struc}
X_\sigma\cap X_\tau = X_{\sigma\cap \tau}\;\hbox{ and }\; \bigcup_{\sigma\in A} X_\sigma = |A|. 
\end{equation} 
Using the notation in \eqref{E:geot}, the special case when $A=\Delta$ may be reformulated to the following statement.

\smallskip
\noindent {\em Statement 1.} There exists a homeomorphism from $X_\sigma$ to $|\sigma|_{\mathbb R}$ 
that maps $X_\tau$ for $\emptyset\not=\tau\subsetneq \sigma$ onto $|\tau|_{\mathbb R}$. 
\smallskip

\noindent The following observation is easy to show.

\smallskip
\noindent {\em Statement 2.} Each homeomorphism of the boundary of $|\sigma|_{\mathbb R}$ 
\[
\bigcup \{ |\tau|_{\mathbb R}\mid \emptyset\not= \tau\subsetneq \sigma\}
\]
extends to a homeomorphism of $|\sigma|_{\mathbb R}$.
\smallskip

Using these two statements we can now finish the proof. Let $n={\rm dim}(A)$ and, for $k\leq n$, let $A_{\leq k}$ be the $k$-skeleton of $A$, that is, the subcomplex of $A$ consisting of all faces of 
dimension not exceeding $k$. By induction on $k$, we produce a homeomorphism 
\begin{equation}\notag
\phi_k\colon  \bigcup_{\sigma\in A_{\leq k}} X_\sigma \to |A_{\leq k}|_{\mathbb R},
\end{equation} 
with notation as in \eqref{E:geos}. We define $\phi_0$ in the obvious manner by mapping the unique point in $X_\sigma$ to the point in $|A_{\leq 0}|_{\mathbb R}$ 
corresponding to $\sigma$, for $\sigma\in A_{\leq 0}$. (The space $X_\sigma$ consists of one point by Statement 1.) If $k<n$, we produce $\phi_{k+1}$ out of 
$\phi_k$ as follows. Fix $\sigma\in A$ with ${\rm dim}(\sigma)=k+1$. Let $f_\sigma\colon X_\sigma\to |\sigma|_{\mathbb R}$ be given by Statement 1. Now 
use Statement 2 to find a homeomorphism $g_\sigma\colon |\sigma|_{\mathbb R}\to |\sigma|_{\mathbb R}$ so that 
$g_\sigma\circ f_\sigma\colon X_\sigma\to |\sigma|_{\mathbb R}$ extends $\phi_k\res \bigcup_{\emptyset\not= \tau\subsetneq \sigma} X_\tau$. 
Produce $\phi_{k+1}$ by extending $\phi_k$ to the subspace 
$\bigcup_{\sigma\in A, {\rm dim}(\sigma)=k+1} X_\sigma$ with $g_\sigma\circ f_\sigma$ for $\sigma\in A$, ${\rm dim}(\sigma)=k+1$. 
\end{proof}

\subsection{The categories $\mathcal{H}_{\star}(\Delta)$ and $\mathcal{H}(\Delta)$---upper bounds on  $[{\mathcal S}_\star(\Delta)]$ and 
$[{\mathcal S}(\Delta)]$}\label{S:ResultsOnRestrictedNearHomeo}

The following theorem and its corollary summarizes the relationship of $\mathcal{H}_{\star}(\Delta)$ and $\mathcal{H}_{\star}(\Delta)$ with the rest of the categories. 

\begin{theorem}\label{Theorem:HeredNearHomeoand Selection}
Let $\Delta$ be the $n$-simplex. Then 
\[
[\mathcal{S}_{\star}(\Delta)]   \subseteq \mathcal{H}_{\star}(\Delta)\,\hbox{ and }\, [\mathcal{S}(\Delta)]   \subseteq \mathcal{H}(\Delta).
\]
\end{theorem}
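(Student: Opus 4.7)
The plan is to first establish the starred inclusion $[\mathcal{S}_\star(\Delta)]\subseteq\mathcal{H}_\star(\Delta)$ and then deduce $[\mathcal{S}(\Delta)]\subseteq\mathcal{H}(\Delta)$ from the chain $[\mathcal{S}(\Delta)]\subseteq[\mathcal{S}_\star(\Delta)]\cap\mathcal{R}(\Delta)\subseteq \mathcal{H}_\star(\Delta)\cap\mathcal{R}(\Delta)=\mathcal{H}(\Delta)$, noting that any subcategory of $\mathcal{R}(\Delta)$ dominated by $\mathcal{S}(\Delta)$ is, when embedded in $\mathcal{R}_\star(\Delta)$, also dominated by the larger $\mathcal{S}_\star(\Delta)$. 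For the starred case, fix $f\colon B\to A$ in $[\mathcal{S}_\star(\Delta)]$. By Theorem~\ref{T:car}, there exist $g\colon\DD\to B$ approximable by $\mathcal{S}_\star(\Delta)$ and $\phi\in\mathrm{Aut}(\DD)$ such that $h:=f\circ g\circ\phi\colon\DD\to A$ is also approximable by $\mathcal{S}_\star(\Delta)$; at the level of topological realizations $|h|=|f|\circ|g|\circ|\phi|$ with $|\phi|$ a self-homeomorphism of $|\DD|$. The argument hinges on the following \emph{Key Lemma}: for any map $k\colon\DD\to K$ to a finite stellar $n$-simplex $K$ that is approximable by $\mathcal{S}_\star(\Delta)$, the continuous surjection $|k|\colon|\DD|\to|K|$ is uniformly approximable by face-preserving homeomorphisms. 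Granting this, by choosing face-preserving homeomorphisms $h_g,h_h$ uniformly close to $|g|,|h|$ and using $|f|\circ|g|=|h|\circ|\phi|^{-1}$, we get that $|f|$ is uniformly close to the face-preserving homeomorphism $h_h\circ|\phi|^{-1}\circ h_g^{-1}$ (via uniform continuity of $|f|$ and of $h_g^{-1}$ on compacta), so $f\in\mathcal{H}_\star(\Delta)$.

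To prove the Key Lemma, write $k=k'\circ f_m^\infty$ for some $k'\colon A_m\to K$ in $\mathcal{S}_\star(\Delta)$, where $(A_n,f_n)$ is a generic sequence for $\mathcal{S}_\star(\Delta)$ with projective Fra{\"i}ss{\'e} limit $\DD$. On the topological side $|\DD|=\varprojlim(|A_n|,|f_n|)$ as a compact metric space, and each closed subspace $|(A_n)_X|$, for $X\in\Delta$, is preserved by the bonding maps $|f_n|$. Apply Theorem~\ref{T:Brown} with $Y_n=|A_n|$ and closed subsets $Y_n^X=|(A_n)_X|$: this reduces the Key Lemma to showing that each $|f_n|$ is uniformly approximable by homeomorphisms sending $|(A_{n+1})_X|$ onto $|(A_n)_X|$ for every face $X\in\Delta$. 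Because every morphism of $\mathcal{S}_\star(\Delta)$ is obtained by composition and barycentric subdivision from identities and elementary selections (and barycentric subdivision of a map leaves its geometric realization intact), and because the class of face-preserving-homeomorphism-approximable maps is closed under composition by uniform continuity on compacta, the problem reduces to verifying the approximability statement for a single elementary selection $s\colon\beta C\to C$.

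The main obstacle is this final reduction: showing that the PL map $|s|$ is uniformly approximable by face-preserving PL self-homeomorphisms of $|C|_{\mathbb{R}}$. The idea is to exploit cellularity. By Lemma~\ref{L:SimpleCellularMaps} together with the observation that every elementary selection is a composition of connection maps, $s$ is hereditarily cellular, so $s\!\res\!\beta C_X\to C_X$ is cellular between PL balls of matching dimension for each face $X\in\Delta$. Classical results on approximation of transversely cellular PL maps between PL balls---due to Cohen~\cite{Co1} and Akin~\cite{Akin}---assert that such maps can be uniformly approximated by PL homeomorphisms that agree with the original map on the boundary. Proceeding inductively on the dimension of the faces of $\Delta$, starting from $0$-dimensional faces where $|s|$ is the identity and, at each stage, extending the already-chosen boundary homeomorphism on $\partial|C_X|_{\mathbb{R}}$ to the interior by a PL homeomorphism approximating $|s|\!\res\!C_X$ while interpolating to the prescribed boundary values via a collar-neighborhood argument, yields the desired face-preserving PL approximation of $|s|$. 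The delicate work lies in reconciling the boundary behavior forced by the cellular approximation theorem with the boundary values prescribed by the inductive hypothesis, which is handled through collar neighborhoods and standard boundary-fixing isotopies.
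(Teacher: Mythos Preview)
Your overall architecture matches the paper's: both use the iso-sequence/automorphism characterization of $[\mathcal{S}_\star(\Delta)]$ (Theorem~\ref{T:iid} for the paper, Theorem~\ref{T:car} for you), Brown's approximation theorem on inverse limits, and a reduction to showing each elementary selection is a restricted near-homeomorphism. Your inline argument that $|f|$ is uniformly approximated by $h_h\circ|\phi|^{-1}\circ h_g^{-1}$ is essentially the paper's Lemma~\ref{L:nil}.

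Two remarks. First, a small gap: you must check that the automorphism $\phi$ supplied by Theorem~\ref{T:car} descends to a \emph{face-preserving} homeomorphism of $|\DD|$; an abstract element of $\mathrm{Aut}(\DD)$ need not preserve the subcomplexes $\beta^\infty[X]$. The $\phi$ actually produced in the proof of Theorem~\ref{T:car} is built from maps in the ambient category $\mathcal{R}_\star(\Delta)$, hence is face-preserving, but this needs to be said. The paper avoids the issue by working directly with the iso-sequence of Theorem~\ref{T:iid}, whose entries lie in $\mathcal{R}_\star(\Delta)$ by construction.

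Second, and more significantly, your treatment of elementary selections is much heavier than necessary. The paper dispenses with cellularity, Cohen--Akin approximation, and the ``delicate'' boundary-matching induction entirely: for an elementary selection $s\colon\beta B\to B$, pick any sequence $\bar{x}^n=(x_\sigma^n)_{\sigma\in B}$ with each $x_\sigma^n$ in the interior of $|\sigma|_{\mathbb{R}}$ and $x_\sigma^n\to s(\sigma)$, and observe that the piecewise-affine homeomorphism $\phi_{B,\bar{x}^n}\colon|\beta B|_{\mathbb{R}}\to|B|_{\mathbb{R}}$ sending the vertex $\sigma$ of $\beta B$ to $x_\sigma^n$ is face-preserving and converges uniformly to $|s|_{\mathbb{R}}$. (Closure under $\beta$ is handled similarly via the identity $|f|_{\mathbb{R}}\circ\phi=\psi\circ|\beta f|_{\mathbb{R}}$ for the barycenter homeomorphisms $\phi,\psi$; your remark that $|\beta f|=|f|$ is not literally true, only true up to conjugation by such homeomorphisms.) Your route through transverse cellularity, while plausibly salvageable, leaves the collar-interpolation step unjustified and gains nothing over this two-line construction.
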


We register the following  slight refinement of the well known fact that near-homeomorphism are closed under composition (see 
\cite[Section 1.7, Exercise 3]{vM}). 

\begin{proposition}\label{P:nhoca} 
The collections $\mathcal{H}_{\star}(\Delta)$ and $\mathcal{H}(\Delta)$ are categories.
\end{proposition}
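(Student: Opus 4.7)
The plan is to verify the two conditions that make a collection of arrows a category: containing the identity on each object, and closure under composition.

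The identity map $\mathrm{id}\colon A\to A$ of any stellar $n$-simplex $A$ is face-preserving and its geometric realization $|\mathrm{id}|\colon |A|\to |A|$ is the identity homeomorphism; it is trivially the uniform limit of homeomorphisms from $|A|$ to $|A|$ preserving every $|A_X|$, so $\mathrm{id}\in\mathcal{H}_{\star}(\Delta)$, and likewise $\mathrm{id}\in\mathcal{H}(\Delta)$ whenever $A=\beta^k\Delta$.

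For closure under composition, let $f\colon C\to B$ and $g\colon B\to A$ be morphisms in $\mathcal{H}_{\star}(\Delta)$. The composite $g\circ f$ is again face-preserving: for each $X\in\Delta$ one has $(g\circ f)(C_X)=g(f(C_X))=g(B_X)=A_X$. The hard part, though not too hard, is to check that $|g\circ f|=|g|\circ|f|\colon |C|\to |A|$ is a uniform limit of homeomorphisms preserving the face structure. I would carry this out by the standard $\varepsilon/2$ device, exploiting compactness of $|B|$ and hence uniform continuity of any fixed approximating homeomorphism. Given $\varepsilon>0$, first pick a homeomorphism $h\colon |B|\to |A|$ with $h(|B_X|)=|A_X|$ for all $X\in\Delta$ and with $\sup_{y\in|B|}d_A(h(y),|g|(y))<\varepsilon/2$. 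Since $h$ is uniformly continuous on the compact space $|B|$, choose $\delta>0$ with $d_B(y,y')<\delta\Rightarrow d_A(h(y),h(y'))<\varepsilon/2$. Then pick a homeomorphism $k\colon |C|\to |B|$ with $k(|C_X|)=|B_X|$ for all $X$ and $\sup_{x\in|C|}d_B(k(x),|f|(x))<\delta$. The composite $h\circ k\colon |C|\to |A|$ is a homeomorphism, $(h\circ k)(|C_X|)=h(|B_X|)=|A_X|$ for every $X$, and
\[
d_A(h(k(x)),|g|(|f|(x)))\le d_A(h(k(x)),h(|f|(x)))+d_A(h(|f|(x)),|g|(|f|(x)))<\varepsilon.
\]
Thus $|g\circ f|$ is uniformly approximated by face-structure-preserving homeomorphisms, so $g\circ f\in\mathcal{H}_{\star}(\Delta)$.

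Finally, $\mathcal{H}(\Delta)$ is the full subcategory of $\mathcal{H}_{\star}(\Delta)$ whose objects are the complexes $\beta^k\Delta$, so composition and identities stay inside $\mathcal{H}(\Delta)$ automatically, and the same argument yields that $\mathcal{H}(\Delta)$ is a category. The only real work is the $\varepsilon/2$ step above; everything else is formal.
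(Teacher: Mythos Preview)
Your proof is correct and follows essentially the same approach as the paper: the paper observes that for compact metric spaces, if $(f_n)$ and $(g_n)$ converge uniformly to $f$ and $g$, then a diagonal subsequence $(g_{k_n}\circ f_n)$ converges uniformly to $g\circ f$, which is exactly what your $\varepsilon/2$ argument with uniform continuity of $h$ establishes. Your version is slightly more explicit about the face-preservation of the approximating homeomorphisms, but the underlying idea is identical.
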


\begin{proof} 
Only closure under composition of the two classes needs to be checked. 
It is not difficult to see that if $X,Y, Z$ are compact metric spaces and the sequences $(f_n)$ and $(g_n)$ of continuous functions 
from $X$ to $Y$ and from $Y$ to $Z$, respectively, converge uniformly to $f\colon X\to Y$ and $g\colon Y\to Z$, then there exists a sequence $(k_n)$ 
of natural numbers such that 
\[
(g_{k_n}\circ f_n)
\]
converges uniformly to $g\circ f$. The conclusion of the proposition is an immediate consequence of this observation. 
\end{proof}

Let $\phi\colon K\to L$ and $\psi\colon K\to M$ be near-homeomorphisms between compact metric spaces. It is known that if  $\psi=\gamma \circ \phi$  for some   
$\gamma \colon L\to M$, then $\gamma$ is also a near-homeomorphism. The next lemma reproves this fact and gives some additional information that will be 
needed here. 

\begin{lemma}\label{L:nil}
Let $K, L, M$ be compact metric spaces. Let $\phi\colon K\to L$, $\psi\colon K\to M$, and $\gamma \colon L\to M$ be continuous with 
$\psi=\gamma \circ \phi$. Let $(\phi_p)$ and $(\psi_p)$ be sequences of homeomorphisms from $K$ to $L$ and from $K$ to $M$ 
converging uniformly to $\phi$ and $\psi$, respectively. Then there is a subsequence $(\psi_{k_p})$ of $(\psi_{p})$ such that the sequence 
\[
(\psi_{k_p}\circ \phi_{p}^{-1})
\]
converges uniformly to $\gamma$. 
\end{lemma}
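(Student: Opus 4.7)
The proof will be a direct $\varepsilon$--$\delta$ argument combining uniform continuity of $\gamma$ with the hypothesized uniform convergences. First, since $L$ is a compact metric space, $\gamma\colon L\to M$ is uniformly continuous. Fix $\varepsilon>0$ and choose $\delta>0$ witnessing uniform continuity of $\gamma$ at scale $\varepsilon/2$: whenever $y,y'\in L$ satisfy $d_L(y,y')<\delta$, we have $d_M(\gamma(y),\gamma(y'))<\varepsilon/2$.

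Next, I would use the uniform convergence $\phi_p\to \phi$ to pick $P_1=P_1(\varepsilon)$ such that $\sup_{x\in K}d_L(\phi_p(x),\phi(x))<\delta$ for all $p\geq P_1$, and the uniform convergence $\psi_q\to \psi$ to pick $P_2=P_2(\varepsilon)$ such that $\sup_{x\in K}d_M(\psi_q(x),\psi(x))<\varepsilon/2$ for all $q\geq P_2$. The subsequence $(k_p)$ is then produced by a routine diagonalization: running $\varepsilon$ through a sequence $\varepsilon_j\to 0$, the corresponding thresholds $P_2(\varepsilon_j)$ can be interlaced with a strictly increasing choice $k_1<k_2<\cdots$ with $k_p\geq P_2(\varepsilon_j)$ eventually in $p$ for every fixed $j$.

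The core inequality is then transparent. For arbitrary $y\in L$ and $p$ large, set $x=\phi_p^{-1}(y)\in K$, so $\phi_p(x)=y$ and therefore $d_L(\phi(x),y)<\delta$. Using $\psi=\gamma\circ\phi$, the triangle inequality yields
\[
d_M\bigl(\psi_{k_p}(\phi_p^{-1}(y)),\,\gamma(y)\bigr)\;\leq\;d_M\bigl(\psi_{k_p}(x),\psi(x)\bigr)+d_M\bigl(\gamma(\phi(x)),\gamma(y)\bigr)<\tfrac{\varepsilon}{2}+\tfrac{\varepsilon}{2}=\varepsilon,
\]
uniformly in $y$. This gives uniform convergence of $(\psi_{k_p}\circ\phi_p^{-1})$ to $\gamma$ on $L$.

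There is no real obstacle here; the only point requiring a modicum of care is the bookkeeping that produces a single strictly increasing subsequence $(k_p)$ working for all $\varepsilon$ simultaneously, which is handled by the standard diagonal procedure described above. Compactness of $L$ is essential exactly once, to secure uniform continuity of $\gamma$.
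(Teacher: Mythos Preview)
Your proof is correct, and in fact proves more than the lemma asks: your core inequality shows that the \emph{full} sequence $(\psi_p\circ\phi_p^{-1})$ converges uniformly to $\gamma$, so the diagonalization you describe is unnecessary --- one may simply take $k_p=p$.

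Your route is genuinely different from the paper's. The paper first argues that $(\psi\circ\phi_p^{-1})\to\gamma$ uniformly, but does so via a sequential compactness argument in $K$ (using the characterization of uniform convergence on compacta by ``$y_p\to y\Rightarrow f_p(y_p)\to f(y)$'' together with subsequence extraction), never explicitly invoking uniform continuity of $\gamma$; only afterwards does it splice in a subsequence of $(\psi_p)$. Your argument is more direct and elementary: you use compactness of $L$ once to get uniform continuity of $\gamma$, and then a single triangle inequality finishes the job. What your approach buys is a cleaner, quantitative estimate and the stronger conclusion that no subsequence is needed; what the paper's approach buys is a template that generalizes more readily to settings where one wants to avoid explicit moduli of continuity.
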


\begin{proof} First we show that the sequence $(\psi \circ \phi_{p}^{-1})$ converges uniformly to $\gamma$. 
Since $L$ is compact, it suffices to show (see \cite[Exercise 4.2.E]{En}) that for each $y\in L$ and each sequence $(y_p)$ of points in 
$L$ converging to $y$, we have 
\begin{equation}\label{E:jhv}
\psi\circ \phi_{p}^{-1}(y_p) \to \gamma(y).
\end{equation}
Of course, it is enough to show that each subsequence of $(\psi \circ \phi_{p}^{-1}(y_p))$ has a further subsequence convergent to $\gamma(y)$. For 
simplicity of notation, we assume that the initial subsequence is the whole sequence $(\psi\circ \phi_{p}^{-1}(y_p))$. Consider the sequence $(\phi_{p}^{-1}(y_p))$ 
of elements of $K$. By compactness of $K$, this sequence has a convergent subsequence. Again, to simplify notation, assume that 
the whole sequence $(\phi_{p}^{-1}(y_p))$ converges to $x\in K$. It follows that 
\begin{equation}\label{E:a}
\psi\circ \phi_{p}^{-1}(y_p)\to \psi(x), 
\end{equation}
and, since $(\phi_{p})$ converges uniformly to $\phi$, that 
\[
y_p = \phi_{p}\circ \phi_{p}^{-1}(y_p)\to \phi(x). 
\] 
This last formula gives $\phi(x)=y$, hence 
\begin{equation}\label{E:b}
\psi(x) = \gamma\circ \phi(x) = \gamma(y). 
\end{equation}
Now, \eqref{E:jhv} follows from \eqref{E:a} and \eqref{E:b}. 

Since  $(\psi\circ \phi_{p}^{-1})$ converges uniformly to $\gamma$ and $(\psi_{p})$ converges uniformly to $\psi$, it is now easy 
to pick a subsequence of $(\psi_{p})$ with the desired property. 
\end{proof}

We may now proceed to the proof of Theorem \ref{Theorem:HeredNearHomeoand Selection}.

\begin{proof}[Proof of Theorem \ref{Theorem:HeredNearHomeoand Selection}]
Notice that if $f \circ g$ is in $\mathcal{S}_{\star}(\Delta)$ and $f\in\mathcal{S}(\Delta)$ then $f \circ g$ is in $\mathcal{S}(\Delta)$. 
As a consequence, it suffices to prove that $[\mathcal{S}_{\star}(\Delta)]\subseteq \mathcal{H}_{\star}(\Delta)$.

We show first that $\mathcal{S}_{\star}(\Delta)\subseteq \mathcal{H}_{\star}(\Delta)$. 
It suffices to see that $\mathcal{H}_{\star}(\Delta)$ is closed 
under composition and the operation $f\to \beta(f)$ and that it contains $1_A$ and each elementary selection $s\colon \beta^{k+1}A\to \beta^kA$, 
for each stellar simplex $A$. The class $\mathcal{H}_{\star}(\Delta)$ obviously contains $1_A$ and is closed under composition 
as it is a category by Proposition~\ref{P:nhoca}. 

The rest of the proof of the inclusion 
$\mathcal{S}_{\star}(\Delta)\subseteq \mathcal{H}_{\star}(\Delta)$ involves the barycentric subdivision operator and geometric realizations. It will be helpful to 
define some auxiliary notions and adopt some conventions. Let $C$ be a finite simplicial complex. 
Its geometric realization $|C|_{\mathbb R}$ is given by \eqref{E:geomr}. For a subcomplex $D$ of $C$, by $|D|_{\mathbb R}$ we denote 
the corresponding to $D$ geometric subcomplex of $|C|_{\mathbb R}$ as in \eqref{E:geos}. 
For a face $\sigma$ of $C$, by $|\sigma|_{\mathbb R}$
we denote the associated geometric face of $|C|_{\mathbb R}$ as in \eqref{E:geot}. We also identify vertices in ${\rm dom}(C)$ with the corresponding 
vertices of $|C|_{\mathbb R}$. 
Now let $\bar{x} = (x_\sigma)_{\sigma\in C}$ be a sequence of points in $|C|_{\mathbb R}$ such that $x_\sigma=v$ if 
$\sigma =\{ v\}$ for a $v\in {\rm dom}(C)$, 
and $x_\sigma$ an element of the interior of the face $|\sigma|_{\mathbb R}$ of 
$|C|_{\mathbb R}$
if $\sigma$ contains at least two vertices in ${\rm dom}(C)$. Let 
\[
\phi_{C,\bar{x}} \colon | \beta C|_{\mathbb R}\to |C|_{\mathbb R}
\]
be the function that is the unique affine extension of the function that
maps the vertex $\sigma$ of $\beta C$ (that is, a face of $C$) to $x_\sigma$ and each vertex of $C$ to itself.
The reader will easily check that $\phi_{C,\bar{x}}$ is a homeomorphism from $|\beta C|_{\mathbb R}$ to $|C|_{\mathbb R}$ that 
maps $|\beta D|_{\mathbb R}$ to $|D|_{\mathbb R}$ for each subcomplex $D$ of $C$.

We check that $\mathcal{H}_{\star}(\Delta)$ is closed under the operation $\beta$. 
We make a general observation first. Let $f\colon B\to A$ be a simplicial map for simplicial complexes $A,B$. 
Let $\phi\colon |\beta A|\to |A|$ and $\psi\colon |\beta B|\to |B|$ be given by 
\[
\phi= \phi_{A, \bar{x}} \,\hbox{ and }\, \psi= \psi_{B, \bar{y}}, 
\]
where $\bar{x} = (x_\sigma)_{\sigma\in A}$ is such that $x_\sigma$ is the geometric barycenter of $|\sigma|_{\mathbb R}$ and 
similarly $\bar{y} = (y_\tau)_{\tau\in B}$ is such that $y_\tau$ is the geometric barycenter of $|\tau|_{\mathbb R}$. Then 
one easily checks hat 
\begin{equation}\label{E:fpp}
|f|_{\mathbb R} \circ \phi = \psi \circ |\beta f|_{\mathbb R}. 
\end{equation} 
By the properties of $\phi_{A, \bar{x}}$ and $\psi_{B, \bar{y}}$, 
it follows from \eqref{E:fpp} that $f\in \mathcal{H}_{\star}(\Delta)$ implies $\beta f \in \mathcal{H}_{\star}(\Delta)$. 

It remains to show that $\mathcal{H}_{\star}(\Delta)$ contains elementary selections. 
Let $s\colon \beta^{k+1} A\to \beta^k A$ be an elementary selection, where $A$ is a stellar simplex. 
Set $B=\beta^k A$. Then $s\colon \beta B\to B$ is still an elementary selection. 
Let now, for $n\in {\mathbb N}$, $\bar{x}^n = (x^n_\sigma)_{\sigma\in B}$ be such that,
for each $\sigma\in B$, the sequence $(x_\sigma^n)$ converges to $s(\sigma)$. For each $n$, 
$\phi_{B,\bar{x}^n}$ is a homeomorphism that maps $|\beta C|_{\mathbb R}$ to $|\beta C|_{\mathbb R}$ for each subcomplex $C$ of $B$.
It is easy to check that the sequence 
$(\phi_{B,\bar{x}^n})$ converges uniformly to $|s|_{\mathbb R}$. Thus, $s$ is in $\mathcal{H}_{\star}(\Delta)$.

We move on to the proof of the inclusion $[\mathcal{S}_{\star}(\Delta)]   \subseteq \mathcal{H}_{\star}(\Delta)$. 
Let $g\colon B\to A$ be in $[{\mathcal S}_*(\Delta)]$ with $A$ and $B$ being stellar $n$-simplexes for some $n$. Clearly $g$ is simplicial. 
By Lemma~\ref{L:nil}, it will suffice to find a compact space $Y$ and continuous 
functions $f_1\colon Y\to |A|_{\mathbb R}$ and $f_2\colon Y\to |B|_{\mathbb R}$ such that $f_1= |g|_{\mathbb R}\circ f_2$ and 
for which there exist sequences of homeomorphisms $h_{1,p}\colon Y\to |A|_{\mathbb R}$ and $h_{2,p}\colon Y\to |B|_{\mathbb R}$
convergent uniformly to $f_1$ and $f_2$, respectively, and such that for each $p, q$,  
$h_{1,p}\circ h_{2,q}^{-1}$ maps $|B_X|_{\mathbb R}$ to $|A_X|_{\mathbb R}$ 
for each $X\in\Delta$. 
Obviously, $Y$ must be homeomorphic with $|\Delta|_{\mathbb R}$. 

By Theorem~\ref{T:iid}, fix an iso-sequence $(e_k)$ for ${\mathcal S}_*(\Delta)$ such that 
\[
g=e_0.
\]
and let 
\[
f_{1,k} = e_{2k}\circ e_{2k+1}\;\hbox{ and }\;f_{2,k} = e_{2k+1}\circ e_{2k+2}. 
\]  
By the definition of the notion of iso-sequence, each $f_{1,k}$ and $f_{2,k}$ is in ${\mathcal S}_*(\Delta)$. Let 
\[
A_k = {\rm codom}(f_{1,k})\;\hbox{ and }\; B_k = {\rm codom}(f_{2,k}).  
\]
Note that each $A_k$ and $B_k$ is a stellar $n$-simplex. 
Since $|f_{1,k}|$ and $|f_{2,k}|$ map $|(A_{k+1})_X|$ onto $|(A_k)_X|$, for each $X\in \Delta$, we can define 
\[
\begin{split}
(Y_1)_X &= \varprojlim_k\, (|(A_k)_X|_{\mathbb R},\, |f_{1,k}|_{\mathbb R}\res |(A_k)_X|_{\mathbb R})\\
(Y_2)_X &= \varprojlim_k\, (|(A_k)_X|_{\mathbb R},\, |f_{2,k}|_{\mathbb R}\res |(A_k)_X|_{\mathbb R}).
\end{split} 
\]
Set also 
\[
Y_1 = (Y_1)_{\{ 0, \dots , n\}} \; \hbox{ and }\; Y_2 = (Y_2)_{\{ 0, \dots , n\}}. 
\]
Since $f_{1,k}\in {\mathcal S}_*(\Delta)$, 
$|f_{1,k}|_{\mathbb R}$ can be uniformly approximated by restricted 
homeomorphisms. To see this, note that the class of maps that are uniformly approximated by restricted 
homeomorphisms is easily seen to be closed 
under composition, and to contain $1_{A}$ and all maps of the form $\beta^n(\delta)$ for a selection $\delta$ and a natural number $n$. 

From Theorem~\ref{T:Brown}, we get that the projection map $f^\infty_{1,0}\colon Y_1\to |A_0|_{\mathbb R}$ can be uniformly approximated by 
homeomorphisms mapping $(Y_1)_X$ to 
$|(A_0)_X|_{\mathbb R}$ for each $X\in \Delta$. 

Similarly, the above analysis can be repeated for $Y_2$.
  
Observe further that the sequence $(|e_k|_{\mathbb R})$ induces a homeomorphism $\phi\colon Y_2\to Y_1$ that maps $(Y_2)_X$ to $(Y_1)_X$ 
for each $X\in\Delta$ and is such that 
\[
f^\infty_{1,0}\circ \phi = |e_0|_{\mathbb R}\circ f^\infty_{2,0} = |g|_{\mathbb R}\circ f^\infty_{2,0}. 
\]
Now, it suffices to take $Y=Y_1$, $f_1 = f^\infty_{1,0}\circ \phi$, and $f_2 = f^\infty_{2,0}$ to obtain the desired objects as stated in the beginning of this proof. 
\end{proof}

\subsection{Comparison of the lower and upper bounds for $[{\mathcal S}(\Delta)]$ and $[{\mathcal S}_*(\Delta)]$}\label{Su:close}
By combining Theorems~\ref{Theorem:HeredNearHomeoand Selection} and \ref{Theorem:CellularAndSelections}, we have  that
\[ \mathcal{C}_{\star}(\Delta)\subseteq \mathcal{H}_{\star}(\Delta) \; \text{ and } \;
\mathcal{C}(\Delta)\subseteq \mathcal{H}(\Delta)\]
In Theorem \ref{T:last} below we show that the reverse inclusions also hold for any  $n<4$; or more generally, for all $n\geq 0$ if the PL-Poincar{\'e} conjecture is positively resolved for $n=4$. This establishes the last part of Theorem \ref{Theorem:Intron}. 

In the context of the theorem, due to Alexander \cite{Al}, that two finite simplicial complexes $A$ and $B$ are stellar equivalent if and only if 
$|A|_{\mathbb{R}}$ and $|B|_{\mathbb{R}}$ are PL-homeomorphic, the PL-Poincar{\'e} conjecture in dimension $n$ can be stated as follows:

\smallskip{}
\noindent {\em if $M$ is a combinatorial $n$-manifold which is homotopy equivalent to $\partial \Delta^{n+1}$, then  $M$ is stellar equivalent to $\partial \Delta^{n+1}$.}
\smallskip{}

\noindent While the PL-Poincar{\'e} conjecture remains open in dimension $n=4$, in all other dimensions  it  has been confirmed---for $n=2$, by the classification theorem for compact surfaces \cite{Bra} and the fact that every surface admits a unique PL-structure \cite{Ra}; for $n=3$, by  Perelman's solution to the smooth Poicar\'e conjecture \cite{Per1,Per2} and the fact that  every $3$-dimensional manifold admits a unique PL-structure \cite{Bra}; for $n\geq 5$,  by 
Smale \cite{Smale}. We summarize these results in the following theorem.

\begin{theorem}\label{T:Poincare}
The PL-Poincar{\'e} conjecture holds in every dimension  $n\neq 4$. 
\end{theorem}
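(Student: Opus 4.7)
The plan is to prove Theorem~\ref{T:Poincare} by cases on the dimension $n$. Given a closed combinatorial $n$-manifold $M$ that is homotopy equivalent to $\partial \Delta^{n+1}$, the uniform strategy will be first to identify $|M|_{\mathbb{R}}$ with $|\partial \Delta^{n+1}|_{\mathbb{R}}$ up to PL-homeomorphism, and then to invoke Alexander's theorem, cited in the paragraph preceding the statement, which asserts that two finite complexes are stellar equivalent if and only if their geometric realizations are PL-homeomorphic. The tool used to pass from the homotopy hypothesis to a PL-homeomorphism will depend on the dimension, and the proof will simply collect these tools.

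For $n \in \{0,1\}$ the classification of closed combinatorial $n$-manifolds is elementary, so the conclusion is immediate. For $n = 2$ I will first apply Brahana's classification of closed surfaces \cite{Bra}, which gives that $|M|_{\mathbb{R}}$ is topologically a $2$-sphere (the unique closed surface homotopy equivalent to a sphere), and then invoke Rad\'o's theorem \cite{Ra} on the uniqueness of PL structures on closed surfaces to upgrade this to a PL-homeomorphism with $|\partial \Delta^3|_{\mathbb{R}}$. For $n = 3$ I will invoke Perelman's resolution of the Poincar\'e conjecture \cite{Per1, Per2} to conclude that $|M|_{\mathbb{R}}$ is topologically a $3$-sphere, and then Moise's theorem \cite{Bra} on the uniqueness of PL structures on topological $3$-manifolds to produce a PL-homeomorphism with $|\partial \Delta^4|_{\mathbb{R}}$. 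For $n \geq 5$ I will directly cite Smale's PL-Poincar\'e theorem \cite{Smale}, proved via the PL $h$-cobordism theorem, which yields the PL-homeomorphism $|M|_{\mathbb{R}} \to |\partial \Delta^{n+1}|_{\mathbb{R}}$ outright. In each case, Alexander's theorem then converts PL-homeomorphism into stellar equivalence, concluding the argument.

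The main obstacle, and the reason the theorem explicitly excludes $n = 4$, is that the four-dimensional PL-Poincar\'e conjecture remains genuinely open. In dimension $4$ the PL and smooth categories coincide, and while Freedman's resolution of the topological Poincar\'e conjecture settles the corresponding statement in the topological category, the transfer from topological to PL information that is routine in all other dimensions (via unique smoothing or unique PL structures on topological manifolds) fails here, obstructed by the existence of exotic smooth structures on $\mathbb{R}^4$ and related four-manifold pathologies.
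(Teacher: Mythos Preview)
Your proposal is correct and matches the paper's approach exactly: the paper does not give a proof of this theorem at all, but merely summarizes, in the paragraph immediately preceding the statement, the same case-by-case citations you give (classification of surfaces plus Rad\'o for $n=2$; Perelman plus Moise for $n=3$; Smale for $n\geq 5$). Your write-up is in fact more detailed than the paper's, since you make explicit the role of Alexander's theorem in converting PL-homeomorphism to stellar equivalence and spell out why $n=4$ is excluded.
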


The following theorem may be known; for example, our proof of it is close to the proof of \cite[Theorem 11.2]{Co1}. 
However, we could not find it in the existing literature.

\begin{theorem}\label{T:last}
Let $\Delta$ be the $n$-dimensional simplex and assume that the PL-Poincar{\'e} conjecture holds for all $m\leq n$. 
Then $\mathcal{H}_{\star}(\Delta)= \mathcal{C}_{\star}(\Delta)$ and $\mathcal{H}(\Delta)= \mathcal{C}(\Delta)$.
\end{theorem}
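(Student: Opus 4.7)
The two inclusions $\mathcal{C}_\star(\Delta)\subseteq \mathcal{H}_\star(\Delta)$ and $\mathcal{C}(\Delta)\subseteq \mathcal{H}(\Delta)$ have already been noted at the start of Subsection~\ref{Su:close}, and since $\mathcal{H}(\Delta)$ and $\mathcal{C}(\Delta)$ are the full subcategories of $\mathcal{H}_\star(\Delta)$ and $\mathcal{C}_\star(\Delta)$ on objects of the form $\beta^k\Delta$, it suffices to prove the reverse inclusion $\mathcal{H}_\star(\Delta)\subseteq \mathcal{C}_\star(\Delta)$. Fix $f\colon B\to A$ in $\mathcal{H}_\star(\Delta)$ and $X\in\Delta$. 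The homeomorphisms approximating $|f|_{\mathbb R}$ restrict to homeomorphisms of $|B_X|_{\mathbb R}$ onto $|A_X|_{\mathbb R}$, so $|f_X|_{\mathbb R}$ is a (non-restricted) near-homeomorphism between stellar balls of dimension $\dim X\leq n$. The task thus reduces to the following claim: for every $m\leq n$ and every simplicial map $g\colon M\to N$ between stellar $m$-manifolds with $|g|_{\mathbb R}$ a near-homeomorphism, $g$ is cellular.

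I plan to prove this claim by downward induction on $\dim\sigma$, showing that for each $\sigma\in N$ the dual $D(\sigma,g)$ is a stellar ball of dimension $k:=m-\dim\sigma$. By Cohen's Theorem~\ref{Cohen's original theorem}, $D(\sigma,g)$ is already a stellar $k$-manifold whose boundary is the union of $D(\sigma, g\res \partial M)$ with $\bigcup_{\tau\supsetneq\sigma} D(\tau,g)$. Two ingredients then complete the induction. First, I must show that $|D(\sigma,g)|_{\mathbb R}$ is a topological closed $k$-disk: recall that $D(\sigma,g)$ is a regular neighborhood of the preimage subcomplex $(\beta g)_*^{-1}(\sigma)$ by Proposition~\ref{P:CollapseRegularNeigh}, while the dual cell $|D(\sigma,N)|_{\mathbb R}\subseteq |N|_{\mathbb R}$ is a PL closed $k$-disk, and the approximating homeomorphisms $h_p$ for $|g|_{\mathbb R}$ should transport the disk structure of $|D(\sigma, N)|_{\mathbb R}$ back onto $|D(\sigma,g)|_{\mathbb R}$. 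Second, the inductive hypothesis gives that all $D(\tau,g)$ with $\tau\supsetneq\sigma$ are stellar balls, while applying the claim in dimension $m-1$ to the near-homeomorphism $g\res\partial M\colon \partial M\to\partial N$ yields that $D(\sigma, g\res \partial M)$ is a stellar ball as well; these duals organize into a cell-system on $\partial D(\sigma,g)$, and Corollary~\ref{C: stellar-equivalent to Poset} converts $\partial D(\sigma,g)$ up to stellar equivalence into the chain complex of the associated poset, whose geometric realization is a topological $(k-1)$-sphere. PL-Poincar\'e in dimension $k-1$ then promotes $\partial D(\sigma,g)$ to a stellar $(k-1)$-sphere, and a standard capping argument --- gluing the cone $[v]\star \partial D(\sigma,g)$ to $D(\sigma,g)$ yields a stellar $k$-manifold whose geometric realization is a topological $k$-sphere, hence by PL-Poincar\'e in dimension $k$ a stellar $k$-sphere --- finally upgrades $D(\sigma,g)$ itself to a stellar $k$-ball.

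The hard part will be the first ingredient: proving that $|D(\sigma,g)|_{\mathbb R}$ is a topological $k$-disk despite $|g|_{\mathbb R}$ not being a homeomorphism. A naive Hausdorff-limit argument does not suffice, since Hausdorff limits of topological disks need not be disks. The intended resolution combines the regular-neighborhood description from Proposition~\ref{P:CollapseRegularNeigh} with uniqueness of regular neighborhoods in the PL category: if $h_p$ is sufficiently uniformly close to $|g|_{\mathbb R}$, then $h_p^{-1}(|D(\sigma,N)|_{\mathbb R})$ is a topological $k$-disk which also serves as a regular neighborhood of a small displacement of the preimage complex $|(\beta g)_*^{-1}(\sigma)|_{\mathbb R}$, and one obtains a PL homeomorphism between $h_p^{-1}(|D(\sigma,N)|_{\mathbb R})$ and $|D(\sigma,g)|_{\mathbb R}$. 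Assembling these local PL homeomorphisms compatibly across all dual cells to yield a single global homeomorphism certifying that $|D(\sigma,g)|_{\mathbb R}$ is topologically a disk will require careful bookkeeping, and this is where the bulk of the technical work will lie.
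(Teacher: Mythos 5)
Your overall skeleton --- reducing to $\mathcal{H}_{\star}(\Delta)\subseteq\mathcal{C}_{\star}(\Delta)$, using Theorem~\ref{Cohen's original theorem} to see that each dual $D(\sigma,f_{\mathrm{Y}})$ is a stellar manifold whose boundary is covered by smaller duals, organizing those into a cell-system, and capping with a cone plus PL-Poincar\'e to promote the dual to a stellar ball --- matches the structure of the paper's proof (which runs a single induction on the height of the dual in the system $\mathcal{S}_f=\{D(\sigma,f_{\mathrm{Y}})\}$ and finishes with Newman's excision theorem \cite{Ne,Li}, a step you leave implicit). The genuine gap is your ``first ingredient.'' You need some topological or homotopical input on $D(\sigma,g)$ itself, and you propose to prove the strong statement that $|D(\sigma,g)|_{\mathbb R}$ is a topological $k$-disk by pulling the dual cell $|D(\sigma,N)|_{\mathbb R}$ back through an approximating homeomorphism $h_p$ and invoking uniqueness of PL regular neighborhoods. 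This does not work as sketched: $h_p$ is merely a topological homeomorphism close to $|g|_{\mathbb R}$, so $h_p^{-1}(|D(\sigma,N)|_{\mathbb R})$ is in general not a subpolyhedron of $|M|_{\mathbb R}$, is not a regular neighborhood of anything in the PL sense, and PL regular-neighborhood uniqueness cannot be applied to it; moreover, a topological disk lying uniformly close to $|D(\sigma,g)|_{\mathbb R}$ carries no information about the homeomorphism type of $|D(\sigma,g)|_{\mathbb R}$ --- the same failure as the Hausdorff-limit idea you reject. You acknowledge that this is where the bulk of the work lies, but no workable mechanism is offered, so the induction does not close.

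The fix, and the route the paper takes, is to demand much less of the dual: only contractibility. A near-homeomorphism is cell-like (\cite[Theorem~3.2]{Cu} together with \cite[Theorem~1.6]{La}), and cell-like maps pull contractible subcomplexes back to contractible preimages (\cite[Theorem~1.2]{La}); applied to the dual cells of $A$, this gives that each $D(\sigma,f_{\mathrm{Y}})$ is contractible, with no approximation bookkeeping at all. Since $\partial D(\sigma,f_{\mathrm{Y}})$ is already stellar equivalent, via the cell-system argument, to $\beta\bigl(\partial([\{\sigma\}]\star\mathrm{lk}(\sigma,A_{\mathrm{Y}}))\bigr)$, which is a stellar sphere --- so your appeal to PL-Poincar\'e at the boundary stage is unnecessary --- the capped complex $S=([v]\star\partial D)\cup D$ is a combinatorial manifold homotopy equivalent to a sphere; PL-Poincar\'e makes $S$ a stellar sphere, and Newman's excision theorem then yields that $D$ is a stellar ball. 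If you replace your first ingredient by this cell-likeness argument (and make the excision step explicit), your proof closes and is then essentially the paper's.
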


\begin{proof}
By Theorems~\ref{Theorem:HeredNearHomeoand Selection} and \ref{Theorem:CellularAndSelections}, 
it suffices to show that $\mathcal{H}_{\star}(\Delta)\subseteq \mathcal{C}_{\star}(\Delta)$. Let $f\colon B\to A$ be in $\mathcal{H}_{\star}(\Delta)$ and let 
$\mathcal{S}_{f}=\{D(\sigma,f_\mathrm{Y})\mid \; \mathrm{Y}\in \Delta, \; \sigma\in A_{\mathrm{Y}}\}$. It follows from Theorem~\ref{Cohen's original theorem} that 
$\mathcal{S}_{f}$ is a system and every $D\in \mathcal{S}_{f}$ is a stellar manifold of dimension $(\#\mathrm{Y}-\#\tau)$ whose boundary is the union of all 
$C\subsetneq D$ with $C\in\mathcal{S}_{f}$. 
We will show by induction on  the height of $D\in\mathcal{S}_{f}$ in the poset 
$(\mathcal{S}_{f},\subseteq)$ that $D$ is a stellar ball.

Note that, by \cite[Theorem~3.2]{Cu} and \cite[Theorem~1.6]{La}, 
$f$ is cell-like. This property and the implication \cite[Theorem~1.2(a)$\Rightarrow$(c)]{La} give that 
for each subcomplex $C$ of $A$, if $|C|_{\mathbb{R}}$ is contractible, then so is $|f^{-1}(C)|_{\mathbb{R}}$. In particular, $D$ is 
contractible. 


It is clear that the minimal elements in $(\mathcal{S}_{f},\subseteq)$ are $0$-balls. Let now $D=D(\tau,f_{\mathrm{Y}})\in \mathcal{S}_{f}$  and assume that 
every $C\in\mathcal{S}_{f}$ with $C\subsetneq D$ is a stellar ball. But then $\mathcal{S}^{<D}_{f}:=\{C\in\mathcal{S}_{f}\mid C\subsetneq D\}$ is a cell-system 
whose union is $\partial D$. As in the proof of Lemma \ref{L: Cellular has composition,anti-composition,triangulations}, it follows then that $\partial D$ is stellar 
equivalent with the complex $\beta \big( \partial \big( [\{\tau\}]\star \mathrm{lk}(\tau, A_{\mathrm{X}})\big)  \big )$, which is a stellar sphere. Let now $v$ be 
a new point and consider the complex $S:=  \big([v]\star(\partial D)\big) \cup D$. Since $D$ and $\partial D$ are combinatorial manifolds, so is $S$.  
Since $D$ is contractible, $S$ is homotopy equivalent to a sphere of dimension $\leq n$. By the PL-Poincar{\'e} conjecture, 
we have that $S$ is a stellar sphere. By the ``excision'' theorem for stellar spheres due to Newman \cite{Ne}, see \cite[Theorem~3.8]{Li}, 
it follows that $D$ is a stellar ball.  
\end{proof}

\noindent {\bf Acknowledgment.} We would like to thank Jan van Mill, Henryk Toru{\'n}czyk, and Jim West for their valuable comments.

\end{document}